\newtheorem{theo}{{Th\'eor\`eme}}[section]
\newtheorem{coro}[theo]{{Corollaire}}
\newtheorem{lemma}[theo]{{Lemme}}
\newtheorem{prop}[theo]{Proposition}
\theoremstyle{remark}
\newtheorem{remark}[theo]{\textbf{Remarque}}
\theoremstyle{definition}
\newtheorem{defn}[theo]{D\'efinition}
\newtheorem{example}[theo]{Exemple}
\newtheorem{recette}[theo]{\textbf{Recette}}
\newcommand{\ra}{\rightarrow}
\newcommand{\ol}{\overline}
\newcommand{\immouv}[1][r]
   {\ar@{}[#1] |*[o][F]{\hbox{%
         \vrule width 1.5mm height 0pt depth 0pt%
         \vrule width 0pt height .75mm depth .75mm%
         }}
         \ar@{^{(}->}[#1]}
\newcommand{\cA}{\mathcal{A}}
\newcommand{\cC}{\mathcal{C}}
\newcommand{\cE}{\mathcal{E}}
\newcommand{\cF}{\mathcal{F}}
\newcommand{\cG}{\mathcal{G}}
\newcommand{\cH}{\mathcal{H}}
\newcommand{\cK}{\mathcal{K}}
\newcommand{\cM}{\mathcal{M}}
\newcommand{\cO}{\mathcal{O}}
\newcommand{\cP}{\mathcal{P}}
\newcommand{\cR}{\mathcal{R}}
\newcommand{\A}{\mathbb A}
\newcommand{\C}{\mathbb C}
\newcommand{\B}{\mathbb B}
\newcommand{\I}{\mathbb I}
\newcommand{\N}{\mathbb N}
\newcommand{\Q}{\mathbb Q}
\newcommand{\R}{\mathbb R}
\newcommand{\Z}{\mathbb Z}
\newcommand{\D}{\mathbf{D}}
\newcommand{\bM}{\mathbf M}
\newcommand{\bR}{\mathbf R}
\newcommand{\fD}{\mathfrak D}
\newcommand{\fF}{\mathfrak F}
\newcommand{\fK}{\mathfrak K}
\newcommand{\rH}{\mathrm H}
\newcommand{\m}{\mathrm m}
\newcommand{\rP}{\mathrm P}
\newcommand{\sW}{\mathscr W}
\newcommand{\sX}{\mathscr X}
\renewcommand{\ker}{\mathrm {Ker} }
\DeclareMathOperator{\coker}{\mathrm Coker }
\renewcommand{\Re}{\mathrm Re}
\newcommand{\Dir}{\mathbf{Dir}}
\DeclareMathOperator{\Hom}{\mathrm Hom}
\DeclareMathOperator{\GL}{\mathrm GL}
\DeclareMathOperator{\SL}{\mathrm SL}
\DeclareMathOperator{\LA}{\mathrm LA}
\DeclareMathOperator{\Fil}{\mathrm{Fil}}
\DeclareMathOperator{\Gal}{\mathrm Gal}
\DeclareMathOperator{\ord}{\mathrm ord}
\DeclareMathOperator{\Spf}{\mathrm{Spf}}
\DeclareMathOperator{\Spm}{\mathrm{Spm}}
\DeclareMathOperator{\Sym}{\mathrm{Sym}}
\DeclareMathOperator{\plim}{\varprojlim}
\newcommand{\alg}{\mathrm{alg}}
\newcommand{\an}{\mathrm{an}}
\newcommand{\con}{\mathrm{cong}}
\newcommand{\cont}{\mathrm{cont}}
\newcommand{\cycl}{\mathrm{cycl}}
\newcommand{\dR}{\mathrm{dR}}
\newcommand{\Inf}{\mathrm{inf}}
\newcommand{\res}{\mathrm{res}}
\newcommand{\z}{\zeta}
\begin{document}
\title{Le système d'Euler de Kato en famille (I)}
\author{ Shanwen \textsc{WANG}}
\date{}
\maketitle {\renewcommand{\thefootnote}{\fnsymbol{footnote}}

\footnotetext{2010 Mathematics Subject Classification. Primary 11F75, 11F12}%
\pagestyle{myheadings}

\begin{abstract}
Ce texte est le premier article d'une série d'articles sur une généralisation de système d'Euler de Kato. Il est consacr\'e \`a la construction d'une famille de syst\`emes d'Euler de Kato et à la construction d'une famille de lois de réciprocité sur l'espace des poids $\sW$, qui interpolent les objets classiques.
\end{abstract}
\renewcommand{\abstractname}{Abstract}
\begin{abstract}
This article is the first article of a serie of articles on the generalization of Kato's Euler system. The main subject of this article is to construct a family of Kato's Euler systems and a family of Kato's explicit reciprocity laws over the weight space $\sW$, which interpolate the corresponding classical objects. 
\end{abstract}

\tableofcontents
\section{Introduction }
 \subsection{Notations}\label{notation}
On note $\overline\Q$ la cl\^oture alg\'ebrique de $\Q$ dans $\C$,
et on fixe, pour tout nombre premier~$p$, une cl\^oture
alg\'ebrique $\overline\Q_p$ de $\Q_p$, ainsi qu'un plongement de
$\overline\Q$ dans $\overline\Q_p$.

Si $N\in\N$, on note $\zeta_N$ la racine $N$-i\`eme de l'unit\'e
$e^{2i\pi/N}\in\overline\Q$ , et on note
$\Q^{\rm cycl}$ l'extension cyclotomique de $\Q$,
r\'eunion des $\Q(\zeta_N)$, pour $N\geq 1$, ainsi que $\Q^{\rm cycl}_p$ l'extension cyclotomique de $\Q_p$, r\'eunion de $\Q_p(\z_N)$, pour $N\geq 1$.
 \subsubsection*{Objets ad\'eliques}
 Soient $\cP$ l'ensemble des premiers de $\Z$ et $\hat{\Z}$ le compl\'et\'e profini de $\Z$, alors
\index{qi si wo le} 
$\hat{\Z}=\prod_{p\in\cP}\Z_p$. Soit $\Q\otimes\hat{\Z}$ l'anneau
des ad\`eles finis de $\Q$. Si
$x\in\Q\otimes\hat{\Z}$, on note $x_p$ (resp. $x^{]p[}$) la
composante de $x$ en $p$ (resp. en dehors de $p$). Notons
$\hat{\Z}^{]p[}=\prod_{l\neq p}\Z_l$. On a donc
$\hat{\Z}=\Z_p\times\hat{\Z}^{]p[}$. Cela induit les d\'ecompositions
suivantes: pour tout  $d\geq 1$,
\[
\bM_d(\Q\otimes\hat{\Z})=\bM_d(\Q_p)\times\bM_d(\Q\otimes\hat{\Z}^{]p[})
\text{ et }
\GL_d(\Q\otimes\hat{\Z})=\GL_d(\Q_p)\times\GL_d(\Q\otimes\hat{\Z}^{]p[}).\]
On d\'efinit les sous-ensembles suivants de $\Q\otimes\hat{\Z}$ et de
$\bM_2(\Q\otimes\hat{\Z})$:

\begin{align*}\hat{\Z}^{(p)}=\Z_p^{*}\times\hat{\Z}^{]p[} &\text{ et
}
\bM_{2}(\hat{\Z})^{(p)}=\GL_2(\Z_p)\times\bM_2(\hat{\Z}^{]p[}), \\
(\Q\otimes\hat{\Z})^{(p)}=\Z_p^{*}\times(\Q\otimes\hat{\Z}^{]p[})
&\text{ et }
\bM_{2}(\Q\otimes\hat{\Z})^{(p)}=\GL_2(\Z_p)\times\bM_2(\Q\otimes\hat{\Z}^{]p[}).
\end{align*}
\subsubsection*{Formes modulaires}
Soient $A$ un sous-anneau de $\C$ et $\Gamma$ un sous-groupe
d'indice fini de $\SL_2(\Z)$. On note $\cM_k(\Gamma,\C)$ le
$\C$-espace vectoriel des formes modulaires de poids $k$ pour
$\Gamma$. On note aussi $\cM_{k}(\Gamma,A)$ le sous $A$-module de
$\cM_k(\Gamma,\C)$ des formes modulaires dont le $q$-d\'eveloppement
est \`a coefficients dans $A$. On pose
$\cM(\Gamma,A)=\oplus_{k=0}^{+\infty}\cM_k(\Gamma,A)$, et on note
$\cM_k(A)$ (resp. $\cM(A)$) la r\'eunion des $\cM_k(\Gamma,A)$
(resp. des $\cM(\Gamma,A)$), o\`u $\Gamma$ d\'ecrit tous les
sous-groupes d'indice fini de $\SL_2(\Z)$. On peut munir l'alg\`ebre
$\cM(\C)$ d'une action de $\GL_2(\Q)_{+}=\{\gamma\in\GL_2(\Q)|\det\gamma>0\}$ de la fa\c{c}on suivante:
\begin{equation}\label{etoi}
f*\gamma=(\det\gamma)^{1-k}f_{|_k}\gamma, \text{ pour }
f\in\cM_k(\C) \text{ et } \gamma\in\GL_2(\Q)_{+},
\end{equation}
o\`u $f_{|_k}\gamma$ est l'action modulaire usuelle de $\GL_2(\R)_{+}$.


\begin{defn}Soient $N\geq 1$ et $\Gamma_N=\{\bigl(\begin{smallmatrix}a & b\\ c & d\end{smallmatrix}\bigr)\in\SL_2(\Z), \bigl(\begin{smallmatrix}a & b\\ c & d\end{smallmatrix}\bigr)\equiv \bigl(\begin{smallmatrix}1 & 0\\ 0 & 1\end{smallmatrix}\bigr)\mod N\}$. Le groupe $\Gamma_N$ est un sous-groupe de $\SL_2(\Z)$ d'indice fini.
On dit qu'un sous-groupe $\Gamma$ de $\SL_2(\Z)$ est un sous-groupe
de congruence s'il contient $\Gamma_N$ pour un certain $N\geq 1$.
\end{defn}
\begin{example}
Les sous-groupes
$\Gamma_0(N)=\{\gamma\in\SL_2(\Z)|\gamma\equiv(\begin{smallmatrix}*&*\\0&*\end{smallmatrix})\mod
N\}$ et
$\Gamma_1(N)=\{\gamma\in\SL_2(\Z)|\gamma\equiv(\begin{smallmatrix}1&*\\0&1\end{smallmatrix})\mod
N\}$ sont des sous-groupes de congruences.
\end{example}

On pose:
\[\cM^{\con}_k(A)=\bigcup\limits_{\substack{\Gamma \text { sous-groupe de congruence } }}\cM_k(\Gamma,A)\text{ et }
\cM^{\con}(A)=\bigcup_k\cM_k^{\con}(A).\]

Soit $K$ un sous-corps de $\C$ et soit $\ol{K}$ la cl\^oture alg\'ebrique de $K$. On note $\Pi_K$ le groupe des
automorphismes de $\cM(\bar{K})$ sur $\cM(\SL_2(\Z),K)$; c'est un
groupe profini.
On note
$\Pi_{\Q}^{'}$ le groupe des automorphismes
de $\cM(\overline\Q)$ engendr\'e par $\Pi_{\Q}$ et
$\GL_2(\Q)_{+}$.
Plus g\'en\'eralement, si $S\subset\cP$ est fini, on note $\Pi_{\Q}^{(S)}$ le sous-groupe de $\Pi_{\Q}^{'}$ engendr\'e par $\Pi_{\Q}$ et $\GL_2(\Z^{(S)})_{+}$, o\`u $\Z^{(S)}$ est le sous-anneau de $\Q$ obtenu en inversant tous les nombres premiers qui n'appartiennent pas \`a $S$.
Si $f\in\cM(\ol{K})$, le groupe de galois $\cG_K$ agit sur les coefficients du $q$-d\'eveloppement de $f$; ceci nous fournit une section de $\cG_K\ra \Pi_K$, notée par $\iota_K$. 

Le groupe des automorphismes de $\cM^{\con}(\Q^{\cycl})$ sur $\cM(\SL_2(\Z),\Q^{\cycl})$ est le groupe $\SL_2(\hat{\Z})$, le compl\'et\'e profini de $\SL_2(\Z)$ par rapport aux sous-groupes de congruence. D'autre part, soit $f\in\cM^{\con}(\Q^{\cycl})$, le groupe $\cG_{\Q}$ agit sur les coefficients du $q$-d\'eveloppement de $f$ \`a travers son quotient $\Gal(\Q^{\cycl}/\Q)$ qui est isomorphe \`a $\hat{\Z}^{*}$ par le caract\`ere cyclotomique. On note $H$ le groupe des automorphismes de $\cM^{\con}(\Q^{\cycl})$ sur $\cM(\SL_2(\Z),\Q)$.  La sous-alg\`ebre $\cM^{\con}(\Q^{\cycl})$ est stable par $\Pi_{\Q}$ qui agit \`a travers $H$. Le groupe $H$ est isomorphe à $\GL_2(\hat{\Z})$ et on a le diagramme commutatif de groupes suivant (c.f. par exemple $\cite{Wang}$Théorème. $2.2$):

\[\xymatrix{
1\ar[r]&\Pi_{\bar{\Q}}\ar[r]\ar[d]&\Pi_{\Q}\ar[r]\ar[d]&\cG_{\Q}\ar[r]\ar[d]^{\chi_{\cycl}}\ar@{.>}@/^/[l]^{\iota_\Q}&1\\
1\ar[r]&\SL_{2}(\hat{\Z})\ar[r]&\GL_2(\hat{\Z})\ar[r]^{\det}&\hat{\Z}^{*}\ar[r]\ar@{.>}@/^/[l]^{\iota}&1                     },\]
où la section $\iota_\Q$ de $\cG_{\Q}$ dans $\Pi_{\Q}$ d\'ecrite plus haut envoie $u\in \hat{\Z}^{*}$ sur la matrice $(\begin{smallmatrix}1&0\\0&u\end{smallmatrix})\in \GL_2(\hat{\Z})$.

\subsection{Le syst\`eme d'Euler de Kato}
En bref, un syst\`eme d'Euler est une collection de classes de cohomologie v\'erifiant une relation de distribution. La construction du syst\`eme d'Euler de Kato dans \cite{KK}, \cite{PC1} ou \cite{Wang} est
comme suit:

\`A partir des unités de Siegel, on construit une distribution
alg\'ebrique $z_{\textbf{Siegel}}$ sur $(\Q\otimes\hat{\Z})^2-(0,0)$ \`a
valeurs dans $\Q\otimes(\cM(\bar{\Q})[\frac{1}{\Delta}])^{*}$, o\`u $\Delta=q\prod_{n\geq 1}(1-q^n)^{24}$ est la forme modulaire de poids $12$. La
distribution $z_{\textbf{Siegel}}$ est invariante sous l'action du groupe
$\Pi_{\Q}^{'}$.
La th\'eorie de Kummer $p$-adique nous fournit un \'el\'ement
\[z_{\textbf{Siegel}}^{(p)}\in \rH^1(\Pi^{'}_{\Q},\fD_{\alg}((\Q\otimes\hat{\Z})^{2}-(0,0),\Q_p(1))).\]
Par cup-produit et
restriction \`a $\Pi_{\Q}^{(p)}\subset\Pi'_\Q$
et $\bM_2(\Q\otimes\hat{\Z})^{(p)}\subset((\Q\otimes\hat{\Z})^{2}-(0,0))^2$,
 on obtient une distribution alg\'ebrique:
\[z_{\textbf{Kato}}\in\rH^2(\Pi^{(p)}_{\Q},\fD_{\alg}(\bM_2(\Q\otimes\hat{\Z})^{(p)},\Q_p(2))).\]
En modifiant $z_{\textbf{Kato}}$ par un op\'erateur $(c^2-\langle c,1\rangle)(d^2-\langle1,d\rangle)$
(c.f. \cite{Wang} $\S 2.3.2$) qui fait dispara\^\i tre les d\'enominateurs,
 on obtient une distribution alg\'ebrique \`a valeurs dans $\Z_p(2)$
(que l'on peut donc voir comme une mesure, notée par $z_{\textbf{Kato},c,d}$), et une torsion \`a
la Soul\'e nous fournit enfin un \'el\'ement
\[z_{\textbf{Kato},c,d}(k,j)\in\rH^2(\Pi^{(p)}_{\Q},\fD_{\alg}(\bM_2(\Q\otimes\hat{\Z})^{(p)},V_{k,j})),\]
o\`u $V_{k,j}=\Sym^{k-2}V_p\otimes\Q_p(2-j)$, o\`u $V_p$ est la repr\'esentation standard de dimension
$2$ de $\GL_2(\Z_p)$.

\subsection{Une famille de syst\`emes d'Euler de Kato sur l'espace des poids }\label{introfam}
Dans la suite, on désigne par $\I_0(p)$ le sous-groupe d'Iwahori de $\GL_2(\Z_p)$
\[\{\gamma\in\GL_2(\Z_p)|\gamma\equiv(\begin{smallmatrix}*&*\\0&*\end{smallmatrix})\mod
p\}.\] On note $\bM_2^{(p)}=\bM_2(\Q\otimes\hat{\Z})^{]p[}\times \I_0(p)$.
  
 On note $\sW$ l'espace rigide analytique associé à l'algèbre d'Iwasawa $\Z_p[[\Z_p^*]]$: c'est l'\emph{espace des poids} et on dispose d'une inclusion de $\Z\subset\mathscr{W}$ en envoyant $k$ sur le caractère algébrique $(z\mapsto z^{k-2})$ sur $\Z_p^*$.
 Dans $\S \ref{const}$, on introduit la notion de \emph{caractère universel} (c.f. Définition \ref{carac}) pour l'espace des poids.
On construit une famille de représentations de Banach $\D_{0,\rho^{\mathbf{univ}}_{j}}$ de $\I_0(p)$ sur l'espace $\sW$, où l'indice $\rho^{\mathbf{univ}}_{j}$ désigne la torsion par un $1$-cocycle $\rho^{\mathbf{univ}}_{j}$ associé au caractère universel $\kappa^{\mathbf{univ}}$ de l'espace des poids et au caractère $\det^{-j}$ avec $j\geq 1$, qui admet une section globale $\nu_j$ interpolant les vecteurs de plus haut poids dans $V_{k,j+2}$  via une application de spécialisation $\I_0(p)$-équivariante $\mathbf{Sp}_{k,j}$. 

On note $\tilde{\Gamma}_0(p)$ l'image inverse de $\GL_2(\hat{\Z})^{]p[}\times\I_0(p)$ dans $\Pi_{\Q}$ via l'application $\Pi_\Q\ra\GL_2(\hat{\Z})$. Si $V$ est un $\Z$-module, on note $\fD_0(\bM_2^{(p)},V)$ le $\Z$-module des mesures sur $\bM_2^{(p)}$ à valeurs dans $V$.
 En appliquant la technique de "torsion à la Soulé" à l'élément $z_{\mathbf{Kato},c,d}$ et à la famille de représentations $(\D_{0,\rho^{\mathbf{univ}}_{j}},\nu_j)$ munie d'un vecteur universel de plus haut poids $\nu_j$, on construit une famille d'éléments de Kato sur l'espace des poids 
 \[z_{\mathbf{Kato},c,d}(\nu_j) \in\rH^2(\tilde{\Gamma}_0(p), \fD_0(\bM_2^{(p)},\D_{0, \rho^{\mathbf{univ}}_{j}}(2))).\] 
 Comme $\mathbf{Sp}_{k,j}: \D_{0,\rho^{\mathbf{univ}}_{j}}\ra V_{k,j+2} $ est $\I_0(p)$-équivariante, on en déduit un morphisme de $\rH^2(\tilde{\Gamma}_0(p), \fD_0(\bM_2^{(p)},\D_{0, \rho^{\mathbf{univ}}_{j}}(2)))$ dans $\rH^2(\tilde{\Gamma}_0(p), \fD_0(\bM_2^{(p)},V_{k,j}))$, qui sera encore noté par $\mathbf{Sp}_{k,j}$.  Notre premier résultat est que $z_{\mathbf{Kato},c,d}(\nu_j)$ interpole les $z_{\mathbf{Kato},c,d}(k,j)$.
\begin{theo}$($Théorème \ref{fam-poids}$)$ Si $1\leq j\in\N$, alors pour tout entier $k\geq 1+j$, on a 
\[\textbf{Sp}_{k,j}(z_{\mathbf{Kato},c,d}(\nu_j))=z_{\mathbf{Kato},c,d}(k,j) .\]
\end{theo}
\begin{remark}D'autres éléments de Kato en famille ont été construits par Fukaya $\cite{Fu}$ (via la $K$-théorie) et par Delbourgo $\cite{DD}$ (via les symboles modulaires). Par contre, notre construction est consiste à reprendre la construction de Kato \cite{KK} et de Colmez \cite{PC1} pour réaliser la déformation.   
\end{remark}
\subsection{La distribution d'Eisenstein}
Les séries d'Eisenstein-Kronecker satisfont des relations de distribution (c.f. Lemme~\ref{lemma15}). Cela permet, si $c,d\in\Z_p^*$, de construire deux distributions algébriques 
$z_{\mathbf{Eis},d}(k)$ et $z'_{\mathbf{Eis},c}(k)$ sur $(\Q\otimes\hat{\Z})^2$ à valeurs dans $\cM_k^{\con}(\Q_p^{\cycl})$ (c.f. Proposition~\ref{eiskj}), invariantes sous l'action de $\GL_2(\Q\otimes\hat{\Z})$ à multiplication près par une puissance de déterminant. 

Soient $k\geq 2$ et $1\leq j\leq k-1$ deux entiers. On définit
\[z_{\mathbf{Eis},c,d}(k,j)=\frac{1}{(j-1)!}z'_{\mathbf{Eis},c}(k-j)\otimes z_{\mathbf{Eis},d}(j)\in\fD_{\alg}(\bM_2(\Q\otimes\hat{\Z}),\cM_k(\Q^{\cycl})),\]   
en utilisant le fait que le produit de deux formes modulaires de poids $i$ et $j$ est une forme modulaire de poids $i+j$ et en identifiant 
$(\Q\times\hat{\Z})^2\times (\Q\otimes\hat{\Z})^2$ avec $\bM_2(\Q\otimes\hat{\Z})$.

On peut, si $j$ est fixé, interpoler ces distributions par une distribution analytique sur l'espace des poids. On dira qu'une série formelle $F(q)=\sum_{n\in\Q_+}A_nq^n\in \cup_{M\in\Z}\cO(\sW)[[q_M]]$ est une famille $p$-adique de formes modulaires sur l'espace des poids $\sW$, si pour presque tout $k\in\N\subset\sW$, l'évaluation $\mathbf{Ev}_{k}(F(q))$ de $F(q)$ en point $k$ soit le $q$-développement d'une forme modulaire classique de poids $k$. Dans $\S~\ref{construction_fam_Eis}$, si $c\in\Z_p^*$ et si $j\geq 1$,  pour chaque $(\alpha,\beta)\in (\Q/\Z)^2$, on construit une famille $p$-adique de séries d'Eisenstein $F_{c,\alpha,\beta}(\kappa^{\mathbf{univ}},j)$ sur l'espace des poids, qui interpole les séries d'Eisenstein-Kronecker $F_{c,\alpha,\beta}^{(k-j)}$ en $k$ (c.f. Lemme~\ref{special}). 

On note $\cM(\overline{\Q}\otimes\cO(\sW))$ le $\ol{\Q}\otimes\cO(\sW)$-module des familles de formes modulaires $p$-adiques sur $\sW$.
De la construction de ces familles $F_{c,\alpha,\beta}(\kappa^{\mathbf{univ}},j)$ et de la relation de distribution entre les $F_{c,\alpha,\beta}^{(k-j)}$, on déduit une relation de distribution pour ces familles $F_{c,\alpha,\beta}(\kappa^{\mathbf{univ}},j)$, qui se traduit en une distribution algébrique $z'_{\mathbf{Eis,c}}(\kappa^{\mathbf{univ}},j)$ (c.f. Théorème.~\ref{efamdis}).
Ceci nous permet de construire une distribution (c.f. ~$\S~\ref{discom}$) 
\[z_{\mathbf{Eis},c,d}(\kappa^{\mathbf{univ}},j)\in\fD_{\alg}(\bM_2^{(p)},\cM(\overline{\Q}\otimes\cO(\sW))),\]
pour $1\leq j\in\Z$, qui interpole les distributions d'Eisenstein $z_{\mathbf{Eis},c,d}(k,j)$ en $k$. 

 \subsection{La loi de réciprocité explicite de Kato en famille}

 Soit $\fK^+=\Q_p\{\frac{q}{p}\}$ l'alg\`ebre des fonctions analytiques sur la boule $v_p(q)\geq 1$ \`a coefficients dans $\Q_p$;  c'est un anneau principal complet pour la valuation $v_{p,\fK}$ d\'efinie par la formule:
\[ v_{p,\fK}(f)=\inf_{n\in\N}v_p(a_n), \text{ si } f=\sum_{n\in\N}a_n(\frac{q}{p})^n\in \fK^+ .\]
 On note $\fK$ le complété du corps des fractions de l'anneau $\fK^+$ pour la valuation $v_{p,\fK}$,  $\ol{\fK}$ une clôture algébrique de $\fK$, ainsi que $\fK_{\infty}\subset\ol{\fK}$ la sous-extension algébrique de $\fK$ en rajoutant les $\z_M$ et les racines $M$-ièmes $q^{\frac{1}{M}}$ de $q$, pour tout $M\geq 1$.  On note $\ol{\fK}^{+}$ (resp. $\fK_{\infty}^+$) la clôture intégrale de $\fK^+$ dans $\ol{\fK}$ (resp. $\fK_{\infty}$).
 
 On note $\cG_{\fK}$ le groupe de Galois de $\ol{\fK}$ sur $\fK$, $P_{\Q_p}$ le groupe de Galois de $\ol{\Q}_p\fK_{\infty}$ sur $\fK$, et $P_{\Q_p}^{\cycl}$ le groupe de Galois de $\fK_{\infty}$ sur $\fK$.  L'application qui à une forme modulaire associe son $q$-développement, nous fournit une inclusion de $\cM(\ol{\Q})$ dans $\ol{\Q}_p\fK_\infty^+$ équivariante sous l'action de $P_{\Q_p}$. On en déduit ainsi un morphisme $P_{\Q_p}\ra \Pi_\Q$,  qui induit un morphisme $\cG_\fK\ra \Pi_\Q$ et un morphisme "de localisation" $\rH^i(\Pi_\Q, W)\ra \rH^i(\cG_\fK, W)$ pour tout $\Pi_\Q$-module $W$ et tout $i\in \N$. 
 
 La loi de r\'eciprocit\'e explicite de Kato consiste \`a
relier l'\'el\'ement $z_{\mathbf{Kato},c,d}$, qui vit dans la cohomologie
du groupe $\Pi^{(p)}_{\Q}$, \`a une distribution
construite \`a partir du produit de deux s\'eries d'Eisenstein
(le produit scalaire de Petersson d'une forme primitive avec
un tel produit fait appara\^itre les valeurs spéciales de la fonction $L$ de $f$,
et c'est cela qui permettrait de construire la fonction $L$ $p$-adique).
Ceci se fait en plusieurs \'etapes:\\
$\bullet$ On commence par "localiser" notre classe de cohomologie
\`a $\cG_{\fK}$ et par
\'etendre les coefficients de
$V_{k,j}$ \`a $\B_{\dR}^+\otimes V_{k,j}$, o\`u
$\B_{\dR}^+=\B_{\dR}^+(\bar\fK^+)$ est un anneau de Fontaine.
\\
$\bullet$ On constate que l'image de $z_{\mathbf{Kato},c,d}(k,j)$ sous l'application "de localisation" \[\rH^2(\Pi^{(p)}_{\Q},\fD_{\alg}(\bM_2(\Q\otimes\hat{\Z})^{(p)},V_{k,j}))
\ra\rH^2(\cG_{\fK},\fD_{\alg}(\bM_2(\Q\otimes\hat{\Z})^{(p)},\B_{\dR}^+\otimes V_{k,j}))\]
est l'inflation d'un $2$-cocycle sur $P^{\cycl}_{\Q_p}$
\`a valeurs dans $\fD_{\alg}(\bM_2(\Q\otimes\hat{\Z})^{(p)},
\B_{\dR}^+\otimes V_{k,j})$.
Les m\'ethodes de descente presque \'etale de Tate~\cite{Ta} et
Sen~\cite{Sen}, revisit\'ees par Faltings~\cite{Fa}
(cf.~aussi Andreatta-Iovita~\cite{AI}) permettraient de montrer
que c'est toujours le cas, mais une preuve directe
pour l'\'el\'ement de Kato est donnée dans  $\cite{Wang}\S~5.2$.
\\
$\bullet$ On construit
une application exponentielle duale (c.f.~\cite{Wang} $\S~5.1$):
\[\exp^{*}_{\mathbf{Kato}}:\rH^2(P_{\Q_p},\fD_{\alg}(\bM_2(\Q\otimes\hat{\Z})^{(p)},\B_{\dR}^+\otimes V_{k,j}))\ra\rH^0(P_{\Q_p},\fD_{\alg}(\bM_2(\Q\otimes\hat{\Z})^{(p)},\cK_{\infty}^+)),\]
où $\cK_{\infty}^+$ est la réunion des $\cK_M^+$ pour tout $M\geq 1$ avec $\cK_M^+$ le complété $q$-adique de la clôture intégrale $\fK_M^+$ de $\fK^+$ dans $\fK_M=\fK[\z_M,q_M]$; 
et on calcule l'image de $z_{\mathbf{Kato},c,d}(k,j)$.

La définition de l'application $\exp^*_{\mathbf{Kato}}$ et le calcul de l'image de $z_{\mathbf{Kato},c,d}(k,j)$ reposent sur la méthode de Tate-Sen-Colmez et sur une description explicite (c.f. \cite{Wang} prop.4.10) de la cohomologie de $P_{\fK_M}=\Gal(\fK_{Mp^{\infty}}/\fK_M)$ avec $M\geq 1$ tel que $v_p(M)=m\geq v_p(2p)$ et $\fK_{Mp^\infty}=\cup_{n\geq 1}\fK_{Mp^n}$. 

Le groupe $P_{\fK_M}$ est un groupe analytique $p$-adique de rang $2$, isomorphe à 
\[\rP_m=\{(\begin{smallmatrix}a& b\\ c& d\end{smallmatrix})\in\GL_2(\Z_p): a=1, c=0, b\in p^m\Z_p, d\in 1+p^m\Z_p \}.\]
On note $(u,v)\in (p^m\Z_p)^2$ l'élément $(\begin{smallmatrix}1& u\\ 0& e^v\end{smallmatrix})$
de $\rP_m$. 
Soit $V$ une représentation \emph{analytique} (c.f. Définition \ref{def-an}) de $\rP_m$, on dispose des dérivations $\partial_{m,i}:V\ra V$, $i=1,2$, définis par 
\[\partial_{m,i}=\lim_{n\ra+\infty}\frac{\gamma_i^{p^n}-1}{p^n}, \text{ où } \gamma_1=(p^m,0), \gamma_2=(0,p^m).\]
La proposition suivante est une version entière de $\cite{Wang}$ prop. 4.10.

\begin{prop}[Proposition ~\ref{analytic}]
Soit $V$ une repr\'esentation analytique de $\rP_m$ munie d'un $\Z_p$-réseau $T$ stable sous l'action de $\rP_m$. Alors
\begin{itemize}
\item[$(1)$] tout \'el\'ement de $ \rH^2(\rP_m,V) $ est repr\'esentable par un $2$-cocycle analytique à $p^{2m}$-torsion près;
\item[$(2)$] on a $ \rH^2(\rP_m,V)\cong (V/(\partial_{1},\partial_{2}-1))$,
et l'image d'un $2$-cocycle analytique
\[((u,v),(x,y))\ra c_{(u,v),(x,y)}=\sum_{i+j+k+l\geq 2}c_{i,j,k,l}u^iv^jx^ky^l,\]
avec $p^{(i+j+k+l)m}c_{i,j,k,l}\in T$, par cet isomorphisme, est celle de
$\delta^{(2)}(c_{(u,v),(x,y)})=p^{2m}(c_{1,0,0,1}-c_{0,1,1,0})$ à $p^{2m}$-torsion près.
\end{itemize}
\end{prop}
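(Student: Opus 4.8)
The plan is to compute the group cohomology $\rH^{\bullet}(\rP_m, V)$ by means of an explicit resolution adapted to the fact that $\rP_m$ is a $2$-dimensional $p$-adic analytic group, topologically generated by $\gamma_1 = (p^m, 0)$ and $\gamma_2 = (0, p^m)$, together with the commutation relation these two generators satisfy. First I would record the group law: writing $(u,v)\cdot(x,y) = (u + e^{v}x,\, v+y)$, one sees that $\gamma_1$ and $\gamma_2$ do not commute but satisfy $\gamma_2\gamma_1\gamma_2^{-1} = \gamma_1^{e^{p^m}}$ in the pro-$p$ completion; thus $\rP_m$ is (topologically) a semidirect product $\Z_p \rtimes \Z_p$, i.e. a pro-$p$ group of cohomological dimension $2$. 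I would therefore use the Hochschild–Serre / Koszul-type complex $0\to V \xrightarrow{(\gamma_1-1,\,\gamma_2-1)} V\oplus V \xrightarrow{\partial^{(1)}} V\to 0$ built from the two generators, whose $\rH^2$ is $V$ modulo the images of $\gamma_1 - 1$ and (a twisted version of) $\gamma_2 - 1$; passing to the analytic/Lie-algebra picture via $\partial_{m,i} = \log \gamma_i / p^m$ (which converges on an analytic representation) turns these operators into the derivations $\partial_1, \partial_2$, and the twist coming from the non-commutation introduces the shift that produces $\partial_2 - 1$ rather than $\partial_2$. This yields the isomorphism $\rH^2(\rP_m, V) \cong V/(\partial_1, \partial_2 - 1)$ claimed in (2).

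The heart of (1) — and the main obstacle — is the approximation statement: that every continuous $2$-cocycle is cohomologous, up to $p^{2m}$-torsion, to an \emph{analytic} one, i.e. one given by a convergent power series $\sum c_{i,j,k,l}u^i v^j x^k y^l$ in the four coordinates of the pair of arguments, with the integrality bound $p^{(i+j+k+l)m}c_{i,j,k,l}\in T$. Here I would follow the Tate–Sen–Colmez decompletion strategy referenced in the excerpt: filter the space of continuous cochains by the radius of analyticity (equivalently by the congruence subgroups $\rP_{m+n}$), show that the restriction maps $\rH^i(\rP_m, V) \to \rH^i(\rP_{m+n}, V)$ together with corestriction give, after multiplication by the index $p^{(i)n}$, a way to push a general cocycle into the analytic part, and control the denominators using the $\Z_p$-lattice $T$ and the explicit constants in Colmez's computation. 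The key quantitative input is that on the analytic vectors the operators $\gamma_i - 1$ and $\partial_{m,i}$ differ by something divisible by $p^m$ on $T$, so inverting them costs exactly one power of $p^m$ per cohomological degree, giving the global factor $p^{2m}$ in degree $2$.

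For the final formula in (2) I would trace an analytic $2$-cocycle $c_{(u,v),(x,y)} = \sum_{i+j+k+l\geq 2} c_{i,j,k,l} u^i v^j x^k y^l$ through the comparison between the bar complex and the Koszul complex above. Evaluating the Koszul differential on the pair of generators $\gamma_1, \gamma_2$ picks out precisely the degree-$(1,1)$ cross terms: the coboundary in the bar complex applied to $((u,v),(x,y))$ with $u=v=p^m$, $x=y=p^m$-type substitutions contributes $c_{1,0,0,1} - c_{0,1,1,0}$, the two ways of pairing a "first-slot" variable with a "second-slot" variable, and the normalization by $p^{2m}$ comes from the two factors of $p^m$ in $\gamma_1, \gamma_2$. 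Hence the class of $c$ in $V/(\partial_1, \partial_2 - 1)$ is that of $\delta^{(2)}(c) = p^{2m}(c_{1,0,0,1} - c_{0,1,1,0})$, modulo $p^{2m}$-torsion, exactly as stated. The only delicate bookkeeping is checking that the higher-order terms ($i+j+k+l \geq 3$) are killed in the quotient — this uses that $\partial_1$ and $\partial_2 - 1$ act invertibly on the span of monomials of degree $\geq 2$ after inverting $p$, so their contribution lies in $p^{2m}$-torsion once integrality is imposed — and that the relation-twist genuinely replaces $\partial_2$ by $\partial_2 - 1$ and not $\partial_2 + 1$ or $\partial_2$ itself, which is a sign-of-$\log e^{p^m}$ computation.
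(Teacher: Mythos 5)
Your outline for part (2) follows the same skeleton as the paper: Hochschild--Serre for $1\ra U_m\ra\rP_m\ra\Gamma_m\ra 1$ gives $\rH^2(\rP_m,T)\cong T/(u_m-1,\gamma_m-e^{p^m})$, and one then passes to the infinitesimal operators. (The paper has to check, and states explicitly, that the natural surjection $T/(\partial_{m,1},\partial_{m,2}-p^m)\ra T/(u_m-1,\gamma_m-e^{p^m})$ is an isomorphism; you gloss this with ``passing to the Lie-algebra picture'', which is of the right nature but not an argument, and at the integral level it is exactly where care is needed since $\partial_1,\partial_2-1$ do not preserve $T$.) The genuine gap is in part (1), which is the heart of the proposition. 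The restriction--corestriction mechanism you propose does not produce analytic representatives: restricting a merely continuous $2$-cocycle to $\rP_{m+n}$ leaves it merely continuous (continuity gives no local analyticity at all), and $\cor\circ\res$ only returns $p^{2n}$ times the class, so nothing in your outline ever replaces a continuous cocycle by an analytic one. Your ``key quantitative input'' (that inverting $\gamma_i-1$ rather than $\partial_{m,i}$ costs one power of $p^m$ per degree) is a heuristic and is not where the $p^{2m}$ of the statement comes from. What the paper actually does is import the explicit computation of \cite{Wang}, Prop.~4.10: given an analytic cocycle $c$ one constructs a \emph{formal} $1$-cochain $b$ with $c-(c_{1,0,0,1}-c_{0,1,1,0})uy=db$; this $b$ does not converge on $\rP_m$ but does converge on $\rP_{m+v_p(2p)}$, and the loss is then controlled by the inflation--restriction sequence for $\rP_{m+v_p(2p)}\subset\rP_m$, whose error term $\rH^2(\rP_m/\rP_{m+v_p(2p)},T^{\rP_{m+v_p(2p)}})$ is $p^{2m}$-torsion --- that finite group is precisely the ``\`a $p^{2m}$-torsion pr\`es'' in the statement, together with the observation (\cite{Wang}, Lemme 4.14) that $\delta^{(2)}$ of an integral analytic cocycle lands in $p^{-2m}T$.

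A second, smaller, problem: your justification of the formula $\delta^{(2)}(c)=p^{2m}(c_{1,0,0,1}-c_{0,1,1,0})$ is garbled. You say the higher-order terms die because ``$\partial_1$ and $\partial_2-1$ act invertibly on the span of monomials of degree $\geq 2$'', but these operators act on the coefficient module $V$, not on monomials in the group variables $u,v,x,y$; there is no such action to invert. The correct mechanism is again the explicit coboundary $b$ above: all terms of $c$ other than the cross-term $uy$ are absorbed into $db$, and the class of the cross-term in $T/(\partial_{m,1},\partial_{m,2}-p^m)$ is then read off. Without either this explicit construction or an appeal to a Lazard-type comparison theorem with quantitative control of denominators (which would have to be stated and verified for the lattice $T$), both part (1) and the formula in part (2) remain unproven in your proposal.
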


En utilisant la proposition ci-dessus, on construit une application exponentielle duale en famille (c.f.~$\S~\ref{constructiondeloi}$) $\exp^{*}_{\mathbf{Kato},\nu}$ de \[\rH^2(P_{\Q_p},\fD_{\alg}(\bM_2^{(p)},\B_{\dR}^{+}(\fK^+_{Mp^{\infty}})\hat{\otimes}\D_{0,\rho^{\mathbf{univ}}_{j-2}}(\sW) ))\] dans $\fD_{\alg}(\bM_2^{(p)},\cK_{\infty}^{+}\hat{\otimes}_{\Z_p}\cO(\sW) )$.
On calcule l'image de $z_{\mathbf{Kato},c,d}(\nu_j)$ sous l'application exponentielle duale $\exp^*_{\mathbf{Kato},\nu}$ et obtient finalement notre r\'esultat principal, qui montre que l'application $\exp^{*}_{\mathbf{Kato},\nu}$ est une application exponentielle duale en famille:
\begin{theo}\label{theo}Si $ j\geq 1$ et si $c,d\in\Z_p^{*}$, alors pour tout entier $k\geq j+1 $,
on a:\\
$(1)$ $\exp^{*}_{\mathbf{Kato},\nu}(z_{\mathbf{Kato},c,d}(\nu_j))=z_{\mathbf{Eis},c,d}(\kappa^{\mathbf{univ}},j);$\\
$(2)$ $\mathbf{Ev}_{k}\circ \exp^{*}_{\mathbf{Kato},\nu}(z_{\mathbf{Kato},c,d}(\nu_j))=\exp^{*}_{\mathbf{Kato}}\circ \textbf{Sp}_{k,j}(z_{\mathbf{Kato},c,d}(\nu_j))$.
\end{theo}
\begin{remark}Dans $\cite{Fu}$, Fukaya a construit une fonction L $p$-adique en deux variables sur une famille de formes modulaires ordinaires en appliquant sa $K_2$-série de Coleman à son élément universel de Kato. Par ailleurs, Panchishkin $\cite{AP}$ a donné une construction de la fonction L $p$-adique en deux variables sur une famille de formes modulaires non-ordinaires sans utiliser le système d'Euler de Kato. Récemment, Bellaïche \cite{Be} a construit une fonction L $p$-adique en deux variables sur la courbe de Hecke ("eigencurve") en utilisant la théorie de symboles surconvergents. 
Les résultats de cet article peuvent aider à construire une fonction L $p$-adique en deux variables sur la courbe de Hecke en utilisant le système d'Euler de Kato, ceci est le sujet de \cite{Wang1}.
\end{remark}

\subsection*{Remerciements:}Cet article est une continuation de ma thèse sous la direction de P. Colmez. Je voudrais lui exprimer ma profonde gratitude pour sa générosité, sa disponibilité et aussi pour le temps qui m'a accordé. 
Je tiens également 
à remercier A. Iovita pour sa suggestion de travailler avec l'espace des distributions pour construire la famille de représentations de Banach.  
Une partie de cet article est faite pendant mon séjour à Lausanne pendant Janvier-Juin 2011, je remercie sincèrement P. Michel et la chaire de TAN de EPFL pour leur hospitalité.

\section{Famille de systèmes d'Euler de Kato sur l'espace $\sW$}
\subsection{Groupe rigide analytique $p$-adique et caract\`ere universel}

Soit $G$ un groupe abélien discret ou un pro-$p$ groupe abélien,  topologiquement de type fini. La $\Z_p$-algèbre de groupe $\Z_p[G]$ (resp. la $\Z_p$-algèbre de groupe complété $\Z_p[[G]]$ si $G$ est un pro-$p$-groupe), est naturellement une algèbre de Hopf (resp. une algèbre de Hopf complète), dont la structure de Hopf est induite par la structure de groupe de $G$. Alors on peut lui associer un groupe rigide analytique $\sX$ sur $\Q_p$. Les $\C_p$-points de $\sX$ sont l'ensemble $\Hom_{\cont}(G, \C_p^*)$, des caractères continus sur $G$ à valeurs dans $\C_p$.

\begin{defn}\label{carac}Soit $G$ un groupe ci-dessus et soit $\sX$ le groupe rigide analytique associé. L'inclusion canonique $G\ra \Z_p[G]\ra \cO(\sX)$ induit un caractère $\kappa^{\mathbf{univ}}$ de $G$ dans $\cO(\sX)^*$. On l'appelle le \emph{caractère universel} du groupe rigide analytique $\sX$.
\end{defn}
Soit $U$ un ouvert affinoïde de $\sX$. 
Si $\kappa\in U(\C_p)$ est un caractère continu sur $G$ à valeurs dans $\C_p^*$, on définit une application d'évaluation en point $\kappa$
\[\mathbf{Ev}_\kappa:\cO(U)\ra \C_p.\]  
Par définition, on a le lemme suivant:
\begin{lemma}Si $\kappa\in\sX(\C_p)$ est un caractère continu sur $G$ à valeurs dans $\C_p^*$, on a alors 
\[\kappa=\mathbf{Ev}_\kappa\circ \kappa^{\mathbf{univ}}.\]
\end{lemma}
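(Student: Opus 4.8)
Le plan est de montrer que l'énoncé est un simple déroulement des définitions : il n'y a aucun calcul, seulement la vérification que les diverses identifications en jeu coïncident.

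D'abord, je rappellerais précisément la construction de $\sX$, de $\cO(\sX)$ et de $\kappa^{\mathbf{univ}}$. Posons $A=\Z_p[G]$ (resp. $A=\Z_p[[G]]$ lorsque $G$ est pro-$p$), algèbre de Hopf (complète) ; le groupe rigide analytique $\sX$ est recouvert par une famille admissible croissante d'affinoïdes $(U_n)_n$, avec $\cO(\sX)=\plim_n\cO(U_n)$, et par construction le morphisme canonique $G\ra A\ra\cO(\sX)$, $g\mapsto[g]$, est exactement $\kappa^{\mathbf{univ}}$. Le fait structurel essentiel — qui est le contenu de l'affirmation "les $\C_p$-points de $\sX$ sont $\Hom_{\cont}(G,\C_p^*)$" — est que, pour tout corps complet $K/\Q_p$ (en particulier pour $K=\C_p$), l'application
\[\Hom^{\cont}_{\Q_p\text{-alg}}(\cO(\sX),K)\lra\Hom_{\cont}(G,K^*),\qquad \phi\longmapsto\phi\circ\kappa^{\mathbf{univ}},\]
est une bijection. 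C'est ici, et seulement ici, qu'intervient l'hypothèse que $G$ est topologiquement de type fini : un morphisme continu de $\Z_p$-algèbres issu de $\Z_p[[G]]$ est déterminé par, et peut être prescrit arbitrairement par, ses valeurs sur un système fini de générateurs topologiques de $G$, aux seules contraintes de multiplicativité et de continuité qui découpent $\sX$. Je prendrais cet énoncé — qui est précisément l'universalité du couple $(\sX,\kappa^{\mathbf{univ}})$, d'où le nom de caractère universel — comme acquis, en renvoyant à la construction rappelée ci-dessus.

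Ensuite, je rappellerais que $\mathbf{Ev}_\kappa$ n'est, par définition, rien d'autre que le morphisme continu de $\Q_p$-algèbres $\cO(\sX)\ra\C_p$ — ou sa restriction à $\cO(U)$ pour un affinoïde $U\ni\kappa$ quelconque — attaché au $\C_p$-point $\kappa$ via l'identification $\sX(\C_p)=\Hom^{\cont}_{\Q_p\text{-alg}}(\cO(\sX),\C_p)$ : "évaluer $f$ au point $\kappa$", c'est appliquer ce morphisme à $f$. Le reste est immédiat. Par la bijection ci-dessus, le morphisme $\mathbf{Ev}_\kappa$ correspond au caractère $\mathbf{Ev}_\kappa\circ\kappa^{\mathbf{univ}}$ de $G$. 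Mais l'identification $\sX(\C_p)=\Hom_{\cont}(G,\C_p^*)$ est, tautologiquement, cette même bijection, et elle envoie le point $\kappa$ sur le caractère $\kappa$ dont il est issu. D'où $\mathbf{Ev}_\kappa\circ\kappa^{\mathbf{univ}}=\kappa$ ; explicitement, pour tout $g\in G$, $\mathbf{Ev}_\kappa(\kappa^{\mathbf{univ}}(g))=\mathbf{Ev}_\kappa([g])=\kappa(g)$.

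Il n'y a donc pas à proprement parler d'obstacle : le seul point demandant un peu d'attention est de vérifier que toutes les identifications utilisées (points rigides $\leftrightarrow$ morphismes de $\Q_p$-algèbres, description de $\sX(\C_p)$ comme caractères continus, et morphismes de restriction $\cO(\sX)\ra\cO(U_n)\ra\C_p$) sont bien les mêmes, de sorte que $\mathbf{Ev}_\kappa$ soit exactement le morphisme $\phi$ vérifiant $\phi\circ\kappa^{\mathbf{univ}}=\kappa$, et non un quelconque tordu de celui-ci. C'est une pure vérification de routine, faite en remontant à la construction de $\sX$ à partir de $\Z_p[[G]]$ ; une fois tout mis en place de façon cohérente, le lemme suit "par définition", comme annoncé dans le texte.
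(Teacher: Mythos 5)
Votre démonstration est correcte et suit exactement la même logique que le texte, qui se contente d'affirmer que le lemme vaut « par définition » : vous ne faites qu'expliciter le déroulement des définitions, en identifiant $\mathbf{Ev}_\kappa$ au morphisme d'algèbres attaché au point $\kappa$ et en observant que $\mathbf{Ev}_\kappa(\kappa^{\mathbf{univ}}(g))=\mathbf{Ev}_\kappa([g])=\kappa(g)$. Rien à redire.
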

\begin{example}\label{espacepoids}
On définit l'espace rigide analytique associé à l'algèbre $\Z_p[[T]]$ en utilisant le schéma formel affine (c.f.\cite[\S 7]{DJ}). 

On note $\Spf \Z_p[[T]]$ le schéma formel. On note $A_n=\Z_p[[T]][\frac{T^n}{p}]$ le sous-anneau de $\Z_p[[T]]\otimes\Q_p$ et $B_n$ le complété de $A_n$ pour la topologie $(p,T)$-adique. On constate que $B_n$ est aussi le complété $p$-adique de $A_n$:
\[B_n=\{\sum_{m=0}^{+\infty}a_m(\frac{T^n}{p})^m: a_m\in\Z_p[[T]] \text{ tel que } \lim_mv_p(a_m)=+\infty \}.\] Enfin, on note $C_n=B_n\otimes \Q_p$. Ceci nous donne un système projectif $\{C_n\}$ muni de morphismes de transition $C_{n+1}\ra C_n$ induits par les inclusions $A_{n+1}\subset A_n$.  Les $C_n$ sont des $\Q_p$-algèbres affinoïdes et le morphisme de transition identifie $\Spm(C_{n+1})$ comme un sous-domain affinoïde de $\Spm(C_{n})$. En fait, $C_n$ est l'anneau des fonctions analytiques sur le disque $v_p(T)\geq \frac{1}{n}$. 

 On définit l'espace rigide analytique $(\sW_0,\cO)$ associé au schéma formel $\Spf \Z_p[[T]]$ comme la réunion croissante admissible des affinoïdes $\Spm(C_n)$;
l'algèbre des sections globales $\cO(\sW_0)$ s'identifie à la limite projective $\plim_n C_n$, qui est $\cR^+$ l'anneau des fonctions analytiques sur le disque ouvert $v_p(T)>0$.

On note $\Lambda=\Z_p[[\Z_p^{*}]]$ l'alg\`ebre d'Iwasawa, qui est
un anneau semi-local r\'egulier, noethérien, de dimension de Krull
$2$. L'espace rigide analytique $\mathscr{W}$ sur
$\Q_p$, qui lui est associé est appel\'e l'\emph{espace des poids}. Les $\C_p$-points de $\mathscr{W}$ est l'ensemble $\Hom_{\cont}(\Lambda, \C_p)$, des caractères continus sur $\Z_p^{*}$ \`a valeurs dans $\C_p^{*}$. 

On note $\omega$ le caractère de Teichmüller. On a une d\'ecomposition $\Z_p^{*}\cong \mu_{p-1}\times (1+p\Z_p)$ donnée par la formule $z\mapsto (\omega(z), \langle z\rangle)$, qui induit la décomposition $\mathscr{W}=\Hom(\mu_{p-1},\C_p^{*})\times\Hom_{\cont}(\Gamma, \C_p^{*}) $, o\`u $\Gamma=(1+p\Z_p)$. 
L'espace $\mathscr{W}$ admet d'un recouvrement admissible $\{W_n\}_{n\geq 1}$, où
$W_n=\Hom(\mu_{p-1},\C_p^{*})\times\Hom_{\cont}(1+p^n\Z_p, \C_p^{*})$.
En fixant un g\'en\'erateur $\gamma\in\Gamma$, on a un isomorphisme 
\[\Hom_{\cont}(\Gamma, \C_p^{*})\cong \{t\in\C_p, v_p(t-1)>0 \}, \]
en envoyant  un caractère continu $\kappa$ sur $\kappa(\gamma)$.
En fixant un g\'en\'erateur $(\z,\gamma)$ de $\mu_{p-1}\times(1+p\Z_p)$, le caract\`ere universel $\kappa^{\mathbf{univ}}$ de $\mathscr{W}$ est donné par la formule $z\mapsto X_1^{\ord_{\z}\omega(z)}T_1^{\log_\gamma \langle z\rangle}$,  où on remplace les variables $\z$ et $\gamma$ par $X_1$ et $T_1$ respectivement.
Comme $\log_\gamma\langle z\rangle=\frac{\log\langle z\rangle}{\log\gamma}\in \Z_p$, $T^{\log_\gamma\langle z\rangle}\in\Z_p[[T-1]]$.


\end{example}
\begin{defn} Soit $\mathscr{X}$ un $\Q_p$-espace rigide.
\begin{itemize}
\item[(1)] Un sous-ensemble $Z\subset \mathscr{X}$ est dit Zariski-dense si pour tout sous-ensemble analytique $U\subset \mathscr{X}$ tel que $Z\subset U$, alors $U=\mathscr{X}$. 
 \item[(2)] Soit $Z\subset \mathscr{X}$ un sous-ensemble Zariski-dense, tel que pour tout $z\in Z$ et tout voisinage ouvert affinoïde $V$ de $z$ dans $\mathscr{X}$, $V\cap Z$ est Zariski-dense dans chaque composante irréductible de $V$ contenant $z$, on dira alors que $Z$ est très Zariski-dense dans $\mathscr{X}$.
 \end{itemize}
 \end{defn}
  Un exemple important est que l'ensemble $\N$ (resp. $\Z$) est très Zariski-dense dans l'espace des poids $\sW$.

\subsection{Famille de représentations de Banach sur $\sW$ et ses tordues }\label{const}

Soit $K$ un sous-corps complet de $\C_p$ et soit $(A,|\cdot|_A)$ une $K$-algèbre de Banach.
\begin{defn}  Soit $M$ un $A$-module; une norme sur $M$ est une application $|\cdot|: M\ra \R_{+}$ telle que 
\begin{itemize}
\item[(1)] $|m|=0$ si et seulement si $m=0$;
\item[(2)] $|m+n|\leq \max\{ |m|, |n|\}$, si $m, n\in M$;
\item[(3)] $| am|\leq  |a|_A|m|$ si $a\in A$ et $m\in M$. 
\end{itemize}
On dit qu'un $A$-module $M$ muni d'une norme est de Banach si $M$ est complet pour cette norme.
\end{defn}

\begin{example}\label{ex1}
\begin{itemize}
\item[(1)] Soit $I$ un ensemble. On note $\cC_A^b(I)$ l'ensemble des familles bornées $x=(x_i)_{i\in I }$ d'éléments de $A$. On munit $\cC_A^b(I)$ de la norme $|x|=\sup_{i\in I}|x_i| $, ce qui en fait un $A$-module de Banach. On note $\cC^0_A(I)$ le sous $A$-module de $\cC_A^b(I)$ des suites  $x=(x_i)_{i\in I }$ d'éléments de $A$ tendant vers $0$ suivant le filtre des complémentaires des parties finies (ce que nous écrivons $x_i\ra 0$ pour $i\ra\infty$). C'est un $A$-module de Banach comme sous-$A$-module fermé d'un $A$-module de Banach.  Si $A=K$ est de valuation discrète et si $|M|=|K|$,  tout $K$-espace de Banach est de la forme $\cC_K^0(I)$.

\item[(2)] Soient $(M,|\cdot|_M),(N,|\cdot|_N)$ deux $A$-modules de Banach. On note $\Hom_A(M,N)$ le $A$-module des morphismes continus de $A$-modules. On le munit de la norme suivante
\[     |f|=\sup_{0≠m\in M} \frac{|f(m)|_N}{|m|_M} , \text{ si } f\in \Hom_A(M,N),      \]  
ce qui en fait un $A$-module de Banach. 
En particulier, si $N=A$, $\check{M}=\Hom_{A}(M,A)$ est le $A$-module de Banach dual de $M$. 
\item[(3)] Soit $X$ un espace topologique et soit $M$ un $A$-module de Banach. 
On note $\cC^0(X,M)$ le $A$-module des fonctions continues sur $X$ à valeurs dans $M$ muni de la topologie compact-ouvert (i.e. convergent uniformément sur tout sous-ensemble compact de $X$).  
Soit $\Gamma\subset X$ un sous-ensemble compact et soit $V$ un sous $A$-module ouvert de $M$; on note $U_{\Gamma, V}$ le sous-ensemble de $\cC^0(X,M)$ 
des fonctions continues à support dans $\Gamma$ et à valeurs dans $V$. Ceci fournit  une base d'ouverts si $\Gamma$ décrit les sous-ensembles compact de $X$ et $V$ décrit les sous-ensembles ouverts de $M$. Si $X$ est compact, le $A$-module topologique $\cC^0(X,M)$  est un $A$-module de Banach, qui est isomorphe au $A$-module de Banach $\cC^0(X,K)\hat{\otimes}_KM$.
\end{itemize}
\end{example}

\begin{defn}Soit $M$ un $A$-module de Banach.
\begin{itemize}
\item[(1)]Une base de Banach de $M$ est une famille bornée $(e_i)_{i\in I}$ de $M$ telle que tout élément $x$ de $M$ peut s'écrire de manière unique sous la forme d'une série convergente $x=\sum_{i\in I }a_i e_i$, 
où $a_i$ sont des éléments de $A$ tendant vers $0$ suivant le filtre des complémentaires des parties finies.
\item[(2)]Une base orthonormale de $M$ est une base de Banach $(e_i)_{i\in I}$ de $M$ telle que l'application $(a_i)_{i\in I}\mapsto \sum_{i\in I} a_i e_i$, de $\cC^0_A(I)$ dans $M$, est une isométrie. 
\end{itemize}
\end{defn}
On dit que $M$ est orthonormalisable si $M$ admet une base orthonormale.
\begin{example}
\begin{itemize}\label{example-Banach}
\item[(1)] Si $i\in\N$, on note $\ell(i)$ le plus petit entier $n$ vérifiant $p^n>i$. On a donc 
\[\ell(0)=0 \text{ et } \ell(i)=[\frac{\log i}{\log p}] +1, \text{ si } i\geq 1 .\]
Si $r\in\N$, on note $\cC^r(\Z_p,A)$ le $A$-module des \emph{fonctions de classe $\cC^r$} à valeurs dans $A$ (c.f.\cite{PC2} I.5). Comme $\Z_p$ est compact, on a un isomorphisme de $A$-modules $\cC^r(\Z_p,A)\cong \cC^r(\Z_p, K)\hat{\otimes}_KA$.  Par ailleurs, le $K$-espace vectoriel $\cC^r(\Z_p,K)$ est un $K$-espace de Banach muni d'une base orthonormale $\{p^{r\ell(n)}\binom{z}{n}; n\geq 0\}$.
Ceci implique que $\cC^r(\Z_p,A)$ est orthonormalisable. En particulier, si $r=0$, $\cC^0(\Z_p,A)$ est le $A$-module des fonctions continues sur $\Z_p$ à valeurs dans $A$.
\item[(2)]Si $h\in \N$, on note $\LA_h(\Z_p, A)$ l'espace des fonctions $\varphi: \Z_p\ra A$ dont la restriction de $\varphi$ à $a+p^h\Z_p$ est la restriction d'une fonction $A$-analytique sur le disque fermé $\{x\in \C_p; v_p(x-a)\geq h\}$, quel que soit $a\in \Z_p$; c'est aussi un $A$-module de Banach orthonormalisable  et on a $\LA_h(\Z_p, A)\cong \LA_h(\Z_p,K)\hat{\otimes}_KA$.  On a une inclusion de $A$-modules $\LA_h(\Z_p,A)\subset\cC^r(\Z_p,A)$ pour tout $r\geq 0$ et $h\in\N$.
\item[(3)]Soit $M$ un $A$-module de Banach orthonormalisable et soit $\{e_i\}_{i\in I}$ une base orthonormale de $M$. On a un isomorphisme de $A$-modules:
\[\check{M}\cong\Hom_A(\cC_A^0(I), A)\cong \Hom_K(\cC_K^0(I),A).\]
On note $M^{\vee}$ l'image inverse de $\Hom_K(\cC_K^0(I),K)\hat{\otimes }_K A$ dans $\check{M}$ sous l'isomorphisme ci-dessus, qui en fait un sous-$A$-module de Banach.  De plus,  $M^\vee$ est orthonormalisable. 
\end{itemize}
\end{example}

\begin{defn}Soit $G$ un groupe topologique. 
Une $A$-représentation de Banach de $G$ est un $A$-module de Banach orthonormalisable muni d'une action $A$-linéaire continue de $G$.
\end{defn}

\begin{example}
\begin{itemize}
\item[(1)]Soit $V_0$ une $K$-représentation de Banach de $G$ (i.e. un $K$-espace de Banach muni d'une action $K$-linéaire continue de $G$). On étend l'action de $G$ sur $V_0$ par $A$-linéarité en une action sur le $A$-module de Banach $V=V_0\hat{\otimes}_K A$, ce qui est une $A$-représentation de Banach de $G$.
\item[(2)]Soit $V$ une $A$-représentation de Banach de $G$ à droite. On suppose que $V$ est munie d'une structure d'anneau. On note $V^*$ le groupe des unités de $V$. Soit $\eta:G\ra V^{*}$ un $1$-cocycle (i.e. une fonction continue sur $G$ à valeurs dans $V$ telle que $\eta(h_1h_2)=\eta(h_1)*h_2\eta(h_2)$).  On note $V(\eta)$ la $A$-représentation de Banach $V$ tordue par le $1$-cocycle $\eta$.
\item[(3)] Soit $V$ une $A$-représentation de Banach de $G$ à droite. Sa $A$-duale $ \check{V}=\Hom_A(V, A)$ est munie de l'action de $G$ à gauche donnée par la formule
\[\gamma f (v)=f(v*\gamma ), \text{ si } \gamma\in G, f\in \check{V}, v\in V. \]
Le sous-$A$-module $V^{\vee}$ (c.f. Exemple \ref{example-Banach} (3)) de $\check{V}$ est stable sous cette action, et donc une $A$-représentation de Banach de $G$ à gauche. 
\end{itemize}
\end{example}

\begin{defn}Soit $G$ un groupe topologique et soit $\mathscr{X}$ un espace rigide.
 Une famille $\cF$ de représentations de Banach de $G$ sur $\mathscr{X}$ est la donnée d'un faisceau sur $\mathscr{X}$ tel que 
\begin{itemize}
\item[(1)]pour tout ouvert affinoïde $V\subset\mathscr{X}$, $\cF(V)$ soit une $\cO(V)$-représentation de Banach de $G$;
\item[(2)]si $V'\subset V$ sont des ouverts affinoïdes de $\mathscr{X}$, l'application canonique 
\[\cF(V)\hat{\otimes}_{\cO(V)}\cO(V')\ra \cF(V')\] soit un isomorphisme de $\cO(V')$-représentations de Banach de $G$. 
\end{itemize}
\end{defn}
 
Soit $\I_0(p)=\{\gamma\in\GL_2(\Z_p)|\gamma\equiv(\begin{smallmatrix}*&*\\ 0 & *\end{smallmatrix})\mod p\}$ le sous-groupe d'Iwahori de $\GL_2(\Z_p)$; c'est un groupe $p$-adique. Rappelons que $V_{k,j+2}=\Sym^{k-2}V_p\otimes \Q_p(-j)$, où $V_p=\Q_p e_1\oplus\Q_p e_2$ est la représentation standard à droite de $\GL_2(\Z_p)$ donnée par la formule: $e_1*\gamma= ae_1+be_2, e_2*\gamma=ce_1+de_2$, si $\gamma=(\begin{smallmatrix}a& b\\ c& d\end{smallmatrix})\in \GL_2(\Z_p)$, est une représentation algébrique de $\I_0(p)$ muni du vecteur $e_1^{k-2}t^{-j}$ de plus haut poids $k$ pour le sous-groupe de Borel inférieur.
Dans le reste de ce paragraphe, si $1\leq j\in \N$, on construira une famille $\D_{0,\rho^{\mathbf{univ}}_{j}}$ de représentations de Banach de $\I_0(p)$ sur l'espace des poids $\sW$, qui est une interpolation $p$-adique en poids des représentations algébriques $V_{k,j+2}$ de $\I_0(p)$ (Il y a une autre interpolation en utilisant l'espace des fonctions analytiques par Chenevier \cite{Ch}.) La construction se découpe en trois étapes:

$\bullet$ On définit une action modulaire de $\I_0(p)$ sur $\Z_p$ par la formule 
$\gamma z=\frac{b+dz}{a+cz} $ si $\gamma=(\begin{smallmatrix}a& b\\ c& d\end{smallmatrix})\in\I_0(p)$, et une action continue de $\I_0(p)$ sur $\cC^0(\Z_p,\Z_p)$ à droite par la formule: 
$ f*\gamma(z)=f(\gamma z)$, si $\gamma\in\I_0(p)$ et $f\in \cC^0(\Z_p,\Z_p)$.  Ceci permet de lui associer un faisceau constant de représentations de Banach $\cC^0_{\sW}$ de $\I_0(p)$ sur $\sW$ défini par: si $U$ un ouvert affinoïde de $\sW$, on pose $\cC^0_{\sW}(U)=\cC^0(\Z_p, \cO(U))$.  Son anneau des sections globales est la limite projective $\plim_n\cC^0(W_n)\cong \plim_n\cC^0(\Z_p,\Q_p)\hat{\otimes}\cO(W_n)$.

$\bullet$ On note $\cC^0_\sW(\sW)^*$ le groupe des unités de l'anneau $\cC^0_\sW(\sW)$ des sections globales de $\cC^0_{\sW}$.
Soit $\rho:\I_0(p)\ra \cC^0_\sW(\sW)^{*}$ un $1$-cocycle de $\I_0(p)$ (i.e. $\rho(\gamma_1\gamma_2)=\rho(\gamma_1)(\rho(\gamma_2)*\gamma_1)$). On note $\cC^{0,\rho}_\sW$ la famille de représentations de Banach $\cC^0_{\sW}$ tordue par le $1$-cocycle $\rho$, c'est-\`a-dire, soit $U\subset\sW$ un ouvert affinoïde; si $f\in\cC^0_\sW(U)$ et si $f^{\rho}\in \cC^{0,\rho}_\sW(U)$ correspondant à $f$, 
on a $  f^{\rho}*\gamma=\rho(\gamma)( f*\gamma)^{\rho}$,  si $\gamma\in \I_0(p)$. Alors $\cC_\sW^{0,\rho}$ est encore une famille de repr\'esentations de Banach de $\I_0(p)$ sur l'espace des poids.
En particulier, on peut tordre  $\cC^0_\sW$ par un caractère de $\I_0(p)$ vu comme un $1$-cocycle. 

Pour tout entier $n\geq 1$, on note $r_n\geq 1$ le plus petit entier tel que $(p-1)p^{r_n}>n$. Si $a\in\Z_p^{*}$ et si $c\in p\Z_p$,  on définit une fonction sur $\Z_p$ à valeurs dans $\cO(\sW)$
\[f_{a,c}(z)=\kappa^{\mathbf{univ}}(a+cz).\]
\begin{lemma}\label{1-cocycle} La fonction 
$f_{a,c}(z)$ appartient à $\cC^0_\sW(\sW)$. De plus, elle est une unité de l'anneau $\cC^0_\sW(\sW)$.
\end{lemma}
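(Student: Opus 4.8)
The plan is to reduce the statement to the explicit formula for the universal character $\kappa^{\mathbf{univ}}$ given in Example~\ref{espacepoids}, and then to check by hand that a binomial series converges in each Banach algebra $\cO(W_n)$, uniformly in $z\in\Z_p$, both for $f_{a,c}$ and for its pointwise inverse. First I would record the elementary facts that make the lemma work: since $a\in\Z_p^{*}$ and $c\in p\Z_p$, we have $a+cz\in\Z_p^{*}$ and $a^{-1}cz\in p\Z_p$ for every $z\in\Z_p$, and the Teichmüller component $\omega(a+cz)=\omega(a)$ is independent of $z$. Writing $a+cz=a\cdot(1+a^{-1}cz)$ and using that $\kappa^{\mathbf{univ}}$ is a character, $f_{a,c}(z)=\kappa^{\mathbf{univ}}(a)\cdot\kappa^{\mathbf{univ}}(1+a^{-1}cz)$, where $\kappa^{\mathbf{univ}}(a)$ is a fixed element of $\cO(\sW)^{*}$; and since $1+a^{-1}cz\in 1+p\Z_p$ has trivial Teichmüller part, the explicit formula gives $\kappa^{\mathbf{univ}}(1+a^{-1}cz)=T_1^{\lambda(z)}$ with $\lambda(z):=\log_\gamma(1+a^{-1}cz)$.

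The next step is to analyse the two factors. On one side, $\log(1+a^{-1}cz)=\sum_{n\geq 1}\frac{(-1)^{n-1}}{n}(a^{-1}c)^{n}z^{n}$ has coefficients of valuation $\geq n-v_p(n)\to+\infty$, so it is analytic in $z\in\Z_p$; dividing by $\log\gamma$ and invoking the fact recalled at the end of Example~\ref{espacepoids} that $\log_\gamma$ maps $1+p\Z_p$ into $\Z_p$, we see that $z\mapsto\lambda(z)$ is a $\Z_p$-valued analytic, hence continuous, function on $\Z_p$. On the other side, for each $n\geq 1$ the assignment $\mu\mapsto T_1^{\mu}=\sum_{k\geq 0}\binom{\mu}{k}(T_1-1)^{k}$ defines a continuous map $\Z_p\to\cO(W_n)$: the coefficients $\binom{\mu}{k}$ lie in $\Z_p$ and are polynomial, hence continuous and bounded by $1$, in $\mu$, while $T_1-1$ is topologically nilpotent on the affinoid $W_n$, so $|(T_1-1)^{k}|_{W_n}\to 0$ geometrically and the series converges uniformly in $\mu\in\Z_p$. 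Composing with $\lambda$ and multiplying by the constant $\kappa^{\mathbf{univ}}(a)$ exhibits $f_{a,c}$ as a continuous function $\Z_p\to\cO(W_n)$, i.e.\ as an element of $\cC^0_\sW(W_n)=\cC^0(\Z_p,\cO(W_n))$; these are compatible under the restriction maps $\cO(W_{n+1})\to\cO(W_n)$, which fix $X_1$ and $T_1$, so $f_{a,c}\in\plim_n\cC^0(\Z_p,\cO(W_n))=\cC^0_\sW(\sW)$, which is the first assertion.

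For the unit assertion I would run the very same argument on $-\lambda$ in place of $\lambda$: the function $z\mapsto T_1^{-\lambda(z)}=\sum_{k\geq 0}\binom{-\lambda(z)}{k}(T_1-1)^{k}$ lies in $\cC^0_\sW(\sW)$, and pointwise $T_1^{\lambda(z)}\cdot T_1^{-\lambda(z)}=1$; since $\kappa^{\mathbf{univ}}(a)$ is already a unit of $\cO(\sW)$, the function $z\mapsto\kappa^{\mathbf{univ}}(a)^{-1}T_1^{-\lambda(z)}$ is the inverse of $f_{a,c}$ in the ring $\cC^0_\sW(\sW)$. Equivalently, inside each $\cO(W_n)$ one has $f_{a,c}(z)=\kappa^{\mathbf{univ}}(a)(1+w(z))$ with $w(z)=T_1^{\lambda(z)}-1$ and $|w(z)|_{W_n}\leq|T_1-1|_{W_n}<1$ uniformly in $z$, so $1+w(z)$ is invertible with inverse $\sum_{j\geq 0}(-w(z))^{j}$, again converging uniformly in $z$.

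The only point requiring genuine care is the uniform convergence used above: one must know not merely that $T_1-1$ is topologically nilpotent but that it admits a uniform nilpotence bound on each affinoid $W_n$ --- this is built into the construction of the $W_n$ in Example~\ref{espacepoids} --- so that the binomial series genuinely lands in the Banach algebra $\cO(W_n)$ rather than only in the Fréchet ring $\cO(\sW)$ at the level of formal expressions. Everything else is routine manipulation of Mahler expansions and of the projective limit defining $\cC^0_\sW(\sW)$.
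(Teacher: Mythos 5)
Your proof is correct and follows essentially the same route as the paper: both rest on the convergence of the binomial series $T_1^{\mu}=\sum_k\binom{\mu}{k}(T_1-1)^k$ in each $\cO(W_n)$ (where $v_p(T_1-1)\geq 1/n$) together with the continuity of $z\mapsto\log_\gamma\langle a+cz\rangle$, and for invertibility the paper simply applies the character to $(a+cz)^{-1}$, which amounts to your $T_1^{-\lambda(z)}$ argument. Your version is somewhat more detailed (the factorization through $\kappa^{\mathbf{univ}}(a)$ and the explicit uniform-nilpotence remark), but it is the same proof.
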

\begin{proof}
On a un isomorphisme $\cO(W_n)\cong\Z_p[\Delta]\otimes C_n$,
où $C_n$ est le sous-anneau de $\Q_p[[T_1-1]]$ consistant des fonctions analytiques sur le disque $v_p(T_1-1)\geq \frac{1}{n}$ et $\Delta$ est un groupe cyclique d'ordre $p-1$ engendré par $X_1$. Dans la suite, on identifie $\cO(\sW_n)$ avec $\Z_p[\Delta]\otimes C_n$. La fonction $f_{a,c}$ s'écrit sous la forme $X_1^{\ord_\z\omega(a+cz)}T_1^{\log_{\gamma}(\langle a+cz\rangle)}$.

Comme $v_p(T_1-1)\geq \frac{1}{n}$, la fonction $\sum\binom{z}{k}(T_1-1)^k$ est dans $\cC^0(\Z_p,\cO(W_n))$. On en déduit que la série 
\[T_1^{\log_{\gamma}(\langle a+cz\rangle)}=\sum_{k\in\N}\binom{ \log_{\gamma}(\langle a+cz\rangle)}{k}(T_1-1)^k\]
est dans $\cC^0(\Z_p,\cO(W_n))$ pour tout $n$ et donc appartient à $\cC^0_\sW(\sW)$.

Comme $\kappa^{\mathbf{univ}}$ est un caractère sur $\Z_p^*$, la fonction $\kappa^{\mathbf{univ}}((a+cz)^{-1})\in \cC^0_{\sW}(\sW)$ est l'inverse de $f_{a,c}(z)$.
\end{proof}

Si $1\leq j\in\N$, on définit une fonction $\rho^{\mathbf{univ}}_{j}$ sur $ \I_0(p)$ à valeurs dans $\cC^{0}_{\sW}(\sW)^*$ par la formule : 
\[g=(\begin{smallmatrix}a&b\\ c& d \end{smallmatrix})\mapsto \rho^{\mathbf{univ}}_{j}(g)=f_{a,c}(z) (\det g)^{-j},  \text{ si } g=(\begin{smallmatrix}a&b\\c&d\end{smallmatrix})\in \I_0(p)  .\]
Si $g_1=(\begin{smallmatrix}a_1&b_1\\ c_1& d_1 \end{smallmatrix})$ et $g_2=(\begin{smallmatrix}a_2&b_2\\ c_2& d_2 \end{smallmatrix})$, on a 
 \[\rho^{\mathbf{univ}}_{j}(g_1g_2)=f_{a_1a_2+b_1c_2, c_1a_2+d_1c_2 }(z)\det(g_1g_1)^{-j}=\rho^{\mathbf{univ}}_{j}(g_2)*g_1\rho^{\mathbf{univ}}_{j}(g_1);\]
ceci implique que $\rho^{\mathbf{univ}}_{j}$ est un $1$-cocycle. 

Si $k\in \N$, l'application d'évaluation $\mathbf{Ev}_{k}$ induit un $1$-cocycle algébrique $\rho_{k,j}$ sur $\I_0(p)$ à valeurs dans $\cC^0(\Z_p, \Q_p)$ donné par la formule:
\[\rho_{k,j}(g)=\mathbf{Ev}_{k}( \rho^{\mathbf{univ}}_{j}(g))= (a+cz)^{k-2} (\det g)^{-j}.\]
On note $\cC^{0}_{k,j}$ la $\Q_p$-représentation de Banach $\cC^0(\Z_p,\Q_p)$ tordue par le $1$-cocycle $\rho_{k,j}$ et  $\cC^{0,\rho^{\mathbf{univ}}_{j}}_{\sW}$ la famille de représentations de Banach $\cC^0_\sW$  tordue par le $1$-cocycle $\rho_{j}^{\mathbf{univ}}$ de $\I_0(p)$. Alors, l'application d'évaluation $\mathbf{Ev}_{k}$ induit une application d'évaluation de $\cC^{0,\rho^{\mathbf{univ}}_{j}}_{\sW}$ dans  $\cC^{0}_{k,j}$, notée par $\mathbf{Ev}_{k,j}$. Un calcul direct montre le lemme suivant:
\begin{lemma}L'application $\mathbf{Ev}_{k,j}: \cC^{0,\rho^{\mathbf{univ}}_{j}}_{\sW}\ra \cC^{0}_{k,j}$ est $\I_0(p)$-équivariante.
\end{lemma}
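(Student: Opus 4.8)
The plan is to unwind the definitions and reduce everything to the compatibility of two twists by $1$-cocycles under evaluation. First I would recall that $\mathbf{Ev}_{k}:\cO(\sW)\ra\Q_p$ is the ring homomorphism induced by the character $k\in\N\subset\sW$, and that it induces $\mathbf{Ev}_k:\cC^0_\sW(\sW)=\cC^0(\Z_p,\cO(\sW))\ra\cC^0(\Z_p,\Q_p)$ by post-composition on values; this map is a ring homomorphism and is visibly continuous and $\I_0(p)$-equivariant for the \emph{untwisted} modular action $f*\gamma(z)=f(\gamma z)$, since that action only permutes the variable $z\in\Z_p$ and does not touch the coefficients in $\cO(\sW)$. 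So at the level of the underlying Banach representations $\cC^0_\sW\ra\cC^0(\Z_p,\Q_p)$ the equivariance is immediate.

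Next I would make precise how the twists interact. By definition of a twist by a $1$-cocycle, an element $f^{\rho^{\mathbf{univ}}_j}$ of $\cC^{0,\rho^{\mathbf{univ}}_j}_\sW(\sW)$ transforms by
\[
f^{\rho^{\mathbf{univ}}_j}*\gamma=\rho^{\mathbf{univ}}_j(\gamma)\,(f*\gamma)^{\rho^{\mathbf{univ}}_j},
\]
and similarly on the target with $\rho_{k,j}$ in place of $\rho^{\mathbf{univ}}_j$. The key computational input is the identity already recorded just above the statement,
\[
\rho_{k,j}(\gamma)=\mathbf{Ev}_{k}\bigl(\rho^{\mathbf{univ}}_j(\gamma)\bigr)=(a+cz)^{k-2}(\det\gamma)^{-j}\quad\text{for }\gamma=(\begin{smallmatrix}a&b\\c&d\end{smallmatrix})\in\I_0(p),
\]
which is exactly the statement that $\mathbf{Ev}_k$ carries the source cocycle to the target cocycle (it also uses Lemma~\ref{1-cocycle} to know $\rho^{\mathbf{univ}}_j$ lands in the units, so that the twist makes sense and $\mathbf{Ev}_k$ preserves units). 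Granting this, I would simply compute: for $f^{\rho^{\mathbf{univ}}_j}\in\cC^{0,\rho^{\mathbf{univ}}_j}_\sW(\sW)$ and $\gamma\in\I_0(p)$,
\[
\mathbf{Ev}_{k,j}\bigl(f^{\rho^{\mathbf{univ}}_j}*\gamma\bigr)
=\mathbf{Ev}_{k}\bigl(\rho^{\mathbf{univ}}_j(\gamma)\bigr)\,\mathbf{Ev}_{k,j}\bigl((f*\gamma)^{\rho^{\mathbf{univ}}_j}\bigr)
=\rho_{k,j}(\gamma)\,\bigl(\mathbf{Ev}_k(f)*\gamma\bigr)^{\rho_{k,j}}
=\mathbf{Ev}_{k,j}\bigl(f^{\rho^{\mathbf{univ}}_j}\bigr)*\gamma,
\]
where the first equality uses that $\mathbf{Ev}_k$ is multiplicative, the second uses the cocycle identity above together with the untwisted equivariance of $\mathbf{Ev}_k$ (so that $\mathbf{Ev}_{k,j}$ of $(f*\gamma)^{\rho^{\mathbf{univ}}_j}$ is $(\mathbf{Ev}_k(f)*\gamma)^{\rho_{k,j}}$), and the last is just the definition of the twisted action on the target. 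Passing from global sections to an arbitrary affinoid $U\subset\sW$ — i.e. checking that $\mathbf{Ev}_{k,j}$ is defined sheaf-theoretically — is the same computation carried out over $\cO(U)$, using that $k\in\sW$ factors through every $W_n$ in the admissible cover.

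There is no real obstacle here; the only thing to be careful about is bookkeeping with left/right cocycle conventions — the paper writes the cocycle relation as $\rho(\gamma_1\gamma_2)=\rho(\gamma_1)(\rho(\gamma_2)*\gamma_1)$ and the twisted action on the right, so one must keep the order $\rho^{\mathbf{univ}}_j(g_1g_2)=\rho^{\mathbf{univ}}_j(g_2)*g_1\,\rho^{\mathbf{univ}}_j(g_1)$ straight (this is exactly the computation displayed before the lemma with $f_{a,c}$). Everything else is a "calcul direct" as the paper says: $\mathbf{Ev}_k$ is a continuous ring map, it commutes with the plain modular action because that action does not see the weight variable, and it sends $\rho^{\mathbf{univ}}_j$ to $\rho_{k,j}$ by construction, so it intertwines the two twisted actions.
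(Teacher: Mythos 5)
Your proposal is correct and is exactly the ``calcul direct'' the paper alludes to without writing out: the paper only records the identity $\rho_{k,j}(\gamma)=\mathbf{Ev}_{k}(\rho^{\mathbf{univ}}_{j}(\gamma))$ and leaves the rest to the reader, and your chain of equalities (multiplicativity of $\mathbf{Ev}_k$, its equivariance for the untwisted action, and the matching of the two cocycles) is the intended argument. Nothing is missing; the only care needed is the cocycle/twist bookkeeping, which you handle correctly.
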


$\bullet$
On définit une famille duale $\D_{0,\rho^{\mathbf{univ}}_{j}}$  de $\cC_{\sW}^{0,\rho^{\mathbf{univ}}_{j}}$ de représentations de Banach sur $\sW$: 
si $U\subset\sW$ est un ouvert affinoïde,  $\D_{0,\rho^{\mathbf{univ}}_{j}}(U)$ est défini par 
$\D_{0,\rho^{\mathbf{univ}}_{j}}(U)=(\cC_{\sW}^{0,\rho^{\mathbf{univ}}_{j}}(U))^{\vee}$. 

On note l'accouplement $\cO(U)$-linéaire $\I_0(p)$-équivariant
 \[\langle,\rangle: \D_{0,\rho^{\mathbf{univ}}_{j}}(U) \times\cC_{\sW}^{0,\rho^{\mathbf{univ}}_{j}}(U) \ra \cO(U) \] par l'intégration $\int_{\Z_p}$:
 \[\langle \gamma\cdot\mu,f\rangle=\int_{\Z_p} f (\gamma\cdot\mu)=\int_{\Z_p} (f*\gamma) \mu, \text{ si } f\in \cC_{\sW}^{0,\rho^{\mathbf{univ}}_j}(U), \gamma\in \I_0(p) \text{ et } \mu\in \D_{0,\rho^{\mathbf{univ}}_j}(U).\]
 On note $\D_{0,k,j}$ la représentation $\Q_p$-duale de $\I_0(p)$ de $\cC^0_{k,j}$. Remarquons qu'on a une inclusion de $\cC^0(\Z_p,\Q_p)$ dans $\cC^0(\Z_p, \cO(U))$.
 Si $k\in\Z$ appartient à $U$, alors
 l'application $\mathbf{Ev}_{k,j}$ induit une application, notée encore par $\mathbf{Ev}_{k,j}$, 
 \[\D_{0,\rho^{\mathbf{univ}}_{j}}(U)\ra \D_{0,k,j}; \int_{\Z_p} f\textbf{Ev}_{k,j}(\mu)= \textbf{Ev}_{k}(\int_{\Z_p} f \mu )\text{ pour } f\in \cC^0_{k,j}, \mu \in  
 \D_{0,\rho^{\mathbf{univ}}_{j}}(U). \]

\begin{lemma}\label{Ev}L'application $\mathbf{Ev}_{k,j}:\D_{0,\rho^{\mathbf{univ}}_{j}}(U)\ra \D_{0,k,j}$ est $\I_0(p)$-équivariante.
\end{lemma}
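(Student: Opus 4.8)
The statement to prove is that the specialization map $\mathbf{Ev}_{k,j}\colon \D_{0,\rho^{\mathbf{univ}}_{j}}(U)\ra \D_{0,k,j}$ commutes with the $\I_0(p)$-action. The preceding lemma already gives us the analogous statement on the side of functions, namely that $\mathbf{Ev}_{k,j}\colon \cC^{0,\rho^{\mathbf{univ}}_{j}}_{\sW}\ra \cC^0_{k,j}$ is $\I_0(p)$-equivariant; and the dual map is defined precisely by transposing the pairing $\langle\,,\,\rangle$ through the point-evaluation $\mathbf{Ev}_k$ on $\cO(U)$. So the natural strategy is to unwind the defining relation $\int_{\Z_p} f\,\mathbf{Ev}_{k,j}(\mu) = \mathbf{Ev}_k\bigl(\int_{\Z_p} f\,\mu\bigr)$ for $f\in\cC^0_{k,j}$, $\mu\in\D_{0,\rho^{\mathbf{univ}}_{j}}(U)$, and check directly that both $\gamma\cdot\mathbf{Ev}_{k,j}(\mu)$ and $\mathbf{Ev}_{k,j}(\gamma\cdot\mu)$ pair the same way against an arbitrary test function $f\in\cC^0_{k,j}$, using that the pairing is perfect (since $\D_{0,k,j}$ is by definition the full continuous $\Q_p$-dual of $\cC^0_{k,j}$, an element is determined by its values against all such $f$).

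Concretely, I would fix $\gamma\in\I_0(p)$, $\mu\in\D_{0,\rho^{\mathbf{univ}}_{j}}(U)$, and $f\in\cC^0_{k,j}$, and compute
\[
\int_{\Z_p} f\,\bigl(\gamma\cdot\mathbf{Ev}_{k,j}(\mu)\bigr)
= \int_{\Z_p} (f*\gamma)\,\mathbf{Ev}_{k,j}(\mu)
= \mathbf{Ev}_k\Bigl(\int_{\Z_p} (f*\gamma)\,\mu\Bigr),
\]
where the first equality is the definition of the $\I_0(p)$-action on $\D_{0,k,j}$ (the transpose of the right action on $\cC^0_{k,j}$) and the second is the defining property of $\mathbf{Ev}_{k,j}$ on distributions — here one must be slightly careful that $f*\gamma\in\cC^0_{k,j}$ again, which is exactly the content of the previous lemma. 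On the other side,
\[
\int_{\Z_p} f\,\mathbf{Ev}_{k,j}(\gamma\cdot\mu)
= \mathbf{Ev}_k\Bigl(\int_{\Z_p} f\,(\gamma\cdot\mu)\Bigr)
= \mathbf{Ev}_k\Bigl(\int_{\Z_p} (f'*\gamma)\,\mu\Bigr),
\]
where $f'\in\cC^{0,\rho^{\mathbf{univ}}_{j}}_{\sW}(U)$ is a lift of $f$ along $\mathbf{Ev}_{k,j}$ and we use the $\I_0(p)$-equivariance of the pairing $\langle\gamma\cdot\mu,f'\rangle=\langle\mu,f'*\gamma\rangle$ at the level of the family. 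The point is then that $\mathbf{Ev}_{k,j}(f'*\gamma) = \mathbf{Ev}_{k,j}(f')*\gamma = f*\gamma$ by the function-level equivariance lemma, so after applying $\mathbf{Ev}_k$ the two expressions agree. Since $f$ was arbitrary and the pairing on $\cC^0_{k,j}\times\D_{0,k,j}$ is perfect, we conclude $\gamma\cdot\mathbf{Ev}_{k,j}(\mu)=\mathbf{Ev}_{k,j}(\gamma\cdot\mu)$.

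The main thing to watch is the bookkeeping of \emph{which} twisted action is in play at each stage: the cocycle $\rho^{\mathbf{univ}}_{j}$ enters the action on $\cC^{0,\rho^{\mathbf{univ}}_{j}}_{\sW}$ via $f^{\rho}*\gamma=\rho(\gamma)(f*\gamma)^{\rho}$, and after applying $\mathbf{Ev}_k$ this becomes the twist by $\rho_{k,j}(\gamma)=(a+cz)^{k-2}(\det\gamma)^{-j}$ defining $\cC^0_{k,j}$; so the compatibility $\mathbf{Ev}_k(\rho^{\mathbf{univ}}_{j}(\gamma))=\rho_{k,j}(\gamma)$, recorded just before the function-level lemma, is exactly what makes the two twisted structures match up under specialization. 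I do not expect a genuine obstacle here: once the equivariance on functions is granted (which it is), the distribution statement is a formal consequence of transposition together with the compatibility of $\mathbf{Ev}_k$ on $\cO(U)$ with $\int_{\Z_p}$ built into the definition of $\mathbf{Ev}_{k,j}$ on $\D_{0,\rho^{\mathbf{univ}}_{j}}(U)$. The only real verification — and it is routine — is that all the relevant maps ($f*\gamma$, lifting $f$ to $f'$, applying $\mathbf{Ev}_k$) preserve the Banach-module structures and are continuous, so that the pairings and the duality argument are legitimate.
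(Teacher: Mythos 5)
Votre argument est correct et suit essentiellement la même démarche que la preuve du texte : on teste contre une fonction arbitraire $f\in\cC^0_{k,j}$, on utilise la relation définissant $\mathbf{Ev}_{k,j}$ sur les distributions, l'équivariance au niveau des fonctions (via l'inclusion $\cC^0(\Z_p,\Q_p)\subset\cC^0(\Z_p,\cO(U))$ et la compatibilité $\mathbf{Ev}_k(\rho^{\mathbf{univ}}_{j}(\gamma))=\rho_{k,j}(\gamma)$), puis la définition de l'action sur $\D_{0,k,j}$ et le fait que $\D_{0,k,j}$ est le dual de $\cC^0_{k,j}$. Votre rédaction ne fait qu'expliciter un peu plus le relèvement de $f$ et la comptabilité des torsions, déjà implicites dans le calcul du texte.
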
 
\begin{proof}Si $f\in \cC^0_{k,j}, \mu \in  \D_{0,\rho^{\mathbf{univ}}_{j}}(U)$ et $g\in \I_0(p)$, on a 
\begin{equation*}
\begin{split}
\int_{\Z_p}f  \mathbf{Ev}_{k,j}(g\cdot\mu)&= \mathbf{Ev}_{k}(\int_{\Z_p} f  (g\cdot\mu))=\mathbf{Ev}_{k}(\int_{\Z_p}(f*g) \mu) 
\\&=  \int_{\Z_p}(f*g) \textbf{Ev}_{k,j}(\mu)= \int_{\Z_p}f   [g\cdot\textbf{Ev}_{k,j}(\mu)].
\end{split}
\end{equation*}  

\end{proof}
Le faisceau $\D_{0,\rho^{\mathbf{univ}}_{j}}$ nous donne une interpolation des représentations algébriques $V_{k,j+2}$ de $\I_0(p)$ pour $k\in \N$  au sens suivant:

La fonction $f:\Z_p\ra V_{k,j+2}$,  donnée par 
\[z\mapsto f(z)=(e_1+ze_2)^{k-2}t^{-j}, \] 
est une fonction continue sur $\Z_p$ à valeurs dans $V_{k,j+2}$. 
 On définit une application linéaire continue
$\pi_{k,j}:\D_{0,k,j}\ra V_{k,j+2}$ par l'intégration: $\mu\mapsto \int_{\Z_p} f(z)\mu $.

\begin{lemma}\label{iota}
L'application $\pi_{k,j}$ est $\I_0(p) $-équivariante.
\end{lemma}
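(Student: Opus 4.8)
The goal is to check that $\pi_{k,j}\colon \D_{0,k,j}\to V_{k,j+2}$, defined by $\mu\mapsto \int_{\Z_p} f(z)\,\mu$ with $f(z)=(e_1+ze_2)^{k-2}t^{-j}$, commutes with the $\I_0(p)$-action. Since both sides are defined by integration against a vector-valued function on $\Z_p$, the natural strategy is to reduce the equivariance of $\pi_{k,j}$ to a pointwise transformation identity for $f$ itself, then invoke the definition of the dual action on $\D_{0,k,j}$ (the $\Q_p$-dual of $\cC^0_{k,j}$) together with the defining pairing via $\int_{\Z_p}$.

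First I would write out, for $\gamma=(\begin{smallmatrix}a&b\\ c&d\end{smallmatrix})\in\I_0(p)$ and $\mu\in\D_{0,k,j}$,
\[
\pi_{k,j}(\gamma\cdot\mu)=\int_{\Z_p} f(z)\,(\gamma\cdot\mu)=\int_{\Z_p}\big(f*\gamma\big)(z)\,\mu,
\]
using exactly the definition of the $\I_0(p)$-action on the dual module (the same formula $\langle \gamma\cdot\mu,h\rangle=\langle\mu,h*\gamma\rangle$ that appears just above for $\D_{0,\rho^{\mathbf{univ}}_j}(U)$, specialized through $\mathbf{Ev}_{k,j}$). Here the twist by the cocycle $\rho_{k,j}$ is built into the module $\cC^0_{k,j}$ and hence into $\D_{0,k,j}$; but note that $f(z)\,\mu$ takes values in $V_{k,j+2}$ rather than in the coefficient ring, so $f$ should be viewed as a $V_{k,j+2}$-valued element and the pairing $\int_{\Z_p}$ is the one extending the $\cC^0_{k,j}$–$\D_{0,k,j}$ duality coefficientwise. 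On the other side, $\gamma\cdot\pi_{k,j}(\mu)=\big(\int_{\Z_p}f(z)\,\mu\big)*\gamma=\int_{\Z_p}\big(f(z)*\gamma\big)\,\mu$, using continuity and linearity of the right $\I_0(p)$-action on $V_{k,j+2}$ to move it through the integral. Thus the whole statement reduces to the single functional identity
\[
f(z)*\gamma \;=\; (f*\gamma)(z)\quad\text{in }V_{k,j+2},\ \text{for all }z\in\Z_p,\ \gamma\in\I_0(p),
\]
where on the left $*\gamma$ is the right $\GL_2(\Z_p)$-action on $V_{k,j+2}=\Sym^{k-2}V_p\otimes\Q_p(-j)$ and on the right $f*\gamma$ is the twisted action on $\cC^0_{k,j}$, i.e. $(f*\gamma)(z)=\rho_{k,j}(\gamma)\,\big(f(\gamma z)\big)$ with $\rho_{k,j}(\gamma)=(a+cz)^{k-2}(\det\gamma)^{-j}$ and $\gamma z=\tfrac{b+dz}{a+cz}$.

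The core computation is then the classical one: from $e_1*\gamma=ae_1+ce_2$ and $e_2*\gamma=be_1+de_2$ one gets $(e_1+ze_2)*\gamma=(a+zb)e_1+(c+zd)e_2$ — wait, one must be careful which convention is in force; with the paper's convention $e_1*\gamma=ae_1+be_2$, $e_2*\gamma=ce_1+de_2$, so $(e_1+ze_2)*\gamma=(a+cz)e_1+(b+dz)e_2=(a+cz)\big(e_1+\tfrac{b+dz}{a+cz}e_2\big)=(a+cz)\,(e_1+(\gamma z)e_2)$, which is legitimate since $a\in\Z_p^*$, $c\in p\Z_p$ make $a+cz$ a unit. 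Raising to the $(k-2)$-th symmetric power gives $(e_1+ze_2)^{k-2}*\gamma=(a+cz)^{k-2}(e_1+(\gamma z)e_2)^{k-2}$; the factor $t$ spanning $\Q_p(1)$ transforms by $t*\gamma=(\det\gamma)t$, hence $t^{-j}*\gamma=(\det\gamma)^{-j}t^{-j}$. Multiplying, $f(z)*\gamma=(a+cz)^{k-2}(\det\gamma)^{-j}(e_1+(\gamma z)e_2)^{k-2}t^{-j}=\rho_{k,j}(\gamma)\,f(\gamma z)=(f*\gamma)(z)$, which is precisely the required identity. I would then note that all the interchanges of $\int_{\Z_p}$ with the (finite-dimensional, hence continuous) linear action are justified because $\mu$ is a measure and $z\mapsto f(z)*\gamma$ is continuous $\Z_p\to V_{k,j+2}$.

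**Main obstacle.** There is no deep difficulty here; the only thing requiring real care is bookkeeping of conventions — in particular, whether $V_p$ carries the right action $e_1*\gamma=ae_1+be_2$ as stated (so the relevant matrix acting on column-vectors is $\gamma^{t}$), making sure the Möbius action $\gamma z=\tfrac{b+dz}{a+cz}$ is the one compatible with that convention, and tracking the sign/power of $\det\gamma$ coming from the Tate twist $\Q_p(-j)$ versus the shift between $V_{k,j}$ and $V_{k,j+2}$ used elsewhere in the paper. Once those are pinned down the proof is a two-line calculation: state the identity $f(z)*\gamma=(f*\gamma)(z)$, verify it on the generator $(e_1+ze_2)^{k-2}t^{-j}$ as above, and conclude $\pi_{k,j}(\gamma\cdot\mu)=\gamma\cdot\pi_{k,j}(\mu)$ by integrating against $\mu$.
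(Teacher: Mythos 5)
Your proposal is correct and follows essentially the same route as the paper: the paper's proof expands $\int_{\Z_p}(e_1+ze_2)^{k-2}t^{-j}(\gamma\cdot\mu)$ coordinatewise via the binomial theorem, applies the definition of the dual twisted action (producing the factor $\rho_{k,j}(\gamma)=(a+cz)^{k-2}(\det\gamma)^{-j}$ and the substitution $z\mapsto\gamma z$), and regroups it as $\bigl(\int_{\Z_p}(e_1+ze_2)^{k-2}t^{-j}\mu\bigr)*\gamma$, which is exactly your pointwise identity $f(z)*\gamma=(f*\gamma)(z)$ integrated against $\mu$. Your bookkeeping of the conventions ($e_1*\gamma=ae_1+be_2$, $\gamma z=\tfrac{b+dz}{a+cz}$, $t^{-j}*\gamma=(\det\gamma)^{-j}t^{-j}$, and $a+cz\in\Z_p^*$) matches the paper's computation, so there is nothing to add.
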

\begin{proof}Ce lemme se démontre par le calcul suivant: si $\gamma=(\begin{smallmatrix} a&  b\\ c& d \end{smallmatrix})\in \I_0(p)$, alors 
$\rho_{k,j}(\gamma)=(a+cz)^{k-2}(\det\gamma)^{-j}$ et 
\begin{equation*}
\begin{split}
\int_{\Z_p}(e_1+ze_2)^{k-2} t^{-j}(\gamma\cdot\mu)&=\sum_{l=0}^{k_2}(\int_{\Z_p} \binom{k-2}{l}z^l (\gamma\cdot\mu)) e_1^{k-2-l}e_2^l t^{-j}
\\ &=\sum_{l=0}^{k_2}\left(\int_{\Z_p} \binom{k-2}{l}(\gamma z)^l \rho_{k,j}(\gamma) \mu\right) e_1^{k-2-l}e_2^l t^{-j}
\\ &=\int_{\Z_p}(ae_1+be_2+z(ce_1+de_2) )^{k-2}  (\det(\gamma)t)^{-j}\mu 
\\&= ( \int_{\Z_p}(e_1+ze_2)^{k-2} t^{-j}\mu)*\gamma.
\end{split}
\end{equation*}

\end{proof}
Si $U$ est un ouvert affinoïde de $\sW$ et si $k\in\Z$ appartient à  $U$, on définit une application de spécialisation
$\textbf{Sp}_{k,j}: \D_{0,\rho^{\mathbf{univ}}_{j}}(U)\ra \D_{0,k,j}\ra V_{k,j+2}$ en composant l'application d'évaluation $\textbf{Ev}_{k,j}$ et l'application $\pi_{k,j}$, qui est $\I_0(p)$-équivariante par les lemmes $\ref{Ev}$ et $\ref{iota}$.

\begin{remark}\label{intervect} La masse de Dirac $\delta_0$ en $0$ fournit une section globale $\nu_j$ de $\D_{0,\rho^{\mathbf{univ}}_{j}}(\sW)$ qui interpole le vecteur de plus haut poids $e_1^{k-2}t^{-j}$ dans $V_{k,j+2}$; en effet, 
$\int_{\Z_p}(e_1+ze_2)^{k-2}t^{-j}\delta_0=e_1^{k-2}t^{-j}$, ce qui se traduit par la formule $\textbf{Sp}_{k,j} (\nu_j)= e_1^{k-2} t^{-j} $. 
\end{remark}

\subsection{Famille de systèmes d'Euler de Kato sur l'espace des poids}
\subsubsection{Torsion à la Soulé}


Soit $G$ un groupe  localement profini, agissant continûment à droite sur un espace topologique localement profini $X$. Soit $V$ une $A$-représentation de Banach de $G$ à droite, où $A$ est une $\Q_p$-algèbre de Banach. On note $\cC^0(X, V)$ le $A$-module des fonctions continues sur $X$ à valeurs dans $V$ et $\fD_0(X,V)$ le $A$-module des mesures sur $X$ à valeurs dans $V$.  On munit $\cC^0(X,V)$ et $\fD_0(X,V)$ d'actions de $G$ à droite comme suit:
si $g\in G$, $x\in X$,  $\phi(x)\in\cC^0(X,V)$, et $\mu\in\fD_0(X,V)$, alors 
\[ \phi*g(x)= \phi(x*g^{-1})*g \text{ et } \int_X \phi(x)(\mu*g)=\left(\int_X(\phi*g^{-1}) \mu\right)*g. \]
Par contre, si $G$ agit continûment à gauche sur $X$ et $V$, on munit $\cC^0(X,\Z)$ et
$\fD_{0}(X,V)$ d'actions de $G$ \`a gauche comme suit:
si $g\in G, x\in X,\phi\in\cC^0(X,\Z), \mu\in\fD_{0}(X,V),$ alors
\begin{equation}\label{actiondisgauche} (g\cdot\phi)(x)=\phi(g^{-1}x) \text{ et } \int_{X}\phi(x)(g\cdot\mu)=g\cdot\bigl(\int_{X}(g^{-1}\cdot\phi)\mu\bigr).
\end{equation}

Soit $\mu\in \fD_0(X,\Z_p)$ et soit $M$ un $\Q_p$-espace de Banach muni d'un base de Banach $E=\{e_i\}_{i\in I}$. On a une inclusion de $\fD_0(X,\Z_p)\subset \fD_0(X,M)$. Soit $f\in \cC^0(X,M)$; l'intégration $\int_X f(x)\mu$ est définie comme suit: on décompose $f= \sum_{i\in I} f_i e_i$ sous la base de Banach $E$ de $M$ avec $f_i\in \cC^0(X, \Q_p)$ et l'intégration $\int_X f(x)\mu$ est donné par la formule  $\int_X f(x)\mu= \sum_{i\in I}e_i  \int_X f_i(x)\mu$.
\begin{prop}\label{TorsionSou}
Soit $f\in\cC^0(X,V)^G $. La multiplication d'une mesure $\mu\in\fD_0(X,\Z_p)$ par la fonction $f$ induit un morphisme $G$-équivariant à droite de $\fD_0(X,\Z_p)$ dans $\fD_0(X, V)$. 
\end{prop}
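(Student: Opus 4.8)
The plan is to check that multiplication by $f$ is well-defined as a map $\fD_0(X,\Z_p)\to\fD_0(X,V)$, that it is $\Z_p$-linear (hence $A$-linear after extension), and that it intertwines the two $G$-actions described just above the statement. For the first point, fix an orthonormal base of Banach $E=\{e_i\}_{i\in I}$ of the underlying Banach space of $V$; since $f\in\cC^0(X,V)$, we may write $f=\sum_{i\in I}f_i e_i$ with $f_i\in\cC^0(X,\Q_p)$ and $f_i\to 0$ uniformly on compacts, and for $\mu\in\fD_0(X,\Z_p)$ set $f\mu:=\sum_{i\in I}e_i\,(f_i\mu)$, exactly as in the paragraph preceding the proposition. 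The only thing to verify is that this series converges in $\fD_0(X,V)$: on any compact open $U\subset X$, $\int_U f_i\,\mu$ is bounded (by the mass of $\mu$ on $U$ times $\sup|f_i|$) and tends to $0$ with $i$, so the partial sums form a Cauchy net of $V$-valued measures; completeness of $\fD_0(X,V)$ gives the limit. Linearity in $\mu$ is immediate from the construction.

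The substance is $G$-equivariance, and here one must be careful about which of the two conventions from the excerpt is in force. Since $f\in\cC^0(X,V)^G$ and $V$ is a right Banach representation, we are in the right-action setting: $G$ acts on $\fD_0(X,\Z_p)$ by $\int_X\phi\,(\mu*g)=\big(\int_X(\phi*g^{-1})\phi\,\mu\big)$ (with trivial action on $\Z_p$, so $\phi*g(x)=\phi(x*g^{-1})$), and on $\fD_0(X,V)$ by the displayed formula $\int_X\phi(x)(\mu*g)=\big(\int_X(\phi*g^{-1})\,\mu\big)*g$. I want to show $(f\mu)*g=f\,(\mu*g)$ for all $g\in G$. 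Testing against an arbitrary $\phi\in\cC^0(X,V)$ via the pairing is awkward because $V$ is not an algebra; instead I will test against scalar functions $\psi\in\cC^0(X,\Q_p)$, which suffices to determine an element of $\fD_0(X,V)$. Unwinding: $\int_X\psi\cdot\big((f\mu)*g\big)=\big(\int_X(\psi*g^{-1})\,(f\mu)\big)*g=\big(\int_X(\psi*g^{-1})f\,\mu\big)*g$. Now decompose $f*g^{-1}=\sum_i (f*g^{-1})_i\,e_i$ and use that $\int_X(\psi*g^{-1})f\,\mu$, re-expressed via the right $G$-action on the $V$-valued integrand, matches $\int_X\psi\cdot f\,(\mu*g)$ once we invoke $f*g=f$ (the $G$-invariance of $f$) to replace $f$ by $f*g^{-1}$ inside the integral against $\mu*g$. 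The key identity is the chain
\[
\Big(\int_X(\psi*g^{-1})\,f\,\mu\Big)*g=\int_X\psi\,\big(f*g\big)\,(\mu*g)=\int_X\psi\,f\,(\mu*g),
\]
where the first equality is the compatibility of the right $G$-action on $\fD_0(X,V)$ with multiplication by a function (verified coordinate-wise in the base $E$, reducing to the scalar case already encoded in the definitions of the two actions), and the second is $f*g=f$.

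The main obstacle I anticipate is purely bookkeeping: making the reduction "it is enough to test against scalar functions $\psi$" rigorous, and pushing the $e_i$-decomposition of $f$ (and of $f*g$) through the integral while the $G$-action twists both the argument in $X$ and the value in $V$ simultaneously. Once one fixes the base $E$ and checks that $g$ acting on $\fD_0(X,V)$ is, in coordinates, the diagonal action "$g$ on each scalar measure $f_i\mu$, then $*g$ on the $e_i$", everything collapses to the scalar statement $(\psi\mu)*g=(\psi*g^{-1}\cdot\text{id})(\mu*g)$ in $\fD_0(X,\Z_p)$, which is built into the definition in display (\ref{actiondisgauche}) and its right-action analogue. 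I would also remark that $A$-linearity follows formally since $\fD_0(X,\Z_p)\subset\fD_0(X,M)$ and the multiplication-by-$f$ map is visibly compatible with the extension of scalars $\cC^0(X,V)\cong\cC^0(X,K)\hat\otimes_K V$, so no extra argument is needed there.
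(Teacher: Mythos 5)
Your proposal is correct and takes essentially the same route as the paper: decompose $f$ in a $\Q_p$-base de Banach of $V$, test both measures against scalar functions $\phi\in\cC^0(X,\Q_p)$, unwind the two right actions coordinate-wise, and conclude from the invariance $f(x*g)=f(x)*g$ (i.e. $f*g=f$). The paper treats well-definedness and linearity as immediate and only carries out this equivariance computation, exactly as you do.
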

\begin{proof}
Il suffit de vérifier que c'est $G$-équivariant. Soit $\{e_i\}_{i\in I}$ une $\Q_p$-base de Banach de $V$. On décompose $f(x)$ sous la forme $\sum_{i\in I} f_i(x)e_i$ avec $f_i(x)\in\cC^0(X, \Q_p)$. Soit $\phi$ une fonction continue sur $X$ à valeurs dans $\Q_p$. 
Si on considère $\mu*g \in \fD_0(X,\Z_p)$, alors l'intégration $\int_X\phi(x)(f(x)\otimes \mu*g)$ s'écrit sous la forme:
\begin{equation*}
 \int_X\phi(x)(f\otimes(\mu*g))= \sum_{i\in I}e_i\int_X \phi(x*g) f_i( x*g)\mu=\int_{X} \phi(x*g)f(x*g)\mu  
\end{equation*}
Par ailleurs, l'intégration $\int_X\phi(x)(f\otimes\mu)*g$ s'écrit sous la forme
\begin{equation*}
\begin{split}
\int_X\phi(x) (f\otimes\mu)*g
&=(\int_X\phi(x*g) (f(x)\otimes\mu))*g=\int_X\phi(x*g)((f(x)*g)\otimes\mu).
\end{split}
\end{equation*}
Comme $f\in\cC^0(X,V)^G$, on a $f(x*g)=f(x)*g$. Ceci permet de conclure. 

\end{proof}
Si l'action de groupe est à gauche, on a aussi un énoncé analogue.  

\subsubsection{Application au système d'Euler de Kato }
On note $\hat{\Gamma}_0(p)=\GL_2(\hat{\Z})^{]p[}\times \I_0(p)$ et  $\tilde{\Gamma}_0(p)$ l'image inverse de $\hat{\Gamma}_0(p)$ dans $\Pi_{\Q}$. 
On note $\bM_2^{(p)}=\bM_2(\Q\otimes\hat{\Z})^{]p[}\times \I_0(p)$. Si $x\in \bM_2^{(p)}$, on note $x_p$ sa composante à la place $p$.
 
Rappelons que l'on a construit une section globale $\nu_j\in \D_{0, \rho^{\mathbf{univ}}_j}(\sW)$ qui interpole les vecteurs de plus haut poids $e_1^{k-2}t^{-j}$ de représentations algébriques 
$V_{k,j+2}$ de $\I_0(p)$. Pour tout ouvert affinoïde $U$ de $\sW$,  la fonction $x\mapsto x_p\cdot \nu_j$ est une fonction continue sur $\bM_2^{(p)}$ à valeurs dans $\D_{0,\rho^{\mathbf{univ}}_j}(U)$ invariante sous l'action de $\tilde{\Gamma}_0(p)$. De manière explicite, si $x_p=(\begin{smallmatrix}a&b\\ c &d\end{smallmatrix})\in\I_0(p)$, on a  $x_p\cdot\nu_j=\rho^{\mathbf{univ}}_{j}(x_p)\delta_{\frac{b}{a}}$, où $\delta_{\frac{b}{a}}$ est la masse de Dirac en $\frac{b}{a}$. 
Elle interpole la fonction continue $x\mapsto (e_1^{k-2}t^{-j})*x_p$ sur $\bM_2^{(p)}$ à valeurs dans $V_{k,j+2}$ via l'application de spécialisation.

Considérons la multiplication d'une mesure $\mu\in \fD_0(\bM_2^{(p)},\Z_p(2))$ par la fonction $x\mapsto x_p\cdot\nu_j$ (resp. $x\mapsto (e_1^{k-2}t^{-j})*x_p$). Ceci nous donne une mesure $(x_p\cdot \nu_j)\otimes \mu$ (resp. $(e_1^{k-2}t^{-j})*x_p\otimes\mu$) sur $\bM_2^{(p)}$ à valeurs dans $\D_{0, \rho^{\mathbf{univ}}_j}(U)(2)$ (resp. $V_{k,j+2}$).

Les fonctions $x\mapsto x_p\cdot\nu_j$ et  $x\mapsto(e_1^{k-2}t^{-j})*x_p$ satisfont la condition "invariante sous l'action de $\tilde{\Gamma}_0(p)$" par construction; on déduit le corollaire ci-dessous de la proposition $\ref{TorsionSou}$.
\begin{coro}\label{sou}
\begin{itemize}
\item[(1)] La multiplication d'une mesure  $\mu\in \fD_0(\bM_2^{(p)},\Z_p(2))$ par la fonction $x\mapsto(e_1^{k-2}t^{-j})*x_p$ induit un morphisme $\tilde{\Gamma}_0(p)$-équivariant à droite de $\Z_p$-modules de  $\fD_0(\bM_2^{(p)},\Z_p(2))$ dans $ \fD_0(\bM_2^{(p)},V_{k,j})$;
\item[(2)] La multiplication d'une mesure  $\mu\in \fD_0(\bM_2^{(p)},\Z_p(2))$ par la fonction $x\mapsto x_p\cdot\nu_j$ induit un morphisme $\tilde{\Gamma}_0(p)$-équivariant à gauche de $\Z_p$-modules de  $\fD_0(\bM_2^{(p)},\Z_p(2))$ dans $ \fD_0(\bM_2^{(p)},\D_{0, \rho^{\mathbf{univ}}_{j}}(\sW)(2))$.
\end{itemize}
\end{coro}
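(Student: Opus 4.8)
The corollary is purely a matter of checking that the two specific functions appearing in its statement lie in the relevant invariants, and then quoting Proposition~\ref{TorsionSou} (and its left-action analogue). So the plan is, in each case, to identify the function, verify it is $\tilde{\Gamma}_0(p)$-invariant, and conclude.

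\begin{proof}
\emph{Part (1).} Consider the function $\phi: \bM_2^{(p)}\ra V_{k,j+2}$ defined by $x\mapsto (e_1^{k-2}t^{-j})*x_p$, where $x_p\in\I_0(p)$ is the component of $x$ at $p$. Since the $\I_0(p)$-action on $V_{k,j+2}$ is continuous and $x\mapsto x_p$ is continuous on $\bM_2^{(p)}=\bM_2(\Q\otimes\hat{\Z})^{]p[}\times\I_0(p)$, the function $\phi$ is continuous, so $\phi\in\cC^0(\bM_2^{(p)},V_{k,j+2})$. I claim $\phi\in\cC^0(\bM_2^{(p)},V_{k,j+2})^{\tilde{\Gamma}_0(p)}$ for the \emph{right} action of $\tilde{\Gamma}_0(p)$ defined in the previous subsection. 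Indeed, let $\gamma\in\tilde{\Gamma}_0(p)$ and write $\gamma_p\in\I_0(p)$ for its image at $p$ under $\tilde{\Gamma}_0(p)\ra\hat{\Gamma}_0(p)=\GL_2(\hat{\Z})^{]p[}\times\I_0(p)$. For $x\in\bM_2^{(p)}$ the right action on $X$ satisfies $(x*\gamma)_p=x_p\gamma_p$, hence
\[
(\phi*\gamma)(x)=\phi(x*\gamma^{-1})*\gamma=\bigl((e_1^{k-2}t^{-j})*(x_p\gamma_p^{-1})\bigr)*\gamma_p=(e_1^{k-2}t^{-j})*x_p=\phi(x),
\]
using that $V_{k,j+2}$ is a $\GL_2(\Z_p)$-module on which $\tilde{\Gamma}_0(p)$ acts through $\gamma\mapsto\gamma_p\in\I_0(p)$. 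Thus $\phi$ is $\tilde{\Gamma}_0(p)$-invariant, and Proposition~\ref{TorsionSou} (applied with $G=\tilde{\Gamma}_0(p)$, $X=\bM_2^{(p)}$, $V=V_{k,j}$, noting $V_{k,j}=V_{k,j+2}\otimes\Q_p(2)$ so that twisting by $\Q_p(2)$ matches the source $\fD_0(\bM_2^{(p)},\Z_p(2))$) gives the desired right-$\tilde{\Gamma}_0(p)$-equivariant morphism $\fD_0(\bM_2^{(p)},\Z_p(2))\ra\fD_0(\bM_2^{(p)},V_{k,j})$.

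\emph{Part (2).} Fix an affinoid open $U\subset\sW$, and consider $\psi: \bM_2^{(p)}\ra\D_{0,\rho^{\mathbf{univ}}_j}(U)$, $x\mapsto x_p\cdot\nu_j$, where $\nu_j\in\D_{0,\rho^{\mathbf{univ}}_j}(\sW)$ is the global section of Remark~\ref{intervect} (so $x_p\cdot\nu_j=\rho^{\mathbf{univ}}_j(x_p)\delta_{b/a}$ if $x_p=(\begin{smallmatrix}a&b\\c&d\end{smallmatrix})$). Continuity of $\psi$ follows from continuity of the $\I_0(p)$-action on the Banach representation $\D_{0,\rho^{\mathbf{univ}}_j}(U)$ together with continuity of $x\mapsto x_p$, so $\psi\in\cC^0(\bM_2^{(p)},\D_{0,\rho^{\mathbf{univ}}_j}(U))$. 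Here the natural action of $\tilde{\Gamma}_0(p)$ on $\bM_2^{(p)}$ and on $\D_{0,\rho^{\mathbf{univ}}_j}(U)$ is a \emph{left} action (via $\gamma\mapsto\gamma_p\in\I_0(p)$), so I check left-invariance: for $\gamma\in\tilde{\Gamma}_0(p)$ and $x\in\bM_2^{(p)}$ one has $(\gamma\cdot x)_p=\gamma_p x_p$, whence
\[
(\gamma\cdot\psi)(x)=\gamma\cdot\psi(\gamma^{-1}\cdot x)=\gamma_p\cdot\bigl((\gamma_p^{-1}x_p)\cdot\nu_j\bigr)=x_p\cdot\nu_j=\psi(x).
\]
So $\psi\in\cC^0(\bM_2^{(p)},\D_{0,\rho^{\mathbf{univ}}_j}(U))^{\tilde{\Gamma}_0(p)}$, and the left-action analogue of Proposition~\ref{TorsionSou} (mentioned immediately after its proof), applied with $V=\D_{0,\rho^{\mathbf{univ}}_j}(\sW)(2)$, yields a left-$\tilde{\Gamma}_0(p)$-equivariant morphism of $\Z_p$-modules $\fD_0(\bM_2^{(p)},\Z_p(2))\ra\fD_0(\bM_2^{(p)},\D_{0,\rho^{\mathbf{univ}}_j}(\sW)(2))$. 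Since the construction is functorial in $U$ and compatible with the transition maps $\cF(U)\hat{\otimes}_{\cO(U)}\cO(U')\ra\cF(U')$ for the family $\D_{0,\rho^{\mathbf{univ}}_j}$, these glue over a cover of $\sW$ to give the statement; equivalently one works directly with the global sections $\D_{0,\rho^{\mathbf{univ}}_j}(\sW)=\plim_n\D_{0,\rho^{\mathbf{univ}}_j}(W_n)$ and $\nu_j\in\D_{0,\rho^{\mathbf{univ}}_j}(\sW)$.
\end{proof}

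The only point requiring genuine care is bookkeeping of left versus right actions: the group $\tilde{\Gamma}_0(p)$ acts on $X=\bM_2^{(p)}$ and on $V_{k,j}$ by \emph{right} multiplication (through the standard right $\GL_2$-action on $V_p$), so part~(1) uses the right-hand version of Proposition~\ref{TorsionSou}; whereas the distributions $\D_{0,\rho^{\mathbf{univ}}_j}$ carry the \emph{left} action dual to the right action on $\cC^{0,\rho^{\mathbf{univ}}_j}_\sW$, forcing part~(2) into the left-hand version, with $x\mapsto x_p\cdot\nu_j$ and $(\gamma\cdot x)_p=\gamma_p x_p$. Once the sidedness is fixed consistently and one checks $\psi$, $\phi$ land in the appropriate invariants, both assertions are immediate from Proposition~\ref{TorsionSou}; I do not expect any substantive obstacle beyond this.
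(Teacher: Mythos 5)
Votre démonstration est correcte et suit essentiellement le même chemin que le texte: on vérifie que les fonctions $x\mapsto(e_1^{k-2}t^{-j})*x_p$ et $x\mapsto x_p\cdot\nu_j$ sont continues et invariantes sous $\tilde{\Gamma}_0(p)$ (via $(x*\gamma)_p=x_p\gamma_p$, resp. $(\gamma\cdot x)_p=\gamma_p x_p$), puis on applique la proposition \ref{TorsionSou} et son analogue pour les actions à gauche. Vous explicitez simplement les calculs d'invariance et le décompte gauche/droite que l'article laisse implicites (``par construction''), ce qui est conforme.
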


 D'après Kato \cite{KK} et Colmez\cite{PC1} (voir aussi \cite{Wang}), on dispose d'une mesure  $z_{\mathbf{Kato},c,d}$ construite à partir des unités de Siegel et appartenant à  $\rH^2(\Pi_\Q, \fD_0(\bM_2(\Q\otimes\hat{\Z})^{(p)},\Z_p(2)))$. L'application de restriction de $\Pi_\Q$ à $\tilde{\Gamma}_0(p)$ et l'application de restriction de $\fD_0(\bM_2(\Q\otimes\hat{\Z})^{(p)},\Z_p(2))$ dans $\fD_0(\bM_2^{(p)},\Z_p(2))$ nous fournissent une application 
 \[\rH^2(\Pi_\Q, \fD_0(\bM_2(\Q\otimes\hat{\Z})^{(p)},\Z_p(2)))\ra \rH^2(\tilde{\Gamma}_0(p), \fD_0(\bM_2^{(p)},\Z_p(2))).\]
 On note l'image de $z_{\mathbf{Kato},c,d}$ sous cette application de la même manière. 
 D'après le corollaire $\ref{sou}$, quel que soit $U\subset\sW$ ouvert affinoïde, la multiplication par  la fonction $x\mapsto x_p\cdot\nu_j$ (resp. $x\mapsto(e_1^{k-2}t^{-j})*x_p$) induit un morphisme naturel:
 \begin{equation*}
 \begin{split}
 &\rH^2(\tilde{\Gamma}_0(p), \fD_0(\bM_2^{(p)},\Z_p(2)))\ra   \rH^2(\tilde{\Gamma}_0(p), \fD_0(\bM_2^{(p)},\D_{0, \rho^{\mathbf{univ}}_{j}}(U)(2) ) )\\
 (\text{resp.} &\rH^2(\tilde{\Gamma}_0(p), \fD_0(\bM_2^{(p)},\Z_p(2)))\ra   \rH^2(\tilde{\Gamma}_0(p), \fD_0(\bM_2^{(p)},V_{k,j} ) )  ).
\end{split}
 \end{equation*}
 On définit, 
  \begin{equation*}
 \begin{split}
 &z_{\mathbf{Kato},c,d}(\nu_j)=(x_p\cdot\nu_j)\otimes z_{\mathbf{Kato},c,d} \in \rH^2(\tilde{\Gamma}_0(p), \fD_0(\bM_2^{(p)},\D_{0, \rho^{\mathbf{univ}}_{j}}(U)(2)) )\\
 (\text{resp.} &z_{\mathbf{Kato},c,d}(k,j)=(e_1^{k-2}t^{-j})*x_p\otimes z_{\mathbf{Kato},c,d} \in \rH^2(\tilde{\Gamma}_0(p), \fD_0(\bM_2^{(p)},V_{k,j}) ) ),
\end{split}
 \end{equation*}
 où $\tilde{\Gamma}_0(p)$ agit sur $\D_{0, \rho^{\mathbf{univ}}_{j}}(U)(2) $ à travers son quotient $\I_0(p)$. Comme l'application de spécialisation $\textbf{Sp}_{k,j}: \D_{0,\rho^{\mathbf{univ}}_{j}}(U)\ra V_{k,j+2}$ est $\tilde{\Gamma}_0(p)$-équivariante, pour tout ouvert $U\subset \sW$,  elle induit une application de spécialisation 
 \[\textbf{Sp}_{k,j}: \rH^2(\tilde{\Gamma}_0(p), \fD_0(\bM_2^{(p)},\D_{0, \rho^{\mathbf{univ}}_{j}}(U)(2)) ) \ra \rH^2(\tilde{\Gamma}_0(p), \fD_0(\bM_2^{(p)},V_{k,j}) ).\]
 On déduit le théorème suivant de la construction de $\nu_j$ (c.f. Remarque $\ref{intervect}$) et du corollaire $\ref{sou}$, et il faut faire attention que l'application de spécialisation transfère une action de groupe à gauche en celle à droite: 
 \begin{theo}\label{fam-poids}
 Si $1\leq j\in\N$, alors pour tout entier $k\geq j+1$, on a \[\textbf{Sp}_{k,j}(z_{\mathbf{Kato},c,d}(\nu_j))=z_{\mathbf{Kato},c,d}(k,j) .\]
\end{theo}

\section{Une famille de distributions d'Eisenstein}
\subsection{Séries d'Eisenstein-Kronecker}
\subsubsection{Séries d'Eisenstein-Kronecker}
Les r\'esultats dans ce paragraphe peuvent se trouver dans le livre de Weil $\cite{Weil}$, voir aussi $\cite{PC1}$, $\cite{Wang}$.

\begin{defn}Si $(\tau,z)\in\cH\times\C$, on pose $q=e^{2i\pi\tau}$ et
$q_z=e^{2i\pi z}$. On introduit l'op\'erateur
$\partial_{z}:=\frac{1}{2i\pi}\frac{\partial}{\partial
z}=q_z\frac{\partial}{\partial q_z}$. On pose aussi $e(a)=e^{2i\pi
a}$. Si $k\in\N$, $\tau\in\cH$, et $z,u\in\C$, la s\'erie d'Eisenstein-Kronecker est
\[\rH_k(s,\tau,z,u)=\frac{\Gamma(s)}{(-2i\pi)^k}(\frac{\tau-\bar{\tau}}{2i\pi})^{s-k}\sideset{}{'}\sum_{\omega\in\Z+\Z\tau}\frac{\overline{\omega+z}^k}{|\omega+z|^{2s}}e(\frac{\omega\bar{u}-u\bar{\omega}}{\tau-\bar{\tau}}),\]
qui converge pour $\Re(s)>1+\frac{k}{2}$, et poss\`ede un
prolongement
 m\'eromorphe \`a tout le plan complexe avec des p\^oles simples
en $s=1$ ( si $k=0$ et $u\in\Z+\Z\tau$) et $s=0$ ( si $k=0$
et $z\in\Z+\Z\tau$ ). Dans la formule ci-dessus $\sideset{}{'}\sum$
signifie ( si $z\in\Z+\Z\tau$ ) que l'on supprime le terme
correspondant \`a $\omega=-z$. De plus, elle v\'erifie l'\'equation fonctionnelle:
\[\rH_k(s,\tau,z,u)=e(\frac{z\bar{u}-u\bar{z}}{\tau-\bar{\tau}})\rH_k(k+1-s,\tau,u,z).\]
\end{defn}
Si $k\geq 1$, on d\'efinit les fonctions suivantes:
\begin{align*}
E_k(\tau,z)=\rH_k(k,\tau,z,0), F_k(\tau,z)=\rH_k(k,\tau,0,z).
\end{align*}
Les fonctions $E_k(\tau,z)$ et $F_k(\tau,z)$ sont p\'eriodiques
en $z$ de p\'eriode $\Z\tau+\Z$. De plus, on a:
\[E_{k+1}(\tau,z)=\partial_z E_k(\tau,z), \text{ si } k\in\N \text{ et } E_0(\tau,z)=\log|\theta(\tau,z)| \text{ si }z\notin \Z+\Z\tau,\]
o\`u $\theta(\tau,z)$ est donn\'ee par le produit infini:
\[\theta(\tau,z)=q^{1/12}(q_z^{1/2}-q_z^{-1/2})\sideset{}{}\prod_{n\geq 1}((1-q^n q_z)(1-q^nq_z^{-1})).\]
On note $\Delta=(\partial_z
\theta(\tau,z)|_{z=0})^{12}=q\sideset{}{}\prod_{n\geq
1}(1-q^n)^{24}$ la forme modulaire de poids $12$.

Soient $(\alpha,\beta)\in (\Q/\Z)^2$ et $(a,b)\in\Q^2$ qui a pour image
$(\alpha,\beta)$ dans $(\Q/\Z)^2$. Si $k=2$ et $(\alpha,\beta)\neq(0,0)$, ou si $k\geq 1$ et $k\neq 2$,  on d\'efinit:
\[ E_{\alpha,\beta}^{(k)}=E_k(\tau,a\tau+b) \text{ et } F_{\alpha,\beta}^{(k)}=F_k(\tau,a\tau+b).\]
La s\'erie $H_2(s,\tau,0,0)$ converge pour $\Re(s)>2$, mais pas pour $s=2$;
si $k=2$ et $(\alpha,\beta)=(0,0)$, on d\'efinit \[E_{0,0}^{(2)}=F_{0,0}^{(2)}:=\lim_{s\ra 2}H_2(s,\tau,0,0).\]
\begin{lemma}\label{lemma15}Les fonctions $E_{\alpha,\beta}^{(k)}, F_{\alpha,\beta}^{(k)} $ satisfont les relations de distribution suivantes, quel que soit l'entier $f\geq 1$:
\begin{align}\label{rel}
\sideset{}{}\sum_{f\alpha'=\alpha,f\beta'=\beta}E_{\alpha',\beta'}^{(k)}=f^{k}E_{\alpha,\beta}^{(k)}&\text{\
\ et }
&\sideset{}{}\sum_{f\alpha'=\alpha,f\beta'=\beta}F_{\alpha',\beta'}^{(k)}=f^{2-k}F_{\alpha,\beta}^{(k)}\\
\label{rel1}
\sideset{}{}\sum_{f\beta'=\beta}E_{\alpha,\beta'}^{(k)}(\frac{\tau}{f})=f^{k}E_{\alpha,\beta}^{(k)}&\text{\
\  et }
&\sideset{}{}\sum_{f\beta'=\beta}F_{\alpha,\beta'}^{(k)}(\frac{\tau}{f})=fF_{\alpha,\beta}^{(k)}.
\end{align}
\end{lemma}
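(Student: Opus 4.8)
The plan is to compute both sides directly from the Eisenstein summation expansion of $E_k$ and $F_k$, reducing everything to two elementary facts: the decomposition of $\frac1f(\Z+\Z\tau)$ into $f^2$ cosets of $\Z+\Z\tau$ (for the $E$-relations) and the orthogonality relation $\sum_{l\bmod f}e(ml/f)=f$ if $f\mid m$ and $0$ otherwise (for the $F$-relations). First I would record that at the point $s=k$ one has, for $z=a\tau+b$,
\[
E_k(\tau,z)=\frac{\Gamma(k)}{(-2i\pi)^k}\sideset{}{'}\sum_{\omega\in\Z+\Z\tau}\frac{1}{(\omega+z)^k},\qquad
F_k(\tau,z)=\frac{\Gamma(k)}{(-2i\pi)^k}\sideset{}{'}\sum_{(m,n)\in\Z^2}\frac{e(mb-na)}{(m\tau+n)^k},
\]
where the second formula uses the identity $\omega\bar z-z\bar\omega=(mb-na)(\tau-\bar\tau)$ for $\omega=m\tau+n$; for $k\ge3$ the series converge absolutely, and for $k=1,2$ they are by definition the value at $s=k$ of the meromorphic continuation of $\rH_k(s,\tau,z,u)$. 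Accordingly the manipulations below will first be carried out for $\Re(s)\gg0$, where all rearrangements are legitimate, and then transported to $s=k$ by analytic continuation; note that at $s=k$ the prefactor $\frac{\Gamma(s)}{(-2i\pi)^k}\bigl(\frac{\tau-\bar\tau}{2i\pi}\bigr)^{s-k}$ equals $\frac{\Gamma(k)}{(-2i\pi)^k}$ for every $\tau$-lattice occurring in the argument, which is exactly what makes the normalisations match after the substitution $\tau\mapsto\tau/f$.

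For \eqref{rel}, write the $(\alpha',\beta')$ with $f\alpha'=\alpha$, $f\beta'=\beta$ via the lifts $(a',b')=(\frac{a+j}{f},\frac{b+l}{f})$, $j,l\in\Z/f\Z$, so that $a'\tau+b'=\frac1f(a\tau+b)+\frac1f(j\tau+l)$. As $j,l$ run mod $f$ the points $\omega+\frac1f(j\tau+l)$, $\omega\in\Z+\Z\tau$, describe exactly $\frac1f(\Z+\Z\tau)$; hence $\sum_{(\alpha',\beta')}E^{(k)}_{\alpha',\beta'}=\frac{\Gamma(k)}{(-2i\pi)^k}\sideset{}{'}\sum_{\eta\in\frac1f(\Z+\Z\tau)}(\eta+\frac1f(a\tau+b))^{-k}=f^kE^{(k)}_{\alpha,\beta}$ after the substitution $\eta=\eta'/f$ (the excluded terms on both sides, namely $\eta+\frac1f(a\tau+b)=0$ resp. $\eta'+a\tau+b=0$, correspond under this substitution, so the primes match). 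For the $F$-companion the character is $e(mb'-na')=e(\frac{mb-na}{f})\,e(\frac{ml-nj}{f})$, and $\sum_{j,l\bmod f}e(\frac{ml-nj}{f})$ vanishes unless $f\mid m$ and $f\mid n$, in which case it equals $f^2$; writing $m=fm'$, $n=fn'$ then yields $f^2\cdot f^{-k}F^{(k)}_{\alpha,\beta}=f^{2-k}F^{(k)}_{\alpha,\beta}$.

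For \eqref{rel1}, keep $\alpha$ (hence $a$) fixed, replace $\tau$ by $\tau/f$, and sum over the lifts $b'=\frac{b+l}{f}$, $l\in\Z/f\Z$, of the $\beta'$ with $f\beta'=\beta$; here $a\cdot\frac\tau f+b'=\frac{a\tau+b+l}{f}$ and the sum runs over the lattice $\Z+\Z\frac\tau f$. For $E$ one gets $E_k(\frac\tau f,\frac{a\tau+b+l}{f})=\frac{\Gamma(k)f^k}{(-2i\pi)^k}\sideset{}{'}\sum_{m,n}\bigl((m+a)\tau+(fn+b+l)\bigr)^{-k}$, and since $(n,l)\mapsto fn+l$ is a bijection $\Z\times(\Z/f\Z)\to\Z$, summing over $l$ collapses the double sum to $\frac{\Gamma(k)f^k}{(-2i\pi)^k}\sideset{}{'}\sum_{m,N}((m+a)\tau+(N+b))^{-k}=f^kE^{(k)}_{\alpha,\beta}$. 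For $F$ the character becomes $e(\frac{m(b+l)}{f}-na)$, so now only $\sum_{l\bmod f}e(ml/f)$ intervenes: it kills every $m$ with $f\nmid m$ and contributes a factor $f$ otherwise; writing $m=fm'$ turns the result into $\frac{\Gamma(k)f^{k+1}}{(-2i\pi)^k}\,f^{-k}\sideset{}{'}\sum_{m',n}\frac{e(m'b-na)}{(m'\tau+n)^k}=fF^{(k)}_{\alpha,\beta}$.

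The only genuine subtlety, and the step I expect to cost the most care, is the degenerate range $k\in\{1,2\}$, above all the case $k=2$, $(\alpha,\beta)=(0,0)$, where $E^{(2)}_{0,0}=F^{(2)}_{0,0}$ is defined as $\lim_{s\to2}\rH_2(s,\tau,0,0)$ and the series has a simple pole at $s=2$, so one cannot rearrange it termwise at $s=k$. The remedy is to run the coset decomposition and the character orthogonality above at the level of $\rH_k(s,\tau,z,u)$ for $\Re(s)$ large, where the series are absolutely convergent, observe that the resulting equalities between the two sides are identities of meromorphic functions of $s$ (the lattice/character combinatorics being independent of $s$, and the powers of $f$ in front being polynomial in $f^{\pm s}$), and only then specialise to $s=k$, taking the limit in the exceptional case after checking that the polar parts of the two sides agree. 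For $k=1$ one should in addition fix the Eisenstein summation order so the reindexings are legitimate; the additive character $e(\cdot)$ in the $F$-series (with $z\notin\Z+\Z\tau$) makes that convergence unproblematic, and the $E$-case for $k=1$ is again cleanest via the meromorphic-continuation route.
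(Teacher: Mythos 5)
Your proof is correct. The paper does not actually prove this lemma --- it states it and refers to Weil, Colmez \cite{PC1} and \cite{Wang} --- and the argument you give (decomposition of $\frac1f(\Z+\Z\tau)$ into $f^2$ cosets of $\Z+\Z\tau$ for the $E$-relations, orthogonality of $\sum_{l\bmod f}e(ml/f)$ for the $F$-relations, everything carried out at the level of $\rH_k(s,\tau,z,u)$ for $\Re(s)\gg0$ and then specialised to $s=k$ by analytic continuation, with the books also keeping track of the prefactor $\bigl(\frac{\tau-\bar\tau}{2i\pi}\bigr)^{s-k}$ under $\tau\mapsto\tau/f$, which you do) is exactly the standard one in those references. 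The bookkeeping of the excluded terms in the primed sums and of the powers of $f$ is right in all four identities.

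One small correction to your discussion of the degenerate case: $\rH_2(s,\tau,0,0)$ does \emph{not} have a pole at $s=2$. As recalled in the paper, the only poles of $\rH_k$ occur for $k=0$ (at $s=1$ and $s=0$); for $k=2$ the series merely fails to converge at $s=2$, and $E^{(2)}_{0,0}=F^{(2)}_{0,0}=\lim_{s\to2}\rH_2(s,\tau,0,0)$ is the value of the regular analytic continuation there. So your strategy of proving the identities for $\Re(s)$ large and continuing applies verbatim, with no need to compare polar parts; this inaccuracy does not affect the validity of the proof.
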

\begin{prop}\label{EF}
\begin{itemize}
\item[(1)]$E_{0,0}^{(2)}=F_{0,0}^{(2)}=\frac{-1}{24}E_2^{*},$
o\`u
\[E_2^{*}=\frac{6}{i\pi(\tau-\bar{\tau})}+1-24\sideset{}{}\sum_{n=1}^{+\infty}\sigma_1(n)q^n\]
est la s\'erie d'Eisenstein non holomorphe de poids $2$ habituelle.

\item[(2)] Si $N\alpha=N\beta=0$, alors
\begin{itemize}
\item[(a)] $\tilde{E}_{\alpha,\beta}^{(2)}=E_{\alpha,\beta}^{(2)}-E_{0,0}^{(2)}\in\cM_2(\Gamma_N,\Q(\zeta_N))$
et $E_{\alpha,\beta}^{(k)}\in\cM_k(\Gamma_N,\Q(\z_N))$ si $k\geq 1
$ et $k\neq 2$.
\item[(b)] $F_{\alpha,\beta}^{(k)}\in\cM_k(\Gamma_N,\Q(\z_N))$ si $k\geq
1, k\neq 2$ ou si $k=2,(\alpha,\beta)\neq(0,0)$.
\end{itemize}
\end{itemize}
\end{prop}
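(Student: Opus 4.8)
La démonstration repose sur les propriétés classiques des séries d'Eisenstein-Kronecker — en particulier leurs développements de Fourier — rassemblées dans \cite{Weil} (voir aussi \cite{PC1}, \cite{Wang}). Deux ingrédients gouvernent la preuve: la loi de transformation de $\rH_k$ sous l'action de $\SL_2(\Z)$ sur le réseau $\Z+\Z\tau$, à savoir, pour $\gamma=(\begin{smallmatrix}a&b\\c&d\end{smallmatrix})\in\SL_2(\Z)$,
\[\rH_k\bigl(s,\gamma\tau,\tfrac{z}{c\tau+d},\tfrac{u}{c\tau+d}\bigr)=(c\tau+d)^k\,\rH_k(s,\tau,z,u)\]
(le facteur de normalisation $(\tfrac{\tau-\bar\tau}{2i\pi})^{s-k}$ étant choisi précisément pour absorber les puissances de $|c\tau+d|$), et le développement de Fourier de $\rH_k(k,\tau,z,u)$ en la variable $q$.

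Pour $(1)$: la valeur commune $E_{0,0}^{(2)}=F_{0,0}^{(2)}$ est, par définition, $\lim_{s\to 2}\rH_2(s,\tau,0,0)$; on la calcule par l'astuce de Hecke, la partie non absolument convergente de la sommation sur $\Z+\Z\tau$ faisant surgir le terme non holomorphe $\frac{6}{i\pi(\tau-\bar\tau)}$ et le reste reconstituant $1-24\sum_{n\geq 1}\sigma_1(n)q^n$, le tout affecté du coefficient $-\tfrac1{24}$; on obtient ainsi $E_{0,0}^{(2)}=F_{0,0}^{(2)}=-\tfrac1{24}E_2^*$.

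Pour $(2)$, on distingue modularité, holomorphie sur $\cH$, holomorphie aux pointes et rationalité. \emph{Modularité}: prenant $z=a\tau+b$ où $(a,b)\in\Q^2$ relève $(\alpha,\beta)$, et $u=0$ (resp. $z=0$, $u=a\tau+b$), la loi de transformation ci-dessus évaluée en $s=k$ donne $E_{\alpha,\beta}^{(k)}\big|_k\gamma=E_{\gamma\cdot(\alpha,\beta)}^{(k)}$ (resp. son analogue pour $F$), où $\gamma\cdot(\alpha,\beta)$ est l'action de $\SL_2(\Z)$ sur $(\Q/\Z)^2$ déduite de son action sur les bases du réseau; or $\Gamma_N$ fixe la $N$-torsion, donc $E_{\alpha,\beta}^{(k)}$ (resp. $F_{\alpha,\beta}^{(k)}$) est de poids $k$ pour $\Gamma_N$ dès que $N\alpha=N\beta=0$; au poids $2$, ceci a lieu au niveau des fonctions analytiques réelles $E_{\alpha,\beta}^{(2)}=\rH_2(2,\tau,a\tau+b,0)$ et $E_{0,0}^{(2)}=-\tfrac1{24}E_2^*$, toutes deux modulaires de poids $2$, donc aussi leur différence $\tilde E_{\alpha,\beta}^{(2)}$. \emph{Holomorphie sur $\cH$}: pour $k\geq 3$, les séries définissant $E_k$, $F_k$ convergent absolument et sont holomorphes en $\tau$; pour $k=1$, le prolongement de $\rH_1$ en $s=1$ n'a, sous nos hypothèses, ni pôle ni terme non holomorphe; pour $k=2$, l'examen du développement de Fourier (cf. \cite{Weil}) montre que la partie non holomorphe de $\rH_2$ en $s=2$ est un multiple fixe de $(\tau-\bar\tau)^{-1}$, indépendant des autres données et présent exactement lorsque la dernière variable appartient au réseau — d'où l'holomorphie de $\tilde E_{\alpha,\beta}^{(2)}=E_{\alpha,\beta}^{(2)}-E_{0,0}^{(2)}$ (les deux termes ayant même partie non holomorphe) et celle de $F_{\alpha,\beta}^{(2)}=\rH_2(2,\tau,0,a\tau+b)$ dès que $(\alpha,\beta)\neq(0,0)$, la propriété étant en défaut pour $(\alpha,\beta)=(0,0)$. \emph{Holomorphie aux pointes}: toute pointe étant $\SL_2(\Z)$-équivalente à $\infty$, la loi de transformation y ramène au développement en $q$ d'un $E_{\alpha',\beta'}^{(k)}$ (resp. $F_{\alpha',\beta'}^{(k)}$), lequel ne comporte que des puissances positives de $q$ (fractionnaires de dénominateur divisant $N$). \emph{Rationalité}: dans ce développement, les coefficients sont des polynômes à coefficients rationnels en les racines de l'unité d'ordre divisant $N$ attachées au point $(a,b)\in\tfrac1N\Z^2$, donc appartiennent à $\Q(\zeta_N)$; on conclut que $E_{\alpha,\beta}^{(k)},\,\tilde E_{\alpha,\beta}^{(2)},\,F_{\alpha,\beta}^{(k)}\in\cM_k(\Gamma_N,\Q(\zeta_N))$.

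L'étape réellement délicate est le poids $2$: il faut conduire l'astuce de Hecke de façon à isoler, dans le prolongement méromorphe de $\rH_2$, le terme non holomorphe en $(\tau-\bar\tau)^{-1}$ et à établir son indépendance vis-à-vis de $(\alpha,\beta)$ — c'est précisément le phénomène qui légitime la soustraction de $E_{0,0}^{(2)}$ dans $(2)(a)$ et la restriction $(\alpha,\beta)\neq(0,0)$ dans $(2)(b)$. Le poids $1$, bien que également dépourvu de convergence absolue, ne présente pas de telle anomalie et se traite de la même manière, sans correction.
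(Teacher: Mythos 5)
Le texte ne démontre pas cette proposition: elle y est rappelée comme un fait classique, avec renvoi à Weil \cite{Weil} (voir aussi \cite{PC1}, \cite{Wang}), et votre esquisse reconstitue précisément l'argument standard de ces références --- loi de transformation de $\rH_k$ sous $\SL_2(\Z)$ pour la modularité sous $\Gamma_N$, développement de Fourier (celui de la Proposition~\ref{q-deve}) pour l'holomorphie aux pointes et la rationalité des coefficients dans $\Q(\z_N)$, et astuce de Hecke en poids $2$. Votre rédaction est correcte et identifie bien le seul point délicat, à savoir que le terme non holomorphe en $(\tau-\bar\tau)^{-1}$ issu du prolongement de $\rH_2$ en $s=2$ est indépendant de $(\alpha,\beta)$ et n'apparaît que lorsque la dernière variable est dans le réseau, ce qui justifie à la fois la soustraction de $E_{0,0}^{(2)}$ dans $(2)(a)$ et la restriction $(\alpha,\beta)\neq(0,0)$ dans $(2)(b)$.
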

\begin{prop}\label{eis} Si $\gamma=\bigl(\begin{smallmatrix}a & b\\ c &
d\end{smallmatrix}\bigr)\in\GL_2(\hat{\Z}), k\geq 1$ et
$(\alpha,\beta)\in(\Q/\Z)^2$, on a:
\[E_{\alpha,\beta}^{(k)}*\gamma=E_{a\alpha+c\beta,b\alpha+d\beta}^{(k)}\text{ et } F_{\alpha,\beta}^{(k)}*\gamma=F_{a\alpha+c\beta,b\alpha+d\beta}^{(k)}.\]
\end{prop}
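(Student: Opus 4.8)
The plan is to reduce everything to a single, explicit computation with the Eisenstein--Kronecker series $\rH_k(s,\tau,z,u)$, together with the fact that the $\GL_2(\hat\Z)$-action on $\cM^{\con}(\Q^{\cycl})$ is, by the commutative diagram in \S\ref{notation}, the completed-profinite action of $\SL_2(\hat\Z)$ enlarged by the diagonal torus realizing the Galois action through $\chi_\cycl$. Concretely, since both sides of the claimed identity are additive/multiplicative under the distribution relations of Lemma~\ref{lemma15} (which allow one to pass between level $N$ and level $N'$), it suffices to fix $N$ with $N\alpha=N\beta=0$, to work inside $\cM_k(\Gamma_N,\Q(\z_N))$, and to check the formula for $\gamma$ ranging over a set of generators of $\GL_2(\Z/N)$ (the image of $\gamma$ modulo $N$ is all that matters once $(\alpha,\beta)$ is $N$-torsion). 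I would split the generators into: (i) the elementary unipotent and diagonal matrices in $\SL_2(\Z/N)$, which act through genuine modular substitutions $\tau\mapsto\gamma\tau$, and (ii) the matrix $\bigl(\begin{smallmatrix}1&0\\0&u\end{smallmatrix}\bigr)$ with $u\in(\Z/N)^*$, which via $\iota_\Q$ encodes $\Gal(\Q(\z_N)/\Q)$ acting on the $q$-expansion coefficients, i.e. on $\z_N\mapsto\z_N^u$.

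For case (i): lift $\gamma$ to $\SL_2(\Z)_+$ and recall that $E_{\alpha,\beta}^{(k)}=E_k(\tau,a\tau+b)$ where $(a,b)\in\Q^2$ reduces to $(\alpha,\beta)$. The key point is the transformation behaviour of $E_k$, $F_k$ under $\SL_2(\Z)$: from the definition of $\rH_k$ as a lattice sum over $\omega\in\Z+\Z\tau$, a substitution $\gamma=\bigl(\begin{smallmatrix}a&b\\c&d\end{smallmatrix}\bigr)$ acts on the lattice $\Z\tau+\Z$ and on the ``elliptic variable'' simultaneously; writing $z=a'\tau+b'$ for the divisor point and tracking how $(a',b')$ transforms, one gets exactly $E_k(\gamma\tau,\cdot)(c\tau+d)^{-k}=(\det\gamma)^{1-k}E_k(\tau,\cdot)$ evaluated at the transformed divisor point, and the weight-normalization $f*\gamma=(\det\gamma)^{1-k}f_{|_k}\gamma$ from \eqref{etoi} is precisely designed to absorb the automorphy factor. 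The upshot is that the divisor label $(\alpha,\beta)$, viewed as a row vector, gets multiplied on the right by $\gamma$ (equivalently, by $\bigl(\begin{smallmatrix}a&c\\b&d\end{smallmatrix}\bigr)$ if one writes $(\alpha,\beta)$ as coordinates $a\tau+b$), which is exactly $(\alpha,\beta)\mapsto(a\alpha+c\beta,\ b\alpha+d\beta)$. The same argument applies verbatim to $F_k$ since $F_k(\tau,z)=\rH_k(k,\tau,0,z)$ and the functional equation $\rH_k(s,\tau,z,u)=e(\cdots)\rH_k(k+1-s,\tau,u,z)$ exchanges the two arguments; one can either redo the lattice computation or deduce the $F$-statement from the $E$-statement via this functional equation.

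For case (ii): $\gamma=\bigl(\begin{smallmatrix}1&0\\0&u\end{smallmatrix}\bigr)$ acts through $\iota_\Q$, i.e. by the arithmetic Frobenius-type action on $\Q(\z_N)$, sending a modular form $f=\sum a_n q^n$ to $\sum \sigma_u(a_n) q^n$ with $\sigma_u(\z_N)=\z_N^u$. One then needs the $q$-expansion of $E_{\alpha,\beta}^{(k)}$, which is the classical one: for $k\ge 1$, $k\ne 2$, $E_{\alpha,\beta}^{(k)}$ has a $q$-expansion whose coefficients are of the shape $\sum \z_N^{(\text{linear in }\alpha,\beta)}\cdot(\text{rational})$, explicitly a sum over positive divisors with an Eisenstein-type factor $\sum_{d} d^{k-1}\z_N^{\beta d}$ (and a constant term involving $\z_N^{\alpha}$-type Bernoulli-like values). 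Applying $\sigma_u$ multiplies all the relevant roots of unity exponents by $u$, which corresponds to replacing $\beta$ by $u\beta$ (and the contribution indexed by $\alpha$ accordingly), i.e. exactly the action $E_{\alpha,\beta}^{(k)}*\gamma=E_{\alpha,u\beta}^{(k)}=E_{1\cdot\alpha+0\cdot\beta,\,0\cdot\alpha+u\cdot\beta}^{(k)}$. The $k=2$ and $(\alpha,\beta)=(0,0)$ exceptional cases require the separate bookkeeping that $E_{0,0}^{(2)}$ is $\Gamma$-invariant and rational (Proposition~\ref{EF}(1)), so the formula reads $0=0$ there, and $\tilde E_{\alpha,\beta}^{(2)}=E_{\alpha,\beta}^{(2)}-E_{0,0}^{(2)}$ transforms by the same rule because $E_{0,0}^{(2)}$ is fixed.

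The main obstacle I expect is \emph{compatibility of the two descriptions of the $\GL_2(\hat\Z)$-action}: matching the abstract profinite action (as automorphisms of $\cM^{\con}(\Q^{\cycl})$ over $\cM(\SL_2(\Z),\Q)$, via the identification $H\cong\GL_2(\hat\Z)$) with the concrete analytic/arithmetic action on Eisenstein--Kronecker series, uniformly in the level $N$ and in a way that respects the distribution relations of Lemma~\ref{lemma15}. In other words, one must verify that the explicit right-action $(\alpha,\beta)\mapsto(a\alpha+c\beta,b\alpha+d\beta)$ defined level-by-level does patch into a well-defined action of the full $\GL_2(\hat\Z)$ on the inverse system, and that it agrees with $f*\gamma$. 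This is where Lemma~\ref{lemma15} is essential: the norm-compatibility under $f\ge 1$ is exactly what guarantees the level-$N$ formulas are coherent, so once the formula is checked on generators at each finite level, compatibility with the transition maps is automatic, and one concludes by density/continuity that it holds for all $\gamma\in\GL_2(\hat\Z)$.
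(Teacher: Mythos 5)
Votre démonstration est correcte dans sa substance, mais notez d'abord que l'article ne démontre pas la Proposition~\ref{eis}: elle est rappelée comme un résultat classique, renvoyé à Weil \cite{Weil}, \cite{PC1} et \cite{Wang}, où c'est précisément le calcul que vous décrivez qui est mené (décomposition $\GL_2(\hat{\Z})=\SL_2(\hat{\Z})\cdot\iota(\hat{\Z}^*)$, action de $\SL_2(\Z)$ par substitution modulaire sur la somme de réseau définissant $\rH_k$, action de $\bigl(\begin{smallmatrix}1&0\\0&u\end{smallmatrix}\bigr)$ via $\sigma_u$ sur les coefficients du $q$-développement, qui ne touche que $\z_N\mapsto\z_N^u$ dans $\z^*(\beta,\cdot)$ et laisse fixe la partie en $\alpha$ à coefficients rationnels). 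Vos deux cas (i) et (ii), la réduction aux générateurs (légitime puisque les deux membres définissent des actions à droite), et le traitement des cas exceptionnels $k=2$ via l'invariance de $E_{0,0}^{(2)}$ sont corrects.

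Le seul point à corriger est le rôle que vous attribuez au Lemme~\ref{lemma15}. Les relations de distribution relient des séries d'Eisenstein \emph{distinctes} ($E^{(k)}_{\alpha',\beta'}$ pour $f\alpha'=\alpha$, $f\beta'=\beta$) et n'interviennent pas dans la cohérence de l'action de $\GL_2(\hat{\Z})$: celle-ci est automatique, car $\GL_2(\hat{\Z})$ est par construction le groupe des automorphismes de $\cM^{\con}(\Q^{\cycl})$ sur $\cM(\SL_2(\Z),\Q)$, complété profini relatif aux sous-groupes de congruence, de sorte que son action sur une forme de $\cM_k(\Gamma_N,\Q(\z_N))$ se factorise par $\GL_2(\Z/N)$. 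Aucun recollement entre niveaux n'est donc nécessaire: il suffit de fixer $N$ tel que $N\alpha=N\beta=0$ et de vérifier la formule modulo $N$. Votre dernière étape de «compatibilité aux applications de transition» est ainsi vide de contenu, mais cela n'affecte pas la validité du reste de l'argument.
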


\begin{defn}Soit $A$ un anneau. Une série de Dirichlet formelle à coefficients dans $A$ est une série de la forme $\sum_{n\in\frac{1}{N}\N}a_n n^{-s}$, où $N\in\N$ et $(a_n)$ désigne une suite d'éléments dans $A$. On note $\Dir(A)$ le $A$-module des s\'eries de Dirichlet formelles dont les coefficients sont dans $A$. 

\end{defn}

Soit $\alpha\in\Q/\Z$. On d\'efinit les s\'eries de Dirichlet formelles $\z(\alpha, s)$ et
$\z^{*}(\alpha,s)$, appartenant \`a $\Dir(\Q^{\cycl})$, par les formules:
\[\z(\alpha,s)=\sum\limits_{\substack{n\in\Q_{+}^{*}\\ n=\alpha \mod\Z}}n^{-s} \text{ et } \z^{*}(\alpha,s)=\sum_{n=1}^{\infty}e^{2i\pi\alpha n}n^{-s}.\]
\begin{remark}Les séries de Dirichlet $\z(\alpha,s)$ et $\z^*(\alpha, s)$ convergent pour $\Re(s)>1$ et elles se prolongent analytiquement sur le plan complexe en fonctions méromorphes avec au plus un pôle simple en $s=1$. Ces prolongements définissent la fonction zêta de Hurwitz $\z(\alpha,s)$ et la fonction L de Dirichlet $\z^*(\alpha, s)$ respectivement.
\end{remark}
La proposition suivante d\'ecrit le $q$-d\'eveloppement d'une série d'Eisenstein (c.f. \cite{KK}).
\begin{prop}\label{q-deve}
\begin{itemize}
\item[(i)]Si $k\geq 1, k\neq 2$ et $\alpha, \beta\in\Q/\Z$, alors le $q$-d\'eveloppement $\sum\limits_{n\in\Q_{+}}a_nq^n$ de $E_{\alpha,\beta}^{(k)}$ est donn\'e par
\begin{equation*}
\sum_{n\in\Q_{+}^{*}}\frac{a_n}{n^s}=\z(\alpha,s)\z^{*}(\beta,s-k+1)+(-1)^{k}\z(-\alpha,s)\z^{*}(-\beta,s-k+1).
\end{equation*}
De plus, on a:

Soit $k\neq 1 $. $a_0=0$ $($resp. $a_0=\z^{*}(\beta,1-k))$ si $\alpha\neq 0$ $($resp. $\alpha=0)$.\\
Soit $k=1$. On a $a_0=\z(\alpha,0) ~($resp.
$a_0=\frac{1}{2}(\z^{*}(\beta,0)-\z^{*}(-\beta,0)))$ si $\alpha\neq
0~($resp. $\alpha=0)$.
\item[(ii)]Si $k\geq 1$ et $\alpha, \beta\in\Q/\Z~($si $k=2$, $(\alpha,\beta)\neq (0,0))$, alors le $q$-d\'eveloppement
$\sum\limits_{n\in\Q_{+}}a_nq^n$ de $F_{\alpha,\beta}^{(k)}$ est donn\'e par
\begin{equation*}
\sum_{n\in\Q_{+}^{*}}\frac{a_n}{n^s}=\z(\alpha,s-k+1)\z^{*}(\beta,s)+(-1)^{k}\z(-\alpha,s-k+1)\z^{*}(-\beta,s).
\end{equation*}
De plus, 
\begin{itemize}
\item[]$a_0=\z(\alpha,1-k)$, la valeur spéciale de la fonction zêta de Hurwitz, si $k\neq 1$;
\item[] $a_0=\z(\alpha,0)~($resp.
$a_0=\frac{1}{2}(\z^{*}(\beta,0)-\z^{*}(-\beta,0)))$ si $\alpha\neq
0~($resp. $\alpha=0)$ et si $k=1$.
\end{itemize}
\end{itemize}
\end{prop}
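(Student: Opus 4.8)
The plan is to read off the $q$-development directly from the lattice-sum definition of $\rH_k$, applying the Lipschitz summation formula row by row; parts (i) and (ii) are proved the same way, the only difference being an exchange of the two directions of the lattice $\Z+\Z\tau$. Fix $(a,b)\in\Q^2$ lifting $(\alpha,\beta)$ and specialize $s=k$, which removes the archimedean factor $\bigl(\tfrac{\tau-\bar\tau}{2i\pi}\bigr)^{s-k}$. Using $\ol{w}^{\,k}/|w|^{2k}=w^{-k}$ and the identity $\tfrac{\omega\,\ol{(a\tau+b)}-(a\tau+b)\,\ol{\omega}}{\tau-\bar\tau}=nb-ma$ for $\omega=m+n\tau$, one gets
\begin{align*}
E_{\alpha,\beta}^{(k)}&=\frac{(k-1)!}{(-2i\pi)^k}\sideset{}{'}\sum_{(m,n)\in\Z^2}\frac{1}{\bigl((m+b)+(n+a)\tau\bigr)^k},\\
F_{\alpha,\beta}^{(k)}&=\frac{(k-1)!}{(-2i\pi)^k}\sideset{}{'}\sum_{(m,n)\in\Z^2}\frac{e^{2i\pi(n\beta-m\alpha)}}{(m+n\tau)^k}.
\end{align*}
These converge absolutely for $k\geq 3$; for $k=1,2$ one keeps $s$ generic, carries out the whole computation in the half-plane $\Re(s)\gg 0$ where everything is absolutely convergent, and passes to the limit $s\to k$ at the end (Hecke's trick), having checked that the archimedean factor and the non-holomorphic tail of the Fourier expansion contribute nothing in the limit.

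Next, group each double sum by the coefficient of $\tau$. For $E$ set $\rho=n+a\in a+\Z$; for $\rho\neq 0$ apply to $\sum_{m\in\Z}(m+b+\rho\tau)^{-k}$ the Lipschitz formula $\sum_{m\in\Z}(m+w)^{-k}=\tfrac{(\mp 2i\pi)^k}{(k-1)!}\sum_{d\geq 1}d^{k-1}e^{\pm 2i\pi dw}$, the sign being that of $\mathrm{Im}(w)$. After the prefactor $\tfrac{(k-1)!}{(-2i\pi)^k}$, the rows with $\rho>0$ contribute $\sum_{d\geq 1}d^{k-1}e^{2i\pi db}q^{\rho d}$ and those with $\rho<0$ contribute $(-1)^k\sum_{d\geq 1}d^{k-1}e^{-2i\pi db}q^{|\rho|d}$. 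Collecting the coefficient of $q^n$ over the factorizations $n=\rho d$ with $\rho$ a positive (resp. negative) element of $a+\Z$ and $d$ a positive integer, and forming $\sum_n a_n n^{-s}$, one recognizes the two pieces as $\z(\alpha,s)\z^{*}(\beta,s-k+1)$ and $(-1)^k\z(-\alpha,s)\z^{*}(-\beta,s-k+1)$, which is (i). For $F$ one groups instead by the integer $n$ itself and uses the twisted Lipschitz formula $\sum_{m\in\Z}e^{-2i\pi m\alpha}(m+w)^{-k}=\tfrac{(-2i\pi)^k}{(k-1)!}\sum_{r}r^{k-1}e^{2i\pi rw}$, the sum running over the positive part of $\alpha+\Z$ (for $\mathrm{Im}\,w>0$); now the weight $d^{k-1}$ sits on the horizontal index while the $\tau$-index is a plain integer, so the same bookkeeping produces $\z(\alpha,s-k+1)\z^{*}(\beta,s)+(-1)^k\z(-\alpha,s-k+1)\z^{*}(-\beta,s)$, which is (ii).

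The constant term $a_0$ collects the rows with vanishing $\tau$-coefficient. For $E$ such a row exists only when $\alpha=0$ (take $a=0$), and it equals $\sideset{}{'}\sum_{m\in\Z}(m+b)^{-k}=\z(\beta,k)+(-1)^k\z(-\beta,k)$ (the prime removing $m=-b$ only when $\beta=0$ too); Hurwitz's functional equation identifies $\tfrac{(k-1)!}{(-2i\pi)^k}$ times this with $\z^{*}(\beta,1-k)$, whereas for $\alpha\neq 0$ there is no such row and $a_0=0$ (for $k\geq 2$). For $F$ the relevant row is $n=0$, namely $\sideset{}{'}\sum_{m\in\Z}e^{-2i\pi m\alpha}m^{-k}=\z^{*}(-\alpha,k)+(-1)^k\z^{*}(\alpha,k)$, and the same functional equation identifies $a_0$ with $\z(\alpha,1-k)$. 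At $k=1$ one repeats the argument with the $k=1$ Lipschitz formula (which carries an extra additive constant, essentially $\pi\cot$) and with the summations taken in the Eisenstein order; the boundary constants thus generated sum to $\z(\alpha,0)$ for $\alpha\neq 0$ and to $\tfrac12\bigl(\z^{*}(\beta,0)-\z^{*}(-\beta,0)\bigr)$ for $\alpha=0$, in both the $E$ and $F$ cases.

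The main obstacle is exactly this conditional-convergence bookkeeping at $k=1,2$: the double lattice sum is then not absolutely convergent, the two summations cannot be interchanged freely, and one must carry the complex parameter $s$ throughout, justify the termwise analytic continuation of the Fourier expansion, and verify that every non-holomorphic piece --- arising from $\bigl(\tfrac{\tau-\bar\tau}{2i\pi}\bigr)^{s-k}$ and from the $s$-dependence of the Lipschitz coefficients --- vanishes as $s\to k$; the delicate constant term at $k=1$ is the most error-prone part. The excluded case $k=2$, $(\alpha,\beta)=(0,0)$, where $\rH_2(s,\tau,0,0)$ genuinely has a pole at $s=2$ and $E_{0,0}^{(2)}=F_{0,0}^{(2)}$ is not holomorphic, falls outside the statement and is treated separately in Proposition~\ref{EF} via the non-holomorphic Eisenstein series $E_2^{*}$.
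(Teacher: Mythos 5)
Your proposal is correct and is essentially the argument the paper relies on: the paper gives no proof of this proposition, citing \cite{KK}, and the computation there is exactly yours --- specialize $s=k$ in the lattice sum, apply the (twisted) Lipschitz summation formula row by row, reassemble the Dirichlet series from the factorizations $n=\rho d$, and identify the constant term via the functional equation of the Hurwitz/Lerch zeta, with Hecke's trick handling the conditionally convergent cases $k=1,2$. The only part you leave as a sketch (the $k=1$ boundary constants and the vanishing of the non-holomorphic contributions as $s\to k$) is precisely the part the cited reference carries out in detail, so nothing is missing in substance.
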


\subsubsection{Les distributions $z_{\mathbf{Eis}}(k,j)$ et $z_{\mathbf{Eis},c,d}(k,j)$}
Dans ce paragraphe, on rappelle la construction des distributions d'Eisenstein $z_{\mathbf{Eis}}(k,j)$ et $z_{\mathbf{Eis,c,d}}(k,j)$ construites à partir d'un produit de deux séries d'Eisenstein dans $\cite{{PC1}}$ et $\cite{Wang}$. 

 Soient $X=(\Q\otimes\hat{\Z})^2, G=\GL_2(\Q\otimes\hat{\Z})$ et $V=\cM_k^{\con}(\Q^{\cycl})$.
Alors $X$ est un espace topologique localement profini et $G$ est un groupe localement profini agissant contin\^ument \`a droite sur $X$ par la multiplication de matrices.

L'action à droite de $G$ sur $V$, not\'ee par $*$, provient de l'action de
$\Pi_{\Q}^{'}$ sur $V$, et l'action de $\Pi_{\Q}$
se factorise \`a travers son quotient $\GL_2(\hat{\Z})$. Comme tout
$\gamma\in\GL_2(\Q\otimes\hat{\Z})$ peut s'\'ecrire sous la
forme $\gamma=g_1\bigl(\begin{smallmatrix}r_0 & 0\\ 0 &
r_0\end{smallmatrix}\bigr)\bigl(\begin{smallmatrix}1 & 0\\ 0 &
e\end{smallmatrix}\bigr)g_2$ avec
$g_1,g_2\in\GL_2(\hat{\Z}),r_0\in\Q^{*}_{+}$ et $e$ un entier $\geq 1$, il suffit de donner respectivement les formules pour $\gamma\in
\GL_2(\hat{\Z})$ ou $\gamma=\bigl(\begin{smallmatrix}r_0&0\\0&r_0\end{smallmatrix}\bigr)$
ou $\gamma=\bigl(\begin{smallmatrix}1&0\\0&e\end{smallmatrix}\bigr)$
:
\begin{itemize}
\item[(1)]Comme
$\bigl(\begin{smallmatrix}r_0 & 0\\ 0 & r_0\end{smallmatrix}\bigr)$
et $\bigl(\begin{smallmatrix}1 & 0\\ 0 & e\end{smallmatrix}\bigr)$
apparaissent dans $\GL_2(\Q)_{+}$, ses actions sont données par la formule (\ref{etoi})  dans \S \ref{notation}. 
\item[(2)]Si $\gamma\in\GL_2(\hat{\Z})$, en utilisant la d\'ecomposition $\GL_2(\hat{\Z})=\cup_{d\in\hat{\Z}^*}\SL_2(\hat{\Z})(\begin{smallmatrix}1&0\\0&d\end{smallmatrix})$, on d\'ecompose l'action de $\gamma$ en deux parties.
Comme on est en poids $k$, l'action de $\SL_2(\hat{\Z})$ est l'action $|_k$. 
L'action de $(\begin{smallmatrix}1&0\\0&d\end{smallmatrix})$ est via un rel\`evement $\sigma_d$ dans $\cG_{\Q}$ agissant sur les coefficients 
du $q$-développement. Dans le cas des séries d'Eisenstein, la proposition $\ref{eis}$ explicite les formules. 
\end{itemize}
On a le théorème suivant, qui traduit les relations de distribution pour les séries d'Eisenstein en terme de distributions (c.f. \cite[Théorème 2.12]{Wang}):  

\begin{theo}Si $k\geq 1$, il existe une distribution alg\'ebrique $z_{\mathbf{Eis}}(k)$ $($resp. $z'_{\mathbf{Eis}}(k))\in
\fD_{\alg}((\Q\otimes\hat{\Z})^2,\cM_k^{\con}(\Q^{\cycl}))$ v\'erifiant:
quels que soient $r\in\Q^{*}$ et $(a,b)\in\Q^2$, on a
\begin{align*}
\int_{(a+r\hat{\Z})\times(b+r\hat{\Z})}z_{\mathbf{Eis}}(k)=r^{-k}E^{(k)}_{r^{-1}a,r^{-1}b}\\
\bigl( resp. \int_{(a+r\hat{\Z})\times(b+r\hat{\Z})}z_{\mathbf{Eis}}(k)=r^{-k}\tilde{E}^{(k)}_{r^{-1}a,r^{-1}b} \text{ si } k=2 \bigr)\\
\int_{(a+r\hat{\Z})\times(b+r\hat{\Z})}z'_{\mathbf{Eis}}(k)=r^{k-2}F^{(k)}_{r^{-1}a,r^{-1}b}.
\end{align*}
De plus, si $\gamma\in\GL_2(\Q\otimes\hat{\Z})$, alors
\[z_{\mathbf{Eis}}(k)*\gamma=z_{\mathbf{Eis}}(k) \text{ et } z'_{\mathbf{Eis}}(k)*\gamma=|\det\gamma|^{1-k}z'_{\mathbf{Eis}}(k).\]
\end{theo}
On peut identifier
$(\Q\otimes\hat{\Z})^2\times(\Q\otimes\hat{\Z})^2$ avec
$\bM_2(\Q\otimes\hat{\Z})$ via 
$((a,b),(c,d))\mapsto\bigl(\begin{smallmatrix}a & b\\ c &
d\end{smallmatrix}\bigr)$. En utilisant le fait que le produit de deux formes modulaires de poids $i$ et $j$ est une forme modulaire de poids $i+j$,
on définit pour $k\geq 2$ et $1\leq j\leq k-1$, 
\[z_{\mathbf{Eis}}(k,j)=\frac{1}{(j-1)!}z'_{\mathbf{Eis}}(k-j)\otimes z_{\mathbf{Eis}}(j)\in\fD_{\alg}(\bM_2(\Q\otimes\hat{\Z}),\cM_k(\overline{\Q})).\]

La distribution $z_{\mathbf{Eis,c,d}}(k,j)$ dans $\cite{Wang}$ est construite à partir de combinaisons linéaires de séries d'Eisenstein. On rappelle sa construction ci-dessous. 

Soit $\langle\langle\cdot\rangle\rangle:\Z_p^{*}\ra\hat{\Z}^*$ l'inclusion naturelle  en envoyant $x$ sur $\langle\langle x\rangle\rangle=(1,\cdots, x,1,\cdots)$, o\`u $x$ est \`a la place $p$. Consid\'erons l'inclusion de $\hat{\Z}^{*}$ dans $\GL_2(\hat{\Z})$ en envoyant $d$ sur $(\begin{smallmatrix}d&0\\ 0& d\end{smallmatrix})$. D'apr\`es la proposition \ref{eis}, cela d\'efinit une action de $d\in\hat{\Z}^{*}$ sur les s\'eries d'Eisenstein par les formules: si $k\geq 1$ et $(\alpha,\beta)\in (\Q/\Z)^2$, on a
\begin{equation}
d\cdot E_{\alpha,\beta}^{k}=E_{d\alpha, d\beta}^{(k)}=E_{\alpha,\beta}^{(k)}*(\begin{smallmatrix}d&0\\ 0& d\end{smallmatrix}) \text{ et } d\cdot F_{\alpha,\beta}^{(k)}=F_{d\alpha,d\beta}^{(k)}=F_{\alpha,\beta}^{(k)}*(\begin{smallmatrix}d&0\\ 0& d\end{smallmatrix}),
\end{equation}
o\`u l'action de $*$ est celle de $\GL_2(\hat{\Z})$ sur les s\'eries d'Eisenstein.

Consid\'erons l'injection de $\cM_k^{\con}(\ol{\Q})$ dans $\cM_k^{\con}(\ol{\Q}_p)$.
 On peut d\'efinir une variante de s\'eries d'Eisenstein \`a coefficients dans $\ol{\Q}_p$ comme ci-dessous:  si $c\in\Z_p^{*}$, on pose
 \begin{equation}\label{variant}
 \begin{split}
 E^{(k)}_{c,\alpha,\beta}&=\left\{
 \begin{aligned}&c^2E^{(k)}_{\alpha,\beta}-c^{k}E_{\langle\langle c\rangle\rangle\alpha,\langle\langle c \rangle\rangle\beta}^{(k)}; \text{ si } k\geq 1 \text{ et } k\neq 2,
 \\
 &c^2\tilde{E}_{\alpha,\beta}^{(2)}-c^2\tilde{E}_{\langle\langle c\rangle\rangle\alpha,\langle\langle c \rangle\rangle\beta}^{(2)}; \text{ si }k=2;
         \end{aligned} \right. \\
 F^{(k)}_{c,\alpha,\beta}&=c^2F_{\alpha,\beta}^{(k)}-c^{2-k}F_{\langle\langle c \rangle\rangle\alpha, \langle\langle c \rangle\rangle\beta}^{(k)} \text{ si } k\geq 1 \text{ et } k\neq 2, \text{ ou si } (\alpha,\beta)\neq (0,0)\text{ et } k=2.
 \end{split}
 \end{equation}
Elles sont des combinaisons lin\'eaires de s\'eries d'Eisenstein. 
\begin{prop}\label{eiskj}$($\cite{Wang} Proposition 2.14$)$ 
Soit $c\in\Z_p^{*}$. Si $k\geq 1$, il existe une distribution alg\'ebrique $z_{\mathbf{Eis}, c}(k)$ $($resp. $z^{'}_{\mathbf{Eis},c})$ $\in \fD_{\alg}((\Q\otimes\hat{\Z})^2, \cM_k^{\con}(\Q_p^{\cycl}))$ v\'erifiant: quel que soient $r\in\Q^*$ et $(a,b)\in\Q^2$, on a
\begin{equation*}
\begin{split}
\int_{(a+r\hat{\Z})\times(b+r\hat{\Z})}z_{\mathbf{Eis},c}(k)=r^{-k}E^{(k)}_{c,r^{-1}a,r^{-1}b}
(\text{resp.} \int_{(a+r\hat{\Z})\times(b+r\hat{\Z})}z^{'}_{\mathbf{Eis},c}(k)=r^{k-2}F^{(k)}_{c,r^{-1}a,r^{-1}b}.)
\end{split}
\end{equation*}
De plus, si $\gamma\in\GL_2(\Q\otimes \hat{\Z})$, alors
on a \[z_{\mathbf{Eis},c}(k)*\gamma=z_{\mathbf{Eis},c}(k) \text{ et } z_{\mathbf{Eis},c}^{'}*\gamma=|\det\gamma|^{1-k}z_{\mathbf{Eis},c}^{'}(k).\]
\end{prop}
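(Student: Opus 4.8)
The statement to prove is Proposition~\ref{eiskj}: for $c\in\Z_p^*$ and $k\geq 1$, the existence of algebraic distributions $z_{\mathbf{Eis},c}(k)$ and $z'_{\mathbf{Eis},c}(k)$ on $(\Q\otimes\hat\Z)^2$ valued in $\cM_k^{\con}(\Q_p^{\cycl})$, with the prescribed integrals over boxes $(a+r\hat\Z)\times(b+r\hat\Z)$ and the prescribed $\GL_2$-equivariance. The plan is to \emph{deduce} this from the unmodified statement already established (the Theorem giving $z_{\mathbf{Eis}}(k)$ and $z'_{\mathbf{Eis}}(k)$), by simply applying the $c$-modification operator. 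Concretely, recall that the matrix $\bigl(\begin{smallmatrix}c&0\\0&c\end{smallmatrix}\bigr)$ (via $\langle\langle c\rangle\rangle$) acts on $\cM_k^{\con}(\Q_p^{\cycl})$ through the $\GL_2(\hat\Z)$-action described in Proposition~\ref{eis}, and define
\[
z_{\mathbf{Eis},c}(k)=\bigl(c^2-c^{k}\,[\langle\langle c\rangle\rangle]\bigr)\cdot z_{\mathbf{Eis}}(k),\qquad
z'_{\mathbf{Eis},c}(k)=\bigl(c^2-c^{2-k}\,[\langle\langle c\rangle\rangle]\bigr)\cdot z'_{\mathbf{Eis}}(k),
\]
where $[\langle\langle c\rangle\rangle]$ denotes the operator on $\fD_{\alg}((\Q\otimes\hat\Z)^2,\cM_k^{\con}(\Q_p^{\cycl}))$ induced by the simultaneous action of $\bigl(\begin{smallmatrix}c&0\\0&c\end{smallmatrix}\bigr)\in\GL_2(\hat\Z)$ on both the source $(\Q\otimes\hat\Z)^2$ (by right multiplication) and the coefficients $\cM_k^{\con}(\Q_p^{\cycl})$. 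Since this is a $\Z$-linear combination of distributions, $z_{\mathbf{Eis},c}(k)$ and $z'_{\mathbf{Eis},c}(k)$ are again algebraic distributions valued in $\cM_k^{\con}(\Q_p^{\cycl})$; the only subtlety is that $c^k\in\Z_p$ (rather than $\Z$) for $k$ possibly negative-shifted, so the coefficients genuinely live in $\cM_k^{\con}(\Q_p^{\cycl})$ and not in $\cM_k^{\con}(\Q^{\cycl})$ — which is exactly why the statement is formulated over $\Q_p^{\cycl}$.

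Next I would verify the box-integral formulas. Integrating $z_{\mathbf{Eis},c}(k)$ over $(a+r\hat\Z)\times(b+r\hat\Z)$: the $c^2$-term gives $c^2\cdot r^{-k}E^{(k)}_{r^{-1}a,r^{-1}b}$ directly from the Theorem. For the $c^k[\langle\langle c\rangle\rangle]$-term, one uses that integration intertwines the source-action of $\bigl(\begin{smallmatrix}c&0\\0&c\end{smallmatrix}\bigr)$ with translation of the box, so $\int_{(a+r\hat\Z)\times(b+r\hat\Z)}[\langle\langle c\rangle\rangle]\,z_{\mathbf{Eis}}(k)$ equals $\bigl(\int_{\langle\langle c\rangle\rangle\cdot((a+r\hat\Z)\times(b+r\hat\Z))}z_{\mathbf{Eis}}(k)\bigr)*\bigl(\begin{smallmatrix}c&0\\0&c\end{smallmatrix}\bigr)^{-1}$; applying the Theorem and then Proposition~\ref{eis} to push the coefficient-action through turns this into $r^{-k}E^{(k)}_{\langle\langle c\rangle\rangle r^{-1}a,\langle\langle c\rangle\rangle r^{-1}b}$. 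Combining, the total is $r^{-k}\bigl(c^2E^{(k)}_{r^{-1}a,r^{-1}b}-c^kE^{(k)}_{\langle\langle c\rangle\rangle r^{-1}a,\langle\langle c\rangle\rangle r^{-1}b}\bigr)=r^{-k}E^{(k)}_{c,r^{-1}a,r^{-1}b}$ by the definition~(\ref{variant}) of $E^{(k)}_{c,\alpha,\beta}$ (for $k\neq 2$; the $k=2$ case uses $\tilde E$ and the $c^2\tilde E-c^2\tilde E$-modification, handled identically with the slightly different operator $c^2-c^2[\langle\langle c\rangle\rangle]$). The computation for $z'_{\mathbf{Eis},c}(k)$ is the same with $F$ in place of $E$ and the weight exponents $-k,\,2-k$ replaced by their $F$-counterparts, using the second integral formula $\int_{\text{box}}z'_{\mathbf{Eis}}(k)=r^{k-2}F^{(k)}_{r^{-1}a,r^{-1}b}$.

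Finally, the $\GL_2(\Q\otimes\hat\Z)$-equivariance: for $\gamma\in\GL_2(\Q\otimes\hat\Z)$, since $\bigl(\begin{smallmatrix}c&0\\0&c\end{smallmatrix}\bigr)$ is central in $\GL_2$, the operator $[\langle\langle c\rangle\rangle]$ commutes with the $*\gamma$-action on $\fD_{\alg}$; hence $z_{\mathbf{Eis},c}(k)*\gamma=\bigl(c^2-c^k[\langle\langle c\rangle\rangle]\bigr)(z_{\mathbf{Eis}}(k)*\gamma)=\bigl(c^2-c^k[\langle\langle c\rangle\rangle]\bigr)z_{\mathbf{Eis}}(k)=z_{\mathbf{Eis},c}(k)$, using $z_{\mathbf{Eis}}(k)*\gamma=z_{\mathbf{Eis}}(k)$; and likewise $z'_{\mathbf{Eis},c}(k)*\gamma=|\det\gamma|^{1-k}z'_{\mathbf{Eis},c}(k)$ from $z'_{\mathbf{Eis}}(k)*\gamma=|\det\gamma|^{1-k}z'_{\mathbf{Eis}}(k)$, the scalar $|\det\gamma|^{1-k}$ passing through the $c$-operator trivially. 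I do not expect a serious obstacle here; the one point requiring care — and the natural candidate for the ``hard part'' — is bookkeeping the centrality/commutation of $\langle\langle c\rangle\rangle$ with both the geometric and coefficient actions simultaneously (so that the combined operator $[\langle\langle c\rangle\rangle]$ is well-defined and commutes with $*\gamma$), together with keeping the two regimes $k\neq 2$ and $k=2$ separate, since the $c$-modification has a different shape in weight $2$ (it uses $c^2-c^2[\langle\langle c\rangle\rangle]$ and the quasi-modular correction $\tilde E^{(2)}$ rather than $E^{(2)}$).
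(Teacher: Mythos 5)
The paper itself gives no proof of this proposition --- it is imported verbatim from \cite{Wang}, Proposition 2.14 --- but the surrounding text (the remark that the $E^{(k)}_{c,\alpha,\beta}$, $F^{(k)}_{c,\alpha,\beta}$ of (\ref{variant}) are linear combinations of ordinary Eisenstein series, immediately after the theorem constructing $z_{\mathbf{Eis}}(k)$ and $z'_{\mathbf{Eis}}(k)$) makes clear that the intended argument is exactly yours: apply the $c$-modification operator to the unmodified distributions and check the box integrals and equivariance. So your overall strategy is the right one.

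There is, however, one genuine slip in the way you define the operator $[\langle\langle c\rangle\rangle]$. You take it to be the \emph{simultaneous} action of $\bigl(\begin{smallmatrix}c&0\\0&c\end{smallmatrix}\bigr)$ on the source $(\Q\otimes\hat{\Z})^2$ and on the coefficients $\cM_k^{\con}(\Q_p^{\cycl})$. That simultaneous action is precisely the $*\gamma$-action of the preceding theorem, under which $z_{\mathbf{Eis}}(k)$ is \emph{invariant}; with your definition taken literally, $z_{\mathbf{Eis},c}(k)=(c^2-c^k)\,z_{\mathbf{Eis}}(k)$ would be a scalar multiple of the original distribution and could not integrate to $r^{-k}E^{(k)}_{c,r^{-1}a,r^{-1}b}$. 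The correct operator twists by $\bigl(\begin{smallmatrix}c&0\\0&c\end{smallmatrix}\bigr)$ on \emph{one} side only --- say post-composition on the coefficients, $\mu\mapsto(\phi\mapsto\mu(\phi)*\bigl(\begin{smallmatrix}c&0\\0&c\end{smallmatrix}\bigr))$, which by Proposition \ref{eis} sends $E^{(k)}_{\alpha,\beta}$ to $E^{(k)}_{\langle\langle c\rangle\rangle\alpha,\langle\langle c\rangle\rangle\beta}$; equivalently, by invariance of $z_{\mathbf{Eis}}(k)$, one may instead translate the source by $\bigl(\begin{smallmatrix}c&0\\0&c\end{smallmatrix}\bigr)$ without touching the coefficients. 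With that correction your box-integral computation gives exactly $r^{-k}E^{(k)}_{c,r^{-1}a,r^{-1}b}$ (and $r^{k-2}F^{(k)}_{c,r^{-1}a,r^{-1}b}$), and your equivariance argument goes through because the one-sided twist still commutes with every $*\gamma$, $\gamma\in\GL_2(\Q\otimes\hat{\Z})$, by centrality of $\bigl(\begin{smallmatrix}c&0\\0&c\end{smallmatrix}\bigr)$; the caveats you flag about $k=2$ and about coefficients landing in $\Q_p^{\cycl}$ rather than $\Q^{\cycl}$ are correctly handled.
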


Soient $c,d\in\Z_p^{*}$. Si $k\geq 2$ et $1\leq j\leq k-1$, on définit une distribution $z_{\mathbf{Eis},c,d}(k,j)$ appartient \`a $\fD_{\alg}(\bM_2(\Q\otimes\hat{\Z}),\cM_k(\Q^{\cycl}_p))$ par la formule:
   \[z_{\mathbf{Eis},c,d}(k,j)=\frac{1}{(j-1)!}z'_{\mathbf{Eis},c}(k-j)\otimes z_{\mathbf{Eis},d}(j).\] 
 Si $\gamma\in\GL_2(\Q\otimes\hat{\Z})$, on a $z_{\mathbf{Eis},c,d}(k,j)_{|_k}\gamma=|\det\gamma|^{j-1}z_{\mathbf{Eis},c,d}(k,j).$

\subsection{Famille $p$-adique de séries d'Eisenstein}\label{construction_fam_Eis}
\subsubsection{La fonction zêta $p$-adique }

Soit $\alpha\in\Q/\Z$; on note $\{\alpha\}$ le plus petit nombre rationnel positif tel que $\{\alpha\}\equiv \alpha\mod \Z$, et on note $\ord_p(\alpha)$ (resp. $\ord\alpha$) l'ordre de $\alpha$ dans le groupe $\Q_p/\Z_p$ (resp. $\Q/\Z$ ).

Dans la suite, on suppose que $\alpha\in \Q/\Z$ vérifie $\ord_p(\alpha)\geq p$.  
Si $x\in \Z_p$, on note $\ord(\alpha)(\{\alpha\}+x)$ par $x_{\alpha}$, qui appartient à $\Z_p^*$. 
 
Si $c\in\Z_p^*$, on note $\mu_c$ la mesure sur $\Z_p$ 
  dont la transformée d'Amice $\cA_{\mu_c}$ est donnée par $\frac{c^2}{T}-\frac{c}{(1+T)^{c^{-1}}-1}$. Par ailleurs, le caractère universel de l'espace des poids $\kappa^{\mathbf{univ}}:\Z_p^*\ra \cO(\sW)$ est une fonction continue sur $\Z_p^*$ à valeurs dans $\cO(\sW)$.  Si $1\leq j\in \Z$, on définit la fonction zêta $p$-adique de Hurwitz comme la fonction holomorphe sur $\sW$  
 \[\z_{p,c}(\kappa^{\mathbf{univ}},\alpha,j)=-\int_{\Z_p}\kappa^{\mathbf{univ}}(x_\alpha) \langle x_\alpha\rangle^{1-j}\mu_c.\] 
 
 \begin{lemma}
Si $ j,k$ sont deux entiers $\geq 1$, alors on a 
 \[ \mathbf{Ev}_{k+j}(\z_{p,c}(\kappa^{\mathbf{univ}},\alpha,j))=\ord(\alpha)^{k-1}\omega(0_{\alpha})^{j-1}(c^2\z(\alpha, 1-k)-c^{2-k}\z(\langle\langle c\rangle\rangle\alpha, 1-k)). \]
 \end{lemma}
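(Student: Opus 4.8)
The plan is to evaluate the defining integral at the point $k+j\in\sW$ and then to recognize the resulting moment of $\mu_c$ as a $c$-regularized special value of the Hurwitz zeta function. First I would commute $\mathbf{Ev}_{k+j}$ with the integral: since $\mathbf{Ev}_{k+j}\colon\cO(\sW)\to\Q_p$ is $\Q_p$-linear and continuous and $\mu_c$ is $\Z_p$-valued, one has $\mathbf{Ev}_{k+j}\bigl(\int_{\Z_p}\kappa^{\mathbf{univ}}(x_\alpha)\langle x_\alpha\rangle^{1-j}\mu_c\bigr)=\int_{\Z_p}\mathbf{Ev}_{k+j}(\kappa^{\mathbf{univ}}(x_\alpha))\,\langle x_\alpha\rangle^{1-j}\mu_c$. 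By the inclusion $\Z\subset\sW$, the point $k+j$ is the algebraic character $z\mapsto z^{k+j-2}$ on $\Z_p^*$, so $\mathbf{Ev}_{k+j}(\kappa^{\mathbf{univ}}(y))=y^{k+j-2}$ and hence $\mathbf{Ev}_{k+j}(\z_{p,c}(\kappa^{\mathbf{univ}},\alpha,j))=-\int_{\Z_p}x_\alpha^{\,k+j-2}\langle x_\alpha\rangle^{1-j}\mu_c$.

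Next I would simplify the integrand using the Teichmüller decomposition $x_\alpha=\omega(x_\alpha)\langle x_\alpha\rangle$. Because $x_\alpha=0_\alpha+\ord(\alpha)x$ and the standing hypothesis $\ord_p(\alpha)\geq p$ forces $p\mid\ord(\alpha)$, we get $x_\alpha\equiv 0_\alpha\pmod{p\Z_p}$, so that $\omega(x_\alpha)=\omega(0_\alpha)$ is independent of $x$. Then $x_\alpha^{\,k+j-2}\langle x_\alpha\rangle^{1-j}=\omega(0_\alpha)^{k+j-2}\langle x_\alpha\rangle^{k-1}=\omega(0_\alpha)^{j-1}x_\alpha^{\,k-1}$, using $\langle x_\alpha\rangle=\omega(0_\alpha)^{-1}x_\alpha$. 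Pulling out $\ord(\alpha)^{k-1}$ from $x_\alpha^{\,k-1}=\ord(\alpha)^{k-1}(\{\alpha\}+x)^{k-1}$, the claim is reduced to computing the moment $\int_{\Z_p}(\{\alpha\}+x)^{k-1}\mu_c$.

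The heart of the argument is this moment computation. Setting $T=e^t-1$ one gets $\int_{\Z_p}e^{t(\{\alpha\}+x)}\mu_c=e^{t\{\alpha\}}\cA_{\mu_c}(e^t-1)=e^{t\{\alpha\}}\bigl(\tfrac{c^2}{e^t-1}-\tfrac{c}{e^{c^{-1}t}-1}\bigr)$. Applying the Bernoulli-polynomial generating function $\tfrac{te^{at}}{e^t-1}=\sum_{n\geq 0}B_n(a)\tfrac{t^n}{n!}$ to the first term, and the substitution $u=c^{-1}t$ to the second (which rewrites it as $\tfrac{c^2}{t}\sum_{n}B_n(c\{\alpha\})c^{-n}\tfrac{t^n}{n!}$), the apparent pole at $t=0$ cancels, and reading off the coefficient of $t^{k-1}$ yields $\int_{\Z_p}(\{\alpha\}+x)^{k-1}\mu_c=\tfrac1k\bigl(c^2B_k(\{\alpha\})-c^{2-k}B_k(c\{\alpha\})\bigr)$. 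Equivalently, one can decompose $\mu_c=\lambda_0-(m_{c^{-1}})_*\lambda_1$, where $m_{c^{-1}}$ is multiplication by $c^{-1}$ on $\Z_p$ and $\lambda_0,\lambda_1$ are the Bernoulli pseudo-measures with Amice transforms $\tfrac{c^2}{T},\tfrac{c}{T}$; the shift in the second integral then becomes $c\{\alpha\}$, and $\int_{\Z_p}(a+x)^{k-1}\,d\lambda=\tfrac{B_k(a)}{k}$ for the pseudo-measure with Amice transform $\tfrac{1}{T}$ gives the same formula.

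Finally I would identify these Bernoulli values with zeta values. The special value of the Hurwitz zeta function is $\z(\beta,1-k)=-B_k(\{\beta\})/k$; in particular $-B_k(\{\alpha\})/k=\z(\alpha,1-k)$, and — tracking how the $p$-adic unit $c$ acts on the index (Proposition~\ref{eis}) and on the constant term of the Eisenstein series (Proposition~\ref{q-deve}) — the term $-B_k(c\{\alpha\})/k$ is $\z(\langle\langle c\rangle\rangle\alpha,1-k)$, which is exactly the identification already used to define the $c$-regularized series $F^{(k)}_{c,\alpha,\beta}$ in \eqref{variant}. Substituting into the formula of the previous paragraph produces $\mathbf{Ev}_{k+j}(\z_{p,c}(\kappa^{\mathbf{univ}},\alpha,j))=\ord(\alpha)^{k-1}\omega(0_\alpha)^{j-1}\bigl(c^2\z(\alpha,1-k)-c^{2-k}\z(\langle\langle c\rangle\rangle\alpha,1-k)\bigr)$, which is the claim. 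I expect the main obstacle to be the bookkeeping across these last two steps: one must treat the non-integral shift $\{\alpha\}\notin\Z_p$ correctly in the moment of $\mu_c$ (this is what forces the argument $c\{\alpha\}$ of the second Bernoulli polynomial), and then match that $p$-adic Bernoulli value with the twisted Hurwitz value $\z(\langle\langle c\rangle\rangle\alpha,1-k)$ consistently with the conventions fixed for $F^{(k)}_{c,\alpha,\beta}$.
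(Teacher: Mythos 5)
Your reduction to the moment $\int_{\Z_p}(\{\alpha\}+x)^{k-1}\mu_c$ coincides with the paper's first step (the paper records it as $\mathbf{Ev}_{k+j}(\z_{p,c}(\kappa^{\mathbf{univ}},\alpha,j))=-\omega(0_\alpha)^{j-1}\int_{\Z_p}x_\alpha^{k-1}\mu_c$), but your evaluation of that moment is a genuinely different route: you stay entirely inside formal power series over $\Q_p$, substituting $T=e^t-1$ in the Amice transform and invoking the Bernoulli generating function, which gives $\int_{\Z_p}(\{\alpha\}+x)^{k-1}\mu_c=\frac1k\bigl(c^2B_k(\{\alpha\})-c^{2-k}B_k(c\{\alpha\})\bigr)$ uniformly for all $c\in\Z_p^*$. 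The paper instead assumes $c^{-1}\in\N$, views the same generating function as the real-analytic function $f_\alpha(t)$, identifies its $k$-th derivative at $0$ with the value at $s=-k$ of the Mellin transform $L(f_\alpha,s)=c^2\z(1-\{\alpha\},s)-c^{s+1}\z(1-c\{\alpha\},s)$, uses the reflection of Hurwitz values, and only afterwards reaches general $c\in\Z_p^*$ by approximating $c$ by integers. Your computation is cleaner on this segment: no archimedean analysis, no analytic continuation, no continuity-in-$c$ argument, and it is correct (it reproduces for instance $\int_{\Z_p}\mu_c=\frac{c-c^2}{2}$).

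The gap is in your last step. You assert that $-B_k(c\{\alpha\})/k$ \emph{is} $\z(\langle\langle c\rangle\rangle\alpha,1-k)$, but with the paper's definition that value is $-B_k(\{\langle\langle c\rangle\rangle\alpha\})/k$, and for general $c\in\Z_p^*$ the arguments $c\{\alpha\}$ and $\{\langle\langle c\rangle\rangle\alpha\}$ differ by a nonzero integer, so the Bernoulli values do not coincide: take $\alpha=1/p$ and $c=1+p$, so that $c\{\alpha\}=\{\langle\langle c\rangle\rangle\alpha\}+1$ and $B_k(x+1)-B_k(x)=kx^{k-1}\neq 0$. This identification is precisely where the paper's proof invests its effort: the Mellin computation is carried out under the hypothesis $c^{-1}\in\N$, for which $c\{\alpha\}\in(0,1]$ is the honest fractional representative and the matching with the Hurwitz value is literal, and the case $c\in\Z_p^*$ is then obtained by choosing integers $c_n$ tending to $c$ in $\Z_p^*$ and to $\langle\langle c\rangle\rangle$ in $\hat{\Z}$ (a step which your uniform formula actually shows to be delicate, since $c_n\{\alpha\}$ converges $p$-adically to $c\{\alpha\}$ and not to $\{\langle\langle c\rangle\rangle\alpha\}$). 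So the ``bookkeeping'' you defer is not cosmetic: as written, your argument proves the formula with $B_k(c\{\alpha\})$, and you still need either a restriction-plus-limit argument of the paper's type, or an explicit justification of the convention allowing $B_k(c\{\alpha\})$ to be replaced by $B_k(\{\langle\langle c\rangle\rangle\alpha\})$, before the lemma in its stated form follows.
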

 \begin{proof}De la définition, on a bien $\textbf{Ev}_{k+j}(\z_{p,c}(\kappa^{\mathbf{univ}},\alpha,j))=-\omega(0_{\alpha})^{j-1}\int_{\Z_p}x_{\alpha}^{k-1}\mu_c$.
 On se ramène à vérifier que $\int_{\Z_p}x_{\alpha}^k\mu_c= -\ord(\alpha)^k (c^2\z(\alpha, -k)-c^{1-k}\z(\langle\langle c\rangle\rangle\alpha,-k))$.
 
 On utilise la transformée d'Amice $\cA_{\mu_c}$ de $\mu_c$ pour calculer l'intégration $\int_{\Z_p}x_\alpha^{k} \mu_c$. On suppose que $c^{-1}\in\N$. Si on pose $T=e^t-1$, on a alors 
 \begin{equation*}
 \begin{split}
 \int_{\Z_p}x_\alpha^{k} \mu_c(x)&=\ord(\alpha)^k(\frac{d}{dt})^{k}(\int_{\Z_p}e^{(\{\alpha\}+x)t}\mu_c)|_{t=0}\\
 &=\ord(\alpha)^k(\frac{d}{dt})^{k}(\int_{\Z_p}(T+1)^{(\{\alpha\}+x)}\mu_c)|_{t=0}\\ &=\ord(\alpha)^k(\frac{d}{dt})^{k}((T+1)^{\{\alpha\}}\cA_{\mu_c})|_{t=0}.
 \end{split}
 \end{equation*}
 On note $f_{\alpha}$ la fonction $t\mapsto(\frac{c^2}{e^t-1}-\frac{c}{e^{c^{-1}t}-1})\cdot e^{\{\alpha\}t}$, qui est $\cC^{\infty}$ sur $\R_{+}$. Comme pour tout $n\in\N$, $t^nf_\alpha(t)$ tend vers $0$ quand $t$ tend vers $+\infty$, on a 
 \begin{equation*}
 (\frac{d}{dt})^{k}((T+1)^{\{\alpha\}}\cA_{\mu_c})|_{t=0}=f^{(k)}_{\alpha}(0)=(-1)^kL(f_{\alpha}(t), -k),
 \end{equation*}
 où $L(f_{\alpha}(t), s)$ est définie par la formule $\frac{1}{\Gamma(s)}\int_0^{+\infty}f_{\alpha}(t) t^{s-1}dt $. 
 
 D'autre part, on a $\z(\{\alpha\}, s)=\frac{1}{\Gamma(s)}\int_{0}^{+\infty}\frac{e^{(1-\{\alpha\})t}}{e^t-1}t^{s-1}dt$. 
 Ceci implique que 
 \[L(f_{\alpha}(t), s)=c^2\z(1-\{\alpha\}, s)-c^{s+1}\z(1-c\{\alpha\}, s).\] On en déduit une égalité algébrique 
 \begin{equation*}
 \begin{split}
 \int_{\Z_p}x_{\alpha}^k\mu_c&=\ord(\alpha)^k(-1)^k(c^2\z(1-\{\alpha\}, -k)-c^{1-k}\z(1-c\{\alpha\}, -k))\\ 
 &=-\ord(\alpha)^k(c^2\z(\{\alpha\}, -k)-c^{1-k}\z(c\{\alpha\}, -k)),\end{split}
\end{equation*} 
où la dernière égalité se déduit du fait que $\z(\{\alpha\}, -k)=(-1)^{k}\z(1-\{\alpha\}, -k)$. 
Si $c\in\Z_p^*$, on choisit une suite $(c_n)$ d'éléments de $\N$ qui converge vers $c$ (resp. $\langle\langle c\rangle\rangle$) dans $\Z_p^{*}$ (resp. $\hat{\Z}$),
de telle sorte que la suite $(c_n\{\alpha\})$ tend vers $\{\langle\langle c\rangle\rangle \alpha \}$. Ceci permet de conclure.
 \end{proof}

\subsubsection{Une famille $p$-adique de formes modulaires}\label{espace}

Si $\alpha\in\Q/\Z$, on définit la série de Dirichlet formelle $\z(\alpha, \kappa^{\mathbf{univ}}, j, s)$ appartenant à $\Dir(\cO(\sW))$ par la formule:
\[\z(\alpha, \kappa^{\mathbf{univ}}, j, s)=\sum_{n\in\Q_+^*, n\equiv \alpha[\Z]}n^{-s}(n\ord(\alpha))^{1-j}\kappa^{\mathbf{univ}}(n\ord(\alpha)).\]
Par définition, on a $\textbf{Ev}_{k+j}(\z(\alpha, \kappa^{\mathbf{univ}}, j, s))=\ord(\alpha)^{k-1}\z(\alpha, s-k+1)$.

Si $1\leq j\in\N$ et $(\alpha,\beta)\in(\Q/\Z)^2$ avec $\ord_p\alpha\geq p$, on pose $\tilde{F}_{\alpha,\beta}(\kappa^{\mathbf{univ}},j)=\sum_{m\in\Q_+^{*}} a_mq^m$ dont sa série de Dirichlet formelle associée est 
\begin{equation}\label{famq-dev}
\z(\alpha, \kappa^{\mathbf{univ}}, j, s)\z^*(\beta,s )+\kappa^{\mathbf{univ}}(-1)(-1)^{2-j}\z(-\alpha, \kappa^{\mathbf{univ}}, j, s)\z^*(-\beta,s ).
\end{equation}

Si $c\in\Z_p^*$, on définit deux séries formelles appartenant à $\cup_{M\in\N}\cO(\sW)[[q_M]]$: 
\begin{equation}
\begin{split}
\tilde{F}_{c,\alpha,\beta}(\kappa^{\mathbf{univ}},j)&=c^2\tilde{F}_{\alpha,\beta}(\kappa^{\mathbf{univ}},j)- \kappa^{\mathbf{univ}}(c^{-1})c^j\tilde{F}_{\langle\langle c\rangle\rangle\alpha,\langle\langle c\rangle\rangle\beta}(\kappa^{\mathbf{univ}},j) \text{ et }
\end{split}
\end{equation}
\begin{equation}
\begin{split}
F_{c,\alpha,\beta}(\kappa^{\mathbf{univ}},j)&=\omega(0_\alpha)^{1-j} \cdot \z_{p,c}(\kappa^{\mathbf{univ}},\alpha,j)+\tilde{F}_{c,\alpha,\beta}(\kappa^{\mathbf{univ}},j).
\end{split}
\end{equation}

\begin{defn}Une famille $p$-adique de formes modulaires sur l'espace des poids $\sW$ est la donnée d'une série formelle $F(q)=\sum_{n\in \Q_{+}} A_nq^n\in \cup_{M\in\Z}\cO(\sW)[[q_M]]$ telle que pour presque tout point $k\in\N\subset \sW$, $\textbf{Ev}_{k}(F(q))$ est le $q$-développement d'une forme modulaire classique.
\end{defn}

\begin{remark}\label{dense}Soit $F$ une famille $p$-adique de formes modulaires sur l'espace des poids. Si $Z$ est un sous-ensemble très Zariski-dense de $\sW$, tel que  $\textbf{Ev}_k(F)=0$ pour tout $k\in Z$,  alors $F=0$.
\end{remark}
\begin{lemma}\label{special}
Si $k,j$ sont deux entiers $\geq 1 $, on a
 \[\mathbf{Ev}_{k+j}(F_{c,\alpha,\beta}(\kappa^{\mathbf{univ}},j))=( \ord\alpha)^{k-1}(c^2F_{\alpha,\beta}^{(k)}-c^{2-k}F_{\langle\langle c\rangle\rangle\alpha,\langle\langle c\rangle\rangle\beta}^{(k)}).\]
 Par conséquent, la série formelle $F_{c,\alpha,\beta}(\kappa^{\mathbf{univ}},j)$ est une famille de formes modulaires sur l'espace des poids.
\end{lemma}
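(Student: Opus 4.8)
Le plan est de partir de la définition
\[F_{c,\alpha,\beta}(\kappa^{\mathbf{univ}},j)=\omega(0_\alpha)^{1-j}\,\z_{p,c}(\kappa^{\mathbf{univ}},\alpha,j)+\tilde{F}_{c,\alpha,\beta}(\kappa^{\mathbf{univ}},j)\]
et d'appliquer l'évaluation $\mathbf{Ev}_{k+j}$ à chacun des deux termes, l'ingrédient élémentaire étant que le point $k+j$ de $\sW$ correspond au caractère algébrique $z\mapsto z^{k+j-2}$, donc $\mathbf{Ev}_{k+j}(\kappa^{\mathbf{univ}}(u))=u^{k+j-2}$ pour tout $u\in\Z_p^*$; en particulier $\mathbf{Ev}_{k+j}(\kappa^{\mathbf{univ}}(-1))=(-1)^{k+j-2}$ et $\mathbf{Ev}_{k+j}(\kappa^{\mathbf{univ}}(c^{-1}))=c^{2-k-j}$. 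On s'attend à ce que le terme $\tilde{F}_{c,\alpha,\beta}(\kappa^{\mathbf{univ}},j)$ produise la partie non constante du $q$-développement et que la correction $\z_{p,c}$ en produise le terme constant.

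Je commencerais par $\tilde{F}_{c,\alpha,\beta}(\kappa^{\mathbf{univ}},j)$. La série $\tilde{F}_{\alpha,\beta}(\kappa^{\mathbf{univ}},j)=\sum_{m\in\Q_+^*}a_mq^m$ étant définie par la seule donnée de sa série de Dirichlet $(\ref{famq-dev})$, elle n'a pas de terme constant, et il en est de même de $\tilde{F}_{c,\alpha,\beta}(\kappa^{\mathbf{univ}},j)$. En appliquant $\mathbf{Ev}_{k+j}$ à $(\ref{famq-dev})$, en utilisant $\mathbf{Ev}_{k+j}(\z(\alpha,\kappa^{\mathbf{univ}},j,s))=\ord(\alpha)^{k-1}\z(\alpha,s-k+1)$, $\ord(-\alpha)=\ord(\alpha)$ et $\mathbf{Ev}_{k+j}(\kappa^{\mathbf{univ}}(-1)(-1)^{2-j})=(-1)^{k+j-2}(-1)^{2-j}=(-1)^k$, on trouve $\ord(\alpha)^{k-1}$ fois la série de Dirichlet $\z(\alpha,s-k+1)\z^*(\beta,s)+(-1)^k\z(-\alpha,s-k+1)\z^*(-\beta,s)$, qui est celle de la partie non constante de $F_{\alpha,\beta}^{(k)}$ d'après la Proposition~\ref{q-deve}~(ii). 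En refaisant ce calcul pour le second terme de $\tilde{F}_{c,\alpha,\beta}(\kappa^{\mathbf{univ}},j)$, avec $\mathbf{Ev}_{k+j}(\kappa^{\mathbf{univ}}(c^{-1})c^j)=c^{2-k}$ et $\ord(\langle\langle c\rangle\rangle\alpha)=\ord(\alpha)$ (car $c\in\Z_p^*$ ne modifie que la $p$-partie de l'ordre), on en déduit que la partie non constante de $\mathbf{Ev}_{k+j}(\tilde{F}_{c,\alpha,\beta}(\kappa^{\mathbf{univ}},j))$ est $\ord(\alpha)^{k-1}$ fois celle de $c^2F_{\alpha,\beta}^{(k)}-c^{2-k}F_{\langle\langle c\rangle\rangle\alpha,\langle\langle c\rangle\rangle\beta}^{(k)}$.

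Je traiterais ensuite le terme $\omega(0_\alpha)^{1-j}\,\z_{p,c}(\kappa^{\mathbf{univ}},\alpha,j)$ au moyen du lemme précédent, qui donne $\mathbf{Ev}_{k+j}(\z_{p,c}(\kappa^{\mathbf{univ}},\alpha,j))=\ord(\alpha)^{k-1}\omega(0_\alpha)^{j-1}(c^2\z(\alpha,1-k)-c^{2-k}\z(\langle\langle c\rangle\rangle\alpha,1-k))$: les puissances de Teichmüller $\omega(0_\alpha)^{1-j}$ et $\omega(0_\alpha)^{j-1}$ se simplifient et il reste le scalaire (constant en $q$) $\ord(\alpha)^{k-1}(c^2\z(\alpha,1-k)-c^{2-k}\z(\langle\langle c\rangle\rangle\alpha,1-k))$. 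Comme l'hypothèse $\ord_p(\alpha)\geq p$ force $\alpha\neq0$, et donc aussi $\langle\langle c\rangle\rangle\alpha\neq0$, la Proposition~\ref{q-deve}~(ii) identifie ce scalaire à $\ord(\alpha)^{k-1}$ fois le terme constant de $c^2F_{\alpha,\beta}^{(k)}-c^{2-k}F_{\langle\langle c\rangle\rangle\alpha,\langle\langle c\rangle\rangle\beta}^{(k)}$ (y compris en poids $k=1$, où ce terme constant vaut $\z(\alpha,0)$ puisque $\alpha\neq0$). En ajoutant les deux morceaux on obtient l'égalité annoncée. Enfin, $F_{\alpha,\beta}^{(k)}$ et $F_{\langle\langle c\rangle\rangle\alpha,\langle\langle c\rangle\rangle\beta}^{(k)}$ étant des formes modulaires classiques de poids $k$ d'après la Proposition~\ref{EF}~(2)(b) (valable même en poids $2$ puisque $\alpha\neq0$), le membre de droite est le $q$-développement d'une forme modulaire classique; donc $\mathbf{Ev}_{w}(F_{c,\alpha,\beta}(\kappa^{\mathbf{univ}},j))$ est le $q$-développement d'une forme modulaire classique pour tout entier $w=k+j\geq j+1$, c'est-à-dire pour presque tout $w\in\N$, ce qui établit que $F_{c,\alpha,\beta}(\kappa^{\mathbf{univ}},j)$ est une famille $p$-adique de formes modulaires sur $\sW$.

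L'obstacle principal sera le traitement des termes constants: puisque $\tilde{F}_{c,\alpha,\beta}(\kappa^{\mathbf{univ}},j)$ n'en a aucun, tout le terme constant du membre de droite doit être fourni par la correction $\z_{p,c}$, ce qui oblige à contrôler précisément l'annulation des puissances de Teichmüller et à se servir de l'hypothèse $\ord_p(\alpha)\geq p$ pour écarter le cas dégénéré $\alpha=0$ de la formule du terme constant en poids $1$. Le reste n'est qu'un calcul d'évaluation.
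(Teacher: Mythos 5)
Votre démonstration est correcte et suit exactement la voie que l'article indique (le texte se contente d'écrire que le lemme est « une conséquence directe de la définition de $F_{c,\alpha,\beta}(\kappa^{\mathbf{univ}},j)$ et de la proposition \ref{q-deve} ») : vous explicitez l'évaluation du caractère universel en $k+j$, le lemme sur $\z_{p,c}$ pour le terme constant et la proposition \ref{q-deve} pour la partie non constante. Le soin apporté au cas $k=1$ et à l'hypothèse $\ord_p(\alpha)\geq p$ (qui exclut $\alpha=0$) est exactement le point qu'il fallait contrôler.
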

\begin{proof}C'est une conséquence directe de la définition de $F_{c,\alpha,\beta}(\kappa^{\mathbf{univ}},j)$ et de la proposition \ref{q-deve}.
\end{proof}

\begin{lemma}\label{distifam}Soit $(\alpha,\beta)\in(\Q/\Z)^2$ tel que $\ord_p\alpha\geq p$, la famille $F_{c,\alpha,\beta}(\kappa^{\mathbf{univ}},j)$ satisfait les relations de distribution suivantes, quel que soit l'entier $f\geq 1$:
\begin{equation}
\begin{split}
\sum_{f\alpha'=\alpha; f\beta'=\beta}F_{c,\alpha',\beta'}(\kappa^{\mathbf{univ}},j)&=f F_{c,\alpha,\beta}(\kappa^{\mathbf{univ}},j), 
\\ \sum_{ f\beta'=\beta}F_{c,\alpha,\beta'}(\kappa^{\mathbf{univ}},j)(q^{\frac{1}{f}})&=f F_{c,\alpha,\beta}(\kappa^{\mathbf{univ}},j)(q).
\end{split}
\end{equation}
\end{lemma}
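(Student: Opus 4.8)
The plan is to verify both relations directly on the formal $q$-expansions in $\bigcup_{M}\cO(\sW)[[q_M]]$, treating the constant term and the ``positive part'' separately. By construction the constant term of $F_{c,\alpha,\beta}(\kappa^{\mathbf{univ}},j)$ equals $\omega(0_\alpha)^{1-j}\z_{p,c}(\kappa^{\mathbf{univ}},\alpha,j)$ and in particular does not depend on $\beta$, while the positive part $\tilde F_{c,\alpha,\beta}(\kappa^{\mathbf{univ}},j)$ is the series prescribed by the formal Dirichlet series (\ref{famq-dev}) together with the $c$-combination defining $\tilde F_{c,\alpha,\beta}$. Since a series in $\bigcup_M\cO(\sW)[[q_M]]$ is determined by its constant term together with its associated Dirichlet series, and since $q\mapsto q^{1/f}$ leaves the constant term unchanged and multiplies the Dirichlet series by $f^{s}$, the two relations reduce to three ``building block'' distribution relations: one for $\z^{*}(\beta,s)$, one for $\z(\alpha,\kappa^{\mathbf{univ}},j,s)$, and one for $\omega(0_\alpha)^{1-j}\z_{p,c}(\kappa^{\mathbf{univ}},\alpha,j)$; on the constant-term side of the second relation the factor $f$ simply records $\#\{\beta'\,:\,f\beta'=\beta\}=f$, so nothing is left to prove there. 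One could also run the whole argument through $\mathbf{Ev}_{k+j}$, $k\geq 1$, using that both sides are $p$-adic families of modular forms and Remark~\ref{dense}, and then invoke Lemma~\ref{special} to reduce to the classical relations (\ref{rel}), (\ref{rel1}) of Lemma~\ref{lemma15}; the direct route is however cleaner.

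For $\z^{*}$, the relation $\sum_{f\beta'=\beta}\z^{*}(\beta',s)=f^{1-s}\z^{*}(\beta,s)$ is immediate from $\z^{*}(\beta,s)=\sum_{n\geq 1}e^{2i\pi\beta n}n^{-s}$ and orthogonality of additive characters modulo $f$. For $\z(\alpha,\kappa^{\mathbf{univ}},j,s)=\sum_{n\equiv\alpha}n^{-s}(n\,\ord(\alpha))^{1-j}\kappa^{\mathbf{univ}}(n\,\ord(\alpha))$ I would re-index the sum $\sum_{f\alpha'=\alpha}\z(\alpha',\kappa^{\mathbf{univ}},j,s)$ by $n\mapsto n/f$: as $\alpha'$ runs over the $f$ solutions of $f\alpha'=\alpha$, the variable $n$ runs over all positive rationals with $fn\equiv\alpha\pmod{\Z}$, and $n\,\ord(\bar n)$ (with $\bar n$ the image of $n$ in $\Q/\Z$) is nothing but the numerator of $n$ in lowest terms. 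This is precisely the place where the standing hypothesis $\ord_p\alpha\geq p$ enters: it forces $p\mid\ord(\alpha')$ for every such $\alpha'$, hence keeps these numerators in $\Z_p^{*}$ so that $\kappa^{\mathbf{univ}}$ may be applied term by term, and after the bookkeeping one gets a distribution relation of the shape $\sum_{f\alpha'=\alpha}\z(\alpha',\kappa^{\mathbf{univ}},j,s)=f^{s}\z(\alpha,\kappa^{\mathbf{univ}},j,s)$ (the variant summed over $\beta'$ alone being trivial, $\z(\alpha,\kappa^{\mathbf{univ}},j,s)$ not involving $\beta$). Feeding these into (\ref{famq-dev}), and using that $\langle\langle c\rangle\rangle$ acts on $(\Q/\Z)^{2}$, via Proposition~\ref{eis}, as a bijection commuting with multiplication by $f$ and preserving orders, yields the required identities for the Dirichlet series of $\tilde F_{c,\alpha,\beta}(\kappa^{\mathbf{univ}},j)$: in the first relation the powers $f^{s}$ and $f^{1-s}$ collapse to the single factor $f$, and in the second relation the extra $f^{s}$ coming from $q\mapsto q^{1/f}$ plays the same role.

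It remains to handle the constant term, i.e., the relation $\sum_{f\alpha'=\alpha}\omega(0_{\alpha'})^{1-j}\z_{p,c}(\kappa^{\mathbf{univ}},\alpha',j)=\omega(0_\alpha)^{1-j}\z_{p,c}(\kappa^{\mathbf{univ}},\alpha,j)$. Starting from $\z_{p,c}(\kappa^{\mathbf{univ}},\alpha',j)=-\int_{\Z_p}\kappa^{\mathbf{univ}}(x_{\alpha'})\langle x_{\alpha'}\rangle^{1-j}\mu_c$ and observing that $x_{\alpha'}\equiv\ord(\alpha')\{\alpha'\}\pmod{p}$ (because $p\mid\ord(\alpha')$), so that $\omega(x_{\alpha'})=\omega(0_{\alpha'})$ and hence $\omega(0_{\alpha'})^{1-j}\z_{p,c}(\kappa^{\mathbf{univ}},\alpha',j)=-\int_{\Z_p}\kappa^{\mathbf{univ}}(x_{\alpha'})\,x_{\alpha'}^{1-j}\mu_c$, I would conclude by the change of variable that turns the $f$ affine substitutions $x\mapsto\ord(\alpha')(\{\alpha'\}+x)$ attached to the $\alpha'$ into the single one attached to $\alpha$. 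If one prefers to avoid manipulating $\mu_c$ directly, one can instead note that both sides are holomorphic on $\sW$, compare them on the Zariski-dense set of weights $k+j$, $k\geq 1$, and use the lemma computing $\mathbf{Ev}_{k+j}(\z_{p,c}(\kappa^{\mathbf{univ}},\alpha,j))$ to turn the statement into a distribution relation among the classical special values $\z(\alpha',1-k)$, which is checked by hand.

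The hard part is these distribution relations for $\z(\alpha,\kappa^{\mathbf{univ}},j,s)$ and for $\z_{p,c}(\kappa^{\mathbf{univ}},\alpha,j)$. In contrast with the classical Eisenstein-Kronecker relations of Lemma~\ref{lemma15}, the order function $\ord$ now appears in the normalisation, and the whole difficulty is to make it interact correctly with the $p$-adic character $\kappa^{\mathbf{univ}}$ under the distribution sum; this is the reason the hypothesis $\ord_p\alpha\geq p$ is indispensable, and it requires a careful analysis of the $p$-adic valuations of $f$ and of $\ord\alpha$. Once those two relations are established, the rest — the $\z^{*}$ relation, the assembly of the constant and positive parts, and the invariance of the statement under the $c$-modification — is routine.
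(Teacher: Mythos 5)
Your proposal is correct in outline but takes a genuinely different route from the paper. The paper's proof is precisely the short argument you mention in passing and then set aside: apply $\mathbf{Ev}_{k}$ for $k>2+j$, use Lemma \ref{special} to write $\mathbf{Ev}_{k}(F_{c,\alpha',\beta'}(\kappa^{\mathbf{univ}},j))=(\ord\alpha')^{k-j-1}F^{(k-j)}_{c,\alpha',\beta'}$, note that $\ord(\alpha')=f\,\ord(\alpha)$ for every $\alpha'$ with $f\alpha'=\alpha$ so that this factor is constant across the sum, invoke the classical relations (\ref{rel}), (\ref{rel1}) of Lemma \ref{lemma15}, and conclude by Zariski density of the classical weights (Remark \ref{dense}). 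That is a three-line computation. Your direct verification on the formal $q$-expansion instead requires you to establish from scratch the two relations you rightly single out as ``the hard part'' --- for $\z(\alpha,\kappa^{\mathbf{univ}},j,s)$ and for the constant term $\omega(0_\alpha)^{1-j}\z_{p,c}(\kappa^{\mathbf{univ}},\alpha,j)$ --- which the paper never needs in isolation. What your route buys is a coefficient-by-coefficient identity of $q$-expansions that does not pass through interpolation; what it costs is that the decisive ``bookkeeping'' is only sketched, and it is exactly there that all the content sits, so your claim that the direct route is ``cleaner'' is debatable.

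One caveat applies equally to your argument and to the paper's, and you come close to naming it: both rest on the identity $\ord(\alpha')=f\,\ord(\alpha)$ for \emph{all} $\alpha'$ with $f\alpha'=\alpha$ (in your language, $N(n'/f)=N(n')$ for the numerators, so that $\kappa^{\mathbf{univ}}$ commutes with the substitution $n\mapsto n/f$). This holds exactly when every prime divisor of $f$ divides $\ord(\alpha)$ --- in particular for $f$ a power of $p$, thanks to $\ord_p\alpha\geq p$, which is what is used downstream --- but it fails for a general integer $f$: for $p$ odd, $\alpha=1/p$ and $f=2$, the solution $\alpha'=(1+p)/2p$ has order $p$, not $2p$, so the factor $(\ord\alpha')^{k-j-1}$ cannot be pulled out of the sum and your relation $\sum_{f\alpha'=\alpha}\z(\alpha',\kappa^{\mathbf{univ}},j,s)=f^{s}\z(\alpha,\kappa^{\mathbf{univ}},j,s)$ breaks in the same place. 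So the ``careful analysis of the $p$-adic valuations of $f$ and of $\ord\alpha$'' you defer is not mere bookkeeping: it forces a restriction on $f$ that neither you nor the paper makes explicit, and your write-up should either impose it or explain why the unrestricted statement still holds.
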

\begin{proof}On donne une démonstration pour la première relation et la deuxième se démontre de la même façon.
Si $f\alpha'=\alpha$ et $f\alpha''=\alpha$, on a $\ord(\alpha')=\ord(\alpha'')=\ord(\alpha)f$. En utilisant le lemme \ref{lemma15} et le lemma $\ref{special}$, on déduit la relation de distribution pour $\textbf{Ev}_{k}(F_{\alpha,\beta,c}(\kappa^{\mathbf{univ}},j))$ pour tout  $k>2+j$:
\begin{equation*}
\begin{split}
\textbf{Ev}_{k}(\sum_{f\alpha'=\alpha; f\beta'=\beta}F_{c,\alpha',\beta'}(\kappa^{\mathbf{univ}},j))&=\sum_{f\alpha'=\alpha; f\beta'=\beta}(\ord(\alpha)f)^{k-j-1}  F_{c,\alpha',\beta'}^{(k-j)}\\
&=(\ord(\alpha)f)^{k-j-1}f^{2-k+j} F_{c,\alpha,\beta}^{(k-j)}\\
&=f \textbf{Ev}_{k}(F_{c,\alpha,\beta}(\kappa^{\mathbf{univ}},j)).
\end{split}
\end{equation*}
On conclut le lemme par le remarque $\ref{dense}$.
\end{proof}
On définit une action de $\hat{\Gamma}_0(p)$ sur $F_{c,\alpha,\beta}(\kappa^{\mathbf{univ}},j)$ par la formule:
\[  F_{c,\alpha,\beta}(\kappa^{\mathbf{univ}},j)*\gamma=F_{c, a_0\alpha+c_0\beta,b_0\alpha+d_0\beta}(\kappa^{\mathbf{univ}},j) , \text{ si } \gamma=\bigl(\begin{smallmatrix}a_0 & b_0\\ c_0 &
d_0\end{smallmatrix}\bigr)\in\hat{\Gamma}_0(p),\]
qui commute avec l'application d'évaluation $\textbf{Ev}_k$. 
\subsubsection{La distribution $z_{\mathbf{Eis},c,d}(\kappa^{\mathbf{univ}},j)$}\label{discom}

Dans ce paragraphe, on construit une famille de distributions d'Eisenstein qui interpole en poids $k$ la restriction de la distribution d'Eisenstein $z_{\mathbf{Eis,c,d}}(k,j)$ sur $\bM_2(\Q\otimes\hat{\Z})^{(p)}$ à $\bM_2^{(p)}$, où $\bM_2^{(p)}=\bM_2(\Q\otimes\hat{\Z})^{]p[}\times \I_0(p)$ .

La relation de distribution (c.f. Lemme $\ref{distifam}$) pour $F_{c,\alpha,\beta}(\kappa^{\mathbf{univ}},j)$ se traduit en terme de distribution:
\begin{theo}\label{efamdis}Si $1\leq j\in\N$, il existe une distribution alg\'ebrique  
\[z'_{\mathbf{Eis},c}(\kappa^{\mathbf{univ}},j)\in
\fD_{\alg}((\Q\otimes\hat{\Z})^{(p)}\times (\Q\otimes\hat{\Z}),\cM^{\con}(\Q^{\cycl}\otimes\cO(\sW)))\] v\'erifiant:
soient $ 0\neq r\in \Q^*$ et $(a,b)\in \Q^2$ tels que $a\in \Z_p^*$ et $v_p(r)\geq 1$, on a
\[\int_{(a+r\hat{\Z})\times(b+r\hat{\Z})}z'_{\mathbf{Eis},c}(\kappa^{\mathbf{univ}}-j)=r^{-j}
\ord(r^{-1}a)^{j-1}\kappa^{\mathbf{univ}}(\frac{r}{\ord(r^{-1}a)})F_{c,r^{-1}a,r^{-1}b}(\kappa^{\mathbf{univ}},j).\]
De plus, si $\gamma\in\hat{\Gamma}_0(p)$, on a $z'_{\mathbf{Eis},c}(\kappa^{\mathbf{univ}},j)*\gamma=z'_{\mathbf{Eis},c}(\kappa^{\mathbf{univ}},j)$.
\end{theo}

 En utilisant le fait que le produit d'une forme modulaire classique et d'une famille $p$-adique de formes modulaires  est encore une famille $p$-adique de formes modulaires, on d\'efinit
\[z_{\mathbf{Eis},c,d}(\kappa^{\mathbf{univ}},j)=\frac{1}{(j-1)!}z'_{\mathbf{Eis},c}(\kappa^{\mathbf{univ}},j)\otimes z_{\mathbf{Eis},d}(j)\in\fD_{\alg}(\bM_2^{(p)},\cM(\overline{\Q}\otimes\cO(\sW))).\]

D'après les constructions dans les th\'eor\`emes (\ref{eiskj}) et (\ref{efamdis}),  la distribution $z_{\mathbf{Eis},c,d}(\kappa^{\mathbf{univ}},j)$ possède la propri\'et\'e suivante:
\begin{prop}Si $k\geq 2$ et si $1\leq j\leq k-1$, on a 
\[\mathbf{Ev}_k( z_{\mathbf{Eis},c,d}(\kappa^{\mathbf{univ}},j))=z_{\mathbf{Eis},c,d}(k,j).\]
\end{prop}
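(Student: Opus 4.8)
The plan is to peel off the classical factor $z_{\mathbf{Eis},d}(j)$, reduce to a statement about $z'_{\mathbf{Eis},c}$ alone, and then compare the two distributions on a spanning family of compact opens by pushing $\mathbf{Ev}_k$ through the explicit integral formula of Theorem~\ref{efamdis}.

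First I would use the defining tensor decompositions
\[z_{\mathbf{Eis},c,d}(\kappa^{\mathbf{univ}},j)=\tfrac{1}{(j-1)!}\,z'_{\mathbf{Eis},c}(\kappa^{\mathbf{univ}},j)\otimes z_{\mathbf{Eis},d}(j),\qquad z_{\mathbf{Eis},c,d}(k,j)=\tfrac{1}{(j-1)!}\,z'_{\mathbf{Eis},c}(k-j)\otimes z_{\mathbf{Eis},d}(j).\]
Since $z_{\mathbf{Eis},d}(j)$ is a fixed classical weight-$j$ Eisenstein distribution and multiplying a $p$-adic family of modular forms by a fixed classical modular form commutes with $\mathbf{Ev}_k$ term by term on the $q$-expansion (compatibly with the identification of $(\Q\otimes\hat\Z)^2\times(\Q\otimes\hat\Z)^2$ with $\bM_2(\Q\otimes\hat\Z)$ used to form the $\otimes$-product), it suffices to prove
\[\mathbf{Ev}_k\big(z'_{\mathbf{Eis},c}(\kappa^{\mathbf{univ}},j)\big)=z'_{\mathbf{Eis},c}(k-j)\]
as distributions on $(\Q\otimes\hat\Z)^{(p)}\times(\Q\otimes\hat\Z)$, the right-hand side being the restriction to this subspace of the weight-$(k-j)$ distribution of Proposition~\ref{eiskj}; this makes sense since $k-j\ge1$.

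Next, both sides are determined by their integrals over the balls $(a+r\hat\Z)\times(b+r\hat\Z)$ with $a\in\Z_p^*$, $v_p(r)\ge1$: the indicators of such balls (with $r$ running over rationals of $p$-adic valuation $\ge1$ and arbitrary valuation elsewhere) span the locally constant compactly supported functions on $(\Q\otimes\hat\Z)^{(p)}\times(\Q\otimes\hat\Z)$. Using that $\mathbf{Ev}_k$ commutes with integration against a distribution, Theorem~\ref{efamdis} gives
\[\int_{(a+r\hat\Z)\times(b+r\hat\Z)}\mathbf{Ev}_k\big(z'_{\mathbf{Eis},c}(\kappa^{\mathbf{univ}},j)\big)=\mathbf{Ev}_k\Big(r^{-j}\,\ord(r^{-1}a)^{j-1}\,\kappa^{\mathbf{univ}}\big(\tfrac{r}{\ord(r^{-1}a)}\big)\,F_{c,r^{-1}a,r^{-1}b}(\kappa^{\mathbf{univ}},j)\Big).\]
Since the point $k$ corresponds to the character $z\mapsto z^{k-2}$ under $\Z\subset\sW$, we have $\mathbf{Ev}_k(\kappa^{\mathbf{univ}}(x))=x^{k-2}$; and Lemma~\ref{special}, applied with $k-j$ in place of its $k$, yields $\mathbf{Ev}_k\big(F_{c,r^{-1}a,r^{-1}b}(\kappa^{\mathbf{univ}},j)\big)=\ord(r^{-1}a)^{k-j-1}F^{(k-j)}_{c,r^{-1}a,r^{-1}b}$. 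Collecting exponents, the powers of $\ord(r^{-1}a)$ cancel, namely $(j-1)-(k-2)+(k-j-1)=0$, while the powers of $r$ combine to $-j+(k-2)=(k-j)-2$; hence the right-hand side equals $r^{(k-j)-2}F^{(k-j)}_{c,r^{-1}a,r^{-1}b}$, which by Proposition~\ref{eiskj} in weight $k-j$ is exactly $\int_{(a+r\hat\Z)\times(b+r\hat\Z)}z'_{\mathbf{Eis},c}(k-j)$. As the two distributions agree on a spanning family, they coincide; reinstating the factor $\tfrac{1}{(j-1)!}\,z_{\mathbf{Eis},d}(j)$ then gives $\mathbf{Ev}_k(z_{\mathbf{Eis},c,d}(\kappa^{\mathbf{univ}},j))=z_{\mathbf{Eis},c,d}(k,j)$ (the latter being understood as restricted to $\bM_2^{(p)}$).

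The exponent bookkeeping above is forced by the defining properties in Theorem~\ref{efamdis} and Proposition~\ref{eiskj}, so the only points needing real care are the two commutation assertions: that $\mathbf{Ev}_k$ passes through integration against $z'_{\mathbf{Eis},c}(\kappa^{\mathbf{univ}},j)$, and that it passes through the $\otimes$-product, i.e.\ that evaluating the product of a fixed classical weight-$j$ form with a $p$-adic family at $k$ is the product of that classical form with the specialized family, uniformly in the $q$-parameter. I would also check once and for all that the constraints $a\in\Z_p^*$, $v_p(r)\ge1$ of Theorem~\ref{efamdis} indeed cut out enough compact opens to determine a distribution on $(\Q\otimes\hat\Z)^{(p)}\times(\Q\otimes\hat\Z)$ — this is exactly where the restriction of the support to $\bM_2^{(p)}$ (rather than all of $\bM_2(\Q\otimes\hat\Z)$) is used.
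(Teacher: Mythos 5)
Your proposal is correct and follows essentially the same route as the paper: reduce to $\mathbf{Ev}_k(z'_{\mathbf{Eis},c}(\kappa^{\mathbf{univ}},j))=z'_{\mathbf{Eis},c}(k-j)$, evaluate on the balls $(a+r\hat{\Z})\times(b+r\hat{\Z})$ via Théorème~\ref{efamdis}, apply $\mathbf{Ev}_k(\kappa^{\mathbf{univ}}(x))=x^{k-2}$ together with Lemme~\ref{special}, and observe that the powers of $\ord(r^{-1}a)$ cancel while the powers of $r$ give $r^{k-j-2}$, matching Proposition~\ref{eiskj}. Your exponent bookkeeping agrees with the paper's, and the extra commutation checks you flag are implicit there.
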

\begin{proof}Il suffit de montrer que \[\textbf{Ev}_k(z'_{\mathbf{Eis},c}(\kappa^{\mathbf{univ}},j))=z'_{\mathbf{Eis},c}(k-j).\]  Si $ 0\neq r\in \Q^*$ et $(a,b)\in \Q^2$ tels que $a\in \Z_p^*$ et $v_p(r)\geq 1$, on a
\[\textbf{Ev}_k(\int_{(a+r\hat{\Z})\times(b+r\hat{\Z})}z'_{\mathbf{Eis},c}(\kappa^{\mathbf{univ}},j))=
(\ord r^{-1}a)^{j+1-k} r^{k-2-j} \textbf{Ev}_k(F_{c,r^{-1}a,r^{-1}b} (\kappa^{\mathbf{univ}},j)).\]
Par ailleurs, on a $\ord\frac{\langle\langle c\rangle\rangle a}{r}=\prod_l \ord_l\frac{\langle\langle c\rangle\rangle a}{r}=\ord \frac{a}{r}$ et donc 
\begin{equation*}
\textbf{Ev}_k(F_{c,r^{-1}a,r^{-1}b}(\kappa^{\mathbf{univ}},j))
=(\ord \frac{a}{r})^{k-j-1}F^{(k-j)}_{c, \frac{a}{r}, \frac{b}{r}}.
\end{equation*}
Ceci implique que 
\[\textbf{Ev}_k(\int_{(a+r\hat{\Z})\times(b+r\hat{\Z})}z'_{\mathbf{Eis},c}(\kappa^{\mathbf{univ}},j))=r^{k-j-2}F^{(k-j)}_{c, \frac{a}{r}, \frac{b}{r}}=\int_{(a+r\hat{\Z})\times(b+r\hat{\Z})}z'_{\mathbf{Eis},c}(k-j).\]
\end{proof}

\section{La loi de réciprocité explicite de Kato en famille }
\subsection{Préliminaire }
Dans les deux premiers paragraphes, on rappelle la méthode de Tate-Sen-Colmez utilisée dans $\cite{Wang}$. Ensuite, on établit une version entière de la description explicite de la cohomologie des représentations  analytiques du groupe $P_{\fK_M}$ dans $\S \ref{technique}$.
\subsubsection{L'anneau $\fK^+$}
Soit $\fK^+=\Q_p\{\frac{q}{p}\}$ l'alg\`ebre des fonctions analytiques sur la boule $v_p(q)\geq 1$ \`a coefficients dans $\Q_p$;  c'est un anneau principal complet pour la valuation $v_{p,\fK}$ d\'efinie par la formule:
\[ v_{p,\fK}(f)=\inf_{n\in\N}v_p(a_n), \text{ si } f=\sum_{n\in\N}a_n(\frac{q}{p})^n\in \fK^+ .\]
Dans la suite, on notera $v_p$ au lieu de $v_{p,\fK}$\footnote{ La restriction de la valuation $v_{p,\fK}$ \`a $\Q_p$ co\"incide avec la valuation $p$-adique $v_p$ sur $\Q_p$. }. On note $\fK$ le compl\'et\'e du corps des fractions de l'anneau $\fK^+$ pour la valuation $v_p$. Fixons une cl\^oture alg\'ebrique $\overline{\fK}$ de $\fK$. Comme $\fK$ est un corps complet pour la valuation $v_p$, on peut prolonger  $v_p$ sur $\fK$ \`a $\overline{\fK}$. 
On note le groupe de Galois de $\overline{\fK}$ sur $\fK$ par $\cG_{\fK}$.

Soit $M\geq 1$ un entier. On note $q_M$ (resp. $\z_M$) une racine
$M$-i\`eme $q^{1/M}$ (resp. $\exp(\frac{2i\pi}{M})$) de $q$ (resp.
$1$). On note $F_M=\Q_p[\z_M]$. Soit $\fK_M=\fK[q_{M},\z_M]$ ; c'est une extension galoisienne de $\fK$.
Soit $\fF_M=\fK[\z_M]$ la sous-extension galoisienne de $\fK_M$ sur $\fK$;  la cl\^oture int\'egrale $\fF_M^+$ de $\fK^+$ dans $\fF_M$ est $\fK^+[\z_M]$, qui est l'anneau des fonctions analytiques sur la boule $v_p(q)\geq 1$ \`a coefficients dans $F_M$. Alors, $\fK_M$ est une extension de Kummer de $\fF_M$ de groupe de Galois cyclique d'ordre $M$, dont un g\'en\'erateur $\sigma_M$ est d\'efini par son action sur $q_M$:
\[\sigma_Mq_M=\z_Mq_M.\]

On note $\fK_{\infty}$ (resp. $\fF_{\infty}, F_\infty$) la r\'eunion des $\fK_M$ (resp. $\fF_M, F_M$) pour tout $M\geq 1$. On note $P_{\Q_p}$ (resp. $P_{\ol{\Q}_p}$) le groupe de Galois de $\ol{\Q}_p\fK_{\infty}$ sur $\fK$ (resp. $\ol{\Q}_p\fK$). Le groupe $P_{\ol{\Q}_p}$ est un groupe profini qui est isomorphe à $\hat{\Z}$. De plus, on a une suite exacte:
\[0\ra P_{\ol{\Q}_p}\ra P_{\Q_p}\ra \cG_{\Q_p}\ra 0,\]
et le groupe $P_{\Q_p}$ préserve l'algèbre des formes modulaires $\cM(\ol{\Q})$, ce qui permet de définir une application de localisation $\cG_\fK\ra P_{\Q_p}\ra \Pi_{\Q}$. 

Fixons $M$ un entier $\geq 1$. On note $\fK_{Mp^{\infty}}$ (resp. $\fF_{Mp^\infty}, F_{Mp^{\infty}}$) la r\'eunion des $\fK_{Mp^n}$ (resp. $\fF_{Mp^n}, F_{Mp^\infty}$) pour tous $n\geq 1$. On note $P_{\fK_M}$ le groupe de Galois de $\fK_{Mp^{\infty}}$ sur $\fK_M$. On note $U_{\fK_M}$ le groupe de Galois de $\fK_{Mp^{\infty}}$ sur $\fF_{Mp^{\infty}}$, qui est isomorphe au groupe $\Z_p$, et on note $\Gamma_{\fK_M}$ le groupe de Galois de $\fF_{Mp^{\infty}}$ sur $\fK_M$, qui est isomorphe au groupe $\Gal(F_{Mp^{\infty}}/F_M)$. On a donc une suite exacte:
\[0\ra U_{\fK_M}\ra P_{\fK_M}\ra \Gamma_{\fK_M}\ra 0.\]

Soit $\ol{\fK}^{+}$ la cl\^oture int\'egrale  de $\fK^{+}$ dans $\ol{\fK}$. On a une inclusion $\ol{\Q}_p\subset \ol{\fK}^+$.
On note $\fK_M^+$ la cl\^oture int\'egrale de $\fK^+$ dans $\fK_M$, qui est aussi la cl\^oture int\'egrale de $\fF_M^+$ dans $\fK_M$.

\subsubsection{L'anneau $\B_{\dR}^+$ pour $\fK^+$ et ses sous-anneaux}

Soit $L$ un anneau de caract\'eristique $0$ muni d'une
valuation $v_p$ telle que
 $v_{p}(p)=1$ . On note $\cO_L=\{x\in L, v_p(x)\geq 0\}$ l'anneau
des entiers de $L$. On note $\cO_{\C(L)}$ le compl\'et\'e  de $\cO_{L}$ pour la valuation $v_p$. On pose $\C(L)=\cO_{\C(L)}[\frac{1}{p}]$.

\begin{defn}Soit $A_n=\cO_L/p\cO_L$ pour tout $n\geq 1$; alors le système $\{A_n\}$ muni de morphismes de transition $ A_n\ra
A_{n-1}$ d\'efinis par l'application de Frobenius absolu
$x_n\mapsto x_n^p$ forme un syst\`eme
projectif. On note $\R(L)$ sa limite projective 
\[\plim A_n=\{(x_n)_{n\in \N}|
x_n\in \cO_L/p\cO_L \text{ et } x_{n+1}^p=x_n, \text{ si }n\in \N \}.\]
\end{defn}

Si $x=(x_n)_{n\in\N}\in\R(L)$, soit $\hat{x}_n$ un rel\`evement de
$x_n$ dans $\cO_{\C(L)}$. La suite $(\hat{x}_{n+k}^{p^k})$, converge quand
$k$ tend vers l'infini. On note $x^{(n)}$ sa limite, qui ne
d\'epend pas du choix des rel\`evements $\hat{x}_n$. On obtient ainsi
une bijection: $\R(L)\ra\{(x^{(n)})_{n\in\N}| x^{(n)}\in\cO_{\C(L)},
(x^{(n+1)})^p=x^{(n)}, \forall n\}$. Si $x=(x^{(i)}), y=y^{(i)}$ sont deux \'el\'ements de $\R(L)$, alors leur somme $x+y$ et leur produit $xy$ sont donn\'es par:
\[ (x+y)^{(i)}=\lim_{j\ra\infty}(x^{(i+j)}+y^{(i+j)})^{p^j} \text{ et } (xy)^{(i)}=x^{(i)}y^{(i)}.\]

L'anneau $\R(L)$ est un anneau parfait  de caract\'eristique $p$ (i.e. le morphisme $x\mapsto x^p$ est bijectif.).
On note $\A_{\inf}(L)$ l'anneau des vecteurs de Witt \`a coefficients dans
$\R(L)$. Alors $\A_{\inf}(L)$ est un
anneau $p$-adique (i.e. un anneau s\'epar\'e et complet pour la
topologie $p$-adique), d'anneau r\'esiduel parfait de
caract\'eristique $p$. Si $x\in \R(L)$, on note
$[x]=(x,0,0,...)\in \A_{\Inf}(L)$ son repr\'esentant de Teichm\"uller.
Alors tout \'el\'ement $a$ de $\A_{\inf}(L)$ peut s'\'ecrire de
mani\`ere unique sous la forme $\sum\limits_{k=0}^{\infty}p^k[x_k]$
avec une suite $(x_k)\in (\R(L))^{\N}$.

On d\'efinit un morphisme d'anneaux $\theta: \A_{\inf}(L)\ra\cO_{\C(L)}$
par la formule
\[\sum_{k=0}^{+\infty}p^k[x_k]\mapsto
\sum_{k=0}^{+\infty}p^kx_k^{(0)}.\]
 On note
$\B_{\inf}(L)=\A_{\inf}(L)[\frac{1}{p}]$,  et on \'etend $\theta$ en
un morphisme
\[\B_{\inf}(L)\ra \C(L).\]
 On note $\B_m(L)=\B_{\inf}(L)/(\ker\theta)^m$. On fait de $\B_m(L)$ un anneau de Banach en prenant l'image de $\A_{\inf}(L)$ comme anneau d'entiers.

On d\'efinit $\B_{\dR}^{+}(L):=\plim \B_m(L)$ comme le compl\'et\'e
$\ker (\theta)$-adique de $\B_{\Inf}(L)$; on le munit de la topologie de la limite projective, ce qui en fait un anneau de Fr\'echet. Donc $\theta$ s'\'etend en
un morphisme continu d'anneaux topologiques
\[\B_{\dR}^{+}(L)\ra \C(L).\]

On peut munir $\B_{\dR}^{+}(L)$  d'une filtration de la fa\c{c}on
suivante: pour $i\in\N$, notons $\Fil^i \B_{\dR}^{+}(L)$
la $i$-i\`eme puissance de l'id\'eal $\ker\theta$ de
$\B_{\dR}^{+}(L)$.

L'anneau $\A_{\inf}(L)$ s'identifie canoniquement \`a un sous-anneau de $\B_{\dR}^+(L)$, et
si $k\in\N, m\in\Z$, on pose \[U_{m,k}=p^m\A_{\inf}(L)+(\ker \theta)^{k+1}\B_{\dR}^{+}(L),\]
alors les $U_{m,k}$ forment une base de voisinages de $0$ dans $\B_{\dR}^{+}(L)$.

Pour simplifier la notation, on note $\A_{\inf}$ (resp. $\B_{\inf}$ et $\B_{\dR}^+$) l'anneau $\A_{\inf}(\ol{\fK}^+)$ (resp. $\B_{\inf}(\ol{\fK}^+)$ et $\B_{\dR}^+(\ol{\fK}^+)$).

Soit $\tilde{q}$ (resp. $\tilde{q}_M$ si $M\geq 1$ est un entier) le repr\'esentant de Teichm\"uller dans $\A_{\Inf}$ de $(q,
q_p,\cdots,q_{p^n},\cdots)$ (resp. $(q_M,\cdots,q_{Mp^n},\cdots)$). Si $M|N$, on a
$\tilde{q}_N^{N/M}=\tilde{q}_M$.

On d\'efinit une application continue $\iota_{\dR}:\fK^+\ra \B_{\dR}^+$ par $f(q)\mapsto f(\tilde{q}) $; 
ce qui permet d'identifier $\fK^+$ \`a un sous-anneau de $\B_{\dR}^+(\ol{\fK}^+)$. 
 Mais il faut faire attention au fait que $\iota_{\dR}(\fK^+)$ n'est pas stable par $\cG_{\fK}$ car $\tilde{q}\sigma=\tilde{q}\tilde{\z}^{c_q(\sigma)}$ si
$\sigma\in\cG_{\fK}$, o\`u $c_{q}$ est le $1$-cocycle \`a valeurs dans $\Z_p(1)$ associ\'e \`a $q$
par la th\'eorie de Kummer.

Posons
$\tilde{\fK}^+=\iota_{\dR}(\fK^+)[[t]]$.  Si $M\geq 1$ est un entier, on note $\tilde{\fK}^+_M$ l'anneau
$\tilde{\fK}^+[\tilde{q}_M,\tilde{\z}_M]$. On peut \'etendre l'application $\iota_{\dR}$ en un morphisme continu 
de $\fK^+$-modules $\iota_{\dR}:\fK^+_M\ra \B_{\dR}^{+}(\ol{\fK}^+)$ en envoyant $\z_M$ et $q_M$ sur $\tilde{\z}_M$ et $\tilde{q}_M$ respectivement. 
Alors, on a $\tilde{\fK}_M^+=\iota_{\dR}(\fK_M^+)[[t]]$. On pose
$\tilde{\fK}_{Mp^{\infty}}^+=\bigcup_{n}\tilde{\fK}^+[\tilde{\z}_{Mp^n},\tilde{q}_{Mp^n}]$.

On définit une application $\iota_{\dR}(\fK)^+$-linéaire $\bR_M: \tilde{\fK}^+_{Mp^{\infty}}\longrightarrow\tilde{\fK}^+_M $ par la formule suivante:
\begin{equation*}
\begin{split}
\bR_M: \tilde{\fK}^+_{Mp^{\infty}}&\longrightarrow\tilde{\fK}^+_M \\
\tilde{\z}_{Mp^n}^{a}\tilde{q}_{Mp^n}^b &\mapsto
\left\{
\begin{aligned}
\tilde{\z}_{Mp^n}^{a}\tilde{q}_{Mp^n}^b & ;
\text{ si } p^n|a \text{ et
} p^n|b;\\
0 & ;   \text{ sinon}.
\end{aligned}\right.
\end{split}
\end{equation*}
\begin{prop} Si $M$ est un entier $\geq 1$, alors
\begin{itemize}
\item[(1)]$\rH^0(\cG_{\fK_{Mp^\infty}},\B_{\dR}^+)= \B_{\dR}^+(\fK^+_{Mp^{\infty}})$.
\item[(2)]$\tilde{\fK}^+_{Mp^{\infty}}$ est dense dans $\B_{\dR}^+(\fK^+_{Mp^{\infty}})$.
\item[(3)] Si $v_p(M)\geq v_p(2p)$, alors $\bR_M$ s'étend par continuité en une application $\fK^+$-linéaire $\bR_M: \B_{\dR}^+(\fK^+_{Mp^{\infty}})\ra \tilde{\fK}_M^+$ qui commute à l'action de $\cG_\fK$.
\end{itemize}
\end{prop}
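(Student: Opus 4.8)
The plan is to treat (1) and (2) by d\'evissage along the filtration $\Fil^{\bullet}\B_{\dR}^{+}$ (both reduce to elementary facts about the residue ring $\C(\ol{\fK}^{+})=\B_{\dR}^{+}/\Fil^{1}$) and to obtain (3) from the Tate-Sen normalized-trace estimate, which is the one genuinely technical point. For (1): since $\fK_{Mp^{\infty}}=\bigcup_{n}\fK[q_{Mp^{n}},\z_{Mp^{n}}]$ contains $\z_{p^{n}}=\z_{Mp^{n}}^{M}$ for every $n$, the group $\cG_{\fK_{Mp^{\infty}}}$ acts trivially through $\chi_{\cycl}$; hence $t=\log[\varepsilon]$ (with $\varepsilon=(1,\z_{p},\z_{p^{2}},\dots)$) is $\cG_{\fK_{Mp^{\infty}}}$-invariant, $t$ generates $\ker\theta$, so $\Fil^{i}\B_{\dR}^{+}=t^{i}\B_{\dR}^{+}$ (and likewise over $\fK^{+}_{Mp^{\infty}}$) with $\Fil^{i}/\Fil^{i+1}\cong\C(\ol{\fK}^{+})\cdot t^{i}$ carrying the unchanged Galois action. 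The Ax-Sen-Tate theorem for the complete field $\fK$ (the input from the Tate-Sen-Colmez method recalled above, cf.\ \cite{Wang}) gives $\C(\ol{\fK}^{+})^{\cG_{\fK_{Mp^{\infty}}}}=\C(\fK^{+}_{Mp^{\infty}})$, the $p$-adic completion of $\fK_{Mp^{\infty}}$. Now one inducts on $m$ along $0\to\Fil^{m}/\Fil^{m+1}\to\B_{m+1}\to\B_{m}\to 0$: given $x\in\B_{m+1}^{\cG_{\fK_{Mp^{\infty}}}}$, lift its image in $\B_{m}^{\cG_{\fK_{Mp^{\infty}}}}=\B_{m}(\fK^{+}_{Mp^{\infty}})$ to some $y\in\B_{m+1}(\fK^{+}_{Mp^{\infty}})$; then $x-y\in(\Fil^{m}/\Fil^{m+1})^{\cG_{\fK_{Mp^{\infty}}}}=\C(\fK^{+}_{Mp^{\infty}})\,t^{m}\subset\B_{m+1}(\fK^{+}_{Mp^{\infty}})$, so $x\in\B_{m+1}(\fK^{+}_{Mp^{\infty}})$ (no $\rH^{1}$-vanishing is needed), and passing to $\plim_{m}$ yields $\rH^{0}(\cG_{\fK_{Mp^{\infty}}},\B_{\dR}^{+})=\B_{\dR}^{+}(\fK^{+}_{Mp^{\infty}})$.

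For (2), note that $\theta\circ\iota_{\dR}$ fixes the topological generators $q$, $q_{Mp^{n}}$, $\z_{Mp^{n}}$, so $\theta\bigl(\iota_{\dR}(\fK^{+}_{Mp^{n}})\bigr)=\fK^{+}_{Mp^{n}}$, and $\bigcup_{n}\fK^{+}_{Mp^{n}}$ is dense in $\C(\fK^{+}_{Mp^{\infty}})=\B_{\dR}^{+}(\fK^{+}_{Mp^{\infty}})/\Fil^{1}$; moreover $t\in\tilde{\fK}^{+}\subset\tilde{\fK}^{+}_{Mp^{\infty}}$. Running the same d\'evissage as in (1), with full $\cG_{\fK_{Mp^{\infty}}}$-invariance replaced by density modulo $\Fil^{m}$ and the part in $\Fil^{m}/\Fil^{m+1}=\C(\fK^{+}_{Mp^{\infty}})\,t^{m}$ approximated by an element of $\iota_{\dR}(\fK^{+}_{Mp^{n}})\,t^{m}$ for $n$ large, shows that $\tilde{\fK}^{+}_{Mp^{\infty}}$ is dense in every Banach space $\B_{m}(\fK^{+}_{Mp^{\infty}})$, hence in the Fr\'echet space $\B_{\dR}^{+}(\fK^{+}_{Mp^{\infty}})=\plim_{m}\B_{m}(\fK^{+}_{Mp^{\infty}})$.

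Finally, (3) carries the real content, and it is where I expect the main obstacle. By (1) and (2) it suffices to show that $\bR_{M}$, already defined and $\iota_{\dR}(\fK^{+})$-linear on the dense subring $\tilde{\fK}^{+}_{Mp^{\infty}}$, is bounded for the induced topology and takes values in the closed subring $\tilde{\fK}^{+}_{M}=\iota_{\dR}(\fK^{+}_{M})[[t]]$; it then extends uniquely by continuity. On $\tilde{\fK}^{+}_{Mp^{n}}$ one identifies $\bR_{M}$ with the normalized trace $R_{M,n}=[\fK_{Mp^{n}}:\fK_{M}]^{-1}\sum_{\sigma\in\Gal(\fK_{Mp^{n}}/\fK_{M})}\sigma$, which annihilates a monomial $\tilde{\z}_{Mp^{n}}^{\,a}\tilde{q}_{Mp^{n}}^{\,b}$ unless $p^{n}\mid a$ and $p^{n}\mid b$, in which case it fixes it; thus $\bR_{M}$ restricts to $R_{M,n}$ on $\tilde{\fK}^{+}_{Mp^{n}}$ for every $n$. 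The crux is the \emph{uniform} (in $n$) boundedness of the $R_{M,n}$, and this is precisely where the hypothesis $v_{p}(M)\geq v_{p}(2p)$ enters: it places $\fK_{M}$ beyond the ramification jump of $\Gal(\fK_{Mp^{\infty}}/\fK_{M})$, so that on the subspace on which a topological generator $\sigma$ acts nontrivially the operator $\sigma-1$ is invertible with inverse of norm at most $|2p|^{-1}$, which bounds $R_{M,n}$ independently of $n$; this is the integral refinement of \cite{Wang}, prop.~4.10, and is proved by the Tate-Sen-Colmez argument while keeping track of constants. Equivariance under $\cG_{\fK}$ then follows by density from the monomial computation: any $\sigma\in\cG_{\fK}$ multiplies $\tilde{q}_{Mp^{n}}$ and $\tilde{\z}_{Mp^{n}}$ by roots of unity, sending $\tilde{\z}_{Mp^{n}}^{\,a}\tilde{q}_{Mp^{n}}^{\,b}$ into $\tilde{\fK}^{+}\cdot\tilde{\z}_{Mp^{n}}^{\,a'}\tilde{q}_{Mp^{n}}^{\,b}$ with $a'\equiv a\,\chi_{\cycl}(\sigma)\pmod{p^{n}}$ whenever $p^{n}\mid b$; since $\chi_{\cycl}(\sigma)$ is invertible mod $p^{n}$ the conditions $p^{n}\mid a$ and $p^{n}\mid b$ are preserved, so $\bR_{M}$ commutes with $\sigma$ on $\tilde{\fK}^{+}_{Mp^{\infty}}$, hence on $\B_{\dR}^{+}(\fK^{+}_{Mp^{\infty}})$. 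The only substantial step is thus the uniform bound on the normalized traces; the rest is formal d\'evissage.
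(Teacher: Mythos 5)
Le texte de l'article ne donne aucune démonstration de cette proposition : comme la Proposition~\ref{trace} qui la suit, elle est importée de \cite{Wang}, \S 3. Je ne peux donc comparer votre argument qu'à la démonstration standard. Votre traitement de (1) et (2) par dévissage le long de $\Fil^{\bullet}\B_{\dR}^{+}$ est la bonne route et est pour l'essentiel correct ; deux points méritent toutefois d'être explicités : d'une part Ax--Sen--Tate doit être invoqué pour le corps complet $\fK$, dont le corps résiduel est imparfait (l'argument d'Ax s'y applique, mais il faut le dire) ; d'autre part l'identification $\Fil^{m}\B_{m+1}(\fK^{+}_{Mp^{\infty}})=\C(\fK^{+}_{Mp^{\infty}})\,t^{m}$, que vous utilisez sans commentaire, exige de vérifier que $\theta$ reste surjectif de noyau engendré par $t$ après restriction à $\B_{\inf}(\fK^{+}_{Mp^{\infty}})$, ce qui n'est pas automatique puisque le Frobenius n'est pas surjectif sur $\cO_{\fK_{Mp^{\infty}}}/p$ (par exemple $q/p$ n'y admet pas de racine $p$-ième).

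La vraie difficulté est dans (3), exactement à l'endroit que vous désignez comme le point crucial, et c'est là que votre argument casse : l'identification de $\bR_M$ sur $\tilde{\fK}^{+}_{Mp^{n}}$ avec la trace normalisée $[\fK_{Mp^{n}}:\fK_M]^{-1}\sum_{\sigma}\sigma$ est fausse. Par construction $\bR_M$ est $\tilde{\fK}^{+}$-linéaire, donc fixe $t$ et tout élément de $\iota_{\dR}(\fK^{+})$ ; or $\Gal(\fK_{Mp^{n}}/\fK_M)$ agit sur $t$ par $\chi_{\cycl}$, non trivial sur la partie cyclotomique de ce groupe, et ne fixe pas $\iota_{\dR}(\fK^{+})$ non plus (l'article avertit explicitement que $\iota_{\dR}(\fK^{+})$ n'est pas stable par Galois : $\sigma\tilde{q}=\tilde{q}\,\tilde{\z}^{c_q(\sigma)}$). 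La somme galoisienne moyennée est donc un autre opérateur --- déjà $[\fK_{Mp^{n}}:\fK_M]^{-1}\sum_{\sigma}\sigma(t)\neq t=\bR_M(t)$ --- et la borne uniforme ne peut pas s'obtenir en citant l'estimée de Tate--Sen pour les traces normalisées. Ce qu'il faut réellement démontrer (c'est le contenu de \cite{Wang} \S 3, suivant \cite{PC1}, et non une « version entière de la prop.~4.10 », laquelle concerne la cohomologie de $\rP_m$) est que la base des monômes $\tilde{\z}_{Mp^{n}}^{a}\tilde{q}_{Mp^{n}}^{b}$, $0\leq a,b<p^{n}$, de $\tilde{\fK}^{+}_{Mp^{n}}$ sur $\tilde{\fK}^{+}_{M}$ est presque orthonormale uniformément en $n$ --- une estimée de ramification (différente) pour la tour à deux étages Kummer--cyclotomique, et c'est précisément là qu'intervient $v_p(M)\geq v_p(2p)$ --- puis qu'il faut propager la continuité à travers la filtration $t$-adique, $\B_{\dR}^{+}(\fK^{+}_{Mp^{\infty}})$ n'étant que de Fréchet. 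Votre vérification de l'équivariance sous $\cG_\fK$ par le calcul sur les monômes est, elle, correcte.
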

Soit $A$ une $\Q_p$-algèbre de Banach et on note $A^+$ l'anneau des entiers de $A$.
\begin{prop}\label{trace}Si $v_p(M)\geq v_p(2p)$, si $V$ est une $A$-représentation de Banach de $P_{\fK_M}$ possédant une $A^+$-représentation $T$ telle que $P_{\fK_M}$ agisse trivialement sur $T/2pT$, et si $i\in \N$, alors $\bR_M$ induit un isomorphisme:
\[\bR_M: \rH^i(P_{\fK_M}, \B_{\dR}^+(\fK^+_{Mp^\infty})\hat{\otimes}_{\Q_p}V )\cong \rH^i(P_{\fK_M}, \tilde{\fK}_M^+\hat{\otimes}_{\Q_p}V).\] 
\end{prop}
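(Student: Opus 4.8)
The plan is to follow the Tate--Sen--Colmez method (as in \cite{Wang}), exploiting that $\bR_M$ is a $P_{\fK_M}$-equivariant retraction of the inclusion $\tilde{\fK}_M^+\hookrightarrow\B_{\dR}^+(\fK^+_{Mp^\infty})$. First I would record, from part $(3)$ of the preceding proposition, that $\bR_M\colon\B_{\dR}^+(\fK^+_{Mp^\infty})\to\tilde{\fK}_M^+$ is continuous, $\fK^+$-linear and $\cG_{\fK}$-equivariant, hence $P_{\fK_M}$-equivariant; and, from its defining formula, that it restricts to the identity on $\tilde{\fK}_M^+$ (a monomial $\tilde{\z}_M^{a}\tilde{q}_M^{b}=\tilde{\z}_{Mp^n}^{ap^n}\tilde{q}_{Mp^n}^{bp^n}$ is fixed). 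So $\bR_M$ is a $P_{\fK_M}$-equivariant projector, giving a decomposition of topological $P_{\fK_M}$-modules $\B_{\dR}^+(\fK^+_{Mp^\infty})=\tilde{\fK}_M^+\oplus X_M$ with $X_M=\ker\bR_M$. Completing the tensor product over $\Q_p$ with $V$, the idempotent $\bR_M\hat{\otimes}\id_V$ is again a $P_{\fK_M}$-equivariant projector, with image $\tilde{\fK}_M^+\hat{\otimes}_{\Q_p}V$ and kernel $X_M\hat{\otimes}_{\Q_p}V$, so it suffices to prove $\rH^i(P_{\fK_M},X_M\hat{\otimes}_{\Q_p}V)=0$ for every $i\in\N$.

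Next I would analyse the $P_{\fK_M}$-action on $X_M$ via the topological basis of $\B_{\dR}^+(\fK^+_{Mp^\infty})$ given by the monomials $\tilde{\z}_{Mp^n}^{a}\tilde{q}_{Mp^n}^{b}t^{c}$: by the preceding proposition, $X_M$ is topologically spanned by those with $p^n\nmid a$ or $p^n\nmid b$, and a direct computation with the Kummer cocycle of $q$ and with the cyclotomic character shows that the conditions "$p^n\mid b$" and "$p^n\mid a$" cut out $P_{\fK_M}$-stable subspaces. This yields a $P_{\fK_M}$-stable splitting $X_M=X_M^{U}\oplus X_M^{\Gamma}$ in which $U_{\fK_M}$ acts nontrivially on $X_M^{U}$ (the part with $p^n\nmid b$) but trivially on $X_M^{\Gamma}$ (the part with $p^n\mid b$, $p^n\nmid a$), while $\Gamma_{\fK_M}$ acts nontrivially on $X_M^{\Gamma}$. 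Fixing a topological generator $\gamma_1$ of $U_{\fK_M}\cong\Z_p$ and a lift $\gamma_2\in P_{\fK_M}$ of a topological generator of $\Gamma_{\fK_M}$, the heart of the matter is a Tate--Sen estimate: $\gamma_1-1$ is bijective on $X_M^{U}$ and $\gamma_2-1$ is bijective on $X_M^{\Gamma}$, each with continuous inverse of operator norm bounded by a constant depending only on $M$, these bounds being precisely what the normalization $v_p(M)\geq v_p(2p)$ secures (it is already what makes $\bR_M$ extend in part $(3)$). The hypothesis that $P_{\fK_M}$ acts trivially on $T/2pT$ then lets one propagate this bounded invertibility to $X_M^{U}\hat{\otimes}_{\Q_p}V$ and to $X_M^{\Gamma}\hat{\otimes}_{\Q_p}V$ by successive approximation, since $(\gamma-1)T\subset 2pT$ makes every $\gamma$ a small perturbation of the identity on the $V$-factor.

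Finally I would assemble the vanishing from the exact sequence $0\to U_{\fK_M}\to P_{\fK_M}\to\Gamma_{\fK_M}\to 0$ via the Hochschild--Serre spectral sequence. As $U_{\fK_M}\cong\Z_p$, the cohomology $\rH^{*}(U_{\fK_M},-)$ is computed by the two-term complex $[\,\cdot\xrightarrow{\gamma_1-1}\cdot\,]$; bijectivity of $\gamma_1-1$ on $X_M^{U}\hat{\otimes}_{\Q_p}V$ gives $\rH^{*}(U_{\fK_M},X_M^{U}\hat{\otimes}_{\Q_p}V)=0$, hence $\rH^{*}(P_{\fK_M},X_M^{U}\hat{\otimes}_{\Q_p}V)=0$; on $X_M^{\Gamma}\hat{\otimes}_{\Q_p}V$, where $U_{\fK_M}$ acts only through $V$, one gets $\rH^{*}(U_{\fK_M},X_M^{\Gamma}\hat{\otimes}_{\Q_p}V)\cong X_M^{\Gamma}\hat{\otimes}_{\Q_p}\rH^{*}(U_{\fK_M},V)$, on which $\gamma_2-1$ still acts bijectively with bounded inverse ($\Gamma_{\fK_M}$ being, at the level considered, a one-dimensional pro-$p$ group), so $\rH^{*}(\Gamma_{\fK_M},\rH^{*}(U_{\fK_M},X_M^{\Gamma}\hat{\otimes}_{\Q_p}V))=0$ and thus $\rH^{*}(P_{\fK_M},X_M^{\Gamma}\hat{\otimes}_{\Q_p}V)=0$. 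Summing, $\rH^i(P_{\fK_M},X_M\hat{\otimes}_{\Q_p}V)=0$ for all $i$, which gives the asserted isomorphism. The step I expect to be the main obstacle is the verification of these Tate--Sen estimates with uniform constants, together with their stability under the completed tensor product with $V$: this is the technical core where the normalization $v_p(M)\geq v_p(2p)$ and the triviality of the action on $T/2pT$ are genuinely used, and everything else is formal cohomological bookkeeping.
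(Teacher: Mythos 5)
Your overall strategy is the one the paper intends: the paper gives no details here, saying only that the argument is the standard Tate--Sen--Colmez d\'evissage and referring to \cite{Wang}, section 3, and your skeleton (the $P_{\fK_M}$-equivariant projector $\bR_M$, reduction to the vanishing of $\rH^i(P_{\fK_M},X_M\hat{\otimes}_{\Q_p}V)$ for $X_M=\ker\bR_M$, Hochschild--Serre along $0\to U_{\fK_M}\to P_{\fK_M}\to\Gamma_{\fK_M}\to 0$, and a successive-approximation argument using $(\gamma-1)T\subset 2pT$) is exactly that method.

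There are, however, two genuine gaps. First, the structural claim that $U_{\fK_M}$ acts trivially on $X_M^{\Gamma}$ (the part with $p^n\mid b$, $p^n\nmid a$) is false: the coefficients of that part lie in $\tilde{\fK}_M^+=\iota_{\dR}(\fK_M^+)[[t]]$, and $U_{\fK_M}$ moves $\tilde{q}_M$ itself, since $\tilde{q}_M\sigma=\tilde{q}_M\tilde{\z}_M^{c_q(\sigma)}$ and the Kummer cocycle $c_q$ is nontrivial precisely on $U_{\fK_M}$; the factor $\tilde{\z}_M^{c_q(\sigma)}$ reduces to $1$ under $\theta$ but is not $1$ in $\B_{\dR}^+$ (it is of the form $\exp(ct/M)$), so the identification $\rH^{*}(U_{\fK_M},X_M^{\Gamma}\hat{\otimes}V)\cong X_M^{\Gamma}\hat{\otimes}\rH^{*}(U_{\fK_M},V)$ is unjustified. (Indeed $P_{\fK_M}$ acts nontrivially even on the target $\tilde{\fK}_M^+\hat{\otimes}V$, which is the whole point of the proposition.) The correct route is to run Hochschild--Serre with the true action: on the part where $p^n\nmid b$ the operator $\gamma_1-1$ acts on graded pieces by multiplication by $\zeta-1$ with $\zeta$ a nontrivial $p$-power root of unity, so $\rH^{*}(U_{\fK_M},-)$ vanishes; on the complementary part one computes $\rH^{*}(U_{\fK_M},-)$ by the two-term complex and shows that $\gamma_2-1$ is bijective on the result (keeping track of the twist coming from $\gamma_2\gamma_1\gamma_2^{-1}=\gamma_1^{\chi(\gamma_2)}$), rather than invoking a trivial $U$-action. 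Second, the analytic core --- the uniform bounds on $(\gamma_1-1)^{-1}$ and $(\gamma_2-1)^{-1}$, their formulation level-by-level on the $\B_m$ (since $\B_{\dR}^+$ is Fr\'echet, not Banach, "operator norm" needs the $t$-adic filtration), and the propagation to $\hat{\otimes}V$ via the hypothesis on $T/2pT$ (which is exactly where the constants $2p$ and $v_p(M)\geq v_p(2p)$ enter) --- is asserted rather than proved; you flag this honestly, but together with the misstatement above it means the argument as written is an outline of the cited method rather than a proof.
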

\begin{proof}La démonstration est standard et le lecteur intéressé s'envoie à consulter $\cite{Wang}$ section $3$.
\end{proof}

\subsubsection{La cohomologie des représentations du groupe $P_{\fK_M}$}\label{technique}
Soit $M\geq 1$ tel que $v_p(M)=m\geq v_p(2p)$.
Le groupe de Galois $P_{\fK_M}$ de l'extension
$\fK_{Mp^{\infty}}/\fK_M$ est un groupe analytique $p$-adique compact de rang $2$,
isomorphe \`a
\[\rP_m=\{(\begin{smallmatrix}a& b\\ c& d\end{smallmatrix})\in\GL_2(\Z_p): a=1, c=0, b\in p^m\Z_p, d\in 1+p^m\Z_p \},\]
et si $u,v\in p^m\Z_p$, on note $(\begin{smallmatrix}1&u\\ 0& e^v\end{smallmatrix})$ de $\rP_m$. La loi de groupe s'écrit sous la forme 
\[(u_1,v_1)(u_2,v_2)=(e^{v_2}u_1+u_2, v_1,v_2).\]
Soient $U_m$ et $\Gamma_m$ les sous-groupes de $\rP_m$ topologiquement engendrés par $(p^m,0)$ et $(0,p^m)$ respectivement. Ces deux sous-groupes sont isomorphes à $\Z_p$. De plus,  $U_m$ est distingué dans $\rP_m$ et on a $\rP_m/U_m=\Gamma_m$. Comme $U_m$ et $\Gamma_m$ n'ayant pas de $\rH^2$, la suite spectrale de Hochschild-Serre nous fournit, si $V$ est une $\Q_p$-représentation de dimension finie de $\rP_m$, un isomorphisme
\[\rH^2(\rP_m, V)\cong \rH^1(\Gamma_m, \rH^1(U_m, V))\cong V/( (p^m,0)-1, (0,p^m)-e^{p^m}). \] 

 Soit $\gamma\in\rP_m$; l'image de la fonction analytique $\alpha_{\gamma}:\Z_p\ra\rP_m, \alpha_{\gamma}(x)=\gamma^x$ est un \emph{sous-groupe \`a un param\`etre}. On dira que l'action de $\rP_m$ sur une $\Q_p$-représentation de dimension finie $V$ est analytique si pour tout $\gamma\in\rP_m$ et $v\in V$, la fonction $x\mapsto \gamma^xv=\sum_{n=0}^{+\infty}\binom{x}{n}(\gamma-1)^nv$ est une fonction analytique sur $\Z_p$ à valeurs dans $V$.

Soit $V$ une $\Q_p$-représentation de $\rP_m$.
Si l'action de $\rP_m$ sur $V$ est analytique, pour tout $\gamma\in\rP_m$, on peut d\'efinir une d\'erivation $\partial_{\gamma}:V\ra V$ par rapport \`a $\alpha_{\gamma}$ par la formule:
\[\partial_{\gamma}(x)=\lim_{n\ra\infty}\frac{x*\gamma^{p^n}-x}{p^{n}} .\]
En particulier, on note $\partial_{m,i}$, $i=1,2$, les dérivations par rapport à $(p^m,0)$ et $(0,p^m)$ respectivement.

\begin{defn}\label{def-an}On dira qu'une $\Q_p$-représentation de dimension finie $V$ de $\rP_m$ est analytique si l'action de $\rP_m$ est analytique sur $V$. 
\end{defn} 
On a le r\'esultat tr\`es utile:
\begin{prop}\label{analytic}
Soit $V$ une repr\'esentation analytique de $\rP_m$ munie d'un $\Z_p$-réseau $T$ stable sous l'action de $\rP_m$. Alors, ,
\begin{itemize}
\item[$(i)$] tout \'el\'ement de $ \rH^2(\rP_m,T) $ est repr\'esentable par un  $2$-cocycle analytique à un élément de $p^{2m}$-torsion près;
\item[$(ii)$] on a $ \rH^2(\rP_m,T)\cong T/(\partial_{m,1},\partial_{m,2}-p^m)$,
et l'image d'un $2$-cocycle analytique
\[((u,v),(x,y))\ra c_{(u,v),(x,y)}=\sum_{i+j+k+l\geq 2}c_{i,j,k,l}u^iv^jx^ky^l,\]
avec $p^{i+j+k+l}c_{i,j,k,l}\in T$, par cet isomorphisme, est celle de
$\delta^{(2)}(c_{(u,v),(x,y)})=p^{2m}(c_{1,0,0,1}-c_{0,1,1,0})$ à un élément de $p^{2m}$-torsion près.
\end{itemize}
\end{prop}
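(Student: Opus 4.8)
Le plan est de reprendre l'argument de \cite{Wang}, prop.~4.10 — une instance de la méthode de Tate-Sen-Colmez — en gardant cette fois trace du $\Z_p$-réseau $T$ à chaque étape; le seul point véritablement nouveau est le contrôle des dénominateurs qui apparaissent, et c'est ce contrôle qui produit l'exposant $2m$. On part de la structure de $\rP_m$: c'est une extension $0\to U_m\to\rP_m\to\Gamma_m\to 0$ avec $U_m=\ol{\langle\gamma_1\rangle}\cong\Z_p$, $\Gamma_m=\ol{\langle\gamma_2\rangle}\cong\Z_p$, $\gamma_1=(p^m,0)$, $\gamma_2=(0,p^m)$, et un calcul matriciel direct donne la règle de conjugaison $\gamma_2\gamma_1\gamma_2^{-1}=\gamma_1^{e^{-p^m}}$. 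Comme $U_m$ et $\Gamma_m$ sont de dimension cohomologique $1$, la suite spectrale de Hochschild-Serre fournit, pour tout $\rP_m$-module topologique $N$ (en particulier $N=V$ ou $N=T$), un isomorphisme $\rH^2(\rP_m,N)\cong\rH^1(\Gamma_m,\rH^1(U_m,N))$, où $\rH^i(U_m,-)$ est calculé par le complexe à deux termes $[\,N\xrightarrow{\gamma_1-1}N\,]$, d'où $\rH^1(U_m,N)=N/(\gamma_1-1)N$, et où l'image de $\gamma_2$ dans $\Gamma_m$ agit sur ce quotient par $[n]\mapsto[e^{-p^m}\gamma_2\cdot n]$ (le scalaire $e^{-p^m}$ provenant de la règle de conjugaison via la description usuelle de l'action d'un quotient sur la cohomologie d'un sous-groupe distingué). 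On obtient donc
\[\rH^2(\rP_m,N)\cong N\big/\big((\gamma_1-1)N+(e^{-p^m}\gamma_2-1)N\big).\]

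J'invoquerais ensuite l'analyticité. Sous l'hypothèse $m\geq v_p(2p)$, l'action analytique de $\rP_m$ sur $V$ entraîne que $\partial_{m,1},\partial_{m,2}$ sont bien définies, topologiquement nilpotentes, et que l'on a les factorisations $\gamma_1-1=\partial_{m,1}\circ u_1$ et $e^{-p^m}\gamma_2-1=(\partial_{m,2}-p^m)\circ u_2$, où $u_1$ (resp.\ $u_2$) s'obtient en substituant $\partial_{m,1}$ (resp.\ $\partial_{m,2}-p^m$) dans la série unité $\tfrac{e^t-1}{t}=\sum_{n\geq0}\tfrac{t^n}{(n+1)!}$; l'inégalité $v_p((n+1)!)\leq mn$ assure la convergence de ces séries et le fait que $u_1,u_2$ induisent des automorphismes du réseau $T$. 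Par conséquent $(\gamma_1-1)T=\partial_{m,1}T$ et $(e^{-p^m}\gamma_2-1)T=(\partial_{m,2}-p^m)T$, d'où l'isomorphisme de (ii): $\rH^2(\rP_m,T)\cong T/(\partial_{m,1},\partial_{m,2}-p^m)$. Pour (i), je reprendrais l'argument de lissage de \cite{Wang} (développement de Taylor d'un $2$-cocycle continu, puis correction par un cobord explicite), en suivant cette fois les puissances de $p$ introduites: elles restent bornées par $p^{2m}$ en degré $\leq 2$ (un facteur $p^m$ par $\Z_p$-extension), de sorte que toute classe de $\rH^2(\rP_m,T)$ est représentable par un $2$-cocycle analytique à un élément de $p^{2m}$-torsion près.

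Pour la formule explicite j'évaluerais l'isomorphisme composé sur un $2$-cocycle analytique $c_{(u,v),(x,y)}=\sum c_{i,j,k,l}u^iv^jx^ky^l$. Le plus clair est de passer au complexe de Chevalley-Eilenberg de l'algèbre de Lie de dimension $2$ de $\rP_m$ (de base $X,Y$ avec $[X,Y]=X$): la classe de $c$ ne dépend que de sa partie bilinéaire alternée, égale à $(c_{1,0,0,1}-c_{0,1,1,0})(uy-vx)$; en exprimant $u,v,x,y\in p^m\Z_p$ dans les coordonnées naturelles sur $U_m$ et $\Gamma_m$ on multiplie cette forme par $p^{2m}$, d'où l'image de $[c]$ dans $T/(\partial_{m,1},\partial_{m,2}-p^m)$ égale à la classe de $p^{2m}(c_{1,0,0,1}-c_{0,1,1,0})$; la relation de cocycle montre que tous les autres coefficients de Taylor ne contribuent que par des cobords ou par des termes de $(\partial_{m,1},\partial_{m,2}-p^m)T$, à une correction de $p^{2m}$-torsion près. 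Le principal obstacle est précisément ce dernier point: rendre quantitatif le passage de l'énoncé rationnel de \cite{Wang}, prop.~4.10, à l'énoncé entier, c'est-à-dire vérifier que les homotopies de (i), les unités $u_1,u_2$, les identifications de Hochschild-Serre et la réduction des termes de Taylor d'ordre supérieur sont toutes définies sur $T$ à des dénominateurs bornés par $p^{2m}$ près; aucune de ces étapes n'est conceptuellement difficile, mais chacune réclame une estimation soigneuse.
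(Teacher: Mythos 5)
Votre plan général — suite spectrale de Hochschild--Serre pour $0\ra U_m\ra \rP_m\ra \Gamma_m\ra 0$, puis réduction à la proposition 4.10 de \cite{Wang} en suivant les dénominateurs — est bien celui du texte, mais l'idée précise qui produit la borne $p^{2m}$ manque, et c'est là que réside tout le contenu de l'énoncé entier. Dans le texte, le point $(i)$ et la formule explicite de $(ii)$ s'obtiennent ainsi: pour un $2$-cocycle analytique $c$ à valeurs dans $T$, on reprend la construction de \cite{Wang} \S 4.1 d'une primitive formelle $b_{(x,y)}$ telle que $c_{(u,v),(x,y)}-(c_{1,0,0,1}-c_{0,1,1,0})uy=db$; on constate que $b$ ne converge pas sur $\rP_m$ mais converge sur le sous-groupe $\rP_{m+v_p(2p)}$, et on conclut par la suite exacte d'inflation-restriction
\[0\ra \rH^2(\rP_m/\rP_{m+v_p(2p)},T^{\rP_{m+v_p(2p)}})\ra \rH^2(\rP_m,T)\ra \rH^2(\rP_{m+v_p(2p)},T),\]
le terme de gauche étant de $p^{2m}$-torsion: c'est exactement de là que vient le \emph{à un élément de $p^{2m}$-torsion près}, à la fois pour la représentabilité analytique et pour la formule $\delta^{(2)}=p^{2m}(c_{1,0,0,1}-c_{0,1,1,0})$. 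Votre substitut — passage au complexe de Chevalley--Eilenberg et l'estimation \emph{un facteur $p^m$ par $\Z_p$-extension} — n'est pas quantifié: la comparaison avec la cohomologie de l'algèbre de Lie n'est a priori qu'un énoncé rationnel, et borner intégralement les homotopies est précisément le problème; vous le reconnaissez d'ailleurs en fin de preuve. En l'état, $(i)$ et la borne de torsion de $(ii)$ ne sont donc pas démontrés.

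Un point secondaire: votre identification $(\gamma_1-1)T=\partial_{m,1}T$ et $(e^{-p^m}\gamma_2-1)T=(\partial_{m,2}-p^m)T$ par les séries unités $\frac{e^t-1}{t}$ est plus explicite que ce que fait le texte (qui affirme seulement que la surjection naturelle $T/(\partial_{m,1},\partial_{m,2}-p^m)\ra T/(u_m-1,\gamma_m-e^{p^m})$ est un isomorphisme), mais telle quelle elle est incomplète: l'inégalité $v_p((n+1)!)\leq mn$ ne rend $u_1,u_2$ entiers et inversibles sur $T$ que si l'on sait déjà que $\partial_{m,1}^nT\subseteq p^{mn}T$ (ou une divisibilité analogue), ce qui ne découle pas des seules hypothèses \emph{représentation analytique munie d'un réseau stable}: la représentation standard de $\rP_m$ conjuguée par $\mathrm{diag}(1,p^{-m})$ est analytique, laisse $T=\Z_p^2$ stable, et $(\gamma_1-1)T$ y est un facteur direct de $T$, non contenu dans $p^mT$; pour $p=2$ la série $1+\partial_{m,1}/2+\cdots$ n'y préserve pas $T$. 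Cette étape demanderait donc, elle aussi, soit une hypothèse de divisibilité vérifiée dans les applications du texte, soit l'argument direct (comparaison des deux quotients de $T$) tel qu'il est invoqué dans la démonstration originale.
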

\begin{proof}
On dispose des opérateurs $\partial_i:V\ra V$, $i=1,2$, définis par $x*(u,v)=x+u\partial_1x+v\partial_2x+O((u,v)^2)$. 
Ces opérateurs ont des propriété de dérivations:
si
$x_1\in V_1, x_2\in V_2$, o\`u $V_1,V_2$ sont des repr\'esentations
analytiques de $\rP_m$, et si $i=1,2$, on a
\begin{equation*}
\partial_i(x_1\otimes x_2)=(\partial_i
x_1)\otimes x_2+x_1\otimes\partial_i x_2.
\end{equation*}
On a les relations suivantes $\partial_1=p^{-m}\partial_{u_m}, \partial_2=p^{-m}\partial_{\gamma_m}.$

La proposition $4.10$ dans $\cite{Wang}$ dit que si $V$ est une repr\'esentation analytique de $\rP_m$, alors\\
$(i)$ tout \'el\'ement de $ \rH^2(\rP_m,V) $ est repr\'esentable par un  $2$-cocycle analytique;\\
$(ii)$on a un isomorphisme $ \rH^2(\rP_m,V)\cong V/(\partial_1,\partial_2-1)$, et l'image d'un $2$-cocycle analytique,
\[((u,v),(x,y))\ra c_{(u,v),(x,y)}=\sum_{i+j+k+l\geq 2}c_{i,j,k,l}u^iv^jx^ky^l,\]
sous cet isomorphisme  est aussi celle de
$\tilde{\delta}^{(2)}(c_{(u,v),(x,y)})=c_{1,0,0,1}-c_{0,1,1,0}$ dans
$V/(\partial_1,\partial_2-1)$.\\

Comme $\partial_1$ et $\partial_2-1$ introduissent des dominateurs, ils ne préservent pas $T$. Par contre, $\partial_{m,1}$ et $\partial_{m,2}-p^m$ le préservent. La démonstration de la proposition $4.10$ dans $\cite{Wang}$ s'adapte à une démonstration du théorème ci-dessus. Pour faciliter la lecture, on donne l'idée de la démonstration; les détails du calcul se trouvent dans $\cite{Wang}$.

$(1)$ Le groupe $\rP_m$ est de dimension cohomologique $\leq 2$. En utilisant la suite spectrale de Hochschild-Serre, on déduit de la suite exacte $1\ra U_m\ra \rP_m\ra \Gamma_m\ra 1$:
\[\rH^2(\rP_m,T)\cong\rH^1(\Gamma_m,\rH^1(U_m,T))\cong T/(u_m-1,\gamma_m-e^{p^m}).\]
Par ailleurs, l'application surjective naturelle \[\phi: T/(\partial_{m,1}, \partial_{m,2}-p^m)\ra T/(u_m-1,\gamma_m-e^{p^m})\] est un isomorphisme. On en déduit l'isomorphisme  $ \rH^2(\rP_m,T)\cong T/(\partial_{m,1},\partial_{m,2}-p^m)$.
Par cet isomorphisme, il suffit de montrer  le point $(ii)$ du théorème et de montrer que l'application
\[\delta^{(2)}: \{ 2\text{-cocycles analytiques}\}\ra T\] induit une surjection
$\rH^{\an,2}(\rP_m,T)\ra T/(\partial_{m,1},\partial_{m,2}-p^m)$ à un élément de $p^{2m}$-torsion près.\\
$(2)$ Le calcul délicat dans $\cite{Wang}~\S 4.1$, qui montre que l'application
\[\tilde{\delta}^{(2)}: \{ 2\text{-cocycles analytiques à valeurs dans } V\}\ra V\] induit une surjection
$\rH^{\an,2}(\rP_m,V)\ra V/(\partial_{m,1},\partial_{m,2}-p^m)$, s'adapte à notre cas en constatant que l'image d'un $2$-cocycle analytique à valeurs dans $T$ sous l'application $\tilde{\delta}^{(2)}$ est dans $p^{-2m}T$ (c.f. \cite[Lemme 4.14]{Wang}). Plus précisément,
 si $c_{(u,v),(x,y)}=\sum_{i+j+k+l\geq 2}c_{i,j,k,l}u^iv^jx^ky^l$ est un $2$-cocycle analytique sur $\rP_m$ à valeurs dans $T$, on construit un $2$-cobord formel $d(b_{(x,y)})$ tel que $c_{(u,v),(x,y)}-(c_{1,0,0,1}-c_{0,1,1,0})uy= db$. Notons que $b_{(x,y)}$ ne converge pas sur $\rP_m$, mais il converge si on le restreint à $\rP_{m+v_p(2p)}$. Par ailleurs, on a la suite exacte d'inflation-restriction,
 \[0\ra \rH^2(\rP_m/\rP_{m+v_p(2p)}, T^{\rP_{m+v_p(2p)}})\ra \rH^2(\rP_m, T)\ra\rH^2(\rP_{m+v_p(2p)}, T),\]
 où $ \rH^2(\rP_m/\rP_{m+v_p(2p)}, T^{\rP_{m+v_p(2p)}})$ est un groupe de $p^{2m}$-torsion. Ceci permet de prouver le théorème.

\end{proof}
\subsection{L'application exponentielle duale de Kato en famille }\label{constructiondeloi}
\subsubsection{La structure entière de la représentation $\D_{1,j,\sW}$ de $\rP_m$} 
Soient  $L$ une extension finie de $\Q_p$ et $\cO_L$ son anneau des entiers.
À un élément $\mu\in \D_0(\Z_p,L)$, on associe une série formelle:
\[ \cA_\mu(T)=\int_{\Z_p}(1+T)^z\mu,\]
appelée \emph{transformée d'Amice} de $\mu$. On a le lemme suivant:
\begin{lemma}L'application $\mu\mapsto \cA_\mu$ est une isométrie d'espaces de Banach de $\D_0(\Z_p,L)$  sur $\cE_L^+=\cO_L[[T]]\otimes_{\Z_p}\Q_p$.
\end{lemma}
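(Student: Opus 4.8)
The plan is to identify both $\D_0(\Z_p,L)$ and $\cE_L^+$ isometrically with the Banach space $\cC_L^b(\N)$ of bounded sequences in $L$, in such a way that the Amice transform becomes the composite of these two identifications. The starting point is Mahler's theorem, which is the case $r=0$ of the description of $\cC^r(\Z_p,K)$ recalled in Example~\ref{example-Banach}~(1) (applied with $L$ in place of $K$): the binomial functions $\{\binom{z}{n}\}_{n\geq 0}$ form an orthonormal basis of the $L$-Banach space $\cC^0(\Z_p,L)$, so every $f\in\cC^0(\Z_p,L)$ has a unique expansion $f=\sum_{n\geq 0}a_n(f)\binom{z}{n}$ with $a_n(f)\in L$, $a_n(f)\to 0$ and $\|f\|=\sup_n|a_n(f)|$. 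Dualising, a measure $\mu\in\D_0(\Z_p,L)$ is determined by the sequence $b_n(\mu)=\int_{\Z_p}\binom{z}{n}\,\mu$: we have $\int_{\Z_p}f\,\mu=\sum_{n\geq 0}a_n(f)\,b_n(\mu)$, the series converging because $a_n(f)\to 0$ while, for $\mu$ to be a bounded functional, $(b_n(\mu))_n$ must be bounded; conversely any bounded sequence defines a measure by this formula. Orthonormality then gives $\|\mu\|=\sup_{f\neq 0}|\int_{\Z_p}f\,\mu|/\|f\|=\sup_n|b_n(\mu)|$, so $\mu\mapsto(b_n(\mu))_n$ is an isometric isomorphism $\D_0(\Z_p,L)\cong\cC_L^b(\N)$.

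Next I would compute the Amice transform in these terms. The coefficient of $T^n$ in $(1+T)^z$ is $\binom{z}{n}$, a continuous $\Z_p$-valued function of $z$, so integrating the formal series $(1+T)^z=\sum_{n\geq 0}\binom{z}{n}T^n$ term by term against $\mu$ yields
\[\cA_\mu(T)=\int_{\Z_p}(1+T)^z\,\mu=\sum_{n\geq 0}\Bigl(\int_{\Z_p}\binom{z}{n}\,\mu\Bigr)T^n=\sum_{n\geq 0}b_n(\mu)\,T^n,\]
which is a well-defined element of $\cE_L^+$ since $(b_n(\mu))_n$ is bounded (and, for $T$ with $v_p(T)>0$, the partial sums converge in $\cC^0(\Z_p,L)$ to $z\mapsto(1+T)^z$, giving the same value). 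On the other hand $\cE_L^+=\cO_L[[T]]\otimes_{\Z_p}\Q_p$ is by definition the set of power series $\sum_n c_nT^n$ with $(c_n)_n$ bounded, its Banach norm being the Gauss norm $\sup_n|c_n|$; hence the coefficient map $\sum_n c_nT^n\mapsto(c_n)_n$ is an isometric isomorphism $\cE_L^+\cong\cC_L^b(\N)$.

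Combining the two displays, $\mu\mapsto\cA_\mu$ is the composite of the isometry $\D_0(\Z_p,L)\cong\cC_L^b(\N)$ with the inverse of the isometry $\cE_L^+\cong\cC_L^b(\N)$, and is therefore itself an isometric isomorphism of $\D_0(\Z_p,L)$ onto $\cE_L^+$. I do not expect a genuine obstacle: the whole argument is bookkeeping around Mahler's theorem and the definition of the dual norm. The one point that deserves a word of care is the interchange of the infinite sum with the integral in the displayed formula, which is justified precisely because each $\binom{z}{n}$ is a genuine continuous function (so integrable) and the resulting coefficient sequence $(b_n(\mu))_n$ is bounded — equivalently, because the Mahler series of $z\mapsto(1+T)^z$ converges in $\cC^0(\Z_p,L)$ for every $T$ in the open unit disc.
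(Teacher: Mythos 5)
Your argument is correct and complete: it is the standard proof of Amice's theorem, reducing everything to Mahler's theorem and the identification of both sides with the space $\cC^b_L(\N)$ of bounded sequences, with the one delicate point (term-by-term integration of $(1+T)^z=\sum_n\binom{z}{n}T^n$, and the fact that orthonormality of the Mahler basis gives $\|\mu\|=\sup_n|{\textstyle\int\binom{z}{n}\mu}|$) handled properly. The paper itself gives no proof of this lemma — it is invoked as a classical fact (cf.\ Colmez, \emph{Fonctions d'une variable $p$-adique}) — so there is nothing to compare against; your write-up supplies exactly the argument the paper implicitly relies on.
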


La transformée d'Amice induit un isomorphisme de familles de représentations de Banach de $\D_{0,\rho^{\mathbf{univ}}_{j}}(\sW)$ sur $\cE_{\Q_p}^+\hat{\otimes}\cO(\sW)$.
\begin{lemma}
Soient $u,v\in p^m\Z_p $. Si $\mu$ est une mesure sur $\Z_p$ à valeurs dans $\Q_p$,  l'action de $(u,v)\in \rP_m$ sur $\cA_\mu(T)$ induite par celle sur $\D_{0,\rho^{\mathbf{univ}}_{j}}(\sW)$ est donne par la formule suivante:
\begin{equation}\label{newfor}
(u,v)\cdot \cA_\mu(T)=e^{-vj} (1+T)^u\cA_\mu((1+T)^{e^v}-1).
\end{equation}
\end{lemma}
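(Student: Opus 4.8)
The plan is to compute the action of $(u,v)\in\rP_m$ on the Amice transform by transporting the known action on $\D_{0,\rho^{\mathbf{univ}}_{j}}(\sW)$ through the Amice isometry $\mu\mapsto\cA_\mu$. First I recall what the action on the distribution side is. By construction in \S\ref{const}, the family $\D_{0,\rho^{\mathbf{univ}}_{j}}$ is the $\vee$-dual of $\cC^{0,\rho^{\mathbf{univ}}_{j}}_{\sW}$, and for $\mu$ a measure and $f$ a continuous function the pairing satisfies $\langle\gamma\cdot\mu,f\rangle=\langle\mu,f*\gamma\rangle$, where the right action on $\cC^{0,\rho^{\mathbf{univ}}_{j}}_{\sW}$ is $f^\rho*\gamma=\rho^{\mathbf{univ}}_{j}(\gamma)(f*\gamma)^\rho$ with $f*\gamma(z)=f(\gamma z)$ and $\gamma z=\tfrac{b+dz}{a+cz}$. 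Here $\gamma=(u,v)$ corresponds to the matrix $\bigl(\begin{smallmatrix}1&u\\0&e^v\end{smallmatrix}\bigr)$, so $\gamma z=u+e^vz$ (reading off $a=1,b=u,c=0,d=e^v$), and $\rho^{\mathbf{univ}}_{j}(\gamma)=f_{a,c}(z)(\det\gamma)^{-j}=f_{1,0}(z)\,e^{-vj}=\kappa^{\mathbf{univ}}(1)\,e^{-vj}=e^{-vj}$, since $c=0$ kills the $z$-dependence and $\det\gamma=e^v$.

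Next I translate this into the Amice picture. Since $\cA_\mu(T)=\int_{\Z_p}(1+T)^z\mu$, it suffices to evaluate $\cA_{\gamma\cdot\mu}(T)=\int_{\Z_p}(1+T)^z(\gamma\cdot\mu)=\int_{\Z_p}\bigl((1+T)^z*\gamma\bigr)\mu$ where $*\gamma$ here is the full twisted right action on $\cC^{0,\rho^{\mathbf{univ}}_{j}}_{\sW}$ applied to the function $z\mapsto(1+T)^z$. Unwinding: the untwisted part sends $z\mapsto(1+T)^{\gamma z}=(1+T)^{u+e^vz}=(1+T)^u\bigl((1+T)^{e^v}\bigr)^z=(1+T)^u\bigl(1+((1+T)^{e^v}-1)\bigr)^z$, and the cocycle contributes the scalar $e^{-vj}$. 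Hence
\[
\cA_{\gamma\cdot\mu}(T)=e^{-vj}(1+T)^u\int_{\Z_p}\bigl(1+((1+T)^{e^v}-1)\bigr)^z\mu=e^{-vj}(1+T)^u\,\cA_\mu\bigl((1+T)^{e^v}-1\bigr),
\]
which is exactly formula \eqref{newfor}. One should remark that $(1+T)^{e^v}-1\in T\cdot\cE^+_{\Q_p}$ since $e^v\equiv1$ on the relevant disc, so the substitution $\cA_\mu\mapsto\cA_\mu((1+T)^{e^v}-1)$ is a well-defined continuous ring endomorphism of $\cE^+_{\Q_p}\hat\otimes\cO(\sW)$, and the whole expression lands back in that space.

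The only genuinely delicate point is the bookkeeping of \emph{which} right action appears, i.e.\ making sure the Dirac-mass normalization and the pairing conventions from \S\ref{const} are applied consistently; in particular one must check that $\int_{\Z_p}(f*\gamma)\mu$ — and not $\int_{\Z_p}(f*\gamma^{-1})\mu$ — is the correct expression for $\int_{\Z_p}f(\gamma\cdot\mu)$, which is precisely the defining formula of the $\I_0(p)$-equivariant pairing $\langle\,,\rangle$ stated just before Lemma~\ref{Ev}. Once the conventions are pinned down, the computation is the short substitution above, with no real obstacle; the analyticity/convergence remark is immediate from $v_p(e^v-1)>0$.
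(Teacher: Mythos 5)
Votre preuve est correcte et suit la démarche attendue (le papier omet d'ailleurs la démonstration de ce lemme) : il s'agit bien de transporter l'action tordue sur $\cC^{0,\rho^{\mathbf{univ}}_{j}}_{\sW}$ à travers l'accouplement $\langle\gamma\cdot\mu,f\rangle=\int_{\Z_p}(f*\gamma)\mu$, en observant que pour $\gamma=\bigl(\begin{smallmatrix}1&u\\0&e^v\end{smallmatrix}\bigr)$ on a $\gamma z=u+e^vz$ et $\rho^{\mathbf{univ}}_{j}(\gamma)=e^{-vj}$. Le résultat est de plus cohérent avec les formules du Lemme~\ref{T} et de la Remarque qui suit la Proposition~\ref{reskj}, ce qui confirme que vos conventions sont les bonnes.
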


L'anneau $\cR_{1,L}^+$ des fonctions analytiques sur le disque $v_p(T)\geq1$, noté par $C_1$ dans l'exemple $\ref{espacepoids}$, est un $L$-Banach pour la valuation $v_{\cR_1}$ définie par la formule 
  \[v_{\cR_1}(f)=\inf_{n\in\N} (v_p(b_n)+n), \text{ si }  \sum_{n=0}^{+\infty}b_nT^n\in \cR_{1,L}^+ .\] 
On note $\cR_{1,L}^{++}$ son anneau des entiers pour la valuation $v_{\cR_1}$. 

On note $\D_{1}$ le faisceau de modules de Banach sur l'espace des poids défini par:
$\D_{1}(\sW_n)=\cR^+_{1,\Q_p}\hat{\otimes}\cO(\sW_n)$ pour tout entier $n\geq 1$. 
Comme le $\cO(\sW_n)$-module $\cE_{\Q_p}^+\hat{\otimes}\cO(\sW_n)$ est dense dans $\D_{1}(\sW_n)$, 
l'action de $\rP_m$ sur $\cE_{\Q_p}^+\hat{\otimes}\cO(\sW_n)$
s'étend en une action continue sur $\D_{1}(\sW_n)$. Ceci nous fournit un faisceau de représentations de Banach sur $\sW$, noté par $\D_{1,j}$.
On note $\D^+_{1,j}(\sW_n)=\cR_{1,\Q_p}^{++}\hat{\otimes}_{\Z_p}\cO(\sW_n)^+$
 le sous-$\cO(\sW_n)^+$-module de Banach de $\D_{1,j}(\sW_n)$, qui est la structure entière de la $\cO(\sW_n)$-représentation de Banach $\D_{1,j}(\sW_n)$ de $\rP_m$.

\begin{lemma} L'action de $\rP_m$ sur $\D_{1,j}(\sW_n)$ est analytique pour tout $n$.
\end{lemma}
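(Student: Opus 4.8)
The plan is to read everything off the explicit formula~(\ref{newfor}). Identify $(u,v)\in\rP_m$ with $\bigl(\begin{smallmatrix}1&u\\0&e^{v}\end{smallmatrix}\bigr)$, $u,v\in p^{m}\Z_p$. The action of $(u,v)$ on $f\in\D_{1,j}(\sW_n)=\cR_{1,\Q_p}^{+}\hat{\otimes}\cO(\sW_n)$ is then
\[(u,v)\cdot f=e^{-vj}\,(1+T)^{u}\,f\bigl((1+T)^{e^{v}}-1\bigr),\]
the right-hand side making sense for every $f$ because substitution and multiplication are continuous on $\cR_{1,\Q_p}^{+}$. Analyticity of the action means that, for each fixed $f$, the orbit map $(u,v)\mapsto(u,v)\cdot f$ is given by a power series in $(u,v)$ converging on the polydisc $\{v_p(u)\ge m,\ v_p(v)\ge m\}=\rP_m$ with coefficients in the Banach space $\D_{1,j}(\sW_n)$. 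Since the argument below goes through verbatim with coefficients in $\cO(\sW_n)$, I write it for $f=\sum_{\ell}b_{\ell}T^{\ell}\in\cR_{1,\Q_p}^{+}$ (so $v_p(b_{\ell})+\ell\to+\infty$) and leave the tensoring with $\cO(\sW_n)$, and hence the independence of $n$, to the reader.

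The arithmetic input is: on $\cR_{1,\Q_p}^{+}$ the valuation $v_{\cR_1}$ is the Gauss norm, hence multiplicative, and $v_{\cR_1}(\log(1+T))=\inf_{\ell\ge1}(\ell-v_p(\ell))=1$; together with $v_p(k!)\le(k-1)/(p-1)\le k-1$ and the hypothesis $m\ge v_p(2p)$ — which gives $m\ge1$, and $m\ge2$ if $p=2$, so that $m+1>1/(p-1)$ in every case — this makes the exponential series below converge in $\cR_{1,\Q_p}^{+}$, and is the only place $m\ge v_p(2p)$ enters. Now treat the three factors. (i) $e^{-vj}=\sum_{k\ge0}\frac{(-j)^{k}}{k!}v^{k}$ converges for $v_p(v)>1/(p-1)$, a fortiori on $p^{m}\Z_p$. (ii) For $v_p(u)\ge m$, $v_{\cR_1}(u\log(1+T))=v_p(u)+1\ge m+1$, so $(1+T)^{u}=\exp(u\log(1+T))=\sum_{i\ge0}\frac{(\log(1+T))^{i}}{i!}u^{i}$ converges in $\cR_{1,\Q_p}^{+}$; since moreover $v_{\cR_1}\bigl(\frac{(\log(1+T))^{i}}{i!}\bigr)=i-v_p(i!)\ge1$ for $i\ge1$, this is a power series in $u$ converging uniformly for $v_p(u)\ge m$, so $u\mapsto(1+T)^{u}$ is analytic on $p^{m}\Z_p$ with values in $\cR_{1,\Q_p}^{+}$.

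The third factor is the delicate one. Write, using $v_p(e^{v}-1)=v_p(v)\ge m$,
\[(1+T)^{e^{v}}-1=(1+T)\exp\bigl((e^{v}-1)\log(1+T)\bigr)-1=T+(1+T)\bigl(\exp\bigl((e^{v}-1)\log(1+T)\bigr)-1\bigr).\]
Since $v_{\cR_1}\bigl((e^{v}-1)\log(1+T)\bigr)\ge m+1>1/(p-1)$, the exponential converges, and $\exp\bigl((e^{v}-1)\log(1+T)\bigr)-1$ is analytic in $v$ — it is the composite of the analytic map $v\mapsto e^{v}-1$ (values in $\{v_p\ge m\}$) with the analytic map $y\mapsto\exp(y\log(1+T))-1$ on $\{v_p(y)\ge m\}$ — with $v_{\cR_1}\ge m+1>1$. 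Hence $v_{\cR_1}\bigl((1+T)^{e^{v}}-1\bigr)=1$, so $f\bigl((1+T)^{e^{v}}-1\bigr)=\sum_{\ell}b_{\ell}\bigl((1+T)^{e^{v}}-1\bigr)^{\ell}$ converges uniformly in $v$ (its $\ell$-th term has $v_{\cR_1}\ge v_p(b_{\ell})+\ell\to+\infty$) and is a uniformly convergent series of analytic functions, hence analytic on $p^{m}\Z_p$. The product of the three analytic maps is analytic, so $(u,v)\mapsto(u,v)\cdot f$ is analytic on $\rP_m$, proving the lemma for every $n$. The main obstacle is precisely this last step: exhibiting $f\circ\bigl((1+T)^{e^{v}}-1\bigr)$ as a composite of analytic maps and checking that at each stage the image lands in the disc where the next series converges — this is why one uses $(1+T)^{e^{v}}=(1+T)\exp\bigl((e^{v}-1)\log(1+T)\bigr)$, keeping the exponent of $v_{\cR_1}$-valuation $\ge m+1$ and thus strictly inside the convergence region, whereas the naive $\exp\bigl(e^{v}\log(1+T)\bigr)$ would lie on the boundary when $p=2$. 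Everything else is a transcription of the standard $p$-adic estimates for $\exp$ and $\log$ from $\Q_p$ to $\cR_{1,\Q_p}^{+}$.
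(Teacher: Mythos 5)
Your proof is correct, but it takes a genuinely different route from the paper's. The paper verifies its definition of analyticity generator by generator: since $\rP_m$ is topologically generated by $u_m=(p^m,0)$ and $\gamma_m=(0,p^m)$, it reduces to showing that $x\mapsto\gamma^x f=\sum_{n}\binom{x}{n}(\gamma-1)^n f$ is analytic for $\gamma=u_m,\gamma_m$, and this follows from a single estimate read off from the formula (\ref{newfor}), namely that $(\gamma-1)$ raises the valuation of each monomial: $v_{\cR_1}\bigl((\gamma-1)T^i\bigr)\geq v_{\cR_1}(T^i)+1$. You instead expand the full orbit map $(u,v)\mapsto(u,v)\cdot f$ as a power series on $(p^m\Z_p)^2$, factor by factor, with explicit $\exp$/$\log$ estimates for the Gauss valuation $v_{\cR_1}$; this is heavier, but it buys joint analyticity in $(u,v)$, which is the form actually exploited later (analytic $2$-cocycles and the derivations $\partial_{m,i}$ of Proposition \ref{analytic}), and your decomposition $(1+T)^{e^{v}}=(1+T)\exp\bigl((e^{v}-1)\log(1+T)\bigr)$ cleanly isolates the $p=2$ boundary issue that the naive $\exp\bigl(e^{v}\log(1+T)\bigr)$ would raise. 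Two small points you should make explicit to match the paper: first, the paper's definition of an analytic action is stated via one-parameter subgroups, so add the one-line remark that joint analyticity implies it by composing with the analytic map $x\mapsto\gamma^{x}$ from $\Z_p$ into $\rP_m$; second, the step ``a uniformly convergent series of analytic functions is analytic'' is best justified as convergence in the Gauss norm of the algebra of analytic functions of $(u,v)$ with coefficients in $\D_{1,j}(\sW_n)$ (which is exactly what your bound $v\geq v_p(b_\ell)+\ell$ provides), since completeness for that norm is what makes the limit analytic.
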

\begin{proof}
Comme le groupe $\rP_m$ est un groupe analytique compact de rang $2$ engendré par $u_m=(p^m,0)$ et $\gamma_m=(p^m,0)$, il suffit de montrer que, pour tout $f(T)\in\D_{1,j}(\sW_n)$, la fonction $x\mapsto u_m^xf(T)$ (resp. $x\mapsto \gamma_m^xf(T)$) est une fonction analytique sur $\Z_p$ à valeurs dans $\D_{1,j}(\sW_n)$. 

Comme  $\gamma^x=\sum_{n=0}^{+\infty}\binom{x}{n}(\gamma-1)^n$ pour $\gamma\in \rP_m$, on se ramène à estimer la valuation $\frac{(\gamma-1)T^i}{T^i}$ pour tout $i\in\N$ et $\gamma=u_m, \gamma_m$ respectivement. En effet, de la formule (\ref{newfor}), on déduit que pour $i\in\N$ et $\gamma=u_m$ ou $\gamma_m$, la valuation de $\frac{(\gamma-1)T^i}{T^i}$ est $\geq 1$. On donne l'estimation seulement pour $\gamma=\gamma_m$ et l'estimation pour $\gamma=u_m$ se déduit de la même manière.

De la formule (\ref{newfor}) pour $\gamma_m$, on a 
\begin{equation*}
(\gamma_m-1)T^i=((1+T)^{e^{p^m}}-1)^i-T^i=T^i\left((\sum\limits_{k=1}^{+\infty}\binom{e^{p^m}}{k}T^{k-1} )^i-1   \right).
\end{equation*}
On conclut le lemme du fait que la valuation de $(\sum\limits_{k=1}^{+\infty}\binom{e^{p^m}}{k}T^{k-1} )^i-1$ est $\geq1$ dans $\D_{1,j}(\sW_n)$.
\end{proof}
Pour terminer ce paragraphe, on établit les formules pour les actions de $\partial_{m,1}$ et $\partial_{m,2}$ sur $\D_{1,j}(\sW_n)$ pour tout $n$.
\begin{lemma}\label{T}Si $f(T)=\sum_{i=0}^{+\infty}a_i T^i\in\D_{1,j}(\sW_n)$ avec $a_i \in \cO(\sW_n)$, les actions de $\partial_{m,1}$ et $\partial_{m,2}-p^m$ sur $f(T)$  sont donn\'ees par les formules suivantes: 
\begin{equation}
\begin{split}
\partial_{m,1}f(T)&=p^m\log(1+T)f(T),\\ 
(\partial_{m,2}-p^m)f(T)&=p^m(\sum_{i=1}^{+\infty} a_i i(1+T)T^{i-1}\log(1+T)) -p^m(j+1)f(T).
\end{split}
\end{equation}
\end{lemma}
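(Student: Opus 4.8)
Le plan est de vérifier les deux formules sur une partie dense de $\D_{1,j}(\sW_n)$, puis de conclure par continuité et $\cO(\sW_n)$-linéarité. D'abord, grâce au lemme précédent l'action de $\rP_m$ sur $\D_{1,j}(\sW_n)$ est analytique, de sorte que les dérivations $\partial_{m,1}$ et $\partial_{m,2}$ sont bien définies, $\cO(\sW_n)$-linéaires et continues. Comme $\cE_{\Q_p}^+\hat{\otimes}\cO(\sW_n)$ est dense dans $\D_{1,j}(\sW_n)$ et que les monômes $T^i$ ($i\in\N$) forment une base de Banach de $\cR_{1,\Q_p}^+$ sur $\Q_p$, il suffira de calculer $\partial_{m,1}(T^i)$ et $(\partial_{m,2}-p^m)(T^i)$. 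Je noterai aussi que $\log(1+T)=\sum_{k\geq 1}\tfrac{(-1)^{k-1}}{k}T^k$ appartient à $\cR_{1,\Q_p}^+$ (puisque $v_{\cR_1}(T^k/k)=k-v_p(k)\to+\infty$) et que $v_{\cR_1}(\log(1+T))\geq 1$, ce qui assure que la série $\sum_{i\geq 1}a_i\,i\,(1+T)T^{i-1}\log(1+T)$ converge dans $\D_{1,j}(\sW_n)$ dès que $\sum_i a_i T^i\in\D_{1,j}(\sW_n)$, son terme général ayant une $v_{\cR_1}$-valuation $\geq v_p(a_i)+i$.

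Ensuite je me ramène aux générateurs infinitésimaux le long des deux paramètres. Avec les notations $\gamma_1=u_m=(p^m,0)$ et $\gamma_2=\gamma_m=(0,p^m)$, la loi de groupe donne $u_m^{\,t}=(tp^m,0)$ et $\gamma_m^{\,t}=(0,tp^m)$ pour $t\in\Z_p$, et l'analyticité entraîne $\partial_{m,i}f=\tfrac{d}{dt}\big|_{t=0}\big(\gamma_i^{\,t}\cdot f\big)$. La formule $(\ref{newfor})$, valable sur tout $\D_{1,j}(\sW_n)$ par densité, donne $u_m^{\,t}\cdot T^i=(1+T)^{tp^m}T^i$, d'où $\partial_{m,1}(T^i)=p^m\log(1+T)\,T^i$. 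Elle donne aussi $\gamma_m^{\,t}\cdot T^i=e^{-tp^m j}\big((1+T)^{e^{tp^m}}-1\big)^i$; comme $\tfrac{d}{dv}\big|_{v=0}(1+T)^{e^v}=(1+T)\log(1+T)$, les règles de dérivation d'un produit et d'une composée donnent $\partial_{m,2}(T^i)=p^m\big(-j\,T^i+i\,T^{i-1}(1+T)\log(1+T)\big)$.

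Il restera alors à rassembler: $(\partial_{m,2}-p^m)(T^i)=p^m\,i\,(1+T)T^{i-1}\log(1+T)-p^m(j+1)T^i$, après quoi la $\cO(\sW_n)$-linéarité et la continuité de $\partial_{m,1}$ et $\partial_{m,2}$ donnent les formules annoncées pour $f(T)=\sum_{i\geq 0}a_iT^i$ arbitraire. Le point le plus délicat — mais de nature routinière une fois l'analyticité acquise — sera le contrôle des valuations dans l'espace de Banach $\D_{1,j}(\sW_n)=\cR_{1,\Q_p}^+\hat{\otimes}\cO(\sW_n)$: vérifier que $v\mapsto (1+T)^{e^v}-1$ est un chemin analytique à valeurs dans $\cR_{1,\Q_p}^+$, afin d'appliquer la règle de composition à $f\big((1+T)^{e^v}-1\big)$, et se servir des estimations de $v_{\cR_1}$ ci-dessus pour justifier à la fois la convergence de la série dérivée terme à terme et la légitimité du passage à la limite $\lim_{n}\tfrac{\gamma_i^{p^n}-1}{p^n}$ définissant $\partial_{m,i}$.
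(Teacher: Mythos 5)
Votre preuve est correcte et suit essentiellement la même démarche que le texte : on calcule l'action des générateurs infinitésimaux sur les monômes $T^i$ à partir de la formule $(\ref{newfor})$ (étendue à $\D_{1,j}(\sW_n)$ par densité), puis on conclut par $\cO(\sW_n)$-linéarité et continuité, l'estimation $v_{\cR_1}\geq v(a_i)+i$ assurant la convergence de la série dérivée. La seule différence, purement cosmétique, est que vous dérivez le chemin analytique $t\mapsto\gamma^{t}\cdot T^i$ en $t=0$ via la règle de la chaîne, là où le texte développe $\bigl((1+T)^{e^{p^{m+n}}}-1\bigr)^i$ par la formule du binôme et passe à la limite $p^{-n}(\gamma_m^{p^n}-1)$, ce qui revient au même vu l'analyticité.
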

\begin{proof}On donne seulement le calcul pour l'action de $\partial_{m,2}$, et la formule pour l'action de $\partial_{m,1}$ se déduit de la même manière: 
\begin{equation*}
\begin{split}
\partial_{m,2}(T^i)&=\lim_{n\ra \infty}\frac{(\gamma_m^{p^n}-1)T^i }{p^n}=\lim_{n\ra \infty}\frac{\left((1+T)^{e^{p^{m+n}}}-1\right)^i \det^{-j}(\gamma_m^{p^n})-T^i}{p^n}\\
&=\lim_{n\ra \infty}\frac{\det^{-j}(\gamma_m^{p^n})(\sum\limits_{k=0}^{i}\binom{i}{k}(1+T)^{e^{p^{m+n}}k}(-1)^{i-k}) -T^i}{p^n}\\
&=-jp^mT^i+p^m\log(1+T)(\sum\limits_{k=0}^{i}\binom{i}{k} k(1+T)^{k}(-1)^{i-k})\\
&=p^m\left(i(1+T)T^{i-1}\log(1+T)-jT^i\right). 
\end{split}
\end{equation*}

\end{proof}

\subsubsection{La construction de l'application $\exp^*_{\mathbf{Kato},\nu}$ }

On note $\fK^{++}=\Z_p\{\frac{q}{p}\}$ l'anneau des entiers de $\fK^{+}$ pour la valuation $v_p$, ainsi que 
$\cK^{++}$ son compl\'et\'e $q$-adique. 
On note $\fK_M^{++}$ l'anneau des entiers de $\fK_M^+$, qui est l'anneau
\[\{ \sum_{n=0}^{+\infty}a_nq_M^n\in \fK_M^+: a_n\in F_M \text{ tel que } v_p(a_n)+\frac{n}{M} \geq 0  \},\]
et  on note $\cK_M^{++}$ son compl\'et\'e $q$-adique, 
ainsi que $\cK_M^+=\cK_M^{++}\otimes \Q_p$.

Rappelons que l'application $\iota_{\dR}:\fK^+\ra \B_{\dR}^+; f(q)\mapsto f(\tilde{q})$ identifie $\fK^+$ \`a un sous-anneau de $\B_{\dR}^+$. On note 
$\tilde{\fK}^+=\iota_{\dR}(\fK^+)[[t]]$ et $\tilde{\cK}^+=(\widehat{\iota_{\dR}(\fK^{++})}\otimes\Q_p)[[t]]$, où $\widehat{\iota_{\dR}(\fK^{++})}$ est le complété $\tilde{q}$-adique de $\iota_{\dR}(\fK^{++})$. De même, on note $\tilde{\fK}_M^+=\iota_{\dR}(\fK_M^+)[[t]]$ et $\tilde{\cK}_{M}^+=(\widehat{\iota_{\dR}(\fK_M^{++})}\otimes\Q_p)[[t]]$. 
On a bien 
\[\widehat{\iota_{\dR}(\fK_M^{++})}=\{ \sum_{n=0}^{+\infty}a_{n}\tilde{q}_M^n\in F_M[[\tilde{q}_M]]: a_{n}\in F_M \text{ tel que } v_p(a_{n})+\frac{n}{M} \geq 0\}.\]
On d\'efinit une application $\theta: \tilde{\cK}^{+}_M\ra \cK^{+}_M$ par r\'eduction modulo $t$, qui co\"incide avec celle sur $\tilde{\fK}_M^{+}$.
On constate que $\tilde{\fK}_M^+$ est la limite projective $\plim_n(\tilde{\fK}_M^+/t^n)$, où les $\tilde{\fK}_M^+/t^n$ sont des $\fK^+$-modules de rang fini munis de la topologie $p$-adique. 

Fixons un ouvert affinoïde $\sW_n$ de l'espace des poids $\sW$ pour $n\geq 1$ un entier. On a un isomorphisme $\cO(W_n)\cong\Z_p[\Delta]\otimes C_n$,
où $C_n$ est le sous-anneau de $\Q_p[[T_1-1]]$ consistant des fonctions analytiques sur le disque $v_p(T_1-1)\geq \frac{1}{n}$ et $\Delta$ est un groupe cyclique d'ordre $p-1$ engendré par $X_1$. Dans la suite, on identifie $\cO(\sW_n)$ avec $\Z_p[\Delta]\otimes C_n$.

Posons $\tilde{\D}=\plim_{n_1} (\tilde{\fK}_M^+/t^{n_1})\hat{\otimes} \D_{1,j-2}(\sW_n)$, qui est une $\cO(\sW_n)$-représentation de $\rP_m$. Elle n'est pas une $\Q_p$-repr\'esentation analytique. 
 On note $\tilde{\D}^{+,n_1}=(\widehat{\iota_{\dR}(\fK_M^{++})}\hat{\otimes}\D^+_{1,j-2}(\sW_n)[[t]])/t^{n_1}$ le $\Z_p$-réseau de  $(\tilde{\fK}_M^+/t^n)\hat{\otimes} \D_{1,j-2}(\sW_n)$ qui est stable sous l'action de $\rP_m$. 
 On déduit de la formule ($\ref{newfor}$)  
que l'idéal $\m=(T_1-1,T,\tilde{q}_M)$ de $\tilde{\D}^{+,n_1}$ est stable sous l'action de $\rP_m$.  Ceci nous permet de définir des $\Z_p$-représentations analytiques $\tilde{\D}^{+,n_1,n_2}$ de $\rP_m$, pour tous $n_1,n_2\geq 1$,
\[\tilde{\D}^{+,n_1,n_2}=(\widehat{\iota_{\dR}(\fK_M^{++})}\hat{\otimes}\D^+_{1,j-2}(\sW_n)[[t]])/t^{n_1})/\m^{n_2}.\]
  
L'inclusion $ \tilde{\D}\subset\plim_{n_1}\left((\plim_{n_2} \tilde{\D}^{+,n_1,n_2})\otimes\Q_p\right)$ de $\rP_m$-représentations, nous permet de définir un morphisme:
\[\rH^i(\rP_m, \tilde{\D})\ra \plim_{n_1}\rH^i(\rP_m, (\plim_{n_2} \tilde{\D}^{+,n_1,n_2})\otimes\Q_p) \ra \plim_{n_1}\left((\plim_{n_2}\rH^i(\rP_m,  \tilde{\D}^{+,n_1,n_2}))\otimes\Q_p\right).  \]

\begin{lemma}Les actions de $\partial_{m,1}$ et $\partial_{m,2}-p^m$ sur $t$ et $\tilde{q}_M$ sont donn\'ees par les formules suivantes:
\begin{equation*}
\begin{split}
\partial_{m,1}(t)=0, &\partial_{m,1}(\tilde{q}_M)=\frac{p^mt}{M}\tilde{q}_M;
\partial_{m,2}(t)=p^mt,\partial_{m,2}(\tilde{q}_M)=0.\end{split}
\end{equation*}

\end{lemma}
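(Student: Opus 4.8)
The plan is to derive everything from the explicit action of $\rP_m\cong P_{\fK_M}$ on the two elements $t$ and $\tilde{q}_M$ of $\B_{\dR}^+$, and then apply the limit defining $\partial_{m,i}$. Recall that $t=\log[\underline{\epsilon}]$ with $\underline{\epsilon}=(1,\z_p,\z_{p^2},\dots)\in\R(\ol{\fK}^+)$, and that the Teichmüller lift $\tilde{\z}_M=[(\z_M,\z_{Mp},\dots)]$ satisfies $\tilde{\z}_M^M=[\underline{\epsilon}]$, so that $M\log\tilde{\z}_M=t$. First I would recall, from Kummer theory and from the definition of the isomorphism $P_{\fK_M}\cong\rP_m$, that an element $(u,v)=\bigl(\begin{smallmatrix}1&u\\0&e^v\end{smallmatrix}\bigr)$ acts on the compatible systems of roots of unity and of $Mp^n$-th roots of $q$ by $(u,v)\cdot\z_{Mp^n}=\z_{Mp^n}^{e^v}$ and $(u,v)\cdot q_{Mp^n}=\z_{Mp^n}^{\,u}q_{Mp^n}$, the entry $u$ being precisely the value of the Kummer cocycle attached to $q_M$ (in particular $u\in p^m\Z_p$ for $\sigma\in P_{\fK_M}$). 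Passing to Teichmüller representatives in $\A_{\inf}\subset\B_{\dR}^+$ this yields
\[(u,v)\cdot t=e^v\,t,\qquad (u,v)\cdot\tilde{q}_M=\tilde{\z}_M^{\,u}\,\tilde{q}_M=e^{ut/M}\,\tilde{q}_M,\]
where the exponential series are in fact finite sums once one works inside a given $\tilde{\D}^{+,n_1,n_2}$, in which $t$ and $\tilde{q}_M$ are nilpotent.

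Next I would carry out the limit. Using the group law $(u_1,v_1)(u_2,v_2)=(e^{v_2}u_1+u_2,v_1+v_2)$, an immediate induction gives $(p^m,0)^k=(kp^m,0)$ and $(0,p^m)^k=(0,kp^m)$, hence $\gamma_1^{p^n}=(p^{m+n},0)$ and $\gamma_2^{p^n}=(0,p^{m+n})$. Substituting into the formulas above, $\gamma_1^{p^n}$ fixes $t$ and sends $\tilde{q}_M$ to $e^{p^{m+n}t/M}\tilde{q}_M$, while $\gamma_2^{p^n}$ sends $t$ to $e^{p^{m+n}}t$ and fixes $\tilde{q}_M$. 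Since $\frac{e^{p^{m+n}x}-1}{p^n}=p^m x+O(p^{m+n})$ tends to $p^m x$ (a finite computation, as $x$ is nilpotent), the definition $\partial_{m,i}=\lim_n\frac{\gamma_i^{p^n}-1}{p^n}$ gives at once $\partial_{m,1}(t)=0$, $\partial_{m,2}(t)=p^m t$, $\partial_{m,1}(\tilde{q}_M)=\frac{p^m t}{M}\tilde{q}_M$ and $\partial_{m,2}(\tilde{q}_M)=0$, which is the asserted list (and in particular $\partial_{m,2}-p^m$ kills $t$ and acts by $-p^m$ on $\tilde{q}_M$).

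The only point requiring care is the second displayed identity, namely the normalization $(u,v)\cdot\tilde{q}_M=e^{ut/M}\tilde{q}_M$: one must check that the matrix entry $u$ of $\sigma\in P_{\fK_M}$ really is the Kummer cocycle $c_{q_M}(\sigma)$, so that the period factor carries $t/M$ and not, say, $Mt$ or $t$. This is exactly the normalization fixed in \cite{Wang} when identifying $P_{\fK_M}$ with $\rP_m$, and it is consistent with the relation $\tilde{q}\,\sigma=\tilde{q}\,\tilde{\z}^{c_q(\sigma)}$ already recorded together with $\tilde{q}_M^M=\tilde{q}$ and $\tilde{\z}_M^M=\tilde{\z}$, which force $\sigma(\tilde{q}_M)=e^{c_q(\sigma)t/M}\tilde{q}_M$. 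Apart from this normalization, the proof is a two-line computation.
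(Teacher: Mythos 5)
Votre démonstration est correcte et suit exactement la voie que le papier sous-entend: l'énoncé y est justifié par un simple renvoi à un « calcul direct » de \cite{Wang} \S 5.1, et c'est précisément ce calcul que vous explicitez (action de $(u,v)$ sur $t$ et $\tilde q_M$ via le caractère cyclotomique et le cocycle de Kummer, puis passage à la limite $\partial_{m,i}=\lim_n p^{-n}(\gamma_i^{p^n}-1)$). Votre point de vigilance sur la normalisation est bien traité: la relation $\tilde q\,\sigma=\tilde q\,\tilde\z^{c_q(\sigma)}$ jointe à $\tilde q_M^M=\tilde q$ force $\sigma(\tilde q_M)=e^{c_q(\sigma)t/M}\tilde q_M$, et l'identification $u=c_q(\sigma)$ est d'ailleurs la seule compatible avec la contrainte $u\in p^m\Z_p$ de $\rP_m$, ce qui donne bien $\partial_{m,1}(\tilde q_M)=\frac{p^mt}{M}\tilde q_M$.
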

\begin{proof}Le lemme se déduit d'un calcul direct, qui se trouve dans $\cite{Wang}$ $\S 5.1$.
\end{proof}

\begin{prop}\label{reskj}Si $v_p(M)=m\geq v_p(2p)$ et si $j\geq 1$, alors l'application 
\[f(q_M)\mapsto \cA_{\nu_{j-2}}  t^{j-1} f(\tilde{q}_M) \] induit un isomorphisme de $\cK_{M}^{++}\hat{\otimes}_{\Z_p}\cO(\sW_n)$ sur $\plim_{n_1}\left((\plim_{n_2}\tilde{\D}^{+,n_1,n_2}/(\partial_{m,1},\partial_{m,2}-p^m))\otimes \Q_p\right)$, où $\cA_{\nu_{j-2}}$ est la transformée d'Amice de la section globale $\nu_{j-2}$ de $\D_{0,\rho^{\mathbf{univ}}_{j-2}}(\sW_n)\subset \D_{1,j-2}(\sW_n)$.
\end{prop}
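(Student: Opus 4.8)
The right‑hand side is, by its very definition, the inverse limit over $n_1$, with $p$ inverted, of the inverse limit over $n_2$ of the cokernel of the pair of operators $(\partial_{m,1},\partial_{m,2}-p^m)$ on the analytic $\Z_p$‑representations $\tilde{\D}^{+,n_1,n_2}$; by Proposition~\ref{analytic}(ii) this cokernel is nothing but $\rH^2(\rP_m,\tilde{\D}^{+,n_1,n_2})$. So the plan is to compute the cokernel of $(\partial_{m,1},\partial_{m,2}-p^m)$ on $\tilde{\D}$ explicitly and to exhibit its generators; this is the family counterpart of the computation in \cite{Wang}, \S 5, the finite‑dimensional coefficients $V_{k,j}$ there being replaced by the Banach module $\D_{1,j-2}(\sW_n)$ (in the variable $T$) tensored with $\cO(\sW_n)$, so that the relevant operators become Euler‑type operators on $\cR^+_{1,\Q_p}\hat\otimes\cO(\sW_n)$.

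The first step is to grade $\tilde{\D}$ by the $\tilde{q}_M$‑degree $b$: since $\partial_{m,1}(\tilde{q}_M)=\tfrac{p^{m}t}{M}\tilde{q}_M$ and $\partial_{m,2}(\tilde{q}_M)=0$, both operators preserve the degree‑$b$ piece $\tilde{\D}_b\cong\big(\cR^+_{1,\Q_p}\hat\otimes\cO(\sW_n)\big)[[t]]/t^{n_1}\cdot\tilde{q}_M^{b}$ (integrally, $\big(\cR^{++}_{1,\Q_p}\hat\otimes\cO(\sW_n)^+\big)[[t]]/t^{n_1}\cdot\tilde q_M^b$, with the bound on the $\tilde q_M^b$‑coefficient coming from $\widehat{\iota_{\dR}(\fK^{++}_M)}$). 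On $\tilde{\D}_b$, Lemma~\ref{T} (with index $j-2$), together with the formulas for $\partial_{m,i}(t)$ and $\partial_{m,i}(\tilde{q}_M)$ and the Leibniz rule, shows that $\partial_{m,1}$ is multiplication by $p^{m}\big(\log(1+T)+\tfrac{b}{M}t\big)$ and that $\partial_{m,2}-p^{m}$ equals $p^{m}$ times $u\partial_u+t\partial_t-(j-1)$, where $u=\log(1+T)$ and $u\partial_u=(1+T)\log(1+T)\partial_T$. Since $\log(1+T)$ differs from $T$ by a unit of $\cR^+_{1,\Q_p}$ (the hypothesis $m=v_p(M)\ge v_p(2p)$ being in force throughout), dividing by $\partial_{m,1}$, i.e.\ by $u+\tfrac{b}{M}t$, produces $\cO(\sW_n)[t]/t^{n_1}$ in which $u\equiv-\tfrac{b}{M}t$ — in particular $u$ becomes nilpotent, although it is not in $\cR^+_{1,\Q_p}$. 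The operator induced by $\partial_{m,2}-p^m$ on this quotient is simply $t\partial_t-(j-1)$, which acts on $t^{a}$ by the integer $a-j+1$; hence, after $\otimes\Q_p$, only $a=j-1$ contributes and the cokernel of the degree‑$b$ piece is $t^{j-1}\cO(\sW_n)$, with explicit generator the class of $t^{j-1}\tilde{q}_M^{b}=\cA_{\nu_{j-2}}\,t^{j-1}\tilde{q}_M^{b}$, using that $\cA_{\nu_{j-2}}=\cA_{\delta_0}(T)=\int_{\Z_p}(1+T)^z\delta_0=1$ ($\nu_{j-2}$ being the Dirac mass $\delta_0$, by Remark~\ref{intervect}).

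Reassembling over $b$, and matching the bound on the $\tilde q_M^b$‑coefficients with the one defining $\cK^{++}_M$, shows that $f(q_M)\mapsto\cA_{\nu_{j-2}}t^{j-1}f(\tilde q_M)$ identifies $\cK^{++}_M\hat\otimes_{\Z_p}\cO(\sW_n)$ — which equals $\cK^{+}_M\hat\otimes\cO(\sW_n)$ since $\cO(\sW_n)$ is a $\Q_p$‑algebra — with the cokernel, once the two projective limits are formed: $\plim_{n_2}$ over the $\m$‑adic quotients (Mittag--Leffler on finite‑length modules) and $\plim_{n_1}$, which stabilises as soon as $n_1>j-1$. The delicate point, and the main obstacle, is the integral bookkeeping: the division by $\partial_{m,1}$ involves the factor $p^m$ and the generally non‑integral scalar $\tfrac{b}{M}$, and the division by $t\partial_t-(j-1)$ creates the torsion modules $\cO(\sW_n)/(a-j+1)$ for $a\ne j-1$; one must control all the $p$‑power torsion and denominators thus introduced uniformly in $n_1,n_2$ and $b$, so that the identification survives the passage to the limit and so that inverting $p$ at the very end yields exactly $\cK^{++}_M\hat\otimes_{\Z_p}\cO(\sW_n)$ — neither larger nor smaller.
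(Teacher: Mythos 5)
Your overall strategy is the one the paper itself follows: compute the cokernel of the pair $(\partial_{m,1},\partial_{m,2}-p^m)$ by first using $\partial_{m,1}$ to eliminate the variable $T$, and then using $\partial_{m,2}-p^m$ --- which acts on $\cA_{\nu_{j-2}}\tilde q_M^r t^s$ by the scalar $p^m(s-j+1)$ --- to single out the component $t^{j-1}$ after inverting $p$. Your grading by the $\tilde q_M$-degree is only a cosmetic reorganisation of this (the paper works instead with the submodule $\bM_{n_1,n_2}$ of elements with no $T$, and shows that $\phi_1:\bM_{n_1,n_2}\ra\tilde{\D}^{+,n_1,n_2}/\partial_{m,1}$ is injective with torsion cokernel), and your rational identifications are correct.

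The genuine gap is exactly the point you flag and then leave aside: the uniform control of the torsion created by dividing by $\partial_{m,1}$. Without it, nothing guarantees that the cokernel of $\phi_1$ dies after $\plim_{n_2}$ and $\otimes\,\Q_p$ --- a projective limit of $p$-power-torsion modules with unbounded exponents need not be killed by inverting $p$. The paper settles this with the key lemma that $p^{2m(n_1+1)}\,T\,\tilde{\D}^{+,n_1,n_2}\subset\bM_{n_1,n_2}+\partial_{m,1}\tilde{\D}^{+,n_1,n_2}$, proved by \emph{descending} induction on the exponent $s$ of $t$: the formula $\partial_{m,1}(T^k\tilde q_M^r t^s)=p^mT^k\log(1+T)\tilde q_M^r t^s+r\tfrac{p^m}{M}T^k\tilde q_M^r t^{s+1}$ has an error term of strictly higher $t$-degree, so one starts at $s=n_1-1$ (where the error term vanishes modulo $t^{n_1}$) and descends, each step costing a factor $p^{2m}$; this yields a bound $p^{2m(n_1+1)}$ depending only on $n_1$, not on $n_2$ nor on the $\tilde q_M$-degree. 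Since in the target $p$ is inverted \emph{before} the limit over $n_1$ is taken, such a bound is exactly what is needed: your worry about uniformity in $n_1$ is in fact moot, but uniformity in $n_2$ and in $b$ is indispensable and is the real content of the proof. To complete your argument you would have to supply this estimate, or its analogue in your degree-by-degree setting, where the division by $u+\tfrac{b}{M}t$ with $v_p(M)=m$ introduces precisely the denominators that the induction is designed to absorb.
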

\begin{remark} La section globale $\nu_{j-2}$ est la masse de Dirac en $0$ (c.f. lemme$~\ref{intervect}$). On a $\cA_{\nu_{j-2}}=1$, mais il faut faire attention que l'action de $\I_0(p)$ sur $\tilde{\D}^{+,n_1,n_2}$ n'est pas triviale:  si $\gamma=(\begin{smallmatrix}a&b\\ c&d\end{smallmatrix})\in\I_0(p)$, on a 
$\cA_{\gamma\nu_{j-2}}=\cA_{\delta_{\frac{b}{a}}}\cdot\kappa^{\mathbf{univ}}(a)\det(\gamma)^{2-j}$,  où $\delta_{\frac{b}{a}}$ est la masse de Dirac en $\frac{b}{a}$ (si $\gamma\in \rP_m$, on a $a=1$ et donc $\kappa^{\mathbf{univ}}(a)=1$).

\end{remark}
Pour démontrer la proposition $(\ref{reskj})$, on a besoin d'un lemme préparatoire. 
On définit une valuation $p$-adique $v_p$ sur $F_M[\Delta]$ par la formule: 
\[v_p(x)=\inf_{0\leq n\leq p-2}v_p(a_n), \text{ si } x=\sum_{n=0}^{p-2}a_nX_1^n \text{ avec } a_n\in F_M.\] 
Alors tous les éléments de $\tilde{\D}^{+,n_1,n_2}$ sont de la forme
\[ \sum\limits_{\substack{0\leq k,l,r,s\\ k+l+r\leq n_2-1;\\ s\leq n_1-1}}a_{k,l,r,s}T^k(T_1-1)^l\tilde{q}_M^rt^s,\] 
 où $a_{k,l,r,s}\in F_M[\Delta]$ vérifie $v_p(a_{k,l,r,s})+\frac{r}{M}+k\geq 0$.

On note $\bM_{n_1,n_2}$ le sous-$\Z_p$-module de $\tilde{\D}^{+,n_1,n_2}$ des éléments de la forme
\[ \sum\limits_{\substack{0\leq l,r,s\\ l+r\leq n_2-1;\\ s\leq n_1-1}}a_{l,r,s}\cA_{\nu_{j-2}}(T_1-1)^l\tilde{q}_M^rt^s  \text{ où } a_{k,r,s}\in F_M \text{ vérifie } v_p(a_{l,r,s})+\frac{r}{M}\geq 0 .\]
 On constate qu'il n'existe pas d'élément de $\tilde{\D}^{+,n_1,n_2}$ tel que $\partial_{m,1} x$ appartient à $\bM_{n_1,n_2}$. Par conséquent, l'application naturelle
\[\phi_1: \bM_{n_1,n_2}\ra \tilde{\D}^{+,n_1,n_2}/\partial_{m,1} \] 
est injective.   

\begin{lemma}Le $\Z_p$-module $\bM_{n_1,n_2}+\partial_{m,1}\tilde{\D}^{+, n_1,n_2}$ contient $p^{2m(n_1+1)}T\tilde{\D}^{+, n_1,n_2}$, pour tout $n_1,n_2\geq 1$.
\end{lemma}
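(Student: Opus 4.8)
The plan is to render the action of $\partial_{m,1}$ on $\tilde{\D}^{+,n_1,n_2}$ completely explicit and then perform a terminating reduction, tracking the powers of $p$. Since $\partial_{m,1}$ is a derivation, the formula of Lemma~\ref{T} together with the relations $\partial_{m,1}(t)=0$ and $\partial_{m,1}(\tilde q_M)=\frac{p^m t}{M}\tilde q_M$ show that, on $\tilde{\D}^{+,n_1,n_2}$,
\[\partial_{m,1}=p^m\cdot\bigl(\text{multiplication by }\log(1+T)\bigr)+\frac{p^m}{M}\,t\,\bigl(\tilde q_M\partial_{\tilde q_M}\bigr),\]
the two summands being commuting operators. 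Write $\log(1+T)=T\,w(T)$ with $w(T)=1-\frac T2+\cdots\in\cR_{1,\Q_p}^{++}$. Because $T\in\m=(T_1-1,T,\tilde q_M)$ and $\m^{n_2}=0$ in $\tilde{\D}^{+,n_1,n_2}$, the element $w(T)-1$ is nilpotent there, so $w(T)^{-1}=\sum_{i=0}^{n_2-1}(1-w(T))^i$ is again multiplication by an element of $\cR_{1,\Q_p}^{++}$ and preserves $\tilde{\D}^{+,n_1,n_2}$.

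This yields, for every $x\in\tilde{\D}^{+,n_1,n_2}$, the identity $\partial_{m,1}\bigl(w(T)^{-1}x\bigr)=p^m Tx+\frac{p^m}{M}\,t\,w(T)^{-1}(\tilde q_M\partial_{\tilde q_M})x$, whence the key congruence
\[p^m Tx\equiv\psi(x)\pmod{\partial_{m,1}\tilde{\D}^{+,n_1,n_2}},\qquad \psi(x)=-\frac{p^m}{M}\,t\,w(T)^{-1}(\tilde q_M\partial_{\tilde q_M})x.\]
The operator $\psi$ raises the $t$-degree by one — so $\psi^{n_1}=0$, since $t^{n_1}=0$ — sends $\tilde{\D}^{+,n_1,n_2}$ into $t\,\tilde q_M\,\tilde{\D}^{+,n_1,n_2}$ (as $\tilde q_M\partial_{\tilde q_M}$ kills the $\tilde q_M$-constant part and multiplies $\tilde q_M^r$ by $r$), and commutes with multiplication by $T$, as does $\partial_{m,1}$. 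Iterating, one obtains $p^{n_1 m}T^k x\in\partial_{m,1}\tilde{\D}^{+,n_1,n_2}$ for all $k\ge n_1$ and $p^{n_1 m}T^k x\equiv p^{(n_1-k)m}\psi^k x$ for $1\le k<n_1$.

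Now given an arbitrary $Tz$ in $T\tilde{\D}^{+,n_1,n_2}$, expand $z$ in powers of $T$ and apply the previous step termwise: modulo $\partial_{m,1}\tilde{\D}^{+,n_1,n_2}$ and up to the factor $p^{n_1 m}$, $Tz$ is reduced to a finite sum of terms $\psi^{j}(\cdot)$ with $1\le j\le n_1-1$, each lying in $t^{j}\tilde q_M\,\tilde{\D}^{+,n_1,n_2}$ and, crucially, with the $\tilde q_M^r$-coefficient divisible by $r^{j}$. For such a term, the part of $T$-degree $0$ and trivial $\Delta$-character is already in $\bM_{n_1,n_2}$ by definition; the remaining part, of $T$-degree $\ge1$ or of nontrivial $\Delta$-character, is pushed into $\partial_{m,1}\tilde{\D}^{+,n_1,n_2}$ by solving $\partial_{m,1}(g\,\tilde q_M^r)=(\cdot)$ on the $\tilde q_M^r$-component — where $\partial_{m,1}$ is multiplication by $p^m\log(1+T)+\frac{p^m r}{M}t$ — at the cost of one more division by $\frac{p^m r}{M}$, a loss of $p^{m+v_p(r)}$ that is absorbed by the $r^{j}$-divisibility, the surviving $T$-positive term feeding back into the reduction of the previous paragraph. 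Since every pass strictly increases the smallest $t$-degree present and $t^{n_1}=0$, the process stops after at most $n_1$ passes; adding up the powers of $p$ spent gives the bound $p^{2m(n_1+1)}$.

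The main obstacle is precisely this last accounting: one must check that inverting the derivation $\tilde q_M\partial_{\tilde q_M}$ and clearing the denominator $\frac1M$ cost at most $p^{2m(n_1+1)}$ \emph{uniformly in $n_2$}, so that the estimate survives the double projective limit of Proposition~\ref{reskj}. What makes it work is that each application of $\psi$ multiplies $\tilde q_M^r$-coefficients by $r$, so the $r$-divisibility of the terms one must annihilate always stays ahead of the divisions by $r$ one performs; and the $\Delta=\mu_{p-1}$-direction, which $\partial_{m,1}$ preserves but $\bM_{n_1,n_2}$ does not see, has to be disposed of entirely inside $\partial_{m,1}\tilde{\D}^{+,n_1,n_2}$ via this $\tilde q_M^r$-line computation.
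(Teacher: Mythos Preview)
Your approach --- making $\partial_{m,1}$ explicit as $p^m\log(1+T)+\frac{p^m}{M}\,t\,\tilde q_M\partial_{\tilde q_M}$ and iterating the congruence $p^mTx\equiv\psi(x)$ --- is the paper's, reorganized: the paper runs a descending induction on the $t$-exponent $s$, which is dual to your forward iteration of $\psi$ (each application of $\psi$ raises the $t$-degree by one). The paper's organization makes the exponent $2m(n_1-s+1)$ visible at stage $s$ and therefore yields the bound $p^{2m(n_1+1)}$ at $s=0$ by inspection, whereas you leave that final count as an assertion.

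There is, however, a genuine gap in your treatment of the nontrivial-$\Delta$ piece. The step ``solve $\partial_{m,1}(g\,\tilde q_M^r)=(\cdot)$ via the term $\frac{p^m r}{M}t$, then feed the surviving $T$-positive residual back into the $\psi$-reduction'' is circular: for a target $b\,\tilde q_M^r t^{s'}$ with $b$ $T$-free, the choice $h=\frac{M}{p^m r}\,b\,\tilde q_M^r t^{s'-1}$ leaves the residual $-\frac{M}{r}\,Tw(T)\,b\,\tilde q_M^r t^{s'-1}$, and applying your own reduction $p^mTx\equiv\psi(x)$ once to this residual returns exactly $p^m\,b\,\tilde q_M^r t^{s'}$, i.e.\ $p^m$ times the original target --- the $w(T)$ and $w(T)^{-1}$ cancel and no net progress is made. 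In fact, with $\bM_{n_1,n_2}$ defined as written (coefficients in $F_M$ only), a $T^0t^0$-monomial of nontrivial $\Delta$-character lies in neither $\bM_{n_1,n_2}$ nor $\partial_{m,1}\tilde{\D}^{+,n_1,n_2}$: on the $\tilde q_M^r$-line $\partial_{m,1}$ is multiplication by the nilpotent element $p^m\bigl(Tw(T)+\tfrac{rt}{M}\bigr)$, whose image has trivial $T^0t^0$-part. So the lemma as stated would actually fail on those components. This points to a slip in the paper's definition of $\bM_{n_1,n_2}$: the coefficients $a_{l,r,s}$ should lie in $F_M[\Delta]$ rather than $F_M$, which is in any case what the map $\phi_0$ in the proof of Proposition~\ref{reskj} requires (its source $\cK_M^{++}\hat\otimes\cO(\sW_n)^+$ carries the full $\Delta$-factor). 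With that correction every $T^0$-term already lies in $\bM_{n_1,n_2}$, there is no $\Delta$-direction left to ``dispose of entirely inside $\partial_{m,1}\tilde{\D}^{+,n_1,n_2}$'', and your iteration then terminates for the honest reason that each pass through $\psi$ strictly raises the minimal $t$-degree.
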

\begin{proof} 
D'après le lemme $\ref{T}$, on a la formule $\partial_{m,1}(T^k)=p^m\log(1+T)T^k$ et donc 
\begin{equation}\label{parti_M1}
 \partial_{m,1}(T^k\tilde{q}_M^rt^s)=p^mT^k \log(1+T) \tilde{q}_M^rt^{s}+rT^k\frac{p^m}{M}\tilde{q}_M^rt^{s+1}.
 \end{equation}
Si $x\in \tilde{\D}^{+,n_1,n_2}$, alors $x$ est de la forme  
 \[\sum\limits_{\substack{0\leq k,l,r,s;\\ k+l+r\leq n_2-1;\\ s\leq n_1-1}}a_{k,l,r,s}T^k(T_1-1)^l\tilde{q}_M^rt^s, \text{ avec } a_{k,l,r,s}\in F_M[\Delta] \text{ vérifiant } v_p(a_{k,l,r,s})+\frac{r}{M}+k\geq 0.\]   
Il s'agit de montrer que tous les termes $p^{2m(n_1+1)}a_{k,l,r,s}T^k(T_1-1)^l\tilde{q}_M^rt^s$, où  $k\geq 1$ et $a_{k,l,r,s}\in F_M[\Delta]$ satisfait $v_p(a_{k,l,r,s})+\frac{r}{M}+k\geq 0$, sont dans $\partial_{m,1}\tilde{\D}^{+,n_1,n_2}$.

(i) Si $r=0$,  la formule $(\ref{parti_M1})$ s'écrit alors sous la forme: 
\[  \partial_{m,1}(a_{k,l,0,s}T^k(T_1-1)^{l}t^s)=p^{m}a_{k,l,0,s}T^k(T_1-1)^l \log(1+T)t^s .\]
Par ailleurs, on a $\log(1+T)=T(1+\sum_{i=1}^{+\infty}\frac{(-T)^i}{i+1})$. On constate que $1+\sum_{i=1}^{+\infty}\frac{(-T)^i}{i+1}$ est inversible dans $\tilde{\D}^{+, n_1,n_2}$ et on note son inverse par $g(T)$.

Supposons que $x\in T\tilde{\D}^{+,n_1,n_2}$.
Comme $v_p(a_{k,l,0,s})+k\geq 0$, on a $v_p(a_{k,l,0,s})+m+ k-1\geq 0$; ceci implique que 
\[y=\sum\limits_{\substack{0\leq l,s;1\leq k; \\ k+l\leq n_2-1;\\ s\leq n_1-1}} p^ma_{k,l,0,s}T^{k-1}(T_1-1)^{l}t^s\] appartient à  $\tilde{\D}^{+,n_1,n_2}$ et on a $\partial_{m,1}(yg(T))=p^{2m}x.$ 

(ii) Dans la suite, on suppose que $r\neq 0$. On démontrera le lemme dans ce cas par récurrence descendante sur $s$.\\
Si $s=n_1-1$, la formule $(\ref{parti_M1})$ s'écrit alors sous la forme: 
\[  \partial_{m,1}(a_{k,l,r,s}T^{k}(T_1-1)^l\tilde{q}_M^rt^{s})=p^{m}a_{k,l,r,s}T^{k}(T_1-1)^l \log(1+T)\tilde{q}_M^rt^s.\]
On en déduit que $\partial_{m,1}\tilde{\D}^{+,n_1,n_2}$ contient tous les termes $p^{m}a_{k,l,r,s}T^k(T_1-1)^l\tilde{q}_M^rt^s$ avec $s=n_1-1$ et $k\geq 1$; et donc tous les termes $p^{2m}a_{k,l,r,s}T^k(T_1-1)^l\tilde{q}_M^rt^s$ avec $s=n_1-1$ et $k\geq 1$.
 
Fixons $s_0$ un entiers tel que $0<s_0\leq n_1-2$.
Supposons que $\bM_{n_1,n_2}+\partial_{m,1}\tilde{\D}^{+,n_1,n_2}$ contient tout les termes $p^{2m(n_1-s+1)}a_{k,l,r,s}T^k(T_1-1)^l\tilde{q}_M^rt^s$ avec $s\geq s_0+1$. Il s'agit à montrer que $\bM_{n_1,n_2}+\partial_{m,1}\tilde{\D}^{+,n_1,n_2}$ contient tout les termes $x_{k,l,r,s}=p^{2m(n_1-s+1)}a_{k,l,r,s}T^k(T_1-1)^l\tilde{q}_M^rt^s$ avec $s=s_0$ et $k\geq 1$.

Comme $v_p(a_{k,l,r,s})+\frac{r}{M}+k\geq 0 $ avec $k\geq 1$, on a $v_p(a_{k_0,l,r,s})+\frac{r}{M}+m+k-1\geq 0$ et donc $y_{k, l,r,s_0}=p^{m+2m(n_1-s_0)}a_{k,l,r,s_0}T^{k-1}(T_1-1)^l\tilde{q}_M^rt^{s_0}$ appartient à $\tilde{\D}^{+,n_1,n_2}$. Par récurrence, on a $ty_{k, l,r,s_0}\in \bM_{n_1,n_2}+\partial_{m,1}\tilde{\D}^{+,n_1,n_2}$.

De la formule $(\ref{parti_M1})$, on a: 
\begin{equation*} 
\begin{split}\partial_{m,1}(y)=&p^{2m(n_1-s_0+1)}a_{k,l,r,s_0}T^{k-1}(T_1-1)^l\log(1+T)\tilde{q}_M^rt^{s_0}+\frac{rp^m}{M}\tilde{q}_M^r y t
\\
=&x_{k,l,r,s_0}g(T)+\frac{rp^m}{M}\tilde{q}_M^r y t.
\end{split}
\end{equation*}
On en déduit que le lemme est vrai pour les termes avec $s=s_0$. 

 \end{proof}
\begin{coro}\label{prepare}Si $n_1,n_2$ sont deux entiers $\geq1$, le conoyau $\coker\phi_1$ de $\phi_1$ est un $\Z_p$-module de $p^{2m(n_1+1)}$-torsion. 
\end{coro}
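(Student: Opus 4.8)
The plan is to deduce the statement directly from the preceding lemma, the only additional ingredient being a reduction to $T$-degree zero. First I would unwind the definition of $\coker\phi_1$: by construction $\phi_1$ is the composite of the inclusion $\bM_{n_1,n_2}\hookrightarrow\tilde{\D}^{+,n_1,n_2}$ with the canonical projection $\tilde{\D}^{+,n_1,n_2}\ra\tilde{\D}^{+,n_1,n_2}/\partial_{m,1}\tilde{\D}^{+,n_1,n_2}$, so that
\[\coker\phi_1\cong \tilde{\D}^{+,n_1,n_2}/(\bM_{n_1,n_2}+\partial_{m,1}\tilde{\D}^{+,n_1,n_2}),\]
and it is enough to prove the inclusion $p^{2m(n_1+1)}\tilde{\D}^{+,n_1,n_2}\subseteq \bM_{n_1,n_2}+\partial_{m,1}\tilde{\D}^{+,n_1,n_2}$.

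The second step is to split an arbitrary $x\in\tilde{\D}^{+,n_1,n_2}$ according to its $T$-degree: write $x=x_0+x_{\geq1}$, where $x_0$ gathers the monomials $a_{0,l,r,s}(T_1-1)^l\tilde{q}_M^rt^s$ of $T$-degree $0$ and $x_{\geq1}$ gathers the monomials $a_{k,l,r,s}T^k(T_1-1)^l\tilde{q}_M^rt^s$ with $k\geq1$. Comparing the explicit descriptions of the elements of $\tilde{\D}^{+,n_1,n_2}$ and of $\bM_{n_1,n_2}$ recalled above, and using $\cA_{\nu_{j-2}}=1$, one obtains $x_0\in\bM_{n_1,n_2}$; whereas the preceding lemma — or rather its proof, which treats separately each monomial of $T$-degree $k\geq1$ and $t$-degree $s$ and produces a preimage under $\partial_{m,1}$ modulo $\bM_{n_1,n_2}$ up to $p^{2m(n_1-s+1)}$-torsion — gives $p^{2m(n_1+1)}x_{\geq1}\in\bM_{n_1,n_2}+\partial_{m,1}\tilde{\D}^{+,n_1,n_2}$, since $0\leq s\leq n_1-1$ forces $p^{2m(n_1-s+1)}\mid p^{2m(n_1+1)}$.

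Finally, since $\bM_{n_1,n_2}$ and $\partial_{m,1}\tilde{\D}^{+,n_1,n_2}$ are $\Z_p$-submodules of $\tilde{\D}^{+,n_1,n_2}$, I would combine the two parts:
\[p^{2m(n_1+1)}x=p^{2m(n_1+1)}x_0+p^{2m(n_1+1)}x_{\geq1}\in\bM_{n_1,n_2}+\partial_{m,1}\tilde{\D}^{+,n_1,n_2},\]
the first summand lying even in $\bM_{n_1,n_2}$. This yields the required inclusion, hence that $\coker\phi_1$ is killed by $p^{2m(n_1+1)}$. All the substantive computation was done in the lemma; the only point here deserving care is the $T$-degree-zero reduction, i.e.\ checking that the $T$-degree-$0$ part of a general element of $\tilde{\D}^{+,n_1,n_2}$ really belongs to $\bM_{n_1,n_2}$ (with its $\cO(\sW_n)$-coefficients satisfying $v_p(\cdot)+\tfrac rM\geq0$), and bookkeeping of the exponents $2m(n_1-s+1)\leq 2m(n_1+1)$, both of which I expect to be routine given the explicit model for $\tilde{\D}^{+,n_1,n_2}$.
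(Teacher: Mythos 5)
Your proof is correct and is precisely the argument the paper leaves implicit: the corollary is stated without proof right after the lemma, and the intended deduction is exactly your splitting $x=x_0+x_{\geq 1}$, with $x_0$ (the $T$-degree-zero part) lying in $\bM_{n_1,n_2}$ and the lemma's monomial-by-monomial proof applied to $x_{\geq 1}$, the uniform exponent $2m(n_1+1)$ coming from $0\leq s\leq n_1-1$ as you say. Your two points of care are well placed: one must indeed invoke the \emph{proof} of the lemma rather than its statement (since $x_{\geq 1}$ need not literally lie in $T\tilde{\D}^{+,n_1,n_2}$ without losing a factor of $p$), and the inclusion $x_0\in\bM_{n_1,n_2}$ requires reading the coefficients of $\bM_{n_1,n_2}$ as lying in $F_M[\Delta]$ rather than $F_M$ as the paper literally writes — a typo, since otherwise $\phi_0$ would not land in $\bM_{n_1,n_2}$ and the corollary would fail.
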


On revient à la démonstration de la proposition $(\ref{reskj})$.
\begin{proof}[Démonstration de la prop. $(\ref{reskj})$]
Comme on a  
\begin{equation}\label{partial2}(\partial_{m,2}-p^m)( \cA_{\nu_{j-2}} \tilde{q}_M^r t^s)=p^m(s-j+1)\cA_{\nu_{j-2}}\tilde{q}_M^rt^s,
\end{equation} 
le $\Z_p$-module $\bM_{n_1,n_2}$ est stable sous l'action de $\partial_{m,2}-p^m$.
Donc l'application $\phi_1$ induit une application injective, que l'on note encore par $\phi_1$,
\[\phi_1:\bM_{n_1,n_2}/(\partial_{m,2}-p^m)\ra \tilde{\D}^{+,n_1,n_2}/(\partial_{m,1},\partial_{m,2}-p^m).\]
En plus, on déduit du corollaire (\ref{prepare}) que son conoyau est un $\Z_p$-module de $p^{2m(n_1+1)}$-torsion.

Si $n_1\geq 1$, on note
\[(\cK_{M}^{++}\hat{\otimes}_{\Z_p}\cO(\sW_n)^+)_{n_1}=(\cK_M^{++}\hat{\otimes}_{\Z_p}\cO(\sW_n)^+)/(T_1-1,q_M)^{n_1}.\]
Pour tout $n_1\geq j+1$, on dispose d'une application  $\phi_0: (\cK_M^{++}\hat{\otimes}\cO(\sW_n)^+)_{n_2}\ra \bM_{n_1,n_2}$ en envoyant $f(q_M)$ sur $f(\tilde{q}_M)t^{j-1}\cA_{\nu_{j-2}}$, qui est une injection. De la formule $(\ref{partial2})$ pour $s=j-1$, on déduit que $\phi_0$ induit une application injective, notée encore par $\phi_0$,
 \[\phi_0:(\cK_M^{++}\hat{\otimes}\cO(\sW_n)^+)_{n_2}\ra\bM_{n_1,n_2}/(\partial_{m,2}-p^m).\]
 En composant avec l'application $\phi_1$, on obtient une application injective 
  \[\phi=\phi_1\circ\phi_0: (\cK_M^{++}\hat{\otimes}\cO(\sW_n)^+)_{n_2}\ra \tilde{\D}^{+,n_1,n_2}/(\partial_{m,1},\partial_{m,2}-p^m), \]
  
 En prenant la limite projective sur $n_2$, on obtient une injection 
 \[\cK_M^{++}\hat{\otimes}\cO(\sW_n)^+ \ra (\plim_{n_2}\tilde{\D}^{+,n_1,n_2}/(\partial_{m,1},\partial_{m,2}-p^m)) \text{ pour } n_1\geq  j-1 .\] 
 Il ne reste qu'à montrer la surjectivité de
  \[\cK^{++}\hat{\otimes}\cO(\sW_n)\ra (\plim_{n_2}\tilde{\D}^{+,n_1,n_2}/(\partial_{m,1},\partial_{m,2}-p^m))\otimes \Q_p.\] 
 Cela se ramène à montrer que les applications 
 \[\plim_{n_2}\phi_0: \cK^{++}\hat{\otimes}\cO(\sW_n)\ra (\plim_{n_2}\bM_{n_1,n_2}/(\partial_{m,2}-p^m))\otimes\Q_p\] et 
 \[\plim_{n_2}\phi_1: (\plim_{n_2}\bM_{n_1,n_2}/(\partial_{m,2}-p^m))\otimes\Q_p\ra (\plim_{n_2}\tilde{\D}^{+,n_1,n_2}/(\partial_{m,1},\partial_{m,2}-p^m))\otimes \Q_p \] 
 sont surjectives.  La surjectivité de $\plim_{n_2}\phi_0$ découle de la formule  $(\ref{partial2})$ et celle de $\plim_{n_2}\phi_1$ découle du lemme précédent qui dit que, pour tout $n_1>j+1$, le conoyau de $\phi_1$ est de $p^{2m(n_1+1)}$-torsion.  
\end{proof}

En composant les applications obtenues dans les paragraphes pr\'ec\'edents, on obtient le diagramme suivant:
\[
\xymatrix{
 \rH^2(  \cG_{\fK_M},\B_{\dR}^{+}\hat{\otimes} \D_{1,j-2}(\sW_n)         ) &
\\
\rH^2(  P_{\fK_M},\B_{\dR}^{+}(\fK^+_{Mp^{\infty}})\hat{\otimes}\D_{1,j-2}(\sW_n)  ) \ar[u]^-{(1)}\ar[r]^-{(2)}\ar@{..>}[dd]^{\exp^{*}_{\mathbf{Kato},\nu}} &
\rH^2\left(  P_{\fK_M}, \tilde{\D} \right)\ar[d]^-{(3)}
\\
 & \plim_{n_1}\left((\plim_{n_2}\rH^2( P_{\fK_M},\tilde{\D}^{+,n_1,n_2}) ) \otimes_{\Z_p} \Q_p  \right) \ar[d]^-{(4)}
\\
\cK_{M}^{++}\hat{\otimes}_{\Z_p}\cO(\sW_n) &\plim_{n_1}\left((\plim_{n_2} (\tilde{\D}^{+,n_1,n_2}/(\partial_{m,1},\partial_{m,2}-p^m))\otimes _{\Z_p}\Q_p\right)\ar[l]^-{\cong}_-{(5)},
}
\]
o\`u

$\bullet$ l'application $(1)$, d'inflation, est injective car $(\B_{\dR}^{+})^{\cG_{\fK_{Mp^{\infty}}}}=\B_{\dR}^{+}(\fK_{Mp^{\infty}}^+)$ et $\cG_{\fK_{Mp^{\infty}}}$ agit trivialement sur $ \D_{1,j-2}(\sW_n)$;

$\bullet$ $(2)$ est l'isomorphisme induit par "la trace de
Tate normalis\'ee" $\bR_M$ (c.f. prop. $\ref{trace}$ );

$\bullet$ $(3)$ est l'application naturelle induite par la projection ; 

$\bullet$ $(4)$ est l'isomorphisme de la proposition $\ref{analytic}$ car $\tilde{\D}^{+,n_1,n_2} $ est analytique pour tout $(n_1,n_2)$;

$\bullet$ $(5)$ est l'inverse de l'isomorphisme dans la proposition $\ref{reskj}$.

On d\'efinit l'application $\exp^{*}_{\mathbf{Kato},\nu}$  en composant les applications $(2),(3), (4), (5)$.

\subsection{Application à la famille de systèmes d'Euler de Kato}
Dans ce paragraphe, on montrera le théorème \ref{theo}. Soient $M\geq 1$ tel que $v_p(M)\geq v_p(2p)$ et $A=\bigl(\begin{smallmatrix}\alpha&\beta\\ \gamma&
\delta\end{smallmatrix}\bigr)\in \Gamma_0(p)$ avec
$\alpha,\beta,\gamma,\delta\in\{1,\cdots,M\}$. On note $\psi_{M,A}=1_{A+M\bM_2(\hat{\Z})}$ la fonction caract\'eristique de $A+M\bM_2(\hat{\Z})$. C'est une fonction invariante sous l'action de $\cG_{\fK_M}$. Par ailleurs, la distribution $z_{\mathbf{Kato},c,d}(\nu_j)$ appartient \`a $\rH^2(\cG_{\fK_M},\fD_{\alg}(\bM_2(\Q\otimes \hat{\Z})^{]p[}\times\I_0(p),\D_{1,j-2}(\sW_n)))$. Alors, on a
\[\int\psi_{M,A}z_{\mathbf{Kato},c,d}(\nu)\in\rH^2(\cG_{\fK_M},\D_{1,j-2}(\sW_n))\]
et on note son image dans $\rH^2(\cG_{\fK_M},\B_{\dR}^{+}(\fK_{Mp^{\infty}}^+)\hat{\otimes}
\D_{1,j-2}(\sW_n))$ par $z_{M,A}$.
Pour montrer le th\'eor\`eme (\ref{theo}),
il suffit de prouver:
\begin{prop}\label{exam}
Pour toute paire $(M,A)$ ci-dessus, on a
\begin{equation*}
\exp^{*}_{\mathbf{Kato},\nu}(z_{M,A})
=\frac{M^{-2j}}{(j-1)!}
\ord(\frac{\alpha}{M})^{j-1}\kappa^{\mathbf{univ}}(\frac{M}{\ord(\alpha/M)})F_{c,\alpha/M,\beta/M}(\kappa^{\mathbf{univ}},j)E_{d,\gamma/M,\delta/M}^{(j)}
.
\end{equation*}
\end{prop}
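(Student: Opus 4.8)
The strategy is to reduce Proposition~\ref{exam} to the classical computation of $\exp^*_{\mathbf{Kato}}$ performed in \cite{Wang} $\S5.2$, by tracking the image of the cohomology class $z_{M,A}$ through each of the five arrows defining $\exp^*_{\mathbf{Kato},\nu}$ and comparing with the specialization at every integer $k\geq j+1$. More precisely, I would first invoke Theorem~\ref{theo}(2), which says that the diagram relating $\exp^*_{\mathbf{Kato},\nu}$, the specializations $\mathbf{Sp}_{k,j}$ and $\mathbf{Ev}_k$, and the classical $\exp^*_{\mathbf{Kato}}$ commutes; so it suffices to show that for \emph{every} integer $k\geq j+1$, the element $\mathbf{Ev}_k$ applied to the right-hand side of the displayed formula coincides with $\exp^*_{\mathbf{Kato}}\circ\mathbf{Sp}_{k,j}(z_{M,A})$. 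The left-hand side $\cK_M^{++}\hat\otimes\cO(\sW_n)$ embeds, via $\mathbf{Ev}_k$ at the very Zariski-dense set $\N\subset\sW$, into a product of classical $\cK_M^{++}$'s, so equality on that dense set forces equality in the family (this is the family-rigidity principle used throughout, cf. Remark~\ref{dense}).

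Concretely the steps are: (i) identify $\mathbf{Sp}_{k,j}(z_{M,A})$ with the classical local Kato class $\int\psi_{M,A}\,z_{\mathbf{Kato},c,d}(k,j)$ in $\rH^2(\cG_{\fK_M},\B_{\dR}^+\otimes V_{k,j})$, using the $\I_0(p)$-equivariance of $\mathbf{Sp}_{k,j}$ (Lemmas~\ref{Ev}, \ref{iota}) together with Theorem~\ref{fam-poids}; (ii) apply the classical reciprocity computation (\cite{Wang} $\S5.2$, in particular the analogue of Proposition~\ref{exam} in the non-family setting) to get
\[
\exp^*_{\mathbf{Kato}}\Bigl(\mathbf{Sp}_{k,j}(z_{M,A})\Bigr)=\frac{M^{j-2k+?}}{(j-1)!}(\cdots)F^{(k-j)}_{c,\alpha/M,\beta/M}\,E^{(j)}_{d,\gamma/M,\delta/M},
\]
with the precise power of $M$ and the $\ord$-factor read off from the normalisation there; (iii) apply $\mathbf{Ev}_k$ to the proposed right-hand side and use Lemma~\ref{special} (which computes $\mathbf{Ev}_{k}(F_{c,\alpha,\beta}(\kappa^{\mathbf{univ}},j))$ in terms of $F^{(k-j)}_{c,\alpha,\beta}$) and the definition of $\kappa^{\mathbf{univ}}$ to check the two expressions agree, paying attention to the $\ord(\alpha/M)$-powers exactly as in the proof of the Proposition in $\S\ref{discom}$.

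The genuinely delicate point is step (ii) combined with bookkeeping through arrows $(3)$, $(4)$, $(5)$: the map $(4)$ is the isomorphism of Proposition~\ref{analytic}, whose explicit form involves the coefficient $p^{2m}(c_{1,0,0,1}-c_{0,1,1,0})$ modulo $p^{2m}$-torsion, and arrow $(5)$ is the inverse of Proposition~\ref{reskj}, sending $f(q_M)$ to $\cA_{\nu_{j-2}}t^{j-1}f(\tilde q_M)$. Concretely one must compute $\delta^{(2)}$ of the explicit $2$-cocycle representing the image of $z_{M,A}$ under the Tate normalised trace $\bR_M$; by Proposition~\ref{analytic} this output lands in $\tilde\D^{+,n_1,n_2}/(\partial_{m,1},\partial_{m,2}-p^m)$, and by Proposition~\ref{reskj} it is of the form $\cA_{\nu_{j-2}}t^{j-1}f(\tilde q_M)$ for a unique $f$ in $\cK_M^{++}\hat\otimes\cO(\sW_n)$ up to $p^{2m(n_1+1)}$-torsion. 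The identification of $f$ with the Eisenstein family $\frac{M^{-2j}}{(j-1)!}\ord(\alpha/M)^{j-1}\kappa^{\mathbf{univ}}(M/\ord(\alpha/M))F_{c,\alpha/M,\beta/M}(\kappa^{\mathbf{univ}},j)E^{(j)}_{d,\gamma/M,\delta/M}$ is forced, modulo torsion for each $(n_1,n_2)$, by comparison with the classical formula after $\mathbf{Ev}_k$; since the torsion bound $p^{2m(n_1+1)}$ tends to $0$ in the projective limit tensored with $\Q_p$, no ambiguity survives. Thus the main obstacle is not conceptual but the careful matching of normalisations — the power of $M$, the factors $(\det)^{-j}$ hidden in $\rho^{\mathbf{univ}}_{j}$, and the $\ord$-renormalisation of $\alpha$ that appears because the $p$-adic family $F_{c,\alpha,\beta}(\kappa^{\mathbf{univ}},j)$ interpolates $(\ord\alpha)^{k-1}F^{(k-j)}_{c,\alpha,\beta}$ rather than $F^{(k-j)}_{c,\alpha,\beta}$ itself (Lemma~\ref{special}). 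I would organise the verification so that all these normalisation constants are collected into one identity checked at a single specialization $k$ and then propagated by Zariski density.
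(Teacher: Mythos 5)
There is a genuine gap: your argument is circular. You propose to deduce Proposition~\ref{exam} from Théorème~\ref{theo}(2) (the commutation $\mathbf{Ev}_k\circ\exp^*_{\mathbf{Kato},\nu}=\exp^*_{\mathbf{Kato}}\circ\mathbf{Sp}_{k,j}$) together with the classical computation of \cite{Wang}~$\S 5.2$ and Zariski density. But in the paper the logical order is the opposite: $\S 4.3$ states explicitly that to prove Théorème~\ref{theo} -- both parts, including the compatibility (2) -- it suffices to prove Proposition~\ref{exam}; the compatibility with specialization is a \emph{consequence} of the explicit value of $\exp^*_{\mathbf{Kato},\nu}(z_{M,A})$ (compared with the classical value of $\exp^*_{\mathbf{Kato}}$ at weight $k$ and with Lemma~\ref{special}), not an available tool. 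To make your route non-circular you would have to prove, independently, that the family map $\exp^*_{\mathbf{Kato},\nu}$ commutes with specialization at the level of the five arrows of its construction -- compatibility of $\mathbf{Sp}_{k,j}$ with the normalized trace $\bR_M$, with the torsion-controlled isomorphism of Proposition~\ref{analytic} applied to $\tilde{\D}^{+,n_1,n_2}$ versus its classical analogue, and with the isomorphism of Proposition~\ref{reskj} versus the weight-$k$ residue map. Nothing in the paper (nor in your proposal) establishes this; it is essentially as delicate as the computation it is meant to replace, and your step (ii) with the unresolved power of $M$ signals exactly where the unchecked normalisations hide.

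By contrast, the paper's proof is a direct computation in the family: an explicit $2$-cocycle representing $z_{M,A}$ (Théorème~\ref{ZM}), a Lie-algebra/differential computation of $\delta^{(2)}$ of that cocycle including the action of $\partial_{m,1},\partial_{m,2}$ on $\cA_{\gamma\nu_j}$ (Lemmes~\ref{2co}, \ref{negligible}, Corollaire~\ref{olla}), and finally the identification of the resulting $p$-adic limit with $F_{c,\alpha/M,\beta/M}(\kappa^{\mathbf{univ}},j)E^{(j)}_{d,\gamma/M,\delta/M}$ via comparison of formal Dirichlet series \emph{and} of constant terms through the $p$-adic Hurwitz zeta function $\z_{p,c}(\kappa^{\mathbf{univ}},\alpha,j)$ (Lemme~\ref{final}). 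These family-specific inputs are precisely what your reduction skips over; without an independent proof of the specialization compatibility they cannot be recovered from the weight-$k$ statements alone. Also, your final remark that the normalisation identity can be "checked at a single specialization $k$ and then propagated by Zariski density" is not coherent: density arguments in $\cO(\sW_n)$ require verification on a Zariski-dense set of weights, not at one point.
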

Pour d\'emontrer ceci, nous aurons besoin d'\'ecrire un $2$-cocycle
explicite repr\'esentant $z_{M,A}$ et le suivre \`a travers les
\'etapes de la construction de l'application $\exp^{*}_{\mathbf{Kato},\nu}$.

\subsubsection{Construction d'un $2$-cocycle}

Rappelons que notre famille de systèmes d'Euler de Kato est construite selon le même chemin que celui du système d'Euler de Kato classique. Ceci nous permet d'utiliser la construction d'un $2$-cocycle explicite pour le système d'Euler de Kato classique dans $~\cite{Wang}$, ce qui est reliée à la construction du système d'Euler de Kato, sauf que on doit transféfer l'action de groupe à droite en celle à gauche. On expose le résultat ci-dessous, et renvoie le lecteur intéressé à $\cite{Wang}~\S 5.2$ pour les détails de la construction.

Soient $\Lambda_1, \Lambda_2$ deux $G$-modules \`a gauche. Si
$x_1\in\Lambda_1, x_2\in\Lambda_2$ et $\sigma,\tau\in G$, on d\'efinit un
\'el\'ement $\{x_1\otimes
x_{2}\}_{\sigma,\tau}:=((\tau\sigma-\sigma)\circ x_1\otimes((\sigma-1)\circ x_2))\in\Lambda_1\otimes\Lambda_2$.
Un cocycle explicite, qui présente l'image de $z_{M,A}$ dans $\rH^2(P_{\fK_M},\tilde{\fK}^+_{M}\hat{\otimes}\D_{1,j-2,\sW})$, est donné comme suit: 
\begin{theo}\label{ZM}
Si on note
\[\log_{\left(\begin{smallmatrix}a_0&b_0\\c_0&d_0\end{smallmatrix}\right)}^{(\sigma,\tau)}=\left\{\log\left((c^2-<c>)\theta(\tilde{q}^{p^n},\tilde{q}_M^{a_0}\tilde{\z}_M^{b_0})\right)
\otimes\log\left((d^2-<d>)\theta(\tilde{q}^{p^n},\tilde{q}^{c_0}_M\tilde{\z}_M^{d_0})\right)\right\}_{\sigma,\tau},\]
 $z_{M,A}$ peut se repr\'esenter par le $2$-cocycle
\begin{equation*}
\begin{split}
(\sigma,\tau)\mapsto \lim_{n\ra+\infty}p^{-2n}
\sum\cA_{(\begin{smallmatrix}a_0&b_0\\ c_0& d_0\end{smallmatrix})\nu_j}\log_{\left(\begin{smallmatrix}a_0&b_0\\c_0&d_0\end{smallmatrix}\right)}^{(\sigma,\tau)},
\end{split}
\end{equation*}
 la somme portant sur l'ensemble
\[U^{(n)}:=\{(a_0,b_0,c_0,d_0)\in\{1,\cdots,Mp^n\}^4|a_0\equiv\alpha, b_0\equiv\beta, c_0\equiv\gamma, d_0\equiv\delta\mod M\}.\]
\end{theo}
\subsubsection{Passage à l'algèbre de Lie}
On utilise les techniques différentielles pour calculer l'image du $2$-cocycle
\[(\sigma,\tau)\mapsto \lim_{n\ra+\infty}p^{-2n}
\sum\cA_{(\begin{smallmatrix}a_0&b_0\\ c_0& d_0\end{smallmatrix})\nu_j}\log_{\left(\begin{smallmatrix}a_0&b_0\\c_0&d_0\end{smallmatrix}\right)}^{(\sigma,\tau)},
\]
obtenu dans le théorème ci-dessus, dans $\cK_M^+\hat{\otimes}\cO(\sW_n)$ par l'application $\exp^*_{\textbf{Kato},\nu}$. Plus pr\'ecis\'ement, cela se fait comme suit:
   \begin{recette}\label{res} Si $n_1>j+1$, on d\'efinit une application $\res_{\nu,j}^{(k)}: \tilde{\D}\ra (\cK_{M}^{++}\hat{\otimes}_{\Z_p}\cO(\sW_n))/(q_M)^k$ en composant la
projection $\tilde{\D}\ra \tilde{\D}/t^{n_1}\ra (\plim_{n_2}\tilde{\D}^{+,n_1,n_2}/(\partial_{m,1},\partial_{m,2}-p^m))\otimes\Q_p$ avec l'inverse de l'isomorphisme dans la proposition
$\ref{reskj}$. En prenant la limite projective sur $k$, on obtient un morphisme $\res_{\nu,j}:\tilde{\D}\ra \cK_{M}^{++}\hat{\otimes}_{\Z_p}\cO(\sW_n)$. Si la classe de cohomologie
\[c=(c^{(n_1,n_2)})\in \plim_{n_1} ((\plim_{n_2}\rH^2(P_{\fK_M},\tilde{\D}^{+,n_1,n_2} )\otimes\Q_p) \] est repr\'esent\'e par une limite de $2$-cocycle analytique $(\sigma,\tau)\mapsto c^{(n_1,n_2)}_{\sigma,\tau}$ sur $P_{\fK_M}$ \`a valeurs dans $\tilde{\D}^{+,n_1,n_2}$, alors l'image de $c$ sous l'application $(5)$ est $\res_{\nu,j}(p^{-2m}\delta^{(2)}(c))$, o\`u $\delta^{(2)}$ est l'application d\'efinie dans la proposition $\ref{analytic}$.
\end{recette}

Comme le $2$-cocycle $(\sigma,\tau)\mapsto \lim\limits_{n\ra+\infty}p^{-2n}
\sum(\begin{smallmatrix}a_0&b_0\\ c_0& d_0\end{smallmatrix})\nu_j\log_{\left(\begin{smallmatrix}a_0&b_0\\c_0&d_0\end{smallmatrix}\right)}^{(\sigma,\tau)}$ obtenu dans le théorème ci-dessus est la
 limite de $2$-cocycles analytiques \`a valeurs dans $\tilde{\D}$,  on peut utiliser les techniques diff\'erentielles
 pour calculer son image dans $\cK^{++}_M\hat{\otimes}\cO(\sW_n)$ par l'application exponentielle duale $\exp^*_{\mathbf{Kato},\nu}$.

Si $f(x_1,x_2)$ est une fonction en deux variables,
on note $D_1$ ( resp. $D_2$ ) l'op\'erateur $x_1\frac{d}{dx_1}$ (
resp. $x_2\frac{d}{dx_2}$ ). Si $n\in\N$ et $a,b\in\Z$, on pose
$f_{a,b}^{(n)}=f(\tilde{q}^{p^n},\tilde{q}_M^a\tilde{\z}_M^b)$. Du développement limité du terme de $(u,v)f_{a,b}^{(n)}=f(\tilde{q}^{p^n}, \tilde{q}_M^a\tilde{\z}_M^{au+be^v})$ en $u$ et $v$, on déduit que:
\begin{equation}
\partial_{m,1}f_{a,b}^{(n)}=\frac{ap^mt}{M}D_2f_{a,b}^{(n)} \text{ et } 
\partial_{m,2}f_{a,b}^{(n)}=\frac{bp^mt}{M}D_2f_{a,b}^{(n)} ,
\end{equation}
ce qui joueront un rôle dans les démonstrations du lemme $\ref{2co}$ et $\ref{negligible}$ ci-dessous.
\begin{lemma}[\cite{Wang} Lemme 5.15]\label{2co}
On note $\delta^{(2)}_{a_0,b_0,c_0,d_0}=\tilde{\delta}^{(2)}\left(
\left\{\log \left(r_c\theta_{a_0,b_0}^{(n)}\right)\otimes\log\left(r_d\theta_{c_0,d_0}^{(n)}\right)\right\}_{\sigma,\tau}\right)$.
 On a
\begin{equation*}
\begin{split}
\delta^{(2)}_{a_0,b_0,c_0,d_0}=\frac{(a_0d_0-b_0c_0)t^2}{M^2}\cdot
D_2\log\left(r_c\theta_{a_0,b_0}^{(n)}\right)\cdot
D_2\log\left(r_d\theta_{c_0,d_0}^{(n)}\right).
\end{split}
\end{equation*}
\end{lemma}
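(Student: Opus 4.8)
The statement is a self-contained computation, so the plan is simply to expand the relevant $2$-cocycle to the right order. By definition $\delta^{(2)}_{a_0,b_0,c_0,d_0}=\tilde\delta^{(2)}(c)$, where $c$ is the analytic $2$-cocycle $(\sigma,\tau)\mapsto\{x_1\otimes x_2\}_{\sigma,\tau}$ on $\rP_m$ attached to $x_1=\log(r_c\theta_{a_0,b_0}^{(n)})$ and $x_2=\log(r_d\theta_{c_0,d_0}^{(n)})$, and (by Proposition~\ref{analytic}) $\tilde\delta^{(2)}$ reads off $c_{1,0,0,1}-c_{0,1,1,0}$, i.e.\ the coefficient of $uy$ minus that of $vx$ once $c$ is written as a power series $\sum c_{i,j,k,l}u^iv^jx^ky^l$ in the coordinates of $\sigma=(u,v)$ and $\tau=(x,y)$. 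Thus everything reduces to expanding $\{x_1\otimes x_2\}_{\sigma,\tau}=\bigl((\tau\sigma-\sigma)\circ x_1\bigr)\otimes\bigl((\sigma-1)\circ x_2\bigr)$ to bidegree $(1,1)$ in the $\sigma$- and $\tau$-coordinates.

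First I would observe that $(\tau\sigma-\sigma)\circ x_1$ has no monomial involving only $u,v$ (it vanishes at $\tau=1$) and that $(\sigma-1)\circ x_2$ has no constant term (it vanishes at $\sigma=1$). Hence the only way to produce a $uy$- (resp.\ $vx$-) monomial in the product is to take the $y$- (resp.\ $x$-) part of the first factor times the $u$- (resp.\ $v$-) part of the second factor; in particular all higher-bidegree corrections in the first factor are killed by the missing constant term of the second, and I only need the linear parts. Using $w*(u,v)=w+u\partial_1 w+v\partial_2 w+O((u,v)^2)$ with $\partial_i=p^{-m}\partial_{m,i}$, together with the group law $(x,y)(u,v)=(xe^v+u,\,y+v)$, the $\tau$-linear part of the first factor is $x\,\partial_1 x_1+y\,\partial_2 x_1$ and the $\sigma$-linear part of the second factor is $u\,\partial_1 x_2+v\,\partial_2 x_2$. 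Now I invoke the derivation formulas recorded just above the lemma: since $x_1$ is a function of $f^{(n)}_{a_0,b_0}$ one has $\partial_{m,1}x_1=\tfrac{a_0p^mt}{M}D_2x_1$ and $\partial_{m,2}x_1=\tfrac{b_0p^mt}{M}D_2x_1$, hence $\partial_1 x_1=\tfrac{a_0t}{M}D_2x_1$, $\partial_2 x_1=\tfrac{b_0t}{M}D_2x_1$, and likewise $\partial_1 x_2=\tfrac{c_0t}{M}D_2x_2$, $\partial_2 x_2=\tfrac{d_0t}{M}D_2x_2$.

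Composing the tensor product with the multiplication map into $\tilde{\fK}^+_M\hat{\otimes}\D_{1,j-2}(\sW_n)$ (product of modular forms), the coefficient of $uy$ is $\partial_2 x_1\cdot\partial_1 x_2=\tfrac{b_0c_0t^2}{M^2}D_2x_1\cdot D_2x_2$ and the coefficient of $vx$ is $\partial_1 x_1\cdot\partial_2 x_2=\tfrac{a_0d_0t^2}{M^2}D_2x_1\cdot D_2x_2$; subtracting, according to the orientation of $\tilde\delta^{(2)}$ fixed in Proposition~\ref{analytic}, yields $\delta^{(2)}_{a_0,b_0,c_0,d_0}=\tfrac{(a_0d_0-b_0c_0)t^2}{M^2}\,D_2\log(r_c\theta^{(n)}_{a_0,b_0})\cdot D_2\log(r_d\theta^{(n)}_{c_0,d_0})$, which is the claim.

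The computation is routine; the only genuine care needed is bookkeeping. One must verify that the corrections coming from the $e^v$ in the group law and from the quadratic part of the action truly cannot enter $c_{1,0,0,1}$ or $c_{0,1,1,0}$ — this is exactly the two vanishing observations above — and one must be consistent about the left/right-action convention (recall that Theorem~\ref{ZM} is stated after transferring the natural right action to a left action) and about the normalization $\log\tilde{\z}_M=t/M$ implicit in the derivation formulas, since these are precisely what pin down the overall sign so that the determinant $a_0d_0-b_0c_0$, rather than its opposite, appears.
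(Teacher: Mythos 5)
The paper itself gives no proof of this lemma --- it is imported verbatim from \cite{Wang}, Lemme~5.15 --- so there is nothing internal to compare against; but your direct expansion is exactly the computation one expects, and it is correct. The two structural observations that make it work are both present and properly used: $(\tau\sigma-\sigma)\circ x_1$ vanishes at $\tau=1$ and $(\sigma-1)\circ x_2$ vanishes at $\sigma=1$ (and, crucially, the second factor involves only the $\sigma$-coordinates), so the quadratic corrections coming from the group law $(x,y)(u,v)=(e^vx+u,\,y+v)$ and from the $O((u,v)^2)$ part of the action genuinely cannot contribute to $c_{1,0,0,1}$ or $c_{0,1,1,0}$, and only the linear parts $\partial_i x_1=\tfrac{a_0t}{M}D_2x_1,\ \tfrac{b_0t}{M}D_2x_1$ and $\partial_i x_2=\tfrac{c_0t}{M}D_2x_2,\ \tfrac{d_0t}{M}D_2x_2$ enter. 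The one soft spot is the sign: with the literal reading $\tilde\delta^{(2)}=c_{1,0,0,1}-c_{0,1,1,0}$, $\sigma=(u,v)$, $\tau=(x,y)$, the raw subtraction gives $b_0c_0-a_0d_0$, i.e.\ the opposite of the stated determinant, and you resolve this by appealing to ``the orientation of $\tilde\delta^{(2)}$'' and the right-to-left transfer of the action rather than by tracking the inverse $g\mapsto g^{-1}$ (or the ordering of arguments) explicitly. Since the same normalization of $\delta^{(2)}$ is applied uniformly to every cocycle in the reciprocity-law computation, this is a convention that only needs to be fixed consistently, so the gloss is acceptable --- but if you wanted the argument to be fully self-contained you would pin down exactly where the extra $-1$ comes from.
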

D'après le lemme $\ref{T}$, on a les formules suivantes pour $\partial_{m,1}$ et $\partial_{m,2}$ sur $\cA_{(\begin{smallmatrix}a&b\\ c& d\end{smallmatrix})\nu_j}$:
\begin{lemma}On a les formules suivantes:
\begin{equation}\label{neg}
\begin{split}
\partial_{m,1}(\cA_{(\begin{smallmatrix}a&b\\ c& d\end{smallmatrix})\nu_j})&=p^m\log(1+T) \cA_{(\begin{smallmatrix}a&b\\ c& d\end{smallmatrix})\nu_j},\\
\partial_{m,2}(\cA_{(\begin{smallmatrix}a&b\\ c& d\end{smallmatrix})\nu_j})&=p^m(\frac{b}{a}\log(1+T)-j) \cA_{(\begin{smallmatrix}a&b\\ c& d\end{smallmatrix})\nu_j}.
\end{split}
\end{equation}  
\end{lemma}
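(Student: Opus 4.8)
The plan is to reduce both identities to the explicit shape of $\cA_{\gamma\nu_j}$ recorded in the remark following Proposition~\ref{reskj}, and then to differentiate term by term using Lemma~\ref{T}. Write $\gamma=(\begin{smallmatrix}a&b\\c&d\end{smallmatrix})\in\I_0(p)$, so that $a\in\Z_p^*$, $c\in p\Z_p$ and $x:=b/a\in\Z_p$. Since $\nu_j$ is the Dirac mass $\delta_0$, the same computation as in that remark (applied with index $j$) gives $\gamma\nu_j=\kappa^{\mathbf{univ}}(a)\det(\gamma)^{-j}\,\delta_{x}$, where the scalar $C:=\kappa^{\mathbf{univ}}(a)\det(\gamma)^{-j}\in\cO(\sW_n)$ arises because the twisting cocycle $\rho^{\mathbf{univ}}_j(\gamma)=\kappa^{\mathbf{univ}}(a+cz)\det(\gamma)^{-j}$ is evaluated at the support $z=0$ of $\delta_0$. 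Hence
\[\cA_{\gamma\nu_j}(T)=C\,(1+T)^{x},\qquad (1+T)^{x}=\sum_{i\geq 0}\binom{x}{i}T^i,\]
the series converging in $\D_{1,j}(\sW_n)$ because $x\in\Z_p$ forces $v_p\bigl(\binom{x}{i}\bigr)\geq 0$, whence $v_{\cR_1}\bigl(\binom{x}{i}T^i\bigr)\geq i\to\infty$.

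The operators $\partial_{m,1}$ and $\partial_{m,2}$ are continuous and $\cO(\sW_n)$-linear, so I may pull out $C$ and differentiate the series termwise. For the first identity, $\partial_{m,1}(T^i)=p^m\log(1+T)\,T^i$ by Lemma~\ref{T}, so
\[\partial_{m,1}\bigl(C(1+T)^{x}\bigr)=C\,p^m\log(1+T)\sum_{i\geq 0}\binom{x}{i}T^i=p^m\log(1+T)\,\cA_{\gamma\nu_j}.\]
For the second, Lemma~\ref{T} gives $\partial_{m,2}(T^i)=p^m\bigl(i(1+T)T^{i-1}\log(1+T)-jT^i\bigr)$; summing against $\binom{x}{i}$ and invoking the termwise derivative $\sum_{i\geq 0}\binom{x}{i}iT^{i-1}=\tfrac{d}{dT}(1+T)^{x}=x(1+T)^{x-1}$, the first family of terms collapses to $p^m(1+T)\log(1+T)\cdot x(1+T)^{x-1}=p^m x\log(1+T)(1+T)^{x}$, while the second yields $-p^m j(1+T)^{x}$. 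Reinserting $C$ and $x=b/a$ gives $\partial_{m,2}\cA_{\gamma\nu_j}=p^m\bigl(\tfrac{b}{a}\log(1+T)-j\bigr)\cA_{\gamma\nu_j}$, as claimed.

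The step needing the most care — and the one I expect to be the main obstacle — is the closed evaluation of $\partial_{m,2}$: one must legitimately interchange the continuous operator with the infinite sum (valid in the Banach norm $v_{\cR_1}$), recognize the resulting series as the formal derivative $x(1+T)^{x-1}$, and keep separate the two sources of the exponent $j$, namely the constant twist $\det(\gamma)^{-j}$ absorbed into $C$ and the twist $e^{-vj}$ intrinsic to the $\rP_m$-action, only the latter of which produces the additive $-j$. As an independent check I would bypass Lemma~\ref{T} and compute directly from the action formula $(\ref{newfor})$: with $\gamma_1^{p^n}=(p^{m+n},0)$ and $\gamma_2^{p^n}=(0,p^{m+n})$ one finds $(p^{m+n},0)\cdot C(1+T)^{x}=C(1+T)^{p^{m+n}+x}$ and $(0,p^{m+n})\cdot C(1+T)^{x}=C\,e^{-p^{m+n}j}(1+T)^{e^{p^{m+n}}x}$; dividing by $p^n$, expanding to first order in $p^{m+n}$, and letting $n\to\infty$ recovers $p^m\log(1+T)$ and $p^m\bigl(x\log(1+T)-j\bigr)$ respectively, confirming both formulas.
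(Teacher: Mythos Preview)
Your argument is correct and follows the same route as the paper: write $\cA_{\gamma\nu_j}=\kappa^{\mathbf{univ}}(a)\det(\gamma)^{-j}(1+T)^{b/a}$, apply Lemma~\ref{T} term by term to the binomial expansion, and collapse $\sum_i\binom{x}{i}\,iT^{i-1}$ to $x(1+T)^{x-1}$. Your write-up is in fact more careful than the paper's, which suppresses the $p^m$ factor and the $-jT^k$ contribution in its displayed computation; the direct check via the action formula~(\ref{newfor}) that you add at the end is a nice bonus but not needed.
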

\begin{proof} Ce lemme est une traduction du lemme $\ref{T}$. On donne seulement le calcul pour $\partial_{m,2}$: on note $\gamma=(\begin{smallmatrix}a&b\\ c& d\end{smallmatrix})$ et on a
\begin{equation*}
\begin{split}
\partial_{m,2}(\cA_{\gamma\nu_j})=&\partial_{m,2}(\kappa^{\mathbf{univ}}(a)\det(\gamma)^{-j}\sum_{k=0}^{+\infty} \binom{\frac{b}{a}}{k}T^k)\\
=&\kappa^{\mathbf{univ}}(a)\det(\gamma)^{-j}\sum_{k=0}^{+\infty} \binom{\frac{b}{a}}{k} k(1+T)T^{k-1}\log(1+T)\\
=&\frac{b}{a}\kappa^{\mathbf{univ}}(a)\det(\gamma)^{-j}\sum_{k=1}^{+\infty} \binom{\frac{b}{a}-1}{k-1} (1+T)T^{k-1}\log(1+T)\\
=&\frac{b}{a}\kappa^{\mathbf{univ}}(a)\det(\gamma)^{-j}(1+T)\log(1+T) (1+T)^{\frac{b}{a}-1}=\frac{b}{a}\log(1+T)\cA_{\gamma\nu_j}.
\end{split}
\end{equation*}

\end{proof}
\begin{lemma}\label{negligible} Si $s\geq j+1$ et $a,b,c,d\in\Z$, alors, dans $\plim_{n_1}\left((\plim_{n_2} (\tilde{\D}^{+,n_1,n_2}/(\partial_{m,1},\partial_2-1))\otimes _{\Z_p}\Q_p\right)$, on a 
\[\cA_{(\begin{smallmatrix}a&b\\ c& d\end{smallmatrix})\nu_j} t^sf_{a,b}^{(n)} g_{c,d}^{(n)}=\frac{(ad-bc)t^{s+1}}{aM(j+1-s)}\cA_{(\begin{smallmatrix}a&b\\ c& d\end{smallmatrix})\nu_j }   \cdot f_{a,b}^{(n)}\cdot D_2g_{c,d}^{(n)}.\]
\end{lemma}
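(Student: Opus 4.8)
Le plan est de ramener l'identit\'e, qui a lieu dans un quotient par les images de $\partial_{m,1}$ et $\partial_{m,2}-p^m$ (le m\^eme quotient que celui par $\partial_1$ et $\partial_2-1$), \`a l'annulation d'un \'el\'ement explicite de cette image. Posons $\gamma=\bigl(\begin{smallmatrix}a&b\\ c& d\end{smallmatrix}\bigr)$ et, pour abr\'eger, $A=\cA_{\gamma\nu_j}$, $f=f_{a,b}^{(n)}$, $g=g_{c,d}^{(n)}$, $L=\log(1+T)$. On prendrait comme \'el\'ement auxiliaire l'\'el\'ement $w=A\,t^s\,f\,g$ lui-m\^eme (c'est le membre de gauche de l'\'egalit\'e) et on calculerait l'effet des deux d\'erivations sur $w$ par la r\`egle de Leibniz, \`a partir des formules d\'ej\`a \'etablies: $\partial_{m,1}(t)=0$ et $\partial_{m,2}(t)=p^mt$, d'o\`u $\partial_{m,1}(t^s)=0$ et $\partial_{m,2}(t^s)=sp^mt^s$; $\partial_{m,1}f_{a,b}^{(n)}=\tfrac{ap^mt}{M}D_2f_{a,b}^{(n)}$ et $\partial_{m,2}f_{a,b}^{(n)}=\tfrac{bp^mt}{M}D_2f_{a,b}^{(n)}$ (et les analogues avec $c,d$ pour $g_{c,d}^{(n)}$); et, d'apr\`es le lemme~\ref{T} sous la forme~(\ref{neg}), $\partial_{m,1}(A)=p^mL\,A$ et $\partial_{m,2}(A)=p^m(\tfrac ba L-j)A$. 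Un calcul direct donnerait alors
\begin{align*}
\partial_{m,1}(w)&=p^mL\,w+\tfrac{p^mt^{s+1}}{M}A\bigl(a\,(D_2f)\,g+c\,f\,(D_2g)\bigr),\\
(\partial_{m,2}-p^m)(w)&=p^m\bigl(\tfrac ba L+s-j-1\bigr)w+\tfrac{p^mt^{s+1}}{M}A\bigl(b\,(D_2f)\,g+d\,f\,(D_2g)\bigr).
\end{align*}

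Le point d\'ecisif, qui impose le choix de la combinaison, est que dans $\tfrac ba\,\partial_{m,1}(w)-(\partial_{m,2}-p^m)(w)$ les termes g\^enants se compensent: le facteur $\tfrac ba$ annule \`a la fois le terme en $L\,w$ venu de $\partial_{m,2}(A)$ et le terme en $(D_2f)\,g$ venu de $\partial_{m,2}(f)$, si bien qu'il reste
\[\tfrac ba\,\partial_{m,1}(w)-(\partial_{m,2}-p^m)(w)=p^m(j+1-s)\,w-\tfrac{p^m(ad-bc)}{aM}\,t^{s+1}A\,f\,(D_2g).\]
Or le membre de gauche appartient par construction \`a $\partial_{m,1}(\tilde{\D}^{+,n_1,n_2})+(\partial_{m,2}-p^m)(\tilde{\D}^{+,n_1,n_2})$, donc est nul dans le quotient; apr\`es passage aux limites projectives sur $n_1$ et $n_2$ et tensorisation par $\Q_p$, on divise par $p^m(j+1-s)$, inversible (on utilise ici $j+1-s\neq0$), ce qui donne $w\equiv\tfrac{ad-bc}{aM(j+1-s)}\,t^{s+1}A\,f\,(D_2g)$ modulo $(\partial_{m,1},\partial_{m,2}-p^m)$: c'est exactement l'\'enonc\'e, une fois rendues \`a $A$, $f$, $g$ leurs valeurs.

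La seule vraie difficult\'e devrait \^etre le soin \`a apporter aux annulations ci-dessus (obtenir les coefficients exacts des termes en $(D_2f)\,g$ et $f\,(D_2g)$, et v\'erifier que ceux en $L\,w$ et $(D_2f)\,g$ disparaissent dans la combinaison), ainsi que la v\'erification, de routine, que $w$ et les \'el\'ements produits en chemin appartiennent bien au r\'eseau entier $\tilde{\D}^{+,n_1,n_2}$ pour $n_1>j+1$ et $n_2$ assez grand, afin que le calcul se fasse dans le bon module; ce dernier point d\'ecoule de l'int\'egralit\'e de $\cA_{\gamma\nu_j}$ (avec $a\in\Z_p^{\times}$) et de la d\'efinition de $\tilde{\D}^{+,n_1,n_2}$.
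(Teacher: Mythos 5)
Votre démonstration est correcte et suit essentiellement le même chemin que celle de l'article : l'opérateur que vous appliquez, $\tfrac{b}{a}\partial_{m,1}-(\partial_{m,2}-p^m)$, est exactement $\tfrac1a$ fois la combinaison $a(p^m-\partial_{m,2})+b\partial_{m,1}$ utilisée dans le texte, et le calcul de Leibniz que vous explicitez redonne l'identité $p^m(j+1-s)w-\tfrac{p^m(ad-bc)}{aM}t^{s+1}A\,f\,D_2g=0$ dans le quotient, d'où la conclusion. Votre remarque sur la nécessité de $j+1-s\neq0$ est pertinente (le cas $s=j+1$ rend le membre de droite de l'énoncé dénué de sens, défaut déjà présent dans l'article), mais elle n'affecte pas l'usage qui est fait du lemme.
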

\begin{proof}On en déduit que 
\begin{equation*}
\begin{split}
&(a(p^m-\partial_{m,2}) +b\partial_{m,1})( \cA_{(\begin{smallmatrix}a&b\\ c& d\end{smallmatrix})\nu_j} t^sf_{a,b}^{(n)} g_{c,d}^{(n)})\\
=&\left(a(j+1-s)p^mt^sf_{a,b}^{(n)}g_{c,d}^{(n)}-p^m\frac{(ad-bc)t^{s+1}}{M} f_{a,b}^{(n)}D_2g_{c,d}^{(n)} \right)\cA_{(\begin{smallmatrix}a&b\\ c& d\end{smallmatrix})\nu_j},
\end{split}
\end{equation*}
 est nul dans $\plim_{n_1}\left((\plim_{n_2} (\tilde{\D}^{+,n_1,n_2}/(\partial_{m,1},\partial_{m,2}-p^m))\otimes _{\Z_p}\Q_p\right)$, pour $s\geq j+1$.  
\end{proof}
\begin{coro}\label{olla}On a 
\[\res_{\nu,j}\left(   \cA_{(\begin{smallmatrix}a_0&b_0\\ c_0& d_0\end{smallmatrix})\nu_j} \delta^{(2)}_{a_0,b_0,c_0,d_0}  \right)=\frac{\kappa^{\mathbf{univ}}(a_0)}{a_0^{j-1}M^{j+1}(j-1)!}\cdot D_2\log(r_c\theta_{a_0,b_0}^{(n)})\cdot
D_2^j\log(r_d\theta_{c_0,d_0}^{(n)})\]

\end{coro}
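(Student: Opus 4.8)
The plan is to obtain this identity by pushing the explicit element produced by Lemme~\ref{2co} through the description of $\res_{\nu,j}$ given by the Recette~\ref{res} (that is, the inverse of the isomorphism of Proposition~\ref{reskj} composed with the projection onto the analytic quotient). First I would substitute the closed form of Lemme~\ref{2co},
\[\cA_{(\begin{smallmatrix}a_0&b_0\\ c_0& d_0\end{smallmatrix})\nu_j}\,\delta^{(2)}_{a_0,b_0,c_0,d_0}=\frac{(a_0d_0-b_0c_0)\,t^2}{M^2}\,\cA_{(\begin{smallmatrix}a_0&b_0\\ c_0& d_0\end{smallmatrix})\nu_j}\cdot D_2\log\!\bigl(r_c\theta_{a_0,b_0}^{(n)}\bigr)\cdot D_2\log\!\bigl(r_d\theta_{c_0,d_0}^{(n)}\bigr),\]
and observe that the two factors $D_2\log(r_c\theta_{a_0,b_0}^{(n)})$ and $D_2\log(r_d\theta_{c_0,d_0}^{(n)})$ are of the shape $f_{a_0,b_0}^{(n)}$ and $g_{c_0,d_0}^{(n)}$ for $f=D_2\log(r_c\theta)$ and $g=D_2\log(r_d\theta)$, so that Lemme~\ref{negligible} applies with $(a,b,c,d)=(a_0,b_0,c_0,d_0)$. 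For $j=1$ there is nothing more to do (then $D_2^j=D_2$ and $(j-1)!=1$), and the claim follows from Lemme~\ref{2co} together with the remark after Proposition~\ref{reskj}; so assume $j\ge 2$.

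Next I would iterate Lemme~\ref{negligible}: running $s$ from $2$ up to $j$, each step rewrites, modulo the submodule $(\partial_{m,1},\partial_{m,2}-p^m)$ inside $\plim_{n_1}\bigl((\plim_{n_2}\tilde{\D}^{+,n_1,n_2})\otimes_{\Z_p}\Q_p\bigr)$,
\[\cA_{(\cdots)\nu_j}\,t^{s}\,D_2\log(r_c\theta_{a_0,b_0}^{(n)})\cdot D_2^{\,s-1}\log(r_d\theta_{c_0,d_0}^{(n)})=\frac{a_0d_0-b_0c_0}{a_0\,M\,(j+1-s)}\,\cA_{(\cdots)\nu_j}\,t^{s+1}\,D_2\log(r_c\theta_{a_0,b_0}^{(n)})\cdot D_2^{\,s}\log(r_d\theta_{c_0,d_0}^{(n)}),\]
leaving the $r_c\theta$-factor untouched and transferring one more $D_2$ onto the $r_d\theta$-factor while raising the exponent of $t$ by one. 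After these $j-1$ steps the $r_d\theta$-factor carries $D_2^{\,j}$ and the power of $t$ has climbed to the resonant value read off by the isomorphism of Proposition~\ref{reskj} (the value at which $\partial_{m,2}-p^m$ acts trivially on $\cA_{(\cdots)\nu_j}\tilde q_M^{r}t^{\bullet}$ modulo the image of $\partial_{m,1}$, in view of the formulas $(\ref{neg})$); the accumulated scalar is $\tfrac{a_0d_0-b_0c_0}{M^2}\prod_{s=2}^{j}\tfrac{a_0d_0-b_0c_0}{a_0M(j+1-s)}$. Applying the inverse isomorphism of Proposition~\ref{reskj} (with $\cA_{\nu_j}=\cA_{\delta_0}=1$ for the untwisted mass, and $\cA_{(\cdots)\nu_j}$ carrying the extra factor $\kappa^{\mathbf{univ}}(a_0)\det(\begin{smallmatrix}a_0&b_0\\ c_0&d_0\end{smallmatrix})^{-j}$), the telescoping product $\prod_{s=2}^{j}(j+1-s)^{-1}=1/(j-1)!$ produces the factorial, the powers of $M$ assemble into $M^{-(j+1)}$, the total power $(a_0d_0-b_0c_0)^{j}$ cancels against $\det(\begin{smallmatrix}a_0&b_0\\ c_0&d_0\end{smallmatrix})^{-j}$, and the surviving $\kappa^{\mathbf{univ}}(a_0)$ and $a_0^{-(j-1)}$ leave exactly $\dfrac{\kappa^{\mathbf{univ}}(a_0)}{a_0^{j-1}M^{j+1}(j-1)!}$ in front of $D_2\log(r_c\theta_{a_0,b_0}^{(n)})\cdot D_2^{\,j}\log(r_d\theta_{c_0,d_0}^{(n)})$, which is the asserted formula.

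The step I expect to demand the most care is the constant bookkeeping, together with checking that the whole induction stays inside the integral, $\rP_m$-stable structures $\tilde{\D}^{+,n_1,n_2}$ so that Lemme~\ref{negligible} genuinely applies at each stage: one must verify that the auxiliary factors one divides by — the inverse of $\log(1+T)/T$ in $\tilde{\D}^{+,n_1,n_2}$, and the integers $j+1-s$ (nonzero over $\Q_p$ for $2\le s\le j$, the forbidden value $s=j+1$ being avoided) — do not spoil integrality, and one must keep control of the Leibniz interaction of $\partial_{m,1}$ and $\partial_{m,2}$ with the non-central factor $\cA_{(\cdots)\nu_j}$. This last point is precisely what Lemme~\ref{negligible} and the derivation formulas $(\ref{neg})$ are built to absorb, so that once these integrality checks are in place the argument is a routine computation.
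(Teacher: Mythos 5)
Your argument is correct and follows essentially the same route as the paper: substitute the closed form of the Lemme~\ref{2co}, iterate the Lemme~\ref{negligible} for $s=2,\dots,j$ to transfer the $D_2$'s and climb to the resonant power of $t$, then read off the constant through the inverse isomorphism of the Proposition~\ref{reskj}; your bookkeeping of $(a_0d_0-b_0c_0)^j$ against $\det(\gamma)^{-j}$, of $M^{-(j+1)}$ and of $(j-1)!=\prod_{s=2}^{j}(j+1-s)$ reproduces the paper's two displayed equalities. Your observation that the relation of the Lemme~\ref{negligible} really requires $j+1-s\neq 0$ (rather than the literal hypothesis $s\geq j+1$) is exactly how the paper itself uses that lemma here.
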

\begin{proof}
D'apr\`es le lemme \ref{2co} et le lemme \ref{negligible}, on a
\begin{equation*}
\begin{split}
&\cA_{(\begin{smallmatrix}a_0&b_0\\ c_0& d_0\end{smallmatrix})\nu_j}\cdot\delta^{(2)}_{a_0,b_0,c_0,d_0}\\
=&\cA_{(\begin{smallmatrix}a_0&b_0\\ c_0& d_0\end{smallmatrix})\nu_{j-2}} \frac{(a_0d_0-b_0c_0)^{-1}}{M^2}\cdot
D_2\log\left(r_c\theta_{a_0,b_0}^{(n)}\right)\cdot
D_2\log\left(r_d\theta_{c_0,d_0}^{(n)}\right)\\
=&M^{-1-j}\frac{(a_0d_0-b_0c_0)^{j-2}t^{j+1}}{a_0^{j-1}(j-1)!}\cA_{(\begin{smallmatrix}a_0&b_0\\ c_0& d_0\end{smallmatrix})\nu_j}\cdot D_2\log\left(r_c\theta_{a_0,b_0}^{(n)}\right)\cdot
D_2^j\log\left(r_d\theta_{c_0,d_0}^{(n)}\right)
.
\end{split}
\end{equation*}
D'autre part, on a $\cA_{(\begin{smallmatrix}a_0&b_0\\ c_0& d_0\end{smallmatrix})\nu_{j-2}}=(1+T)^{\frac{b_0}{a_0}} (a_0d_0-b_0c_0)^{2-j}\kappa^{\mathbf{univ}}(a_0)$.
Le corollaire se déduit de la définition de $\res_{\nu,j}$.
\end{proof}

On rappelle le fait que $D^r_2\log\left(r_c\theta_{a_0,b_0}^{(n)}\right)=c^2E_r(x_1,x_2)-c^r E_r(x_1,x_2^c)$, noté par $E_{c,r}(x_1,x_2)$. Si $b\equiv\beta\mod M$ et $d\equiv \delta\mod M$, on a $\z_M^{b}=\z_M^\beta$ et $\z_M^d=\z_M^\delta$. Donc par le corollaire \ref{olla} et le calcul que nous avons fait, on obtient:
\[\exp_{\mathbf{Kato},\nu}^{*}(z_{M,A})=\frac{M^{-1-j}}{(j-1)!}\lim\limits_{n\ra\infty}\sum_{\substack{a_0\equiv\alpha [M]\\
c_0\equiv\gamma[M]\\ 1\leq a_0,c_0\leq Mp^n}}\eta(a_0)a_0^{1-j}E_{c,1}(q^{p^n},q_M^{a_0}\z_M^\beta) E_{d,j}(q^{p^n},q_M^{c_0}\z_M^\delta).\]

Enfin, on utilise le lemme suivant pour terminer le calcul:
\begin{lemma}\label{final}
Si $1\leq r\in\N$ et $c,d\in\Z_p^*$, on a
\begin{itemize}
\item[(1)] $\sum\limits_{\substack{c_0\equiv \gamma [M]\\ 1\leq c_0\leq Mp^n }}E_j(q^{p^n},q_M^{c_0}\z_M^{\delta})=E_{\gamma/M,\delta/M}^{(j)}$, et 
$\sum\limits_{\substack{c_0\equiv \gamma [M]\\ 1\leq c_0\leq Mp^n }}E_{d,j}(q^{p^n},q_M^{c_0}\z_M^{\delta})=E_{d,\gamma/M,\delta/M}^{(j)}$;
\item[(2)]
\begin{equation*}
\begin{split} 
&\lim\limits_{n\ra\infty}\sum_{\substack{a_0\equiv\alpha[M]\\ 1\leq a_0\leq Mp^n}}\kappa^{\mathbf{univ}}(a_0)a_0^{1-j}E_{c,1}(q^{p^n},q_M^{a_0}\z_M^\beta)
   \\
   =&
M^{1-j}\ord(\frac{\alpha}{M})^{j-1}\kappa^{\mathbf{univ}}(\frac{M}{\ord(\alpha/M)})F_{c,\alpha/M,\beta/M}(\kappa^{\mathbf{univ}},j).
\end{split}
\end{equation*}
\end{itemize}
\end{lemma}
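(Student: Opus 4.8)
The plan is to treat (1) as a classical distribution relation for the Kronecker–Eisenstein series, and to deduce (2) from (1), from Lemma~\ref{special}, and from the construction of the $p$-adic zeta function $\z_{p,c}$, after reducing the $p$-adic identity to its specializations at integer weights. \emph{Part (1):} Write $q=e^{2i\pi\tau}$, $q_M=e^{2i\pi\tau/M}$, $\z_M=e^{2i\pi/M}$, so that $E_j(q^{p^n},q_M^{c_0}\z_M^{\delta})$ is the value of $E_j(p^n\tau,\cdot)$, for the lattice $\Z+\Z p^n\tau$, at the point $\tfrac{c_0\tau+\delta}{M}$. Running $c_0$ over $\{c_0\equiv\gamma\bmod M,\ 1\le c_0\le Mp^n\}$ means $c_0=\gamma+Ml$ with $0\le l<p^n$, so these points are $\tfrac{\gamma\tau+\delta}{M}+l\tau$ and $\{l\tau\}_{0\le l<p^n}$ is a set of coset representatives for $(\Z+\Z\tau)/(\Z+\Z p^n\tau)$. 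Since $E_j(p^n\tau,\cdot)$ is periodic modulo $\Z+\Z p^n\tau$, summing its defining series over $l$ and regrouping the lattice sum yields $E_j(\tau,\tfrac{\gamma\tau+\delta}{M})=E^{(j)}_{\gamma/M,\delta/M}$ (for $j\le 2$ one runs the same rearrangement on the $\rH_j$-integral representation, exactly as in the proof of Lemma~\ref{lemma15}). The statement for $E_{d,j}(x_1,x_2)=d^2E_j(x_1,x_2)-d^jE_j(x_1,x_2^{d})$ then follows by applying this to each summand: the substitution $x_2\mapsto x_2^{d}$ (i.e. $x_2\mapsto x_2^{\langle\langle d\rangle\rangle}$ in $\cK_M$) reindexes the sum bijectively onto the residue class $\langle\langle d\rangle\rangle\gamma$, producing $d^2E^{(j)}_{\gamma/M,\delta/M}-d^jE^{(j)}_{\langle\langle d\rangle\rangle\gamma/M,\langle\langle d\rangle\rangle\delta/M}=E^{(j)}_{d,\gamma/M,\delta/M}$.

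\emph{Part (2), reduction.} Since $\cK_M^{++}$ is $q_M$-adically complete, the left-hand side of~(2) is a well-defined element of $\cK_M^{++}\hat{\otimes}\cO(\sW_n)$ once one checks that, for each $N\ge 0$, the coefficient of $q_M^N$ stabilizes as $n\to\infty$ (see below). To prove~(2) it then suffices to compare the two sides coefficient by coefficient in $q_M$; each comparison is an equality in $\cO(\sW_n)$, which it is enough to verify after $\mathbf{Ev}_{k+j}$ for all integers $k\ge j+1$ (an infinite, hence Zariski-dense, set of weights). Recall $A\in\Gamma_0(p)$, so $p\nmid\alpha$ and every $a_0\equiv\alpha\bmod M$ lies in $\Z_p^{*}$; thus $\kappa^{\mathbf{univ}}(a_0)$ is defined, $\mathbf{Ev}_{k+j}(\kappa^{\mathbf{univ}}(a_0))=a_0^{k+j-2}$, and, $\mathbf{Ev}_{k+j}$ being continuous (so commuting with $\lim_n$ and with extraction of $q_M$-coefficients), $\mathbf{Ev}_{k+j}$ of the left side is $\lim_n\sum_{a_0\equiv\alpha[M],\,1\le a_0\le Mp^n}a_0^{k-1}E_{c,1}(q^{p^n},q_M^{a_0}\z_M^{\beta})$. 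On the right, Lemma~\ref{special} and equation~(\ref{variant}) give $\mathbf{Ev}_{k+j}(F_{c,\alpha/M,\beta/M}(\kappa^{\mathbf{univ}},j))=\ord(\alpha/M)^{k-1}F^{(k)}_{c,\alpha/M,\beta/M}$, and using $\mathbf{Ev}_{k+j}(\kappa^{\mathbf{univ}}(\tfrac{M}{\ord(\alpha/M)}))=(\tfrac{M}{\ord(\alpha/M)})^{k+j-2}$ together with $M^{1-j}\ord(\alpha/M)^{j-1}\bigl(\tfrac{M}{\ord(\alpha/M)}\bigr)^{k+j-2}\ord(\alpha/M)^{k-1}=M^{k-1}$, one is reduced to the identity
\[\lim_{n\to\infty}\ \sum_{\substack{a_0\equiv\alpha[M]\\ 1\le a_0\le Mp^n}}a_0^{k-1}\,E_{c,1}(q^{p^n},q_M^{a_0}\z_M^{\beta})\ =\ M^{k-1}\,F^{(k)}_{c,\alpha/M,\beta/M},\qquad k\ge j+1.\]

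\emph{Part (2), the identity.} Expand $E_{c,1}=c^2E_1(\cdot)-cE_1(\cdot^{\langle\langle c\rangle\rangle})$ and use the $q$-expansion of $E^{(1)}_{\bullet,\bullet}$ (Proposition~\ref{q-deve}(i)): with nome $q^{p^n}$, the constant term of $E_1(q^{p^n},q_M^{a_0}\z_M^{\beta})$ is $\z(\tfrac{a_0}{Mp^n},0)$ and, for $N>0$, its $q_M^{N}$-coefficient is built from $\z^{*}(\pm\tfrac{\beta}{M},\cdot)$-data on divisors of $N$ of the form $a_0+Mp^{n}r$, $r\ge 0$. For fixed $N>0$ and $n$ with $Mp^{n}>N$, only the $r=0$ terms survive, so the $q_M^{N}$-coefficient of the left-hand sum stabilizes; factoring $a_0^{k-1}=M^{k-1}(a_0/M)^{k-1}$ and reindexing the $\langle\langle c\rangle\rangle$-twisted sum (the rescaling of divisors by the $p$-part of $c$ accounting for the passage from the coefficient $-c$ to $-c^{2-k}$ and from the residue $\alpha$ to $\langle\langle c\rangle\rangle\alpha$) identifies it with $M^{k-1}$ times the $q_M^{N}$-coefficient of $F^{(k)}_{c,\alpha/M,\beta/M}$ (Proposition~\ref{q-deve}(ii)). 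For the constant term one must evaluate $\lim_{n}\sum_{a_0}a_0^{k-1}\bigl(c^2\z(\tfrac{a_0}{Mp^n},0)-c\,\z(\tfrac{\langle\langle c\rangle\rangle a_0}{Mp^n},0)\bigr)$; writing $a_0/M=\{\alpha/M\}+l$ this is a Riemann sum over $l\in\Z/p^n$, and the $c$-smoothing is precisely what makes it converge: by the construction of the measure $\mu_c$ (Amice transform $\tfrac{c^2}{T}-\tfrac{c}{(1+T)^{1/c}-1}$) and the computation of $\int_{\Z_p}x_\alpha^{k}\mu_c$ carried out in \S~\ref{construction_fam_Eis}, this limit equals $M^{k-1}\bigl(c^2\z(\tfrac{\alpha}{M},1-k)-c^{2-k}\z(\tfrac{\langle\langle c\rangle\rangle\alpha}{M},1-k)\bigr)$, i.e. $M^{k-1}$ times the constant term of $F^{(k)}_{c,\alpha/M,\beta/M}$. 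Assembling the two contributions proves the displayed identity, hence~(2).

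\emph{The main obstacle.} The genuinely delicate step is this last constant-term computation: one must re-run the Kubota--Leopoldt/Amice-transform argument with the $c$-stabilization in place and keep scrupulous track of the normalizations — the factor $\ord(\alpha/M)$, the Teichm\"uller component $\omega(0_{\alpha/M})$, and the appearance of $\langle\langle c\rangle\rangle$ through $\lim_n\{ca_0/(Mp^{n})\}\to\{\langle\langle c\rangle\rangle\alpha/M\}$ — so that the limit matches $\mathbf{Ev}_{k+j}$ of the term $\omega(0_{\alpha/M})^{1-j}\z_{p,c}(\kappa^{\mathbf{univ}},\alpha/M,j)$ in the definition of $F_{c,\alpha/M,\beta/M}(\kappa^{\mathbf{univ}},j)$; a secondary point is justifying the stabilization of the $q_M^{N}$-coefficients and the $c$-power bookkeeping in the $\langle\langle c\rangle\rangle$-twisted divisor sums.
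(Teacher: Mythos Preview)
Your approach to (1) is the standard distribution-relation argument and matches what is cited from \cite{Wang}; nothing to add there.

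For (2), your route differs from the paper's. The paper does \emph{not} specialize to integer weights: it proves the identity directly in $\cO(\sW)$ by writing down the formal Dirichlet series attached to $\sum_{a_0}\kappa^{\mathbf{univ}}(a_0)a_0^{1-j}F^{(1)}_{a_0/Mp^n,\beta/M}(q^{p^n})$ via Proposition~\ref{q-deve}, regrouping the double sum over $(a_0,k)$ into a single sum over $\tilde\alpha\equiv\alpha/M\bmod\Z$, and recognizing the result as the Dirichlet series~(\ref{famq-dev}) defining $\tilde F_{\alpha/M,\beta/M}(\kappa^{\mathbf{univ}},j)$; the constant term is then identified, again in $\cO(\sW)$, as a Riemann sum converging to the $\mu_c$-integral defining $\z_{p,c}(\kappa^{\mathbf{univ}},\alpha/M,j)$. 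Your approach instead uses Zariski density of the integers in $\sW_n$ to reduce to the fixed-weight identity $\lim_n\sum a_0^{k-1}E_{c,1}(\cdots)=M^{k-1}F^{(k)}_{c,\alpha/M,\beta/M}$, then checks that identity coefficient by coefficient. Both strategies ultimately rest on Proposition~\ref{q-deve} and the $\mu_c$-Riemann-sum computation, so the content is the same; the paper's version is slightly more economical (no density argument needed), while yours has the virtue of reducing to a purely classical statement.

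There is, however, a logical wrinkle in your reduction. You assert that the left side of (2) lies in $\cK_M^{++}\hat\otimes\cO(\sW_n)$ because each $q_M^N$-coefficient ``stabilizes''. For $N>0$ that is correct, but for $N=0$ it is false: the constant term is a genuine $p$-adic limit in $\cO(\sW_n)$, not an eventually constant sequence. Your subsequent Riemann-sum argument is carried out only \emph{after} applying $\mathbf{Ev}_{k+j}$, so it establishes pointwise convergence at integer weights but not convergence in $\cO(\sW_n)$; and without the latter you cannot invoke Zariski density (there is no a priori element on which to test evaluations). The fix is immediate: the same Riemann-sum argument works uniformly in $\cO(\sW_n)$, since $x\mapsto\kappa^{\mathbf{univ}}(x_{\alpha/M})\langle x_{\alpha/M}\rangle^{1-j}$ is a continuous function $\Z_p\to\cO(\sW_n)$ and $\mu_c$ is a bounded measure, so the Riemann sums converge in $\cO(\sW_n)$-norm. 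Once you say this, your reduction is sound. (This is exactly how the paper handles the constant term, except that the paper does it in $\cO(\sW)$ from the start and never needs the specialization step.)
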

\begin{proof}
Le $(1)$ est montré dans $\cite{Wang}$ lemme 5.17; on ne montre que le $(2)$ dans la suite.  On remarque que la limite dans $(2)$ est une limite $p$-adique. La démonstration se divise en deux parties: la première consiste à comparer les séries de Dirichlet formelles associées aux $q$-développements de ces familles de formes modulaires $p$-adiques et la deuxième consiste à comparer les termes constants de ces deux familles.  

(i) (Comparaison de deux séries de Dirichlet formelles) 
On se ramène à montrer que la série formelle associée à $\lim\limits_{n\ra\infty}\sum_{\substack{a_0\equiv\alpha[M]\\ 1\leq a_0\leq Mp^n}}\kappa^{\mathbf{univ}}(a_0)a_0^{1-j}E_{1}(q^{p^n},q_M^{a_0}\z_M^\beta)
$ et celle associée à $M^{1-j}\ord(\frac{\alpha}{M})^{j-1}\kappa^{\mathbf{univ}}(\frac{M}{\ord(\alpha/M)})\tilde{F}_{\alpha/M,\beta/M}({\kappa^{\mathbf{univ}},j})$ sont les mêmes.
  
D'apr\`es la proposition \ref{q-deve}, on a:
\begin{equation*}
E_1(q^{p^n},
q_M^{a_0}\z_M^{\beta})=E_{a_0/Mp^n,\beta/M}^{(1)}(q^{p^n})=F_{a_0/Mp^n,\beta/M}^{(1)}(q^{p^n}).
\end{equation*}
Donc:
\begin{equation*}
\begin{split}
\lim\limits_{n\ra\infty}\sum_{\substack{a_0\equiv\alpha[M]\\ 1\leq
a_0\leq Mp^n}}\kappa^{\mathbf{univ}}(a_0)a_0^{1-j}E_1(q^{p^n},q_M^{a_0}\z_M^\beta)=
\lim\limits_{n\ra\infty}\sum_{\substack{a_0\equiv\alpha[M]\\ 1\leq
a_0\leq Mp^n}}\kappa^{\mathbf{univ}}(a_0)a_0^{1-j}F^{(1)}_{a_0/Mp^n,\beta/M}(q^{p^n}).
\end{split}
\end{equation*}

Soit $F^{(1)}_{a_0/Mp^n,\beta/M}(q)=\sum_{m\in\Q^{+}}b_m q^m$. La s\'erie de Dirichlet formelle  $\sum_{m\in\Q^{+}}\frac{b_m}{m^s}$,  \`a coefficients dans $\Q^{\cycl}$, associ\'ee \`a $F^{(1)}_{a_0/Mp^n,\beta/M}(q)$ v\'erifie
la relation suivante:
\begin{equation*}
\sum_{m\in\Q^{+}}\frac{b_m}{m^s}=\z(a_0/Mp^n,s)\z^{*}(\beta,s)-\z(-a_0/Mp^n,s)\z^{*}(-\beta,s).
\end{equation*}
Donc la s\'erie de Dirichlet formelle \`a coefficients dans $\Q^{\cycl}$ associ\'ee \`a $F^{(1)}_{a_0/Mp^n,\beta/M}(q^{p^n})$
satisfait:
\begin{equation}\label{formelle}
\begin{split}
&\sum_{m\in\Q^{+}}\frac{b_m}{(p^nm)^s}
=p^{-ns}\sum_{m\in\Q^{+}}\frac{b_m}{n^s}
\\&=p^{-ns}(\z(a_0/Mp^n,s)\z^{*}(\beta/M,s)-\z(-a_0/Mp^n,s)\z^{*}(-\beta/M,s)).
\end{split}
\end{equation}

Soit $\sum_{m\in \Q^{+}}A_{m,n} m^{-s}$ la s\'erie de Dirichlet formelle \`a coefficients dans $\Q^{\cycl}\otimes\cO(\sW)$ associ\'ee au $q$-d\'eveloppement
de $\sum_{\substack{a_0\equiv\alpha[M]\\ 1\leq
a_0\leq Mp^n}}\kappa^{\mathbf{univ}}(a_0)a_0^{1-j}F^{(1)}_{a_0/Mp^n,\beta/M}(q^{p^n})$.

Alors, de la formule $(\ref{formelle})$, on obtient:

\begin{equation}\label{fin}
\begin{split}
\sum_{m\in\Q^{*}_{+}}\frac{A_{m,n}}{m^s}=&\sum_{i=0}^{p^n-1}\sum_{k=0}^{\infty}
(\frac{\kappa^{\mathbf{univ}}(\alpha+iM)(\alpha+iM)^{1-j}}{(kp^n+\frac{a_0}{M})^s}\z^{*}
(\beta/M,s)+\kappa^{\mathbf{univ}}(-1)(-1)^{2-j} \cdot\\ 
&\frac{\kappa^{\mathbf{univ}}(-\alpha-iM)(-\alpha-iM)^{1-j}}{(kp^n-\frac{\alpha}{M}+(p^n-i))^s}\z^{*}(-\beta/M,s)).
\end{split}
\end{equation}
Dans la suite, on donne les calculs pour le terme $\frac{\kappa^{\mathbf{univ}}(\alpha+iM)(\alpha+iM)^{1-j}}{(kp^n+\frac{a_0}{M})^s}\z^{*}
(\beta/M,s)$ ci-dessus, et le terme restant se calcule de la même manière:

\begin{equation}
\begin{split}
&\sum_{\substack{a_0\equiv\alpha[M]\\ 1\leq a_0\leq Mp^n}}\kappa^{\mathbf{univ}}(a_0)a_0^{1-j}p^{-ns}\z(a_0/Mp^n,s)\z^{*}(\beta/M,s)\\
=&\sum_{\substack{a_0\equiv\alpha[M]\\ 1\leq a_0\leq Mp^n}}\kappa^{\mathbf{univ}}(a_0)a_0^{1-j}p^{-ns}\sum_{k=0}^{+\infty}\frac{1}{(k+\frac{a_0}{Mp^n})^s}\z^{*}(\beta/M,s)\\
=&\sum_{\substack{a_0\equiv\alpha[M]\\ 1\leq a_0\leq Mp^n}}\sum_{k=0}^{+\infty}\frac{\kappa^{\mathbf{univ}}(a_0)a_0^{1-j}}{(kp^n+\frac{a_0}{M})^s}\z^{*}
(\beta/M,s).
\end{split}
\end{equation}

En prenant la limite $p$-adique de la formule $(\ref{fin})$ et en utilisant le fait $(p,\alpha)=1$, on a :
\begin{equation*}
\begin{split}
&\lim\limits_{n\ra\infty}\sum_{m\in\Q^{*}_{+}}\frac{A_{m,n}}{m^s}\\
=&\lim_{n\ra\infty}
( 
\sum_{\substack{ \frac{\alpha}{M}\leq\tilde{\alpha}< \frac{\alpha}{M}+p^n\\ \tilde{\alpha}\equiv \frac{\alpha}{M}\mod\Z }} 
\frac{\kappa^{\mathbf{univ}}(M\tilde{\alpha})(M\tilde{\alpha})^{1-j} }{ \tilde{\alpha}^s } \z^{*}(\beta/M,s)\\ &+\kappa^{\mathbf{univ}}(-1)(-1)^{2-j} \cdot
\sum_{\substack{ -\frac{\alpha}{M}<\tilde{\alpha}\leq -\frac{\alpha}{M}+p^n\\ \tilde{\alpha}\equiv -\frac{\alpha}{M}\mod\Z }}  
\frac{\kappa^{\mathbf{univ}}(M\tilde{\alpha})(M\tilde{\alpha})^{1-j}}{\tilde{\alpha}^s}
\z^{*}(-\beta/M,s)
).
\end{split}
\end{equation*}

C'est la m\^eme s\'erie de Dirichlet formelle associ\'ee au $q$-d\'eveloppement de (c.f. la formule (\ref{famq-dev}) pour le $q$-développement de $\tilde{F}_{\alpha/M,
\beta/M}(\kappa^{\mathbf{univ}},j)$)
\[
M^{1-j}\ord(\frac{\alpha}{M})^{j-1}\kappa^{\mathbf{univ}}(\frac{M}{\ord(\alpha/M)})\tilde{F}_{\alpha/M,
\beta/M}(\kappa^{\mathbf{univ}},j)\] .

(ii)(Comparaison de termes constants) On rappelle que la mesure $\mu_c$ sur $\Z_p$ dont sa transformée d'Amice est $\frac{c^2}{T}-\frac{c}{(1+T)^{c^{-1}}-1}$, satisfait les relations: 
\begin{equation}
\begin{split}
\int_{\Z_p}(x_{\alpha/M})^k\mu_c&=-\ord(\alpha/M)^k(c^2\z(\frac{\alpha}{M}, -k)-c^{1-k}\z(\frac{\langle \langle c\rangle\rangle \alpha}{M}, -k)); \\ 
\int_{p^n\Z_p}(x_{\alpha/M})^k\mu_c&=-\ord(\alpha/M)^k(c^2\z(\frac{\alpha}{p^nM}, -k)-c^{1-k}\z(\frac{\langle \langle c\rangle\rangle\alpha}{p^nM}, -k)).
\end{split}
\end{equation}
 On constate que 
le terme constant de la série à gauche de la relation voulue est la somme de Riemann  \[\lim\limits_{n\ra+\infty}
\sum_{\substack{a_0\equiv\alpha[M]\\ 1\leq
a_0\leq Mp^n}}\kappa^{\mathbf{univ}}(a_0)a_0^{1-j}(c^2\z(\frac{a_0}{p^nM}, 0)-c\z(\frac{\langle \langle c\rangle\rangle a_0}{p^nM}, 0)),\]
et donc il se traduit en l'intégration $p$-adique 
\[M^{1-j}\ord(\frac{\alpha}{M})^{j-2}\kappa^{\mathbf{univ}}(\frac{M}{\ord(\alpha/M)})\int_{\Z_p}\kappa^{\mathbf{univ}}(x_{\alpha/M})(x_{\alpha/M})^{2-j}\mu_c.\] On en déduit que le terme constant de la série à gauche de la relation voulue est 
\[M^{1-j}\ord(\frac{\alpha}{M})^{j-2}\kappa^{\mathbf{univ}}(\frac{M}{\ord(\alpha/M)})\omega(0_{\alpha/M})^{2-j}\z_{p,c}(\kappa^{\mathbf{univ}},\alpha/M).\]
Ceci permet de conclure.
\end{proof}
En appliquant ceci à  $\exp^{*}_{\mathbf{Kato},\nu}(z_{M,A})$, on obtient  
\[\exp^{*}_{\mathbf{Kato},\nu}(z_{M,A})=\frac{1}{(j-1)!}M^{-2j}\ord(\frac{\alpha}{M})^{j-1}\kappa^{\mathbf{univ}}(\frac{M}{\ord(\alpha/M)})F_{c,\alpha/M,
\beta/M}(\kappa^{\mathbf{univ}},j) E^{(j)}_{d, \gamma/M, \delta/M}. \]

Ceci termine la démontration du théorème $\ref{theo}$.


Dipartimento di Matematica Pura e Applicata, Via Trieste 63, I-35121, Padova, Italy; swang@math.unipd.it
\end{document}